\title{Equivariant Bifurcation from Relative Equilibria via Isomorphic Vector Fields}
\author{Stef Klajbor-Goderich}
\begin{document}

\maketitle

\begin{abstract}
We present a framework for studying the dynamics of equivariant vector fields near relative equilibria.
To overcome the lack of linearization at a relative equilibrium or the possible non-smoothness of the orbit space, we categorify the space of equivariant vector fields.
A category where the objects are equivariant vector fields was first introduced by Hepworth in the context of smooth stacks \cite{H09}.
Central to our approach is the ensuing notion of isomorphic equivariant vector fields.
The idea is that considering equivariant vector fields, and their corresponding dynamics, up to isomorphism is a way to take into account the symmetries of the group action without passing to the orbit space.
In particular, the category of equivariant vector fields near a relative equilibrium is equivalent to the category of equivariant vector fields on the slice representation of the relative equilibrium.
In this paper we apply this to bifurcations to and from relative equilibria, and to the genericity of conditions for equivariant bifurcation from relative equilibria.
\end{abstract}

\setcounter{tocdepth}{2}
\tableofcontents

\section{Introduction}\label{Introduction}
We present a framework for studying the dynamics of equivariant vector fields near relative equilibria.
In particular, we study bifurcations to and from relative equilibria, and the generic conditions for equivariant bifurcation from relative equilibria.
Previously, the author used this framework to study stability of relative equilibria \cite{K18}.

The usual approaches used for vector fields near strict equilibria run into difficulties in the case of relative equilibria.
For example, it doesn't make sense to directly linearize a vector field at a relative equilibrium.
Since relative equilibria descend to equilibria of the flow on the orbit space, one could try to linearize on the orbit space \cite{Koe97,Ch02}.
The obstacle is that orbit spaces of group actions are generally not smooth.
A brute force approach is to embed the orbit space in some Euclidean space $\bbr^n$.
However, this can be difficult in practice.
Instead, we prefer to think of the orbit space as a smooth stack (see, for example, \cite{L10}).
For our purposes, this amounts to categorifying the space of equivariant vector fields.

A category where the objects are equivariant vector fields was first introduced by Hepworth in the context of smooth stacks \cite{H09}.
Given an action of a group $G$ on a manifold $M$, we categorify the space $\ffX(M)^G$ of $G$-equivariant vector fields by introducing an action on $\ffX(M)^G$ of the vector space:
\begin{equation*}
C^\infty\left(M,\ffg\right)^G:=
\left\{
\psi:M\to \ffg
\mid
\psi(g\cdot m) = \Ad(g)\psi(m),\,\,
g\in G,\, m\in M
\right\},
\end{equation*}
where $\ffg$ is the Lie algebra of $G$ (see (\ref{Eq:Action}) and Definition \ref{Def:GpoidVFs}).
Central to our approach is the ensuing notion of isomorphic equivariant vector fields (Definition \ref{Def:IsoVF}).
Isomorphic vector fields lead to equivalent continuous flows on the orbit space.
The idea is that considering equivariant vector fields, and their corresponding dynamics, {\it up to isomorphism} is a way to take into account the symmetries of the group action without passing to the orbit space.

b
In section \ref{ch:category}, we show that the category of equivariant vector fields in an invariant neighborhood of a point is equivalent to the category of equivariant vector fields on the canonical slice representation near the point (Theorem \ref{Thm:EquivalenceVFs}).
Consequently, given an equivariant vector field $X\in\ffX(M)^G$ with a relative equilibrium at a point $m\in M$, we obtain a decomposition:
\begin{equation}\label{Eq:DecompositionIntro}
X=Y^X+\partial(\psi^X),
\end{equation}
where $Y^X$ has an equilibrium at $m$ and is transverse to the group orbits near $m$, and $\partial(\psi^X)$ is an equivariant vector field built out of an isomorphism of equivariant vector fields natural in $X$ (Theorem \ref{Thm:Decomposition}).
This decomposition makes many of the methods used for equilibria available for the case of relative equilibria.
Similar decompositions to (\ref{Eq:DecompositionIntro}) have been used before, notably using invariant Riemannian metrics \cite{K90,FSSW96} and lifting the vector field to a skew-product decomposition \cite{FSSW96}.
However, we believe the decomposition (\ref{Eq:DecompositionIntro}) in terms of isomorphisms of equivariant vector fields is natural and conceptually simple.
Additionally it also helps address the effect of the choices involved (Proposition \ref{Prop:ProjectionChoice} and Proposition \ref{Prop:SliceChoice}).
In particular, the choices involved lead to transversal vector fields that are isomorphic.

In preparation for the results on bifurcation, we discuss the motion of relative equilibria on a $K$-manifold $M$, where $K$ is compact, from the point of view of the category of equivariant vector fields.
It is well-known that, the motion of a relative equilibrium of an equivariant vector field on $M$ is equivalent to linear motion on a torus \cite{F80,K90}.
In fact, there is a bound on the number of independent frequencies of the motion; that is, on the dimension of the torus containing the motion \cite{F80,K90}.
We show how isomorphisms of equivariant vector fields are related to these frequencies, which will be useful in section \ref{ch2}.
Furthermore, this bound is attained generically, but one can hope to modify the equivariant vector field to reduce, or otherwise adjust, the number of independent frequencies of the relative equilibrium's motion to obtain nongeneric motions.

Given an equivariant vector field with a relative equilibrium on a compact $K$-manifold, we provide conditions for constucting an isomorphic vector field that has any desired number of independent frequencies at the relative equilibrium (Proposition \ref{Prop:FrequencyStabilizationConditions}).
Since the resulting vector field is isomorphic to the given one, it determines the same flow on the orbit space, and hence the same dynamics modulo the symmetries (Theorem \ref{Lemma:LermanFlows}).
In particular, we show that this is always possible for actions of tori (Theorem \ref{Thm:jStabilization}).
The results in this section may also be of interest in the control of equivariant dynamical systems.

We then proceed to apply the category of equivariant vector fields point of view to bifurcations to and from relative equilibria in section \ref{ch2}.
It is well known that, due to the presence of group symmetries, one expects bifurcations to relative equilibria in place of bifurcations to equilibria.
We prove a test for bifurcations to relative equilibria on representations (Theorem \ref{Thm:BifToRelEq}).
On the one hand, this test is conceptually simple: it is essentially a generalization of the Equivariant Branching Lemma \cite{C81,V82} considered up to isomorphism of equivariant vector fields.
On the other hand, it can be quite general: it can predict bifurcations to steady-state, periodic, or quasi-periodic motion on tori.
The motion on the bifurcating branches depends on an isomorphism of equivariant vector fields used in the test.
In particular, isomorphisms of equivariant vector fields are central to reducing and reconstructing the dynamics of the bifurcation in our test.

We then extend this test to bifurcations {\it from} relative equilibria for proper actions of (potentially noncompact) Lie groups (Theorem \ref{Thm:BifToRelEqProper}).
We reduce to the slice representation at the relative equilibrium.
In fact, we prove something stronger: that bifurcations to relative equilibria are in correspondence with bifurcations to relative equilibria on the slice representation (Theorem \ref{Thm:BifBranchEquivalence}), and how isomorphisms of equivariant vector fields help relate the velocities of the corresponding relative equilibria.
Compared to similar slice reductions that can be found in the literature \cite{K90, FSSW96} we reconstruct the dynamics from the slice reduction using isomorphisms of equivariant vector fields.
This gives finely-tuned control of the reconstruction, and a clear way to track the effect of choices.

It is common in equivariant bifurcation to talk about {\it generic conditions} for equivariant vector fields or for paths of equivariant vector fields.
For example, a standard bifurcation condition is the eigenvalue crossing condition: that the eigenvalues of the linearization of a path of equivariant vector fields crosses the imaginary axis with nonzero velocity (see, for example, the Equivariant Branching Lemma \cite{C81,V82} and Theorem \ref{Thm:BifToRelEq}).
One way to formalize the notion of a condition being {\it generic} in equivariant dynamics is to endow the space of equivariant vector fields with a topology and consider whether there are {\it open and dense} subsets satisfying the desired condition.

In section \ref{ch:residual}, we consider open and dense subsets of equivariant vector fields in light of the category of equivariant vector fields.
The first main result of this section is that open and dense subsets of equivariant vector fields are ``preserved'' by isomorphisms of equivariant vector fields (Theorem \ref{Thm:MainResidual1}).
That is, the set of all equivariant vector fields isomorphic to those in an open and dense subset of equivariant vector fields is also open and dense in the space of equivariant vector fields.
The second main result of this section is that the equivalence in Theorem \ref{Thm:EquivalenceVFs} ``preserves'' open and dense subsets of equivariant vector fields up to isomorphism (Theorem \ref{Thm:MainResidual2}).
That is, in particular, the reduction to the slice representation via equivariant projection and the reconstruction of the dynamics via equivariant extension from this slice preserve open and dense subsets of equivariant vector fields \textit{up to isomorphism}.

Importantly, the equivariant projection and extension in decomposition (\ref{Eq:DecompositionIntro}) don't need to strictly preserve open and dense subsets.
In particular, the equivariant extension of an open and dense subset of vector fields on the slice representation doesn't need to be open and dense.
For this, note the equivariantly extended vector fields are all vertical in the corresponding associated bundle over the group orbit of the relative equilibrium, which is not an open and dense condition (see the discussion preceding (\ref{Eq:Decomposition})).
The result that such collections are preserved \textit{up to isomorphism} addresses this issue.

These theorems also apply to paths of equivariant vector fields, and so they also apply in the context of equivariant bifurcation.
That is, generic conditions for equivariant bifurcation problems from relative equilibria, like those in Theorem \ref{Thm:BifToRelEqProper}, correspond to generic conditions for equivariant bifurcation problems from equilibria.

In order to prove Theorems \ref{Thm:MainResidual1} and \ref{Thm:MainResidual2}, we endow the spaces of equivariant vector fields and of paths of equivariant vector fields with the Whitney $C^\infty$ topologies (section \ref{whitney}).
While these spaces are vector spaces, they are {\it not} topological vector spaces when endowed with these topologies.
The problem is the scalar multiplication fails to be continuous.
Nevertheless, the addition is continuous and hence they are {\it topological abelian groups}.
If we don't consider the topology, the category $\bbx(M)^G$ of equivariant vector fields on a $G$-manifold is a $2$-vector space (Remark \ref{Rem:2VectorSpaceStr}).
That is, it is a category internal to the category of vector spaces: its space of objects and morphisms are vector spaces, and all the structure maps are linear.
When endowed with a topology, the category $\bbx(M)^G$ becomes a topological abelian $2$-group.
That is, it becomes a category internal to the category of topological abelian groups: its space of objects and morphisms are topological abelian groups, and all the structure maps are continuous group homomorphisms.
This topological abelian $2$-group structure proves to be the necessary key for proving the main results of this section, so we discuss it in section \ref{top2groups}.

To summarize, section \ref{ch:category} defines a category of equivariant vector fields and uses this to decompose equivariant vector fields near relative equilibria.
In section \ref{motionsection}, we discuss the motion of relative equilibria from the point of view of the category of equivariant vector fields.
In section \ref{ch2}, we use isomorphisms of equivariant vector fields to study bifurcations to and from relative equilibria.
Finally, in section \ref{ch:residual}, we study open and dense collections of equivariant vector fields and paths of equivariant vector fields.

\subsection{Notation}\label{Notation}
In this brief section we give an overview of the conventions and notation that will be used throughout this paper.

\begin{enumerate}
\item We will denote the {\it tangent map} of a smooth $f:M\to N$ between smooth manifolds $M$ and $N$ by $Tf:TM\to TN$.

\item The vector space of {\it smooth vector fields} on a manifold $M$ will be denoted by $\ffX(M)$.

\item Given a diffeomorphism $f:M\to N$ between two manifolds, we will denote the corresponding {\it pushforward of vector fields along $f$} by $f_*:\ffX(M)\to\ffX(N)$ and the {\it pullback of vector fields along $f$} by $f^*:\ffX(N)\to \ffX(M)$.

\item We will denote {\it Lie groups} with uppercase Latin letters, their {\it Lie algebras} with the corresponding lowercase fraktur letter, and the {\it duals of these Lie algebras} by adding a star superscript.
For example, a Lie group $G$ will have Lie algebra $\ffg$ and dual Lie algebra $\ffg^*$.

\item The {\it adjoint representation} of a Lie group on its Lie algebra will be denoted by $\Ad$, while its {\it coadjoint representation} on the dual of the Lie algebra will be denoted by $\CoAd$.

\item The {\it action} of a group $G$ on a space $M$ will be denoted by $g\cdot m$ for all $g\in G$ and all $m\in M$.
Given $g\in G$, we denote the corresponding translation map by:
\begin{equation*}
g_M:M\to M, \qquad g_M(m):=g\cdot m.
\end{equation*}

\item Given an action of a group $G$ on a space $M$, the {\it stabilizer subgroup} of a point $m\in M$ will be denoted by the same letter as the group but with the point as a subscript (e.g. $G_m$).
If $G$ is a Lie group and $M$ is a smooth manifold, the {\it Lie algebra of the stabilizer} will also carry the point as a subscript (e.g. $\ffg_m$).

\item \label{Not:TangentAction} If $G$ is a Lie group, $M$ is a smooth manifold, and $g\in G$, then the {\it translation map} $g_M:M\to M$ is a diffeomorphism.
Furthermore, there is a canonical action of $G$ on the tangent bundle $TM$, given by:
\begin{equation*}
g\cdot X := Tg_M(X), \qquad g\in G,\, X\in TM.
\end{equation*}

\item \label{Not:EquivariantInvariant} Given actions of a Lie group $G$ on manifolds $M$ and $N$, an {\it equivariant map } $f:M\to N$ is a smooth map such that:
\begin{equation*}
f(g\cdot m) = g\cdot f(m), \qquad g\in G,\, m\in M.
\end{equation*}
An {\it invariant map} $f:M\to N$ is a smooth map such that:
\begin{equation*}
f(g\cdot m) = f(m), \qquad g\in G,\, m\in M.
\end{equation*}
Equivalently, an invariant map is an equivariant map where the action of $G$ on $N$ is the trivial action.
We denote the {\it space of equivariant maps} $M\to N$ by $C^\infty(M,N)^G$.
If to avoid ambiguity, we need to distinguish between equivariant and invariant maps, we will denote the space of invariant maps by $C^\infty(M,N)^{G-\text{inv}}$ and reserve $C^\infty(M,N)^G$ for equivariant maps in such a case.
Otherwise, we will explicitly say which one we mean or context will make it clear.

\item \label{Not:VerticalBundle}Given a smooth fiber bundle $\pi:P\to B$, the corresponding {\it vertical bundle} is the bundle over the manifold $P$ with total space $\calv P:=\ker\d\pi$.
The projection $\calv P\to P$ is the restriction of the tangent bundle projection $TP\to P$, and hence the vertical bundle is a subbundle of the tangent bundle.

\item \label{Not:AssociatedBundle} We will also make use of associated bundles.
Given a Lie group $K$, a manifold  $P$ with a free and proper right action of $K$, and a manifold $F$ with a proper left action of $K$, the {\it associated bundle} is the bundle with total space the quotient $P\times^K F:=(P\times F)/K$ of the action:
\begin{equation*}
k\cdot (p,f):= (p\cdot k^{-1},k\cdot f), \qquad k\in K, \, (p,f)\in P\times F.
\end{equation*}
The base of the associated bundle is the space $P/K$ and the typical fiber is $F$.
Thus, if $F$ is a vector space then the associated bundle $P\times^KF$ is a vector bundle over $G/K$.
We will denote the elements of $P\times^KF$ by $[p,f]$.
If the manifold $F$ is a product of the form $M\times N$, we will denote the elements of $P\times^KF$ by $[p,m,n]$ instead of $[p,(m,n)]$.
Furthermore, suppose $K$ is a compact Lie subgroup of a Lie group $G$.
The $P:=G$ is a right $K$-principal bundle over $G/K$ with the action by right-multiplication.
Then there is an action of $G$ on the associated bundle $G\times^KF$ given by:
\begin{equation*}
g'\cdot [g,f]:=[g'g,f], \qquad g'\in G,\, [g,f]\in G\times^KF.
\end{equation*}

\item \label{Not:Categories} Given a category $\calc$, we will denote the {\it collection of objects} by $\calc_0$ and the {\it collection of morphisms} by $\calc_1$.
A {\it morphism} $f$ between two objects $X$ and $Y$ in $\calc_0$ will be denoted by $f:X\to Y$.
Given a {\it functor} $F:\calc\to\cald$ between two categories $\calc$ and $\cald$, we will denote the corresponding map on objects by $F_0:\calc_0\to\cald_0$ and the corresponding map on morphisms by $F_1:\calc_1\to\cald_1$.
We will denote a natural isomorphism between two functors $F:\calc\to\cald$ and $G:\calc\to\cald$ by $h:F\cong G$ or alternatively by its corresponding map $h:\calc_0\to\cald_1$ assigning each object $X$ in $\calc_0$ to the corresponding morphism $h(X):F_0(X)\to G_0(X)$.
By an {\it equivalence of categories } between two categories $\calc$ and $\cald$ we will mean a pair of functors $F:\calc\to\cald$ and $G:\cald\to\calc$ together with natural isomorphisms $\epsilon:GF \cong 1_{\calc}$ and $\mu:FG \cong 1_{\cald}$.
\end{enumerate}

\subsection{Acknowledgements}\label{Acknowledgements}
The author would like to thank their PhD advisor, Eugene Lerman, who provided valuable guidance on this work while the author was completing their PhD dissertation.

\section{Categories and isomorphisms of equivariant vector fields}\label{ch:category}
A category where the objects are equivariant vector fields was first introduced by Hepworth in the context of smooth stacks \cite{H09}.
In this paper, we explore applications of this category to equivariant dynamics.
The main idea is that considering equivariant vector fields, and their corresponding dynamics, {\it up to isomorphism} is a way to take into account the symmetries of the group action.
Besides introducing the category of equivariant vector fields (Definition \ref{Def:GpoidVFs}), in this section we show that the category of equivariant vector fields in an invariant neighborhood of a point is equivalent to the category of equivariant vector fields on the canonical slice representation near the point (Theorem \ref{Thm:EquivalenceVFs}).
Consequently, we obtain a decomposition of equivariant vector fields near relative equilibria into a component transverse to group orbits near the relative equilibrium and a component tangent to them (Theorem \ref{Thm:Decomposition}).

\subsection{Categories of equivariant vector fields}\label{categoryVFs}
In this section we define the category of equivariant vector fields and provide several examples.
First, recall:

\begin{definition}
A {\it $G$-manifold} $M$ is a smooth manifold $M$ with a smooth action of a Lie group $G$.
A {\it proper $G$-manifold} is one where the action is proper.
\end{definition}

\begin{definition}\label{Def:EquivVF}
An {\it equivariant vector field} on a $G$-manifold $M$ is a smooth vector field $X:M\to TM$ such that:
\begin{equation*}
X(g\cdot m)=g\cdot X(m),
\end{equation*}
for all $g\in G$ and $m\in M$, where the action on the right is as in Notation \ref{Notation}.\ref{Not:TangentAction}.
\end{definition}

\begin{notation}
We will denote the vector space of equivariant vector fields on a $G$-manifold $M$  by $\ffX(M)^G$.
\end{notation}

Morphisms between equivariant vector fields will be built out of the following class of maps:

\begin{definition}\label{Def:InfGaugeTransf}
An {\it infinitesimal gauge transformation} on a $G$-manifold $M$ is an equivariant smooth map $\psi:M\to\ffg$, where $\ffg$ is the Lie algebra of $G$.
That is, 
\begin{equation*}
\psi(g\cdot m)= \Ad(g)\psi(m),
\end{equation*}
for all $g\in G$ and $m\in M$, where $\Ad$ is the adjoint representation.
\end{definition}

\begin{notation}
We will denote the space of infinitesimal gauge transformations by $C^\infty(M,\ffg)^G$.
\end{notation}

\begin{remark}\label{Rem:InfGaugeTransf}
If the action of $G$ on $M$ is free and proper, then the orbit space $M/G$ is a~mani\-fold and the orbit space map $M\to M/G$ is a principal $G$-bundle. In this case, the space of infinitesimal gauge transformations $C^\infty(M,\ffg)^G$ is isomorphic to the space of smooth sections of the associated bundle $M\times^G\ffg\to M/G$ (see Notation \ref{Notation}.\ref{Not:AssociatedBundle}, with the action of $G$ on $\ffg$ being the $\Ad$ representation).
\end{remark}

An infinitesimal gauge transformation $\psi:M\to\ffg$ on a $G$-manifold $M$ induces an equivariant vector field $\partial(\psi):M\to TM$.
It is given by:
\begin{equation}\label{Eq:InducedVF}
\partial(\psi)(m):=\frac{\d}{\d \tau}\Big|_0 \exp(\tau\psi(m))\cdot m,
\end{equation}
for any $m\in M$.
We verify this is indeed equivariant:

\begin{lemma}\label{Lemma:InvPsiM}
Let $M$ be a $G$-manifold and let $\psi\colon M\to\ffg$ be an infinitesimal gauge transformation on $M$.
The induced vector field $\partial(\psi)$ defined by (\ref{Eq:InducedVF}) is an equivariant vector field with respect to the action of $G$.
\end{lemma}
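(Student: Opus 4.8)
The plan is to verify directly from formula (\ref{Eq:InducedVF}) that $\partial(\psi)$ is a well-defined smooth section of $TM$, and then to check the equivariance condition of Definition \ref{Def:EquivVF} by a short computation whose crux is the standard identity $\exp(\Ad(g)\xi)=g\exp(\xi)g^{-1}$ relating the adjoint representation and the exponential map.

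First I would observe that for each $m\in M$ the curve $\tau\mapsto\exp(\tau\psi(m))\cdot m$ is smooth and equals $m$ at $\tau=0$ (since $\exp(0)=e$), so its velocity at $\tau=0$ lies in $T_mM$; hence $\partial(\psi)$ is a section of $TM$. Smoothness of $\partial(\psi)$ as a map $M\to TM$ is then immediate: it is obtained by taking the $\tau$-derivative at $0$ of the smooth map $\bbr\times M\to M$, $(\tau,m)\mapsto\exp(\tau\psi(m))\cdot m$, which is a composition of the smooth maps $m\mapsto\psi(m)$, the exponential of $G$, and the action $G\times M\to M$. (Equivalently, $\partial(\psi)(m)$ is the value at $m$ of the fundamental vector field of $\psi(m)\in\ffg$, which also makes smoothness manifest.)

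For equivariance, fix $g\in G$ and $m\in M$. Using that $\psi$ is an infinitesimal gauge transformation, so $\psi(g\cdot m)=\Ad(g)\psi(m)$, together with the conjugation identity above, I compute
\begin{align*}
\exp\bigl(\tau\psi(g\cdot m)\bigr)\cdot(g\cdot m)
&=\exp\bigl(\Ad(g)(\tau\psi(m))\bigr)\cdot(g\cdot m)\\
&=\bigl(g\exp(\tau\psi(m))g^{-1}\bigr)\cdot(g\cdot m)\\
&=g\cdot\bigl(\exp(\tau\psi(m))\cdot m\bigr)
=g_M\bigl(\exp(\tau\psi(m))\cdot m\bigr).
\end{align*}
Thus the curve through $g\cdot m$ defining $\partial(\psi)(g\cdot m)$ is the $g_M$-image of the curve through $m$ defining $\partial(\psi)(m)$. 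Differentiating at $\tau=0$ and applying the chain rule to the diffeomorphism $g_M$ yields $\partial(\psi)(g\cdot m)=Tg_M\bigl(\partial(\psi)(m)\bigr)=g\cdot\partial(\psi)(m)$, which is precisely the equivariance condition with respect to the tangent action of Notation \ref{Notation}.\ref{Not:TangentAction}.

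I do not expect any genuine obstacle: the computation is essentially forced once one invokes $\exp(\Ad(g)\xi)=g\exp(\xi)g^{-1}$, and the only point needing a word of care is the interchange of $g_M$ with the $\tau$-derivative, which is just the chain rule. If one wants the smoothness of $\partial(\psi)$ spelled out in detail that is the mildly tedious part, but it follows formally from smoothness of the action and exponential maps.
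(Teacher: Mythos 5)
Your proof is correct and follows essentially the same route as the paper's: both hinge on the equivariance of $\psi$ together with the naturality identity $\exp(\Ad(g)\xi)=g\exp(\xi)g^{-1}$, then differentiate at $\tau=0$ and apply the chain rule for $g_M$. The added remarks on smoothness of $\partial(\psi)$ are fine but not needed beyond what the paper records.
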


\begin{proof}
This is a consequence of the naturality of the exponential. Let $g\in G$ and $m\in M$, then
\begin{align*}
\partial(\psi)(g\cdot m)
&=\frac{\d}{\d t}\Big|_0 \exp(t\psi(g\cdot m))\cdot g\cdot m \\
&=\frac{\d}{\d t}\Big|_0 \exp(t\Ad(g)\psi(m))\cdot g\cdot m \qquad \text{ by the equivariance of }\psi \\
&=\frac{\d}{\d t}\Big|_0 \left(g\exp(t\psi(m))g^{-1}\right)\cdot g\cdot m \qquad \text{ by the naturality of }\exp \\
&=g\cdot \partial(\psi)(m).
\end{align*}
Hence, $\partial(\psi)$ is an equivariant vector field.
\end{proof}

Given a $G$-manifold $M$, the map $\partial:C^\infty(M,\ffg)^G\to\ffX(M)^G$ assigning to each infinitesimal gauge transformation $\psi$ the vector field $\partial(\psi)$ as in (\ref{Eq:InducedVF}) is a linear map.
Consequently, the abelian group $C^\infty(M,\ffg)^G$ acts on the space $\ffX(M)^G$.
The action is given by:
\begin{equation}\label{Eq:Action}
\psi\cdot X := X + \partial(\psi),
\end{equation}
where $\psi$ is an infinitesimal gauge transformation, $X$ is an equivariant vector field, and the addition is the pointwise addition of vector fields.

\begin{remark}\label{Rem:ActionGroupoids}
Recall that the action of a group $H$ on a space $\caly$ defines an action groupoid $H\times\caly\toto\caly$ (see, for example, \cite[Example~5.1~(5)]{MM03}).
The objects of the action groupoid in (\ref{Eq:Action}) are the equivariant vector fields, while morphisms are pairs $(\psi,X)$ consisting of a gauge transformation $\psi$ and an equivariant vector field $X$.
The source map is the projection onto the second factor, the target map is the action map, and the composition corresponds to addition of the first factors, i.e. addition of infinitesimal gauge transformations.
\end{remark}

\begin{definition}\label{Def:GpoidVFs}
The {\it category } $\bbx(M)^G$ {\it of equivariant vector fields} on a $G$-manifold $M$ is the action groupoid (see Remark \ref{Rem:ActionGroupoids}) of the action of the space of infinitesimal gauge transformations $C^\infty(M,\ffg)^G$ on the space of equivariant vector fields $\ffX(M)^G$.
\end{definition}

We isolate what it means for two equivariant vector fields to be isomorhic in this category:

\begin{definition}\label{Def:IsoVF}
Two equivariant vector fields $X$ and $Y$ on a $G$-manifold $M$ are {\it isomorphic} if they are isomorphic as objects of the groupoid of equivariant vector fields.
That is, they are isomorphic if there exists an infinitesimal gauge transformation $\psi:M\to \ffg$ such that $Y=X+\partial(\psi)$.
\end{definition}

Recall that the flow of an equivariant vector field on a $G$-manifold $M$ descends to give a continuous flow on the orbit space $M/G$.
The following result has as corollary that isomorphic vector fields descend to the same continuous flow on the orbit space:

\begin{theorem}[Lerman]\label{Lemma:LermanFlows}
Let $X$ and $Y$ be two isomorphic equivariant vector fields on a $G$-manifold $M$ with flow $\phi^X$ and $\phi^Y$ respectively.
Let $\calo\subseteq\bbr\times M$ denote the domain of the flow $\phi^X$.
Then $\calo$ is also the domain of $\phi^Y$.
Furthermore, there exists a smooth map $F:\calo\to G$ such that $F(0,n)$ is the identity of $G$ for all $m\in M$, and:
\begin{equation*}
\phi^Y(\tau,m)=F(\tau,m)\cdot\phi^X(\tau,m)
\end{equation*}
for all $(\tau,m)\in\calo$.
In particular, for each $m\in M$, the curve $F(-,m)$ on $G$ is the unique solution to the ODE initial value problem:
\begin{equation*}
\begin{cases}
\frac{\partial F}{\partial\tau} (\tau,m) = TL_{F(\tau,m)}\left(\psi(\phi^X(\tau,m))\right) \\
F(0,m)=1_G
\end{cases}
\end{equation*}
where $1_G$ is the identity in $G$, and $TL_{F(\tau,m)}$ is the tangent map at the identity $1_G$ of the left translation $L_{F(\tau,m)}:G\to G$ defined by $g\mapsto F(\tau,m)g$.
\end{theorem}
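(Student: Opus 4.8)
The plan is to produce $F$ explicitly as the solution of the initial value problem in the statement, and then to check by a direct computation that $F(\tau,m)\cdot\phi^X(\tau,m)$ is the flow of $Y$. By Definition~\ref{Def:IsoVF} we may fix an infinitesimal gauge transformation $\psi\colon M\to\ffg$ with $Y=X+\partial(\psi)$.

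First I would fix $m\in M$ and let $I_m=\{\tau\in\bbr:(\tau,m)\in\calo\}$, the open interval of existence of $\phi^X(\cdot,m)$. On $I_m$ consider the non-autonomous ODE on $G$
\begin{equation*}
\dot c(\tau)=TL_{c(\tau)}\big(\psi(\phi^X(\tau,m))\big),\qquad c(0)=1_G.
\end{equation*}
Its right-hand side is smooth in $\tau$ (as $\phi^X$ and $\psi$ are smooth) and, written in a left-translation chart around any point, is linear in the position variable; the standard completeness argument for such ``Lie-type'' equations --- translate a hypothetical escaping solution back toward $1_G$, where the equation is again of the same form, and bound below its exit time from a fixed precompact neighbourhood of $1_G$ by a bound on $\psi\circ\phi^X$ over a compact subinterval --- shows the solution exists on all of $I_m$. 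Smooth dependence of ODE solutions on parameters, together with smoothness of $\phi^X$ on $\calo$, then packages these solutions into a smooth map $F\colon\calo\to G$ with $F(0,m)=1_G$ solving the displayed equation for every $m$; uniqueness of solutions gives at once the final ``in particular'' clause and the uniqueness of $F$.

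Next I would set $\tilde\phi(\tau,m):=F(\tau,m)\cdot\phi^X(\tau,m)$ on $\calo$ and show it is the flow of $Y$. Evidently $\tilde\phi(0,m)=1_G\cdot m=m$. Writing $c(\tau)=F(\tau,m)$ and $q(\tau)=\phi^X(\tau,m)$, I differentiate $\tau\mapsto c(\tau)\cdot q(\tau)$ as a function of the two slots of the action map. The $q$-slot contributes $c(\tau)\cdot\dot q(\tau)=c(\tau)\cdot X(q(\tau))=X(\tilde\phi(\tau,m))$ by equivariance of $X$. For the $c$-slot, write $c(\tau+s)=c(\tau)\big(c(\tau)^{-1}c(\tau+s)\big)$ and note the curve $s\mapsto c(\tau)^{-1}c(\tau+s)$ passes through $1_G$ with velocity $TL_{c(\tau)^{-1}}\dot c(\tau)=\psi(q(\tau))$ by the ODE for $F$; since the map $g\mapsto g\cdot q(\tau)$ only depends on the velocity at $1_G$ when differentiated at $s=0$, this slot contributes
\begin{equation*}
c(\tau)\cdot\frac{\d}{\d s}\Big|_0\exp\!\big(s\,\psi(q(\tau))\big)\cdot q(\tau)=c(\tau)\cdot\partial(\psi)(q(\tau))=\partial(\psi)(\tilde\phi(\tau,m)),
\end{equation*}
using the definition (\ref{Eq:InducedVF}) of $\partial(\psi)$ and then its equivariance (Lemma~\ref{Lemma:InvPsiM}). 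Adding the two contributions gives $\frac{\d}{\d\tau}\tilde\phi(\tau,m)=X(\tilde\phi)+\partial(\psi)(\tilde\phi)=Y(\tilde\phi(\tau,m))$, so $\tilde\phi(\cdot,m)$ is the integral curve of $Y$ through $m$; by uniqueness of integral curves, $\calo$ lies in the domain of $\phi^Y$ and $\phi^Y=\tilde\phi$ there.

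Finally, because $C^\infty(M,\ffg)^G$ is a vector space, $-\psi$ is again an infinitesimal gauge transformation and $X=Y+\partial(-\psi)$, so running the preceding two steps with $X$ and $Y$ interchanged shows the domain of $\phi^X$ is contained in that of $\phi^Y$. Hence the two domains coincide with $\calo$ and the identity $\phi^Y(\tau,m)=F(\tau,m)\cdot\phi^X(\tau,m)$ holds throughout $\calo$. I expect the one genuinely delicate point to be the completeness claim in the second paragraph: off-the-shelf ODE theory only produces $F$ on some open neighbourhood of $\{0\}\times M$ in $\calo$, and one must use the left-invariant structure of the equation to know that $F$, and hence $\tilde\phi$, is defined on all of $\calo$; everything else reduces to the chain rule for the action map and the equivariance statements already established.
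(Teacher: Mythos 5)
Your proof is correct: the paper itself offers no argument for this theorem (it simply cites \cite[Theorem~1.6]{L15}), and your construction of $F$ by solving the left-invariant, time-dependent ODE along $\phi^X$, followed by the chain-rule computation showing that $F\cdot\phi^X$ integrates $Y=X+\partial(\psi)$ and the symmetry argument with $-\psi$ giving equality of domains, is exactly the standard reconstruction argument on which the cited result rests. You also correctly identify and handle the one genuinely delicate point: the left-invariance of the equation $\dot c=TL_c\big(\psi(\phi^X(\tau,m))\big)$ gives a position-independent lower bound on local existence times, so $F$ is defined on all of $\calo$ and not merely on a neighbourhood of $\{0\}\times M$.
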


\begin{proof}
See \cite[Theorem~1.6]{L15}.
\end{proof}

\begin{remark}\label{Rem:Origins}
In \cite{H09} Hepworth defined vector fields on stacks.
According to his definition, vector fields on a stack $\caly$ are objects of a category $\Vect(\caly)$.
In the case where the stack $\caly$ is the stack quotient $[M/G]$ of a $G$-manifold $M$, where $G$ is a compact Lie group, the category $\Vect(\caly)$ is equivalent to the corresponding groupoid of equivariant vector fields of Definition \ref{Def:GpoidVFs} \cite[Proposition 6.1]{H09}.
This groupoid is further explored in \cite{L15} and \cite{K18}.
\end{remark}

\begin{remark}\label{Rem:2VectorSpaceStr}
The category $\bbx(M)^G$ of equivariant vector fields for an action $G\times M \to M$ is in fact a $2$-vector space in the sense of Baez and Crans \cite{BC04}.
That is, it is a small category internal to the category of vector spaces and linear maps.
This means that the space of objects and the space of morphisms are vector spaces, and all the structure maps are linear.
For the purposes of this paper we only care about the additive structure.
That is, we only care that the category of equivariant vector fields is an abelian $2$-group.
\end{remark}

We conclude this subsection by introducing some examples of categories of equivariant vector fields.
In this subsection we concentrate on representations of compact Lie groups.
In subsection \ref{decomposition}, we use results in \cite{L15} and this paper, to give some similar examples for more general proper actions.

\begin{example}\label{Ex:Circle}
Let the circle $\bbs^1$ act on the complex plane $\bbc\cong\bbr^2$ (viewed as a real manifold) via rotations in the standard way:
\begin{equation*}
e^{i\theta}\cdot z := e^{i\theta} z, \qquad e^{i\theta}\in\bbs^1,\, z\in\bbc.
\end{equation*}
Since the group is abelian, the Adjoint action is trivial.
Thus, the infinitesimal gauge transformations are the $\bbs^1$-invariant functions on $\bbc$.
Given an infinitesimal gauge transformation $\psi:\bbc\to\bbr$, the corresponding induced vector field as in (\ref{Eq:InducedVF}) is given by:
\begin{equation*}
\partial(\psi)(z)
=\frac{\d}{\d\tau}\Big|_0 e^{\psi(z)i\tau}z
=\psi(z) i z.
\end{equation*}
It is shown in \cite[Ch.XII~Example~4.3]{GSS85}, using a nontrivial theorem of Schwarz \cite{Sch75}, that an arbitrary $\bbs^1$-invariant function $\psi:\bbc\to\bbr$ is of the form:
\begin{equation}\label{Eq:S1StdInf}
\psi(z)=\widehat\psi(|z|^2), \qquad z\in\bbc,
\end{equation}
for some smooth function $\widehat\psi:\bbr\to\bbr$.
We briefly describe how (\ref{Eq:S1StdInf}) follows from Schwarz's Theorem \cite{Sch75} as done in \cite[Ch.XII~Example~4.3]{GSS85}.
By Schwarz's Theorem \cite{Sch75}, it suffices to show that an arbitrary polynomial $\bbs^1$-invariant function $\psi:\bbc\to\bbr$ is of this form.
If $\psi$ is polynomial, then we may write:
\begin{equation*}
\psi(z)=\sum_{j,k} b_{jk} z^j \overline{z}^k, \qquad z\in\bbc,
\end{equation*}
where the sum is finite with nonnegative exponents, and $b_{jk}\in\bbc$ with $\overline{b_{jk}}=b_{kj}$ since $\psi$ is real-valued.
Invariance then implies the equation:
\begin{equation*}
\sum_{j,k} b_{jk} z^j \overline{z}^k = \sum_{j,k} b_{jk}e^{i(j-k)\theta}z^j\overline{z}^k, \qquad z\in\bbc,
\end{equation*}
which in turn implies that $b_{jk}=0$ or $j=k$.
Hence, $\psi$ is of the form:
\begin{equation*}
\psi(z)=\sum_{k}b_{kk}(z\overline{z})^k = \sum_k b_{kk} \left(|z|^2\right)^k,
\end{equation*}
which is of the desired form with:
\begin{equation*}
\widehat\psi(y):=\sum_k b_{kk}y^k, \qquad y\in\bbr.
\end{equation*}

Similarly, it is shown in \cite[Ch.XII~Example~5.4]{GSS85}, using Po\'enaru's Theorem \cite{Poe76}, that the $\bbs^1$-equivariant vector fields on $\bbc$ are of the form:
\begin{equation}\label{Eq:S1StdVF}
X(z)=f\left(|z|^2\right)z+g\left(|z|^2\right)iz, \qquad z \in \bbc,
\end{equation}
where $f:\bbr\to\bbr$ and $g:\bbr\to\bbr$ are smooth functions.
As described in \cite[Ch.XII~Example~5.4]{GSS85}, by Schwarz's Theorem \cite{Sch75} and Po\'enaru's Theorem \cite{Poe76}, it suffices to show that an arbitrary $\bbs^1$-equivariant vector field $X$ is of the form (\ref{Eq:S1StdVF}).
If $X$ is a polynomial vector field, then we may write:
\begin{equation*}
X(z)=\sum_{j,k} b_{jk} z^j \overline{z}^k,\qquad z\in\bbc,
\end{equation*}
where the sum is finite with nonnegative exponents and $b_{jk}\in\bbc$.
Equivariance then implies the equality:
\begin{equation*}
\sum_{j,k} b_{jk} z^j \overline{z}^k = \sum_{j,k} b_{jk} e^{i(j-k-1)\theta} z^j \overline{z}^k,
\end{equation*}
which in turn implies that $b_{jk}=0$ or $j=k+1$.
Hence, we have that:
\begin{equation*}
X(z)=\sum_k b_{k+1,k} (z\overline{z})^k z=\sum_{k} b_{k+1,k} \left(|z|^2\right)^k z = f\left(|z|^2\right)z + g\left(|z|^2\right)iz,
\end{equation*}
where:
\begin{equation*}
f(y):=\sum_k \text{Re}(b_{k+1,k})y^k,
\qquad
g(y):=\sum_k \text{Im}(b_{k+1,k})y^k,
\qquad
y\in\bbr.
\end{equation*}
Hence, all equivariant vector fields are as in (\ref{Eq:S1StdVF}).
From the point of view of the category $\bbx(\bbc)^{\bbs^1}$ of $\bbs^1$-equivariant vector fields on $\bbc$, the form (\ref{Eq:S1StdVF}) means that every equivariant vector field $X$ is isomorphic to its radial component:
\begin{equation*}
\widehat X(z):=f\left(|z|^2\right), \qquad z \in \bbc,
\end{equation*}
via the infinitesimal gauge transformation defined by $\psi(z):=g\left(|z|^2\right)$ for every $z\in\bbc$.
\end{example}

\begin{remark}
It should be noted that it is very diffficult in general to compute standard forms for infinitesimal gauge transformations and equivariant vector fields using Schwarz's Theorem \cite{Sch75} and Po\'enaru's Theorem \cite{Poe76}; as done, for example, to obtain (\ref{Eq:S1StdInf}) and (\ref{Eq:S1StdVF}).
While helpful, such expressions are not integral to the methods described in this paper.
\end{remark}

\begin{example}\label{Ex:O(2)C}
Recall that the orthogonal group $O(2)$ is generated by the circle $\bbs^1\cong SO(2)$ and a reflection $\kappa$.
Consider the representation of the orthogonal group $O(2)$ on $\bbr^2\cong \bbc$ generated by:
\begin{equation*}
  R_\theta\cdot z := e^{i\theta}z, \qquad \kappa\cdot z := \overline z, \qquad e^{i\theta}\in\bbs^1,\, z\in\bbc.
\end{equation*}
As in Example \ref{Ex:Circle}, the infinitesimal gauge transformations must be $\bbs^1$-invariant, but they must also satisfy equivariance with respect to the action of the reflection $\kappa$.
That is, an infinitesimal gauge transformation $\psi$ must satisfy:
\begin{equation*}
\psi(\overline{z})=\psi(\kappa\cdot z)=\Ad(\kappa)\psi(z)=-\psi(z),
\qquad z\in\bbc.
\end{equation*}
On the other hand, the $\bbs^1$-invariance implies that $\psi(\overline{z})=\psi(z)$ for all $z\in\bbc$.
Therefore, the map $\psi$ must satisfy $\psi(z)=-\psi(z)$, meaning $\psi(z)=0$ for all $z\in\bbc$.
Thus, there is only one infinitesimal gauge transformation $\mathbf{0}:\bbc\to\bbr$, mapping every point to the $0$ vector.

It is shown in \cite[Ch.XII~Example~5.5]{GSS85}, that the $O(2)$-equivariant vector fields on $\bbc$ are of the form:
\begin{equation}\label{Eq:O2StdVF}
X(z)=f\left(|z|^2\right)z,\qquad z\in\bbc,
\end{equation}
where $f:\bbr\to\bbr$ is a smooth function.
To see why, note that an $O(2)$-equivariant vector field $X$ is, in particular, $\bbs^1$-equivariant.
Thus, it is of the form (\ref{Eq:S1StdVF}).
On the other hand, equivariance with respect to the $\kappa$ reflection implies that:
\begin{equation*}
f\left(|z|^2\right) \overline{z} + g\left(|z|^2\right) i\overline{z}
=
f\left(|z|^2\right) \overline{z} - g\left(|z|^2\right) i\overline{z},
\qquad
z\in\bbc.
\end{equation*}
This means that $g(|z|^2)=0$ for all $z\in\bbc$, so $X$ is as in (\ref{Eq:O2StdVF}).
Thus, the category $\bbx(\bbc)^{O(2)}$ of $O(2)$-equivariant vector fields on $\bbc$ is a discrete category.
Put another way, every $O(2)$-equivariant vector field on $\bbc$ is a radial vector field that is only isomorphic to itself.
\end{example}

\begin{example}\label{Ex:T2C2}
Example \ref{Ex:Circle} can be generalized to the standard action by rotations of an $n$-dimensional torus $\bbt^n$ on the $n$-fold product $\bbc^n\cong \bbr^{2n}$.
We illustrate with the $\bbt^2$ case.
That is, consider the representation of the torus $\bbt^2:=\bbs^1\times\bbs^1$ on $\bbr^4\cong \bbc^2$ given by:
\begin{equation*}
\left(e^{i\theta},e^{i\varphi}\right)\cdot (z_1,z_2)
:=\left(e^{i\theta}z_1,e^{i\varphi}z_2\right),\qquad
\left(e^{i\theta},e^{i\varphi}\right)\in\bbt^2,\, (z_1,z_2)\in\bbc^2.
\end{equation*}
Since $\bbt^2$ is abelian, the Adjoint action of $\bbt^2$ on $\bbr^2$ is trivial.
Hence, the infinitesimal gauge transformations are the $\bbt^2$-invariant, or doubly-periodic, functions on $\bbc^2$.
Given an infinitesimal gauge transformation $\psi=(\psi_1,\psi_2):\bbc^2\to\bbr^2$, the corresponding induced vector field as in (\ref{Eq:InducedVF}) is giveny by:
\begin{equation*}
\partial(\psi)(z_1,z_2)=
\frac{\d}{\d\tau}\Big|_0 
\left(\begin{array}{c}
e^{\psi_1(z_1,z_2)i\tau} z_1 \\
e^{\psi_2(z_1,z_2)i\tau} z_2
\end{array}\right)
=
\left(\begin{array}{c}
\psi_1\left(z_1,z_2\right)iz_1\\
\psi_2\left(z_1,z_2\right)iz_2\\
\end{array}\right),
\end{equation*}
for $(z_1,z_1)\in\bbc^2$.
We claim that an arbitrary infinitesimal gauge transformation $\psi:\bbc^2\to\bbr^2$ is of the form:
\begin{equation}\label{Eq:T2StdInf}
\psi(z_1,z_2)=\Big(\psi_1\left(|z_1|^2,|z_2|^2\right),\psi_2\left(|z_1|^2,|z_2|^2\right)\Big),
\qquad (z_1,z_2)\in\bbc^2,
\end{equation}
for some smooth functions $\psi_1:\bbr^2\to\bbr$ and $\psi_2:\bbr^2\to\bbr$.
To see this, as in Example \ref{Ex:Circle}, by Schwarz's Theorem \cite{Sch75}, it suffices to show that an arbitray polynomial $\bbt^2$-invariant map $\psi:\bbc^2\to\bbr^2$ is of this form.
Let $\psi_1:\bbc^2\to\bbr$ be the first component of $\psi$.
If $\psi$ is a polynomial, we may write:
\begin{equation*}
\psi_1(z_1,z_2)=\sum_{j,k,l,m} a_{jklm} z_1^j\overline{z_1}^kz_2^l\overline{z_2}^m, \qquad (z_1,z_2)\in\bbc^2,
\end{equation*}
where the sum is finite with nonnegative exponents, and $a_{jklm}\in\bbc$ are such that $\psi_1$ is real-valued.
Invariance then implies the equation:
\begin{equation}\label{Eq:T2Invariance}
\sum_{j,k,l,m} a_{jklm} z_1^j\overline{z_1}^kz_2^l\overline{z_2}^m
=
\sum_{j,k,l,m} a_{jklm} e^{i(j-k)\theta}e^{i(l-m)\varphi}z_1^j\overline{z_1}^kz_2^l\overline{z_2}^m,
\end{equation}
for all $(z_1,z_2)\in\bbc^2,\,(e^{i\theta},e^{i\varphi})\in\bbt^2$.
Equation (\ref{Eq:T2Invariance}) implies that $a_{jklm}=0$ or both $j=k$ and $l=m$.
This means that $\psi_1$ is of the form:
\begin{equation*}
\psi_1(z_1,z_2)
=\sum_{k,m} a_{kkmm} \left(z_1\overline{z_1}\right)^k\left(z_2\overline{z_2}\right)^m 
=\sum_{k,m} a_{kkmm} \left(|z_1|^2\right)^k\left(|z_2|^2\right)^m,
\end{equation*}
for all $(z_1,z_2)\in\bbc^2$, which is of the desired form with:
\begin{equation*}
\widehat\psi_1(x,y):=\sum_{k,m} a_{kkmm}x^ky^m, \qquad (x,y)\in\bbr^2.
\end{equation*}
The argument is completely analogous for the second component of $\psi$, which proves that infinitesimal gauge transformations are of the form (\ref{Eq:T2StdInf}).

Similarly, we claim that the equivariant vector fields are of the form:
\begin{equation}\label{Eq:T2StdVF}
X(z_1,z_2)=
\left(\begin{array}{c}
f_1\left(|z_1|^2,|z_2|^2\right)z_1 + g_1\left(|z_1|^2,|z_2|^2\right) iz_1\\
f_2\left(|z_1|^2,|z_2|^2\right)z_2 + g_2\left(|z_1|^2,|z_2|^2\right) iz_2
\end{array}\right),
\end{equation}
for $(z_1,z_1)\in\bbc^2$, where $f_i:\bbr^2\to\bbr$ and $g_i:\bbr^2\to\bbr$ are smooth functions.
As with Example \ref{Ex:Circle}, by Schwartz's Theorem \cite{Sch75} and Po\'enaru's theorem \cite{Poe76}, it suffices to prove that an arbitrary $\bbt^2$-equivariant polynomial vector field $X$ is of the form (\ref{Eq:T2StdVF}).
If $X$ is a polynomial vector field, denote the first component by $X_1:\bbc^2\to\bbc$ and write:
\begin{equation*}
X_1(z_1,z_2)=\sum_{j,k,l,m} b_{jklm} z_1^j\overline{z_1}^kz_2^l\overline{z_2}^m, \qquad (z_1,z_2)\in\bbc^2,
\end{equation*}
where the sum is finite with nonnegative exponents and $b_{jklm}\in\bbc$.
Equivariance then implies the following equation:
\begin{equation}\label{Eq:T2Equivariance}
e^{i\theta}\sum_{j,k,l,m} b_{jklm} z_1^j\overline{z_1}^kz_2^l\overline{z_2}^m
=\sum_{j,k,l,m} b_{jklm} e^{i(j-k)\theta}e^{i(l-m)\varphi}z_1^j\overline{z_1}^kz_2^l\overline{z_2}^m
\end{equation}
for all $(z_1,z_2)\in \bbc^2$ and $(e^{i\theta},e^{i\varphi})\in\bbt^2$.
Equation (\ref{Eq:T2Equivariance}) in turn implies that $b_{jklm}=0$ or both $j=k+1$ and $l=m$.
Thus, $X_1$ is of the form:
\begin{align*}
X_1(z_1,z_2)&=\sum_{k,m} b_{k+1,k,m,m} \left(z_1\overline{z_1}\right)^k\left(z_2\overline{z_2}\right)^m z_1\\
&=\sum_{k,m} b_{k+1,k,m,m} \left(|z_1|^2\right)^k\left(|z_2|^2\right)^m z_1\\
&=f\left(|z_1|^2,|z_2|^2\right)z_1 + g_1\left(|z_1|^2,|z_2|^2\right)iz_1,
\end{align*}
for $(z_1,z_2)\in\bbc^2$, where:
\begin{equation*}
f_1(x,y):=\sum_{k,m} \text{Re}(b_{k+1,k,m,m})x^ky^m,
\qquad
g_1(x,y):=\sum_{k,m} \text{Im}(b_{k+1,k,m,m})x^ky^m,
\end{equation*}
for $(x,y)\in\bbr^2$.
An analogous argument yields a similar result for the second component, meaning that $X$ is as in (\ref{Eq:T2StdVF}).
From the point of view of the category $\bbx(\bbc^2)^{\bbt^2}$ of $\bbt^2$-equivariant vector fields on $\bbc^2$, (\ref{Eq:T2StdVF}) implies that every $\bbt^2$-equivariant vector field is isomorphic to a vector field of the form:
\begin{equation*}
\widehat X(z_1,z_2)=
\left(\begin{array}{c}
f_1\left(|z_1|^2,|z_2|^2\right)z_1\\
f_2\left(|z_1|^2,|z_2|^2\right)z_2\\
\end{array}\right),
\qquad (z_1,z_1)\in\bbc^2,
\end{equation*}
via the infinitesimal gauge transformation defined by:
\begin{equation}
\psi(z_1,z_2)=
\left(\begin{array}{c}
g_1\left(|z_1|^2,|z_2|^2\right)\\
g_2\left(|z_1|^2,|z_2|^2\right)\\
\end{array}\right),
\qquad (z_1,z_1)\in\bbc^2.
\end{equation}
\end{example}

\begin{example}\label{Ex:O(2)C2}
Consider the representation of the orthogonal group $O(2)$ on $\bbr^4\cong\bbc^2$ generated by:
\begin{equation*}
R_\theta\cdot (z_1,z_2) := \left(e^{i\theta} z_1,e^{-i\theta} z_2\right), \qquad
\kappa\cdot (z_1,z_2) := (z_2,z_1), \qquad e^{i\theta}\in\bbs^1.
\end{equation*}
If $\psi:\bbc^2\to\bbr$ is an infinitesimal gauge transformation, then equivariance with respect to the Adjoint action means that it must satisfy:
\begin{equation*}
\psi(e^{i\theta}z_1,e^{-i\theta}z_2) = \psi(z_1,z_2),
\qquad
\psi(z_2,z_1)=-\psi(z_1,z_2),
\end{equation*}
for all $(z_1,z_2)\in\bbc^2$ and $e^{i\theta}\in\bbs^1$.
The vector field induced by $\psi$ is of the form:
\begin{equation*}
\partial(\psi)(z_1,z_2)=
\frac{\d}{\d\tau}\Big|_0
\left(\begin{array}{c}
e^{\psi(z_1,z_2)i\tau}z_1\\
e^{-\psi(z_1,z_2)i\tau}z_2\\
\end{array}\right)
=\left(\begin{array}{c}
\psi\left(z_1,z_2\right)iz_1\\
\psi\left(z_2,z_1\right)iz_2
\end{array}\right),
\end{equation*}
for $(z_1,z_2)\in\bbc^2$ and where we used $\psi(z_2,z_1)=-\psi(z_1,z_2)$.
It is shown in \cite[\S~6.2.4]{CL00}, that the equivariant vector fields are of the form:
\begin{equation*}
X(z_1,z_2)=
\left(\begin{array}{c}
f\left(z_1,z_2\right)z_1 + g\left(z_1,z_2\right) iz_1 + h(z_1,z_2) \overline{z_2} + k(z_1,z_2) i\overline{z_2}\\
f\left(z_2,z_1\right)z_2 + g\left(z_2,z_1\right) iz_2 + h(z_2,z_1) \overline{z_1} + k(z_2,z_1) i\overline{z_1}
\end{array}\right),
\end{equation*}
for $(z_1,z_2)\in\bbc^2$, where $f:\bbc^2\to\bbr$, $g:\bbc^2\to \bbr$, $h:\bbc^2\to\bbr$, and $k:\bbc^2\to\bbr$ are smooth $\bbs^1$-invariant functions.
Note that, unlike in Example \ref{Ex:Circle}, not every $O(2)$-equivariant vector field is isomorphic to a radial one in the category $\bbx(\bbc^2)^{O(2)}$ of $O(2)$-equivariant vector fields in $\bbc^2$.
\end{example}

\subsection{Relative equilibria and ismorphsims of equivariant vector fields}
A relative equilibrium of an equivariant vector field is a point where the vector field is tangent to the group orbit.
They are the natural analogue of equilibria in the presence of symmetries and are thus central to much of equivariant dynamics.
In this subsection we review some definitions and standard facts concerning relative equilibria.
Furthermore, we prove that isomorphisms of equivariant vector fields preserve relative equilibria, but modify their velocities, hence the motion of the relative equilibrium may change (Lemma \ref{Lemma:IsoShareRel}).

First, recall:

\begin{definition}\label{Def:RelEq}
Let $M$ be a $G$-manifold and let $X$ be an equivariant vector field on $M$.
A point $m\in M$ is a {\it relative equilibrium} of $X$ if the vector $X(m)$ is tangent to the group orbit $G\cdot m$ at $m$.
That is, $m$ is a relative equilibrium of $X$ if $X(m)\in T_m(G\cdot m)$.
\end{definition}

\begin{definition}\label{Def:EvMap}
Let $M$ be a $G$-manifold and let $m\in M$ be a point.
The {\it evaluation map} is the map:
\begin{equation*}
\ev_m:G\to M, \qquad \ev_m(g):=g\cdot m.
\end{equation*}
\end{definition}

\begin{definition}\label{Def:Velocity}
Let $M$ be a $G$-manifold, let $X$ be an equivariant vector field on $M$, and let $m$ be a relative equilibrium of $X$.
A {\it velocity} of $m$ with respect to $X$ is a Lie algebra vector $\xi\in\ffg$ such that $X(m)=T\ev_m(\xi)$.
\end{definition}

\begin{remark}\label{Rem:VelocityFacts}
Let $m$ be a relative equilibrium of an equivariant vector field $X$ on a $G$-manifold $M$.
Velocities for relative equilibria always exist since:
\begin{equation}\label{Eq:TangentEvaluation}
T_m(G\cdot m)=T\ev_m(\ffg).
\end{equation}
Furthermore, such velocities are unique modulo the Lie algebra $\ffg_m$ of the isotropy group $G_m$ of $m$, which is the kernel of the map $T\ev_m$.
That is, if $\xi$ and $\widetilde\xi$ are two velocities of $m$, then there exists a vector $\eta\in\ffg_m$ such that $\widetilde\xi = \xi + \eta$.
\end{remark}

We recall the following two standard facts about velocities of relative equilibria:

\begin{lemma}
Let $M$ be a $G$-manifold, and let $X$ be an equivariant vector field on $M$ with a relative equilibrium at a point $m\in M$.
Let $\xi\in\ffg$ be a velocity of the relative equilibrium $m$ of $X$, then:
\begin{enumerate}
\item For any $g\in G$, the vector $\Ad(g)(\xi)$ is a velocity of the relative equilibrium $g\cdot m$ of $X$
\item The integral curve of $X$ starting at $m$ is equal to:
\begin{equation*}
\gamma:\bbr\to M, \qquad \gamma(t):=\exp(\tau\xi)\cdot m.
\end{equation*}
\end{enumerate}
\end{lemma}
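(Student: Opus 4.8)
The plan is to prove both items by unwinding Definition \ref{Def:Velocity} --- the defining relation $X(m)=T\ev_m(\xi)$ of a velocity, with $\ev_m$ as in Definition \ref{Def:EvMap} --- together with the equivariance of $X$, in the same spirit as Lemma \ref{Lemma:InvPsiM}.

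For part (1), I would fix $g\in G$ and introduce the conjugation map $c_g\colon G\to G$, $c_g(h):=ghg^{-1}$, whose tangent map at $1_G$ is $\Ad(g)$. The first step is the elementary identity $g_M\circ\ev_m=\ev_{g\cdot m}\circ c_g$ of maps $G\to M$ (both send $h\mapsto(gh)\cdot m$). Differentiating this at $1_G$ and using $c_g(1_G)=1_G$ gives $Tg_M\circ T\ev_m=T\ev_{g\cdot m}\circ\Ad(g)$. Evaluating on the velocity $\xi$ and using $X(g\cdot m)=g\cdot X(m)=Tg_M(X(m))$ together with $X(m)=T\ev_m(\xi)$ then yields $X(g\cdot m)=T\ev_{g\cdot m}(\Ad(g)\xi)$, which is precisely the assertion that $\Ad(g)\xi$ is a velocity of the relative equilibrium $g\cdot m$.

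For part (2), I would first observe $\gamma(0)=\exp(0)\cdot m=m$ and that $\gamma$ is defined on all of $\bbr$, so it suffices to check $\gamma'(t)=X(\gamma(t))$ for every $t$; uniqueness of maximal integral curves then identifies $\gamma$ with the integral curve through $m$. Using the one-parameter subgroup property $\exp((t+s)\xi)=\exp(s\xi)\exp(t\xi)$ and differentiating $\gamma(t+s)=\exp(s\xi)\cdot\gamma(t)$ in $s$ at $0$ gives $\gamma'(t)=T\ev_{\gamma(t)}(\xi)$. On the other hand $\gamma(t)=\exp(t\xi)\cdot m$, so applying part (1) with $g=\exp(t\xi)$ gives $X(\gamma(t))=T\ev_{\gamma(t)}(\Ad(\exp(t\xi))\xi)$. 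The key point is $\Ad(\exp(t\xi))\xi=\xi$: conjugation by $\exp(t\xi)$ fixes every $\exp(s\xi)$ since the two commute, and differentiating in $s$ at $0$ gives the claim. Hence $X(\gamma(t))=T\ev_{\gamma(t)}(\xi)=\gamma'(t)$, as required.

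I do not expect a serious obstacle here; the computations are routine. The only point requiring a little care is keeping track of the base points of the various tangent maps (e.g. $T\ev_m$ is taken at $1_G$ while $Tg_M$ is taken at $m$), and the one genuinely substantive ingredient is the identity $\Ad(\exp(t\xi))\xi=\xi$, which is exactly what allows the single velocity $\xi$ to work uniformly along the whole orbit of $m$ in part (2).
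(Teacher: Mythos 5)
Your proof is correct and takes essentially the same approach as the paper: both parts are a direct unwinding of the definition $X(m)=T\ev_m(\xi)$ using the equivariance of $X$ and the naturality of $\exp$ (equivalently $g\exp(\tau\xi)g^{-1}=\exp(\tau\Ad(g)\xi)$), with part (1) invoked to verify the integral-curve equation in part (2). The only cosmetic difference is in part (2): you compute $\gamma'(t)=T\ev_{\gamma(t)}(\xi)$ and then insert the identity $\Ad(\exp(t\xi))\xi=\xi$, whereas the paper rearranges the derivative so that it directly produces $T\ev_{\gamma(t)}\bigl(\Ad(\exp(t\xi))\xi\bigr)$ and part (1) applies immediately; both computations are equivalent.
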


\begin{proof}
The first follows from the following computation:
\begin{align*}
X(g\cdot m)&=Tg_M X(m) \qquad \text{by the equivariance of }X\\
&=Tg_M T\ev_m (\xi) \qquad \text{since }\xi\text{ is a velocity of }m\\
&=\frac{\d}{\d\tau}\Big|_0 g\exp(\tau\xi)\cdot m \\
&=\frac{\d}{\d\tau}\Big|_0 \left(g\exp(\tau\xi)g^{-1}\right)\cdot (g\cdot m) \\
&=\frac{\d}{\d\tau}\Big|_0 \exp(\tau \Ad(g)(\xi))\cdot (g\cdot m) \qquad \text{ by the naturality of }\exp \\
&=T\ev_{g\cdot m} \left(\Ad(g)(\xi)\right).
\end{align*}
To prove the second note that $\gamma(0)=m$ and that for any $s\in\bbr$ we have:
\begin{align*}
&\frac{\d}{\d\tau}\Big|_s \gamma(\tau)\\
 &= \frac{\d}{\d\tau}\Big|_s \exp(\tau\xi)\cdot m \\
&=\frac{\d}{\d u}\Big|_0 \exp\left((s+u)\xi\right) \cdot m \qquad \text{changing variables by }\tau=u+s\\
&=\frac{\d}{\d u}\Big|_0 \exp(s\xi)\exp(u\xi)\cdot m \,\, \text{ by the $1$-parameter subgroup property of }\exp\\
&=\frac{\d}{\d u}\Big|_0 \Big(\exp(s\xi)\exp(u\xi)\exp(s\xi)^{-1}\Big)\cdot (\exp(s\xi)\cdot m) \\
&=\frac{\d}{\d u}\Big|_0 \exp\Big(u \Ad\left(\exp(s\xi)\right)(\xi)\Big) \cdot (\exp(s\xi)\cdot m) \,\,\text{ by the naturality of }\exp \\
&=T\ev_{\exp(s\xi)\cdot m} \left(\Ad\left(\exp(s\xi)\right)(\xi)\right)\\
&=X\left(\exp(s\xi)\cdot m\right) \qquad \text{ by the first part of this lemma}.
\end{align*}
Hence, $\gamma$ is the integral curve of $X$ starting at $m$.
\end{proof}

Isomorphisms preserve relative equilibria but change velocities as follows:

\begin{lemma}\label{Lemma:IsoShareRel}
Let $X$ and $Y$ be isomorphic vector fields on a $G$-manifold $M$, and let $\psi:M\to\ffg$ be an infinitesimal gauge transformation such that $Y=X+\partial(\psi)$.
If $m\in M$ is a relative equilibrium of $X$ and $\xi\in\ffg$ is a velocity of $m$ for $X$, then the point $m$ is also a relative equilibrium of $Y$ and $\xi+\psi(m)\in\ffg$ is a velocity of $m$ for $Y$.
\end{lemma}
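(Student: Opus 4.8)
The plan is to reduce everything to the single observation that the vector field $\partial(\psi)$ evaluated at a point $m$ is itself of the form $T\ev_m(\psi(m))$, and then to combine this with the hypothesis on $X$ using the linearity of $T\ev_m$.

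First I would unwind the definition of the induced vector field. Starting from (\ref{Eq:InducedVF}), we have
\begin{equation*}
\partial(\psi)(m)=\frac{\d}{\d\tau}\Big|_0\exp(\tau\psi(m))\cdot m=\frac{\d}{\d\tau}\Big|_0\ev_m\big(\exp(\tau\psi(m))\big)=T\ev_m\big(\psi(m)\big),
\end{equation*}
where the last equality uses that $\tau\mapsto\exp(\tau\psi(m))$ is a curve in $G$ through $1_G$ with velocity $\psi(m)\in\ffg$, together with Definition \ref{Def:EvMap}. Note this computation does not require $m$ to be a relative equilibrium of anything; it holds at every point.

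Next I would use the hypothesis. Since $m$ is a relative equilibrium of $X$ with velocity $\xi$, Definition \ref{Def:Velocity} gives $X(m)=T\ev_m(\xi)$. Because $Y=X+\partial(\psi)$ with addition taken pointwise, evaluating at $m$ and using linearity of the tangent map $T\ev_m$ yields
\begin{equation*}
Y(m)=X(m)+\partial(\psi)(m)=T\ev_m(\xi)+T\ev_m\big(\psi(m)\big)=T\ev_m\big(\xi+\psi(m)\big).
\end{equation*}
By (\ref{Eq:TangentEvaluation}) this vector lies in $T\ev_m(\ffg)=T_m(G\cdot m)$, so $m$ is a relative equilibrium of $Y$ by Definition \ref{Def:RelEq}, and the displayed identity is precisely the statement that $\xi+\psi(m)$ is a velocity of $m$ for $Y$ in the sense of Definition \ref{Def:Velocity}.

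There is no real obstacle here: the argument is a one-line computation once $\partial(\psi)(m)=T\ev_m(\psi(m))$ is recorded, and the only thing to be mildly careful about is that $\psi(m)$ is a genuine element of $\ffg$ (so that $\xi+\psi(m)$ makes sense) and that the equivariance of $\psi$ plays no role at a single point — it is needed only to know $\partial(\psi)$ is a well-defined equivariant vector field, which is already established in Lemma \ref{Lemma:InvPsiM}. If desired, one can also remark that this is consistent with Remark \ref{Rem:VelocityFacts}: the velocity is still only well-defined modulo $\ffg_m$, and $\xi+\psi(m)$ represents the shifted class.
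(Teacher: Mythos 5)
Your proof is correct and follows essentially the same route as the paper: both compute $\partial(\psi)(m)=T\ev_m(\psi(m))$ and then use linearity of $T\ev_m$ to write $Y(m)=T\ev_m(\xi+\psi(m))\in T_m(G\cdot m)$. Your additional remarks (that equivariance of $\psi$ is not needed pointwise, and that velocities are only defined modulo $\ffg_m$) are accurate but not required.
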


\begin{proof}
Note that:
\begin{equation*}
Y(m)
=X(m)+\partial(\psi)(m)
=T\ev_m(\xi) + T\ev_m(\psi(m))
=T\ev_m\left(\xi+\psi(m)\right)
\end{equation*}
Hence, $Y(m)\in T_m(G\cdot m)$, meaning the point $m$ is a relative equilibrium of $Y$.
Furthermore, $\xi+\psi(m)$ is a velocity of $m$ as a relative equilibrium of $Y$.
\end{proof}

\subsection{Decomposition of equivariant vector fields near relative equilibria}\label{decomposition}
Given an equivariant vector field with a relative equilibrium at a point (Definition \ref{Def:RelEq}), we show how it can be decomposed into two vector fields: one with an equilibrium at the given point and another induced by an isomorphism of equivariant vector fields as in (\ref{Eq:InducedVF}).
This decomposition is the main theorem of this section (Theorem \ref{Thm:Decomposition}).
In fact, this decomposition follows from an equivalence between the category of equivariant vector fields near the group orbit of a point and the category of equivariant vector fields on a slice through the point (Theorem \ref{Thm:EquivalenceVFs}).
Theorem \ref{Thm:EquivalenceVFs} is a local version of Theorem \ref{Thm:Decomposition}, and its proof will occupy most of this subsection.

Theorem \ref{Thm:EquivalenceVFs} was first proved in a different but equivalent way in \cite[Theorem~4.3]{L15}.
However, there are some differences between this subsection and the treatment in \cite{L15}.
There, equivariant vector fields and infinitesimal gauge transformations are seen as parts of a $2$-term chain complex of vector spaces (Remark \ref{Rem:2VectorSpaceStr}).
In this paper, we consider instead the category of equivariant vector fields (Definition \ref{Def:GpoidVFs}).
Thinking in terms of categories has benefits; e.g. isomorphisms of equivariant vector fields correspond precisely to the isomorphisms in the category of equivariant vector fields (Definition \ref{Def:IsoVF}), and the choices in Theorem \ref{Thm:EquivalenceVFs} can be framed in terms of the natural isomorphisms involved (see Proposition \ref{Prop:ProjectionChoice} and Proposition \ref{Prop:SliceChoice}).

As mentioned in Remark \ref{Rem:2VectorSpaceStr}, the category $\bbx(M)^G$ of equivariant vector fields on a $G$-manifold $M$ is in fact a $2$-vector space.
On the other hand, the strict $2$-category $\TwoVect$ of $2$-vector spaces and the strict $2$-category $\TwoTerm\Vect$ of $2$-term chain complexes of vector spaces are equivalent as strict $2$-categories \cite{BC04}.
Hence, Theorem \ref{Thm:EquivalenceVFs} is equivalent to \cite[Theorem~4.3]{L15}.
Similarly, some of the proofs in this subsection are the analogues in the $2$-category $\TwoVect$ of the proofs in \cite{L15}, which are done in the $2$-category $\TwoTerm\Vect$.
However, our treatment of the choices in the equivalence of categories in Theorem \ref{Thm:EquivalenceVFs} differs from \cite{L15}.
In particular, we express the effect of the choice in terms of a natural isomorphism (compare Proposition \ref{Prop:ProjectionChoice} with Lemma \cite[Lemma~3.17]{L15} and Proposition \ref{Prop:SliceChoice} with \cite[Lemma~3.21]{L15}).
We also provide additional details of some of the constructions used in Theorem \ref{Thm:EquivalenceVFs} (see, in particular, Remark \ref{Rem:ConnectionFromSplitting} and Theorem \ref{Thm:NaturalIsomorphism}).
Furthermore, at the end of the subsection, we provide some original examples of decompositions of equivariant vector fields using Theorem \ref{Thm:EquivalenceVFs}.

The main goal of this subsection is to prove the following:

\begin{theorem}\label{Thm:Decomposition}
Let $M$ be a proper $G$-manifold and let $X$ be an equivariant vector field on $M$ with a relative equilibrium at $m$.
Then there exists an equivariant vector field $Y^X\in\ffX(M)^G$, with $Y^X(m)=0$, and an infinitesimal gauge transformation $\psi^X\in C^\infty(M,\ffg)^G$ such that:
\begin{equation*}
X=Y^X+\partial\left(\psi^X\right).
\end{equation*}
Furthermore, $Y^X$ is transverse to the group orbits near $G\cdot m$.
\end{theorem}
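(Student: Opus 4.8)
The plan is to deduce this global statement from its local version, Theorem~\ref{Thm:EquivalenceVFs}, by means of the slice theorem. Since the action is proper, the stabilizer $G_m$ is compact, and the slice theorem furnishes a $G$-invariant open neighborhood $U$ of the orbit $G\cdot m$, a $G_m$-representation $V$ (the slice representation), and a $G$-equivariant diffeomorphism $U\cong G\times^{G_m}V$ carrying $m$ to $[1_G,0]$; a choice of $G$-invariant Riemannian metric moreover realizes $V$ as a $G_m$-invariant complement to $T_m(G\cdot m)$ in $T_mM$. Write $S\subseteq U$ for the slice, i.e.\ the fiber $\{[1_G,v]:v\in V\}$ over $eG_m$, so that $m\in S$. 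Transporting $X|_U$ across this diffeomorphism and feeding it to Theorem~\ref{Thm:EquivalenceVFs}, I obtain the equivariant projection $Z\in\ffX(V)^{G_m}$ of $X|_U$ onto the slice representation, the equivariant extension $Y^X_U\in\ffX(U)^G$ of $Z$ back to $U$, and --- the two functors of the equivalence being quasi-inverse --- a natural isomorphism between the composite ``equivariant extension after equivariant projection'' and the identity functor on $\bbx(U)^G$, whose component at $X|_U$ is an isomorphism $Y^X_U\cong X|_U$. By Definition~\ref{Def:IsoVF} (and the action-groupoid structure of Definition~\ref{Def:GpoidVFs}) this isomorphism is an infinitesimal gauge transformation $\psi^X_U\in C^\infty(U,\ffg)^G$ with $X|_U=Y^X_U+\partial(\psi^X_U)$. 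Several auxiliary choices enter (metric, connection, tube), but any fixed choice suffices for the present existence statement.

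Next I would verify the two claimed properties of $Y^X_U$ near $m$. By the construction underlying Theorem~\ref{Thm:EquivalenceVFs}, the equivariant extension $Y^X_U$ is vertical in the associated bundle $G\times^{G_m}V\to G/G_m$, i.e.\ everywhere tangent to the fibers, which are exactly the translates of the slice; and every slice is transverse to every orbit it meets, since at a point $[1_G,v]$ the slice tangent space $T_vV$ together with the orbit directions already spans all of $T_{[1_G,v]}U$. This is the sense in which ``$Y^X$ is transverse to the group orbits near $G\cdot m$''. For the equilibrium at $m$: on one hand $Y^X_U(m)$ lies in the vertical space $\calv_m=T_mS$; on the other hand $Y^X_U(m)=X(m)-\partial(\psi^X_U)(m)$, where $X(m)\in T_m(G\cdot m)$ because $m$ is a relative equilibrium, and $\partial(\psi^X_U)(m)=T\ev_m(\psi^X_U(m))\in T_m(G\cdot m)=T\ev_m(\ffg)$. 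But $T_mS\cap T_m(G\cdot m)=0$ --- the origin of $V$ is fixed by $G_m$, so $S$ meets $G\cdot m$ transversally at $m$, in fact $T_mM=T_mS\oplus T_m(G\cdot m)$ --- hence $Y^X_U(m)=0$. Consistently, Lemma~\ref{Lemma:IsoShareRel} then identifies $\psi^X_U(m)$ as a velocity of the relative equilibrium $m$ of $X$.

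Finally I would globalize. Pick a $G$-invariant bump function $\chi\colon M\to[0,1]$ with $\chi\equiv 1$ on a neighborhood of $G\cdot m$ and support contained in $U$; such a function exists because on $U\cong G\times^{G_m}V$ the $G$-invariant functions are the $G_m$-invariant functions on $V$ and $G_m$ is compact. Set $\psi^X:=\chi\,\psi^X_U$, extended by $0$ off $U$, and $Y^X:=X-\partial(\psi^X)$. Invariance of $\chi$ makes $\psi^X$ an infinitesimal gauge transformation on all of $M$; because $\partial(\chi\,\psi^X_U)=\chi\cdot\partial(\psi^X_U)$ (the vector field induced by a gauge transformation at a point depends linearly on its value there), one gets $X=Y^X+\partial(\psi^X)$ on $M$; and wherever $\chi\equiv 1$ one has $Y^X=Y^X_U$, so $Y^X(m)=0$ and $Y^X$ is transverse to the orbits near $G\cdot m$. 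One may also record that $X\mapsto(\psi^X,Y^X)$ is linear, since each ingredient --- the functors of Theorem~\ref{Thm:EquivalenceVFs}, the natural isomorphism, multiplication by $\chi$, and $\partial$ --- is linear.

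The main obstacle is bookkeeping rather than a new idea: the substantive content is already in Theorem~\ref{Thm:EquivalenceVFs}, and the task here is to unpack it correctly --- recognizing the relevant natural isomorphism as a genuine infinitesimal gauge transformation through Definition~\ref{Def:IsoVF}, pinning down that the equivariant extension is vertical in the associated bundle (hence ``transverse to the orbits''), and combining the relative-equilibrium hypothesis with the transversality of $S$ to $G\cdot m$ at $m$ to force $Y^X_U(m)=0$. The only genuinely technical points are producing the $G$-invariant cutoff when $G$ is noncompact (handled through the slice, since stabilizers of proper actions are compact) and checking $\partial(\chi\psi)=\chi\,\partial(\psi)$.
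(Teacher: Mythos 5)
Your proposal is correct and follows essentially the same route as the paper: apply the local equivalence of Theorem \ref{Thm:EquivalenceVFs} on a tube $G\times^{K}V$ around the orbit, read off the natural isomorphism as an infinitesimal gauge transformation giving $X=E_0(P_0(X))+\partial(h(X))$ there, use verticality of the equivariant extension together with the splitting $T_mM=T_m(G\cdot m)\oplus T_m S$ to get the equilibrium and transversality, and then globalize by multiplying the gauge transformation by a $G$-invariant cutoff (obtained, as in the paper, from a $K$-invariant bump function on the slice) and setting $Y^X:=X-\partial(\psi^X)$. The only cosmetic difference is that the paper builds the invariant cutoff by explicitly extending and averaging a bump function on the slice, while you invoke the correspondence between $G$-invariant functions on the tube and $K$-invariant functions on $V$; these are the same construction.
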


\begin{remark}\label{Rem:LiteratureObservation1}
The decomposition of the vector field $X$ in Theorem \ref{Thm:Decomposition} is analogous to that of Krupa \cite[Theorem~2.1]{K90}.
Fiedler, Sandstede, Scheel, and Wulff extend this to proper actions by providing a different decomposition of the vector field $X$.
In \cite[Theorem~1.1]{FSSW96}, they lift the vector field $X$ to a $(G\times G_m)$-equivariant vector field on a product $G\times V$, where $V$ is the slice representation at $m$, and write the lift as a ``skew-product'' vector field.
We choose to work directly on the manifold $M$, use equivariant connections instead of invariant Riemannian metrics, and use infinitesimal gauge transformations.
We believe this description in terms of infinitesimal gauge transformations is natural and conceptually simple, but additionally it also helps address the effect of the choices involved (Proposition \ref{Prop:ProjectionChoice} and Proposition \ref{Prop:SliceChoice}).
In particular, the choices involved lead to transverse vector fields that are isomorphic.
\end{remark}

We will first work in an invariant neighborhood of the group orbit of the relative equilibrium.
Recall that given a $K$-manifold $D$, where $K$ is a compact Lie subgroup of a Lie group $G$ we can form the {\it associated bundle} $G\times^KD \to G/K$ (see Notation \ref{Notation}.\ref{Not:AssociatedBundle}).
The total space $G\times^KD$ is the smooth quotient of the action of $K$ on the product $G\times D$ defined by:
\begin{equation*}
k\cdot (g,v):= \left(gk^{-1},k\cdot v\right),
\qquad (k,g,v)\in K\times G\times D.
\end{equation*}
We denote the points of $G\times^KD$ by $[g,v]$.
This is a proper $G$-manifold with the action of $G$ given by:
\begin{equation*}
g'\cdot [g,v]:=[g'g,v],
\qquad \left(g',g,v\right)\in G\times G\times D.
\end{equation*}
Relatedly, we have the following definition:

\begin{definition}\label{Def:Slices}
Given a proper $G$-manifold $M$, let $K$ be the stabilizer of a point $m\in M$.
A {\it slice through $m$ for the action $G\times M \to M$} is a $K$-manifold $D$ and a $K$-equivariant embedding $j\colon D\to M$ such that:
\begin{enumerate}
\item The point $m$ is in the image $j(D)$.
\item The set:
\begin{equation*}
G\cdot j(D):= \left\{
g\cdot j(v) \mid g \in G,\, v\in D
\right\}
\end{equation*}
is open in $M$.
\item The map:
\begin{gather*}
G\times V \to G\cdot j(D), \qquad (g,v)\mapsto g\cdot j(v)
\end{gather*}
descends to a $G$-equivariant diffeomorphism:
\begin{gather*}
G\times^KD\to G\cdot j(D), \qquad [g,v]\mapsto g\cdot j(v),
\end{gather*}
where as before $G\times^KD:=(G\times D)/K$ and the action of $K$ on $G\times D$ is given by:
\begin{equation*}
k\cdot (g,v):= \left(gk^{-1},k\cdot v\right),
\qquad (k,g,v)\in K\times G\times D.
\end{equation*}
\end{enumerate}
\end{definition}

\begin{remark}\label{Rem:TubularNeighborhood}
It is a classic theorem of Palais \cite{Pa61} that slices exist at every point in a \textit{proper} $G$-manifold M (see also \cite[Theorem~2.3.3]{DK00}).
In fact, it is possible and convenient to take the slice $D$ through a point $m\in M$ to be an open ball around the origin of the canonical slice representation:
\begin{equation*}
V:=T_mM/T_m(G\cdot m),
\end{equation*}
which has a canonical representation of the stabilizer $K$ of the point $m$ (see, for example, \cite[Theorem~B.24]{GuK02}).
More is true.
In fact, the following diagram commutes:
\begin{equation}\label{Diag:SliceEmbeddingDiagram}
\begin{gathered}
\xy
{(-16,10)}*+{G\times^KD} = "1";
{(16,10)}*+{G\cdot D} = "2";
{(-16,-10)}*+{G/K} = "3";
{(16,-10)}*+{G\cdot m} = "4";
{\ar@{->}^{\cong} "1";"2"};
{\ar@{->}_{} "1";"3"};
{\ar@{->}_{\cong} "3";"4"};
{\ar@{->}^{} "2";"4"};
\endxy
\end{gathered}
\end{equation}
where the top map is the diffeomorphism $\phi_j$ of Definition \ref{Def:Slices}, the bottom map is given by:
\begin{equation*}
gK\mapsto g\cdot m, 
\qquad g\in G,
\end{equation*}
and the vertical maps are defined by:
\begin{equation*}
[g,d]\mapsto gK, 
\qquad [g,d]\in G\times^KD,
\end{equation*}
and by:
\begin{equation*}
g\cdot j(v) \mapsto g\cdot m,
\qquad g\cdot j(v)\in G\cdot D,
\end{equation*}
respectively.
In particular, the associated bundle $G\times^KV$, where $V$ is the canonical slice $K$-representation of $m$ is a vector bundle and proper $G$-manifold that is locally $G$-equivariantly diffeomorphic to a $G$-invariant neighborhood of $m$ in $M$.
Thus, throughout this paper we often focus on the local models $G\times^KV$ where $V$ is {\it some} finite-dimensional real representation of a compact Lie subgroup $K$ of a Lie group $G$.
\end{remark}

We now state a local version of Theorem \ref{Thm:Decomposition}.
As described in the introduction to this subsection, an equivalent result is originally due to Lerman in \cite{L15}.

\begin{theorem}\label{Thm:EquivalenceVFs}
Let $V$ be a representation of a compact Lie subgroup $K$ of a Lie group $G$.
There is an equivalence of categories:
\begin{equation*}
\bbx\left(G\times^KV\right)^G\simeq \bbx\left(V\right)^K
\end{equation*}
between the categories of equivariant vector fields on $G\times^KV$ and $V$ respectively (Definition \ref{Def:GpoidVFs}).
In particular, there exist functors $E:\bbx(V)^K\to \bbx(G\times^KV)^G$ and $P:\bbx(G\times^KV)^G\to\bbx(V)^K$ and a natural isomorphism $h: \ffX(G\times^KV)^G\to C^\infty(G\times^KV,\ffg)^G$ such that, for every equivariant vector field $X\in\ffX(G\times^KV)^G$, we have that:
\begin{equation*}
X = E_0(P_0(X)) + \partial\left(h(X)\right),
\end{equation*}
is a decomposition as in Theorem \ref{Thm:Decomposition} (see \ref{Notation}.\ref{Not:Categories} for notation).
\end{theorem}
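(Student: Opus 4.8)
The plan is to build the functors $E$ and $P$ and the natural isomorphism $h$ explicitly from a choice of equivariant geometric data, and then verify (i) that $E$ and $P$ are mutually inverse up to natural isomorphism, and (ii) that the decomposition identity $X = E_0(P_0(X)) + \partial(h(X))$ holds. The key structural observation is that an equivariant vector field $X$ on $G\times^KV$ has, at each point $[g,v]$, a value in $T_{[g,v]}(G\times^KV)$; since the bundle map $\pi\colon G\times^KV\to G/K$ is $G$-equivariant, $X$ projects to a $G$-invariant vector field on $G/K$, which (being a homogeneous space) is determined by a single vector in $\mathfrak g/\mathfrak k$ that is $K$-fixed --- equivalently, by an infinitesimal gauge transformation's worth of data along the orbit. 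To make this precise I would fix a $K$-invariant complement $\mathfrak m$ to $\mathfrak k$ in $\mathfrak g$ (which exists because $K$ is compact), giving a $G$-invariant principal connection on $G\to G/K$ and hence a splitting $T(G\times^KV)\cong \mathcal V(G\times^KV)\oplus \pi^*T(G/K)$ into vertical and horizontal parts. The vertical bundle is canonically $G\times^K(\mathfrak m\oplus TV)$ after further splitting, so this is really the place where the slice representation $V$ and its tangent vectors appear.

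**Construction of $P$, $E$, and $h$.** With the connection fixed, define $P_0(X)\in\ffX(V)^K$ to be (the pullback along the $K$-equivariant embedding $v\mapsto[1_G,v]$ of) the $TV$-component of $X$ restricted to the slice $\{[1_G,v]\}$; $K$-equivariance of this restriction follows from the $G$-equivariance of $X$ together with the defining relation $[k,v]=[1_G,k\cdot v]$. Define $h(X)\colon G\times^KV\to\mathfrak g$ by sending $[g,v]$ to $\Ad(g)$ applied to the $\mathfrak m$-component of $X$ at $[g,v]$ (read off via the connection and the identification of the horizontal space at $[1_G,v]$ with $\mathfrak m$); equivariance $h(X)(g'\cdot[g,v])=\Ad(g')h(X)([g,v])$ is immediate from the formula, and linearity and naturality in $X$ are clear since everything is built from linear-algebraic projections applied pointwise. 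On morphisms, $P_1$ and $E_1$ act by the corresponding restriction/extension of infinitesimal gauge transformations; the functoriality (preservation of source, target, composition) reduces to checking that $\partial$ intertwines the restriction map $C^\infty(G\times^KV,\mathfrak g)^G\to C^\infty(V,\mathfrak k)^K$ with the geometric projections --- a computation using Lemma \ref{Lemma:InvPsiM} and the naturality of $\exp$. Finally $E_0(W)$, for $W\in\ffX(V)^K$, is the unique $G$-equivariant vector field on $G\times^KV$ that is horizontal-free (i.e. purely "$TV$-vertical") and restricts to $W$ on the slice; existence and uniqueness come from the associated-bundle structure, exactly as in Notation \ref{Notation}.\ref{Not:AssociatedBundle}.

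**Verifying the equivalence and the decomposition.** It is then essentially a definitional check that $P_0(E_0(W))=W$, so one natural isomorphism $\mu\colon PE\cong 1$ is the identity. For the other direction, given $X$, the vector field $E_0(P_0(X))$ agrees with $X$ on the $TV$-part at the slice but discards the $\mathfrak m$-part; the difference $X - E_0(P_0(X))$ is therefore, at the slice, exactly the value of $\partial(\psi)$ for $\psi=h(X)$, by the formula (\ref{Eq:InducedVF}) for $\partial$ and the identification of $T\ev_m(\mathfrak m)$ with the horizontal directions along the orbit. By $G$-equivariance of both sides and the fact that $G\cdot(\text{slice})$ is everything, the identity $X = E_0(P_0(X)) + \partial(h(X))$ holds globally on $G\times^KV$; this is both the statement $\epsilon\colon EP\cong 1$ (the isomorphism being the morphism $(h(X),E_0(P_0(X)))$ in $\bbx(G\times^KV)^G$) and the asserted decomposition, with $Y^X:=E_0(P_0(X))$ transverse to the group orbits because $E_0(P_0(X))$ is by construction the "$TV$-vertical" part, which is transverse to the $G$-orbit directions near $G\cdot m$. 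Naturality of $h$ in $X$ is what makes $\epsilon$ a natural transformation, and I would spell this out by checking that for a morphism $\psi\colon X\to X+\partial(\psi)$ the square relating $h(X)$, $h(X+\partial(\psi))$, and the functors commutes --- which amounts to $h(X+\partial(\psi)) = h(X) + (\text{the }\mathfrak m\text{-part of }\partial(\psi))$, again a pointwise linear-algebra identity.

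**Main obstacle.** The genuinely delicate point is not any single formula but organizing the vertical/horizontal splitting so that all the maps are simultaneously (a) well-defined on the $K$-quotient $G\times^KV$ rather than merely on $G\times V$, (b) $G$-equivariant in the appropriate sense (with the $\Ad$-twist for $h$), and (c) linear and natural in $X$ so that $E$, $P$, $h$ assemble into functors and a natural isomorphism rather than just maps of sets. Concretely, the hard step is verifying that $h(X)$, defined via $\Ad(g)$ times the $\mathfrak m$-component at $[g,v]$, is independent of the representative $(g,v)$ --- this uses precisely the $\Ad$-invariance of the decomposition $\mathfrak g=\mathfrak k\oplus\mathfrak m$ under $K$ and the compatibility of the connection with the $K$-action --- and dually that $E_0(W)$ descends. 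Everything else is bookkeeping with (\ref{Eq:InducedVF}), the naturality of $\exp$ (as in Lemma \ref{Lemma:InvPsiM}), and the definition of the action groupoid in Definition \ref{Def:GpoidVFs}; the transversality of $Y^X$ and the fact that $Y^X(m)=0$ then follow because the slice is transverse to $T_m(G\cdot m)$ and $m$ corresponds to the origin $0\in V$, where a $K$-equivariant vector field on a representation need not vanish --- so in fact one may need to further split $P_0(X)$ at the origin, but since $m$ is a relative equilibrium of $X$, the $TV$-component of $X$ at $[1_G,0]$ is zero, giving $Y^X(m)=0$ directly.
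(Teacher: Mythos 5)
Your construction is essentially the paper's own proof: the same $K$-equivariant splitting $\ffg=\ffk\oplus\ffq$ with its induced connection on $G\times^KV\to G/K$, the vertical equivariant extension $E$, the slice projection $P$, and the $\Ad(g)$-twisted horizontal ($\ffq$-)component as $h$, with $PE=1$ checked definitionally and $EP\cong 1$ obtained from the pointwise decomposition at the slice plus $G$-equivariance, exactly as in Theorems \ref{Thm:InclusionFunctor}, \ref{Thm:ProjectionFunctor}, \ref{Thm:ExtendThenProject}, Lemma \ref{Lemma:HorizontalTriviality}, and Theorem \ref{Thm:NaturalIsomorphism}. Two motivational misstatements should be corrected, though neither is load-bearing for your argument: a $G$-equivariant $X$ does not in general push forward to a vector field on $G/K$ (its horizontal part varies along the fibers of $G\times^KV\to G/K$), and the vertical bundle of $G\times^KV\to G/K$ is $G\times^K(V\times V)$, whereas $G\times^K(V\times\ffq\times V)$ is the whole tangent bundle.
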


The proof of Theorem \ref{Thm:EquivalenceVFs} will consist of constructing a pair of functors $E$ and $P$, and a natural isomorphism $h:EP\cong 1_{\bbx(G\times^KV)^G}$.
Before proving Theorem \ref{Thm:EquivalenceVFs}, we explain the idea behind it and why it is a local version of Theorem \ref{Thm:Decomposition}.

Given an equivariant vector field $Y\in\ffX(V)^K$ on the $K$-representation $V$, there is a canonical way to equivariantly extend it to an equivariant vector field $E_0(Y)\in\ffX(G\times^KV)^G$ on the associated bundle $G\times^KV$.
It turns out that the vector field $E_0(Y)$ is vertical in the associated bundle $G\times^KV\to G/K$.

Conversely, a choice of equivariant connection:
\begin{equation*}
\Phi\in\Omega^1(G\times^KV;\calv(G\times^KV))^G
\end{equation*}
on the bundle $G\times^KV\to G/K$, where $\calv(G\times^KV) \to G\times^KV$ is the vertical bundle (see \ref{Notation}.\ref{Not:VerticalBundle}), gives a map in the other direction.
That is, for any equivariant vector field $X\in \ffX(G\times^KV)^G$ there is an equivariant vector field $P_0(X)\in \ffX(V)^K$ on the $K$-representation $V$ corresponding to the restriction to the $K$-representation $V\cong\{[1,v]\mid v\in V\}$ of the vertical part of the vector field $X$.
We think of $P_0(X)$ as the projection of the vector field $X$ onto the representation $V$.

Given an equivariant vector field $Y$ on the representation $V$, equivariantly extending and then projecting returns the original vector field (that is, $P_0(E_0(Y))=Y$).
On the other hand, given an equivariant vector field $X$ on the bundle $G\times^KV$, projecting and then equivariantly extending does not return the original vector field (that is, $E_0(P_0(X))\not=X$).
It only returns the vertical part of the vector field.
Nevertheless, we can recover the vector field $X$ via the action of the abelian group $C^\infty(G\times^KV,\ffg)^G$ on the space of equivariant vector fields $\ffX(G\times^KV)^G$ given in (\ref{Eq:Action}).
In particular, there exists an infinitesimal gauge transformation $h(X)\in C^\infty(G\times^KV,\ffg)^G$ such that $X=h(X)\cdot E_0(P_0(X))$.
That is, there is a decomposition of the equivariant vector field $X$ given by:
\begin{equation}\label{Eq:Decomposition}
X=E_0(P_0(X))+\partial(h(X)),
\end{equation}
where the vector field $E_0(P_0(X))$ is transverse to the group orbit $G\cdot [1,0]$ and the vector field $\partial(h(X))$ is tangent to the group orbits.
The decomposition (\ref{Eq:Decomposition}) is the local version of the decomposition in Theorem \ref{Thm:Decomposition}.

To prove Theorem \ref{Thm:Decomposition} we show that the maps $E_0$ and $P_0$ can be extended to functors $E$ and $P$ (Theorem \ref{Thm:InclusionFunctor} and \ref{Thm:ProjectionFunctor}), prove that $PE=\text{id}$ (Theorem \ref{Thm:ExtendThenProject}), and construct a natural isomorphism $h:EP\cong 1_{\bbx(G\times^KV)^G}$ (Theorem \ref{Thm:NaturalIsomorphism}).
The decomposition in (\ref{Eq:Decomposition}) follows from this natural isomorphism.

The following lemma will be useful in proving Theorem \ref{Thm:EquivalenceVFs}:

\begin{lemma}\label{Lemma:FunctorShortcut}
Let $A_1$ and $B_1$ be two abelian groups acting, respectively, on sets $A_0$ and $B_0$, and let $A_1\times A_0\toto A_0$ and $B_1\times B_0 \toto B_0$ be the corresponding action groupoids.
If $F_0:A_0\to B_0$ is a function and $F_1:A_1\to B_1$ is a group homomorphism such that:
\begin{equation}\label{Eq:TargetCommute}
F_0(\psi\cdot x) = F_1(\psi)\cdot F_0(x), \qquad \text{for all } (\psi,x)\in A_1\times A_0,
\end{equation}
then:
\begin{equation}\label{Eq:FunctorFormula}
(F_1\times F_0,F_0): \Big(A_1\times A_0 \toto A_0 \Big) \to \Big(B_1\times B_0 \toto B_0 \Big)
\end{equation}
is a functor.
\end{lemma}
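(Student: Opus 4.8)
The plan is to check directly that the pair $F:=(F_1\times F_0,\,F_0)$ satisfies the defining axioms of a functor between the two action groupoids. First I would spell out the groupoid structure explicitly: in $A_1\times A_0\toto A_0$ the morphism $(\psi,x)$ has source $x$ and target $\psi\cdot x$, the identity at an object $x\in A_0$ is $(1_{A_1},x)$, and the composite of $(\psi,x)\colon x\to\psi\cdot x$ with $(\psi',\psi\cdot x)\colon \psi\cdot x\to\psi'\cdot(\psi\cdot x)$ is $(\psi'\psi,x)$, which is legitimate precisely because $\psi'\cdot(\psi\cdot x)=(\psi'\psi)\cdot x$ by the action axioms. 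The identical description applies verbatim to $B_1\times B_0\toto B_0$. The candidate $F$ acts as $F_0$ on objects and as $F_1\times F_0$ on morphisms, sending $(\psi,x)\mapsto(F_1(\psi),F_0(x))$.

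Next I would verify compatibility with the structure maps. Compatibility with the source map is automatic: the source of $(F_1(\psi),F_0(x))$ is $F_0(x)$, which is $F_0$ applied to the source of $(\psi,x)$. Compatibility with the target map is the one place where the hypothesis is used: the target of $(F_1(\psi),F_0(x))$ is $F_1(\psi)\cdot F_0(x)$, and hypothesis (\ref{Eq:TargetCommute}) says exactly that this equals $F_0(\psi\cdot x)$, i.e. $F_0$ applied to the target of $(\psi,x)$. In particular, this is what guarantees that $F_1\times F_0$ sends a morphism with source $x$ and target $\psi\cdot x$ to a morphism with source $F_0(x)$ and target $F_0(\psi\cdot x)$, so that $F$ is at least a well-defined map respecting source and target. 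Compatibility with identities is then immediate from $F_1$ being a group homomorphism: $F_1(1_{A_1})=1_{B_1}$, so $(F_1\times F_0)(1_{A_1},x)=(1_{B_1},F_0(x))$, the identity at $F_0(x)$.

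Finally I would check that $F$ preserves composition. Given composable morphisms $(\psi,x)\colon x\to\psi\cdot x$ and $(\psi',\psi\cdot x)\colon \psi\cdot x\to(\psi'\psi)\cdot x$, hypothesis (\ref{Eq:TargetCommute}) again ensures that their images $(F_1(\psi),F_0(x))$ and $(F_1(\psi'),F_0(\psi\cdot x))$ are composable in $B_1\times B_0\toto B_0$, since the target of the first image equals $F_1(\psi)\cdot F_0(x)=F_0(\psi\cdot x)=$ the source of the second. The composite of the images is $(F_1(\psi')F_1(\psi),F_0(x))$, whereas $F$ applied to the composite $(\psi'\psi,x)$ is $(F_1(\psi'\psi),F_0(x))$; these agree because $F_1$ is a homomorphism. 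This exhausts the functor axioms and proves (\ref{Eq:FunctorFormula}). There is no genuine obstacle in the argument: the entire content of the lemma is the observation that the intertwining condition (\ref{Eq:TargetCommute}) is exactly what is needed for $(F_1\times F_0,F_0)$ to respect targets—and hence land composable pairs on composable pairs—with everything else a formal consequence of $F_1$ being a group homomorphism.
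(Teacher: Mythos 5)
Your proof is correct and follows essentially the same route as the paper's: a direct verification that $(F_1\times F_0,F_0)$ respects source, target, units, and composition, with the hypothesis (\ref{Eq:TargetCommute}) used exactly where the paper uses it (target compatibility) and the homomorphism property of $F_1$ handling units and composition. The only cosmetic difference is that you write the group operation multiplicatively while the paper writes it additively.
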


\begin{proof}
Denote the source, target, and unit maps by $s$, $t$, and $u$ respectively, letting context imply the category we are working with as is customary.
First note that $(F_1\times F_0,F_0)$ sends a morphism $(\psi,x)\in A_1\times A_0$ to a morphism $(F_1(\psi),F_0(x))\in B_1\times B_0$ since:
\begin{equation*}
s(F_1(\psi),F_0(x))=F_0(x)=F_0(s(\psi,x))
\end{equation*}
and:
\begin{equation*}
t(F_1(\psi),F_0(x))=F_1(\psi)\cdot F_0(x) = F_0(\psi\cdot x) = F_0( t(\psi,x)),
\end{equation*}
where we make use of (\ref{Eq:TargetCommute}).
Let $\psi,\varphi \in A_1$ and $x\in a_0$ be arbitrary.
Then, since $F_1$ is a group homomorphism, note that $(F_1\times F_0,F_0)$ respects units:
\begin{equation*}
u(F_0(x))=(0,F_0(x))=(F_1(0),F_0(x))=(F_1\times F_0)(0,x)=(F_1\times F_0)(u(x)).
\end{equation*}
Finally, $(F_1\times F_0,F_0)$ also respects composition:
\begin{align*}
(F_1\times F_0)\Big((\psi,\varphi\cdot x)\circ (\varphi,x)\Big)
&=(F_1\times F_0)(\psi+\varphi, x)\\
&=(F_1(\psi+\varphi),F_0(x))\\
&=(F_1(\psi)+F_1(\varphi),F_0(x))\\
&=(F_1(\psi),F_1(\varphi)\cdot F_0(x)) \circ (F_1(\varphi),F_0(x))\\
&=(F_1(\psi),F_0(\varphi\cdot x)) \circ (F_1(\varphi),F_0(x))\\
&=(F_1\times F_0)(\psi,\varphi\cdot x) \circ (F_1\times F_0)(\varphi,x).
\end{align*}
Hence, (\ref{Eq:FunctorFormula}) is a functor as claimed.
\end{proof}

We now prove that equivariant extension of equivariant vector fields on a representation is functorial:

\begin{theorem}\label{Thm:InclusionFunctor}
Let $V$ be a representation of a compact Lie subgroup $K$ of a Lie group $G$.
Let $j:V\hookrightarrow G\times^KV$ be the embedding defined by $j(v):=[1,v]$ for $v\in V$ (see Notation \ref{Notation}.\ref{Not:AssociatedBundle}).
Then the map:
\begin{equation}\label{Eq:InclusionObjects}
E_0:\ffX(V)^K\to\ffX(G\times^KV)^G,\qquad
X\mapsto E_0X,
\end{equation}
where $E_0X$ is defined by:
\begin{equation*}
(E_0X)([g,v]):=g\cdot (Tj) X(v),\qquad
[g,v]\in G\times^KV,
\end{equation*}
with the action as in (\ref{Notation}.\ref{Not:TangentAction}), and the map:
\begin{equation}\label{Eq:InclusionGauges}
E_1:C^\infty(V,\ffk)^K \to C^\infty(G\times^KV,\ffg)^G,\qquad
\psi \mapsto E_1\psi,
\end{equation}
where, letting $\iota:\ffk\hookrightarrow\ffg$ be the Lie algebra inclusion, $E_1\psi$ is defined by:
\begin{equation*}
(E_1\psi)([g,v]):=\Ad(g)\iota\left(\psi(v)\right),\qquad
[g,v]\in G\times^KV,
\end{equation*}
define a functor $E:\bbx(V)^K\to\bbx(G\times^KV)^G$.
\end{theorem}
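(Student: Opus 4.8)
The plan is to apply Lemma \ref{Lemma:FunctorShortcut} with $A_1 = C^\infty(V,\ffk)^K$ acting on $A_0 = \ffX(V)^K$ and $B_1 = C^\infty(G\times^KV,\ffg)^G$ acting on $B_0 = \ffX(G\times^KV)^G$, both by the action of (\ref{Eq:Action}). So the work reduces to four checks: (i) that $E_0 X$ is a well-defined smooth vector field on $G\times^KV$ (independent of the representative $(g,v)$) and is $G$-equivariant; (ii) that $E_1\psi$ is a well-defined $G$-equivariant smooth map $G\times^KV\to\ffg$, i.e. an infinitesimal gauge transformation; (iii) that $E_1$ is a group homomorphism; and (iv) the compatibility identity (\ref{Eq:TargetCommute}), namely $E_0(\psi\cdot X) = E_1(\psi)\cdot E_0(X)$, equivalently $E_0(X + \partial\psi) = E_0 X + \partial(E_1\psi)$, which by linearity of $E_0$ (also to be noted) reduces to $E_0(\partial\psi) = \partial(E_1\psi)$.

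For (i): well-definedness means checking that replacing $(g,v)$ by $(gk^{-1}, k\cdot v)$ for $k\in K$ gives the same tangent vector. Using $K$-equivariance of $X$ and of $j$ one computes $g k^{-1}\cdot (Tj)X(k\cdot v) = gk^{-1}\cdot (Tj)(k\cdot X(v)) = gk^{-1}k\cdot (Tj)X(v) = g\cdot (Tj)X(v)$, where the middle equality uses that $Tj$ intertwines the $K$-action on $TV$ with the restriction to $K$ of the $G$-action on $T(G\times^KV)$ — this follows from $j$ being $K$-equivariant and Notation \ref{Notation}.\ref{Not:TangentAction}. Smoothness follows because $E_0 X$ is the descent to the quotient $G\times^KV$ of the smooth $K$-invariant map $(g,v)\mapsto g\cdot(Tj)X(v)$ on $G\times V$. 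Equivariance is immediate: $(E_0X)(g'\cdot[g,v]) = (E_0X)([g'g,v]) = g'g\cdot(Tj)X(v) = g'\cdot (E_0X)([g,v])$. For (ii): well-definedness of $E_1\psi$ uses $\Ad(gk^{-1})\iota(\psi(k\cdot v)) = \Ad(g)\Ad(k^{-1})\iota(\Ad_K(k)\psi(v)) = \Ad(g)\iota(\psi(v))$, where the key point is that the inclusion $\iota:\ffk\hookrightarrow\ffg$ intertwines $\Ad_K$ on $\ffk$ with the restriction of $\Ad$ on $\ffg$ (true because $K\subseteq G$ is a Lie subgroup). Smoothness is again descent of a smooth $K$-invariant map, and $G$-equivariance in the $\Ad$ sense is the computation $(E_1\psi)(g'\cdot[g,v]) = \Ad(g'g)\iota(\psi(v)) = \Ad(g')(E_1\psi)([g,v])$. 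Point (iii) is clear since $E_1$ is pointwise given by the linear map $\iota$ composed with $\Ad(g)$, so it respects addition (and is in fact linear).

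The substantive step is (iv), and I expect the identity $E_0(\partial\psi) = \partial(E_1\psi)$ to be the main (though still short) computation. Evaluate both sides at $[g,v]$. On one side, $\partial(E_1\psi)([g,v]) = \frac{\d}{\d\tau}\big|_0 \exp(\tau\, \Ad(g)\iota(\psi(v)))\cdot[g,v]$; using naturality of $\exp$ to write $\exp(\tau\Ad(g)\iota\psi(v)) = g\exp(\tau\iota\psi(v))g^{-1}$ and the associated-bundle action formula $g'\cdot[g,v]=[g'g,v]$, this becomes $\frac{\d}{\d\tau}\big|_0 [g\exp(\tau\iota\psi(v)), v]$. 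On the other side, $E_0(\partial\psi)([g,v]) = g\cdot(Tj)\big(\frac{\d}{\d\tau}\big|_0 \exp(\tau\psi(v))\cdot v\big) = g\cdot \frac{\d}{\d\tau}\big|_0 j(\exp(\tau\psi(v))\cdot v) = g\cdot\frac{\d}{\d\tau}\big|_0 [1, \exp(\tau\psi(v))\cdot v]$; since in $G\times^KV$ one has $[1,k\cdot v] = [k, v]$ for $k\in K$, applied with $k = \exp(\tau\psi(v))\in K$ this equals $g\cdot\frac{\d}{\d\tau}\big|_0[\exp(\tau\iota\psi(v)), v] = \frac{\d}{\d\tau}\big|_0[g\exp(\tau\iota\psi(v)),v]$, matching the other side. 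Combined with linearity of $E_0$ this gives (\ref{Eq:TargetCommute}), and Lemma \ref{Lemma:FunctorShortcut} then delivers the functor $E$. The only real care needed throughout is bookkeeping the identification of $\ffk$-actions and $K$-actions with the restrictions of the corresponding $\ffg$- and $G$-actions, and the identity $[1,k\cdot v]=[k,v]$ in the associated bundle, which is where all the well-definedness and the key computation hinge.
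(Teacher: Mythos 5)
Your proposal is correct and follows essentially the same route as the paper: reduce to Lemma \ref{Lemma:FunctorShortcut}, note the homomorphism/linearity properties of $E_1$ and $E_0$, and establish the key identity $E_0(\partial\psi)=\partial(E_1\psi)$ (your computation via naturality of $\exp$ and $[1,k\cdot v]=[k,v]$ is the same fact the paper packages as $g\cdot T\ev_{[1,v]}=T\ev_{[g,v]}\Ad(g)$). Your explicit well-definedness, smoothness, and equivariance checks in (i)--(ii) are extra care the paper leaves implicit, but they do not change the argument.
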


\begin{proof}
By Lemma \ref{Lemma:FunctorShortcut}, it suffices to check that $E_1$ is a group homomorphism and $E_1$ and $E_0$ intertwine the actions as in (\ref{Eq:TargetCommute}).
That $E_1$ is a homomorphism follows immediately from the linearity of the inclusion $\iota:\ffk\hookrightarrow \ffg$ and of the Adjoint maps $\Ad(g)$ for each $g\in G$.
It is straightforward to verify that $E_0$ is also linear in this case since the action of $G$ on $TM$ is fiber-wise linear.

It remains to check condition (\ref{Eq:TargetCommute}).
We first prove that the following diagram commutes:
\begin{equation}\label{Diag:BoundaryInclusion}
\xy
{(-24,10)}*+{C^\infty(V,\ffk)^K} = "1";
{(24,10)}*+{\ffX(V)^K} = "2";
{(-24,-10)}*+{C^\infty(G\times^KV,\ffg)^G} = "3";
{(24,-10)}*+{\ffX(G\times^KV)^G} = "4";
{\ar@{->}^{\partial} "1";"2"};
{\ar@{->}_{E_1} "1";"3"};
{\ar@{->}_{\,\,\,\,\partial} "3";"4"};
{\ar@{->}^{E_0} "2";"4"};
\endxy
\end{equation}
Observe that for an infinitesimal gauge transformation $\psi:V\to\ffk$ on $V$ and a point $[g,v]\in G\times^KV$ we have:
\begin{align*}
E_0\partial(\psi)([g,v]) 
&=g\cdot (Tj)(\partial\psi)(v) \\
&=g\cdot (Tj) T\ev_v(\psi(v))\\
&=g\cdot T\ev_{[1,v]}(\psi(v))
\qquad\text{by the chain rule}\\
&=T\ev_{g,v}\Ad(g)\psi(v)
\qquad\text{since }g\cdot T\ev_{[1,v]}=T\ev_{[g,v]}\Ad(g)\\
&=T\ev_{g,v}\left(E_1\psi([g,v])\right)\\
&=\partial(E_1\psi)([g,v]).
\end{align*}
Thus, diagram (\ref{Diag:BoundaryInclusion}) commutes as claimed.
We now check condition (\ref{Eq:TargetCommute}).
Let $X\in\ffX(V)^K$ and let $\psi\in C^\infty(V,\ffk)^K$, then:
\begin{align*}
E_1(\psi)\cdot E_0(X)
&=E_0(X) + \partial(E_1(\psi))\\
&=E_0(X) + E_0(\partial(\psi))
\qquad \text{ by diagram (\ref{Diag:BoundaryInclusion}) }\\
&= E_0(X + \partial(\psi))
\qquad \text{ by the linearity of }E_0\\
&=E_0(\psi\cdot X).
\end{align*}
Hence, the maps $E_1$ and $E_0$ satisfy (\ref{Eq:TargetCommute}), and more generally the hypotheses of Lemma \ref{Lemma:FunctorShortcut}.
Consequently, $E_1$ and $E_0$ give a functor $E:\bbx(V)^K\to\bbx(G\times^KV)^G$.
\end{proof}

\begin{remark}\label{Rem:ConnectionFromSplitting}
We need to recall a standard construction of equivariant connections.
Let $V$ be a finite-dimensional real representation of a compact Lie subgroup $K$ of a Lie group $G$.
Consider the principal $K$-bundle $G\to G/K$.
By equivariance, a choice of principal connection on the principal $K$-bundle $G\to G/K$ is in correspondence with a choice of $K$-equivariant splitting of the short exact sequence of vector spaces:
\begin{equation}\label{Eq:SplittingExplained}
\xy
{(-13,0)}*+{\ffk} = "1";
{(0,0)}*+{\ffg} = "2";
{(13,0)}*+{\ffg/\ffk} = "3";
{\ar@{->}^{} "1";"2"};
{\ar@{->}_{} "2";"3"};
\endxy
\end{equation}
Thus, it is equivalent to choose a $K$-invariant complement $\ffq$ of $\ffk$ in $\ffg$, a $K$-equivariant projection $\bbp:\ffg\to\ffk$, or a principal connection $\widetilde\Phi$ of $G\to G/K$.
In particular, given a $K$-equivariant projection $\bbp:\ffg\to\ffk$, the corresponding connection $\widetilde\Phi\in\Omega^1(G;\calv(G))$ on the principal $K$-bundle $G\to G/K$ is given by:
\begin{equation}\label{Eq:PrincipalConnection}
\widetilde\Phi(g,\xi) := \left(g,\bbp(\xi)\right), \qquad
(g,\xi)\in G\times \ffg\cong TG.
\end{equation}

The total space of the tangent bundle $T(G\times^KV)$ of the associated bundle $G\times^KV$ is itself an associated bundle $TG\times^{TK}TV\to T(G/K)$ (see, for example, \cite[Theorem~10.18~(4)]{KMS93}).
Thus, we have the following commutative diagram:
\begin{equation*}
\xy
{(-16,10)}*+{TG\times^{TK}TV} = "1";
{(16,10)}*+{G\times^KV} = "2";
{(-16,-10)}*+{T(G/K)} = "3";
{(16,-10)}*+{G/K} = "4";
{\ar@{->}^{} "1";"2"};
{\ar@{->}_{} "1";"3"};
{\ar@{->}_{} "3";"4"};
{\ar@{->}^{} "2";"4"};
\endxy
\end{equation*}
where the horizontal maps are the tangent bundle projections and the vertical maps are the associated bundle projections.
There are also canonical identifications:
\begin{equation*}
T(G\times^KV) \cong TG\times^{TK} TV \cong (G\times\ffg)\times^{K\times\ffk} (V\times V),
\end{equation*}
where the associated bundle on the right hand end is the quotient:
\begin{equation*}
(G\times\ffg)\times^{K\times\ffk} (V\times V) := \Big((G\times\ffg)\times (V\times V)\Big)/(K\times\ffk)
\end{equation*}
of the action of the group $K\times\ffk$ on the space $G\times\ffg\times V\times V$ given by:
\begin{equation}\label{Eq:TangentAssociatedBundleAction}
(k,\eta)\cdot\Big(g,\xi,v,w\Big)
=\Big(gk^{-1},\Ad(k)(\xi-\eta),k\cdot v, k\cdot \eta_V(w) \Big),
\end{equation}
for $(g,\xi,v,w)\in G\times \ffg\times V\times V$ and $(k,\eta)\in K\times \ffk$, and where $\eta_V:V\to V$ is the fundamental vector field induced by $\eta$ and defined by $w\mapsto \frac{\d}{\d\tau}\big|_0 \exp(\tau\eta)\cdot w$.
That is, the action of the group $K\times\ffk$ on $G\times \ffg\times V\times V$ is given by the tangent map of the action $K\times (G\times V) \to G\times V$ giving rise to the associated bundle $G\times^KV$.

Now let $\calv(G\times^KV)\to G\times^KV$ be the vertical bundle of the associated bundle $G\times^KV \to G/K$ (see Notation \ref{Notation}.\ref{Not:AssociatedBundle} and \ref{Notation}.\ref{Not:VerticalBundle}), and let $\varpi:TG\times TV\to TG\times^{TK}TV\cong T(G\times^KV)$ be the quotient map.
A choice of $K$-equivariant splitting $\ffg=\ffk\oplus\ffq$ determines an equivariant connection $\Phi\in\Omega^1(G\times^KV;\calv(G\times^KV))^G$ on the bundle $G\times^KV\to G/K$.
The connection $\Phi$ is given by following diagram:
\begin{equation}\label{Diag:ConnectionDefined}
\xy
{(-16,10)}*+{TG\times TV} = "1";
{(16,10)}*+{TG\times TV} = "2";
{(-16,-10)}*+{T(G\times^KV)} = "3";
{(16,-10)}*+{T(G\times^KV)} = "4";
{\ar@{->}^{\widetilde\Phi\times \id} "1";"2"};
{\ar@{->}_{\varpi} "1";"3"};
{\ar@{->}_{\Phi} "3";"4"};
{\ar@{->}^{\varpi} "2";"4"};
\endxy
\end{equation}
where $\widetilde\Phi$ is as in (\ref{Eq:PrincipalConnection}).
For a verification that this is an equivariant connection see, for example, \cite[Section~11.8]{KMS93}.

We will also make use of the corresponding horizontal bundle, which we now describe.
The $K$-equivariant splitting $\ffg=\ffk\oplus\ffq$ gives the following equivariant diffeomorphisms for the tangent bundle $T(G\times^KV)$:
\begin{align}\label{Eq:AssociatedBundleIdentification}
\begin{split}
TG\times^{TK}TV
&\cong(TG\times TV)/TK\\
&\cong \Big(G\times \ffk\oplus\ffq) \times (V\times V)\Big) / (K\times \ffk)\\
&\cong \left( G \times (V \times \ffq\times V)\right) / K\\
&= G\times^{K} (V\times \ffq\times V),
\end{split}
\end{align}
where we have used the action (\ref{Eq:TangentAssociatedBundleAction}).
We will denote the elements of the tangent bundle $G\times^{K}(V\times \ffq\times V)$ by $[g,v,\xi,w]$, where the first two entries correspond to the base point $[g,v]\in G\times^KV$ of the tangent vector $[g,v,\xi,w]$ (see also Notation \ref{Notation}.\ref{Not:AssociatedBundle}).
Now observe that we have a commutative diagram:
\begin{equation}\label{Diag:TangentAssociatedBundleIdentification}
\xy
{(-22,10)}*+{TG\times^{TK} TV} = "1";
{(22,10)}*+{G\times^{K}(V\times\ffq\times V)} = "2";
{(-22,-10)}*+{T(G/K)} = "3";
{(22,-10)}*+{G\times^K\ffq} = "4";
{\ar@{->}^{\cong} "1";"2"};
{\ar@{->}_{T\pi} "1";"3"};
{\ar@{->}_{\cong} "3";"4"};
{\ar@{->}^{p} "2";"4"};
\endxy
\end{equation}
where the top horizontal map is the identification (\ref{Eq:AssociatedBundleIdentification}), the bottom map is the identification:
\begin{equation*}
T(G/K)\cong G\times^K(\ffg/\ffk)\cong G\times^K\ffq,
\end{equation*}
where $G\times^K\ffq:=(G\times\ffq)/K$ is the quotient by the action of the group $K$ on the product $G\times\ffq$ defined by:
\begin{equation*}
k\cdot (g,\xi) = \left(gk^{-1},\Ad(k)\xi\right),
\qquad k\in K, \, (g,\xi)\in G\times\ffq,
\end{equation*}
the map $T\pi$ is the tangent map of the projection $\pi:G\times^KV\to G/K$, and the map $p$ is given by:
\begin{equation*}
[g,v,\xi,w]\mapsto [g,\xi], \qquad [g,v,\xi,w]\in G\times^{K}(V\times\ffq\times V).
\end{equation*}
Using (\ref{Diag:TangentAssociatedBundleIdentification}), the vertical bundle $\calv(G\times^KV):=\ker(T\pi)$ of the associated bundle $G\times^KV\to G/K$ can be identified with the the vector bundle:
\begin{equation}\label{Eq:AssociatedBundleVerticalBundle}
\ker p = G\times^K (V\times\{0\}\times V) \cong G\times^K(V\times V)
\end{equation}
over the base $G\times^KV$.
Thus, we see that the total space of the vertical bundle $\calv(G\times^KV)$ is also an associated bundle over $G/K$(see also \cite[Theorem~10.18~(5)]{KMS93}).
Using the description (\ref{Eq:AssociatedBundleVerticalBundle}) of the vertical bundle $\calv(G\times^KV)$, the connection $\Phi$ in (\ref{Diag:ConnectionDefined}) is such that the following diagram commutes:
\begin{equation*}
\xy
{(-22,10)}*+{T(G\times^KV)} = "1";
{(22,10)}*+{\calv(G\times^KV)} = "2";
{(-22,-10)}*+{G\times^K(V\times\ffq\times V)} = "3";
{(22,-10)}*+{G\times^K(V\times V)} = "4";
{\ar@{->}^{\Phi} "1";"2"};
{\ar@{->}_{\cong} "1";"3"};
{\ar@{->}_{\widehat\Phi} "3";"4"};
{\ar@{->}^{\cong} "2";"4"};
\endxy
\end{equation*}
where the map $\widehat\Phi$ is given by:
\begin{equation*}
\widehat\Phi([g,v,\xi,w]):=[g,v,w], \qquad [g,v,\xi,w]\in G\times^K(V\times\ffq\times V).
\end{equation*}
Hence, the horizontal bundle $\calh:= \ker \Phi$ of the associated bundle $G\times^KV\to G/K$ can be identified with the vector bundle:
\begin{equation*}
\ker\widehat\Phi=G\times^K(V\times\ffq\times\{0\})\cong G\times^K(V\times \ffq),
\end{equation*}
over the base $G\times^KV$.
Thus, we see that the total space of the horizontal bundle $\calh$ is also an associated bundle over $G/K$.
\end{remark}

We now proceed with the ingredients of the proof of Theorem \ref{Thm:EquivalenceVFs} by showing that projecting vector fields on $G\times^KV$ onto the representation $V$ using an equivariant connection is also functorial:

\begin{theorem}\label{Thm:ProjectionFunctor}
Let $V$ be a representation of a compact Lie subgroup $K$ of a Lie group $G$, and let $\ffg=\ffk\oplus\ffq$ be a $K$-equivariant splitting with corresponding $K$-equivariant projection $\bbp:\ffg\to\ffk$.
Let $\Phi$ be the equivariant connection on the bundle $G\times^KV\to G/K$ determined by the splitting $\ffg=\ffk\oplus\ffq$ (Remark \ref{Rem:ConnectionFromSplitting}).
Let $j:V\hookrightarrow G\times^KV$ be the embedding defined by $j(v):=[1,v]$ for $v\in V$ (see Notation \ref{Notation}.\ref{Not:AssociatedBundle}).
Then the map:
\begin{equation*}
P_0:\ffX(G\times^KV)^G\to\ffX(V)^K, \qquad
X\mapsto P_0X,
\end{equation*}
where $P_0X$ is defined by:
\begin{equation*}
(P_0X)(v):= j^* (\Phi\circ X) (v), \qquad
v\in V,
\end{equation*}
and the map:
\begin{equation*}
P_1:C^\infty(G\times^KV,\ffg)^G\to C^\infty(V,\ffk)^K, \qquad
\psi\mapsto P_1\psi,
\end{equation*}
where $P_1\psi$ is defined by:
\begin{equation*}
(P_1\psi)(v):=\bbp(\psi(j(v))),\qquad
v\in V,
\end{equation*}
define a functor $P:\bbx(G\times^KV)^G\to\bbx(V)^K$.
\end{theorem}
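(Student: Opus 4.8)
The plan is to invoke Lemma \ref{Lemma:FunctorShortcut}, just as in the proof of Theorem \ref{Thm:InclusionFunctor}. Thus it suffices to check three things: that $P_1$ is a group homomorphism, that $P_0$ is well-defined (i.e. $P_0X$ really is a smooth $K$-equivariant vector field on $V$, and similarly $P_1\psi$ really is an infinitesimal gauge transformation on $V$), and that $P_0$ and $P_1$ intertwine the two actions in the sense of (\ref{Eq:TargetCommute}), namely $P_0(\psi\cdot X) = P_1(\psi)\cdot P_0(X)$ for all $\psi\in C^\infty(G\times^KV,\ffg)^G$ and $X\in\ffX(G\times^KV)^G$.

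That $P_1$ is a homomorphism is immediate from the linearity of the $K$-equivariant projection $\bbp$, of the pullback along $j$, and of evaluation; the same linearity shows $P_0$ is linear. For well-definedness of $P_1\psi$: the composite $\bbp\circ(\psi\circ j)$ is visibly smooth, and $K$-equivariance follows because $j$ is $K$-equivariant, $\psi$ is $G$-equivariant (hence $K$-equivariant under the restricted action, with $\Ad$ on $\ffg$), and $\bbp:\ffg\to\ffk$ is $K$-equivariant by choice of splitting, so $(P_1\psi)(k\cdot v) = \bbp(\psi(k\cdot j(v))) = \bbp(\Ad(k)\psi(j(v))) = \Ad(k)\bbp(\psi(j(v))) = \Ad(k)(P_1\psi)(v)$. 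For $P_0X$: since $\Phi$ is a smooth bundle map $T(G\times^KV)\to\calv(G\times^KV)$ and $X$ is a smooth section, $\Phi\circ X$ is a smooth section of $\calv(G\times^KV)$ over $G\times^KV$; pulling back along the embedding $j$ and using the identification $\calv(G\times^KV)|_{j(V)}\cong TV$ from (\ref{Eq:AssociatedBundleVerticalBundle}) (restricted to the slice $\{[1,v]\}$, where $Tj$ realizes $TV$ as the vertical tangent space) gives a smooth vector field on $V$; its $K$-equivariance follows from the $G$-equivariance of both $\Phi$ (as an equivariant connection) and $X$, combined with the $K$-equivariance of $j$ and the fact that the $G$-action restricted to the slice is the $K$-action. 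Here I would spell out that for $k\in K$, $(\Phi\circ X)(k\cdot j(v)) = (\Phi\circ X)(j(k\cdot v)) = k\cdot (\Phi\circ X)(j(v))$ in $\calv(G\times^KV)$, and under the identification with $TV$ this is exactly $k\cdot (P_0X)(v)$.

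For the intertwining condition (\ref{Eq:TargetCommute}), the key is to verify the analogue of diagram (\ref{Diag:BoundaryInclusion}), namely that $P_0\circ\partial = \partial\circ P_1$ as maps $C^\infty(G\times^KV,\ffg)^G\to\ffX(V)^K$. Given $\psi\in C^\infty(G\times^KV,\ffg)^G$ and $v\in V$, I compute $P_0(\partial\psi)(v) = j^*(\Phi\circ\partial\psi)(v)$. The vector $\partial(\psi)(j(v)) = T\ev_{[1,v]}(\psi(j(v)))$ is the value at $[1,v]$ of the fundamental vector field of $\psi(j(v))\in\ffg$; applying $\Phi$ projects onto the vertical part, and in the description of (\ref{Eq:AssociatedBundleVerticalBundle})--(\ref{Diag:TangentAssociatedBundleIdentification}) the vertical part of the fundamental vector field of $\xi\in\ffg$ at $[1,v]$ is precisely the fundamental vector field of $\bbp(\xi)\in\ffk$ acting on $V$ at $v$ --- this is the content of the splitting $\ffg=\ffk\oplus\ffq$ and the formula (\ref{Diag:ConnectionDefined}) for $\Phi$. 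Hence $j^*(\Phi\circ\partial\psi)(v) = \bbp(\psi(j(v)))_V(v) = \partial(P_1\psi)(v)$, which is the desired commuting square. Granting this, (\ref{Eq:TargetCommute}) follows formally exactly as in Theorem \ref{Thm:InclusionFunctor}: $P_1(\psi)\cdot P_0(X) = P_0(X) + \partial(P_1\psi) = P_0(X) + P_0(\partial\psi) = P_0(X+\partial\psi) = P_0(\psi\cdot X)$, using linearity of $P_0$ and the square just established. Lemma \ref{Lemma:FunctorShortcut} then yields the functor $P$.

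I expect the main obstacle to be the verification of the square $P_0\circ\partial=\partial\circ P_1$, specifically the identification of the vertical part (with respect to $\Phi$) of a fundamental vector field at a slice point with the fundamental vector field of its $\bbp$-projection. This requires unwinding the identifications (\ref{Eq:AssociatedBundleIdentification})--(\ref{Eq:AssociatedBundleVerticalBundle}) of $T(G\times^KV)$ and $\calv(G\times^KV)$ as associated bundles and tracking how the diagram (\ref{Diag:ConnectionDefined}) defining $\Phi$ acts on the fundamental vector field $T\ev_{[1,v]}(\xi)$, which under the quotient map $\varpi$ comes from $(\xi,0)\in T_1G\times T_vV$ mapped through $\widetilde\Phi\times\id$ to $(\bbp(\xi),0)$; the resulting element of $\calv(G\times^KV)$, read in the coordinates $[g,v,\xi',w]$, has $w$-component equal to $\bbp(\xi)_V(v)$ (plus a correction from the $\Ad(k)(\xi-\eta)$ term in (\ref{Eq:TangentAssociatedBundleAction}) that vanishes on the slice where $g=1$). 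Everything else is routine linear algebra and equivariance bookkeeping of the type already carried out in Theorem \ref{Thm:InclusionFunctor}.
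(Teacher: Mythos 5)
Your proposal is correct and follows essentially the same route as the paper: invoke Lemma \ref{Lemma:FunctorShortcut}, note $P_1$ is a homomorphism and $P_0$ is linear, establish the commuting square $P_0\circ\partial=\partial\circ P_1$ (the paper's diagram (\ref{Diag:BoundaryProjection})), and then derive condition (\ref{Eq:TargetCommute}) formally. Your key identification --- that the $\Phi$-vertical part of the fundamental vector field $T\ev_{j(v)}(\xi)$ is $Tj$ of the fundamental vector field of $\bbp(\xi)$ on $V$ --- is exactly what the paper verifies via the evaluation maps (\ref{Eq:Evaluation}), (\ref{Diag:EvaluationAndQuotient}) and the defining diagram (\ref{Diag:ConnectionDefined}) of the connection, so only the bookkeeping differs (coordinates on the associated tangent bundle versus the chain of $\ev$-map equalities).
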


\begin{proof}
By Lemma \ref{Lemma:FunctorShortcut} it suffices to check that $P_1$ is a group homomorphism and $P_1$ and $P_0$ interwine the actions as in (\ref{Eq:TargetCommute}).
That $P_1$ is a homomorphism follows immediately from the linearity of the projection $\bbp:\ffg\to\ffk$ in the statement of the Theorem. 
It is also straightforward to verify that $P_0$ is a linear map.

It remains to check condition (\ref{Eq:TargetCommute}).
First, we need to make some observations.
Let $v\in V$ be a point on the slice, and note that the evaluation map $\ev_{(1,v)}:G\to G\times V$ at the point $(1,v)\in G\times V$ of the $G$-action on the product $G\times V$ is such that:
\begin{equation}\label{Eq:Evaluation}
T\ev_{(1,v)}:\ffg\to\ffg\times V,\qquad
T\ev_{(1,v)}(\xi)=(\xi,v).
\end{equation}
Also, if $\ev_{j(v)}:G\times G\times^KV$ is the evaluation map at the point $j(v)=[1,v]\in G\times^KV$ of the $G$-action on $G\times^KV$, then the two evaluation maps make the following diagram commute:
\begin{equation}\label{Diag:EvaluationAndQuotient}
\begin{split}
\xy
(0,18)*+{\ffg}="1";
(34,18)*+{\ffg\times V}="2";
(34,0)*+{T_{j(v)}(G\times^KV)}="3";
{\ar@{->}^{T\ev_{(1,v)}} "1";"2"};
{\ar@{->}_{T\ev_{j(v)}} "1";"3"};
{\ar@{->}^{\varpi|} "2";"3"};
\endxy
\end{split}
\end{equation}
where $\varpi:TG\times TV\to T(G\times^KV)$ is the quotient map of the associated bundle $T(G\times^KV)=TG\times^{TK}TV$ (see Remark \ref{Rem:ConnectionFromSplitting}).

We also need that the following diagam commutes:
\begin{equation}\label{Diag:BoundaryProjection}
\xy
{(-24,12)}*+{C^\infty(G\times^KV,\ffg)^G} = "1";
{(24,12)}*+{\ffX(G\times^KV)^G} = "2";
{(-24,-12)}*+{C^\infty(V,\ffk)^K} = "3";
{(24,-12)}*+{\,\,\,\ffX(V)^K_0} = "4";
{\ar@{->}^{\partial} "1";"2"};
{\ar@{->}_{P_1} "1";"3"};
{\ar@{->}_{\partial} "3";"4"};
{\ar@{->}^{P_0} "2";"4"};
\endxy
\end{equation}
Given an infinitesimal gauge transformation $\psi:G\times^KV\to\ffg$ on the associated bundle $G\times^KV$, it suffices to prove that the vector field $\partial(P_1\psi)$ on the representation $V$ is the unique vector field on $V$ that is $j$-related to the vector field $\Phi\circ \partial(\psi)$, where $\Phi$ is the given connection on the associated bundle $G\times^KV\to G/K$.
For this, let $v\in V$ be a point, and let $\varpi:TG\times TV\to TG\times^{TK}TV$ be the quotient map of the tangent bundle $T\left(G\times^KV\right)$.
Then note that:
\begin{align*}
\Phi_{j(v)}\partial(\psi)(j(v))
&=\Phi_{j(v)}T\ev_{j(v)}\left(\psi j(v)\right)\\
&=\Phi_{j(v)}\varpi T\ev_{(1,v)}\left(\psi(j(v))\right)
\qquad\text{by (\ref{Diag:EvaluationAndQuotient})}\\
&=\Phi_{j(v)}\varpi\left(\psi(j(v)),v\right)
\qquad\text{by (\ref{Eq:Evaluation})}\\
&=\varpi(\bbp\times\id)\left(\psi(j(v)),v\right)
\qquad\text{by (\ref{Diag:ConnectionDefined})}\\
&=\varpi\left(\bbp\psi j(v),v\right)\\
&=\varpi T\ev_{(1,v)} \left(\bbp\psi j(v)\right)
\qquad \text{by (\ref{Eq:Evaluation})}\\
&=\varpi T\ev_{(1,v)}\left(P_1\psi(v)\right)
\qquad \text{by the definition of }P_1\\
&=T\ev_{j(v)}\left(P_1\psi(v)\right)
\qquad \text{by (\ref{Diag:EvaluationAndQuotient})}\\
&=(Tj) T\ev_v \left(P_1\psi(v)\right)
\qquad\text{ by the chain rule}\\
&=(Tj) \partial(P_1\psi)(v).
\end{align*}
Hence, the vector fields $\Phi\circ \partial(\psi)$ and $\partial(P_1\psi)$ are $j$-related, meaning that (\ref{Diag:BoundaryProjection}) commutes as desired.

We now check condition (\ref{Eq:TargetCommute}).
Let $X\in\ffX(G\times^KV)^G$ and let $\psi\in C^\infty(G\times^KV,\ffg)^G$, then:
\begin{align*}
P_1(\psi)\cdot P_0(X)
&=P_0(X) + \partial(P_1(\psi))\\
&=P_0(X) + P_0(\partial(\psi))
\qquad \text{ by diagram (\ref{Diag:BoundaryProjection}) }\\
&= P_0(X + \partial(\psi))
\qquad \text{ by the linearity of }P_0\\
&=P_0(\psi\cdot X).
\end{align*}
Hence, the maps $P_1$ and $P_0$ satisfy (\ref{Eq:TargetCommute}), and more generally the hypotheses of Lemma \ref{Lemma:FunctorShortcut}.
Consequently, $P_1$ and $P_0$ give a functor $P:\bbx(G\times^KV)^G\to\bbx(V)^K$.
\end{proof}

\begin{remark}\label{Rem:ProjectionOnSlices}
Given a proper $G$-manifold $M$, a point $m\in M$ with isotropy $K$, a $K$-invariant splitting $\ffg=\ffk\oplus\ffq$, and a slice $D$ for the action through $m\in M$ with $K$-equivariant embedding $\iota:D\hookrightarrow M$, the projection of Theorem \ref{Thm:ProjectionFunctor} also gives a projection functor:
\begin{equation*}
P:\bbx\left(G\cdot \iota(D)\right)^G \to \bbx(D)^K,
\end{equation*}
where:
\begin{equation*}
G\cdot \iota(D):=\left\{
g\cdot \iota(v) \mid g\in G, \, v\in D
\right\}.
\end{equation*}
This follows from the $G$-equivariant diffeomorphism $G\cdot \iota(D)\cong G\times^K D$ (Remark \ref{Rem:TubularNeighborhood}).
\end{remark}

We now verify part of the equivalence of Theorem \ref{Thm:EquivalenceVFs}.

\begin{theorem}\label{Thm:ExtendThenProject}
Let $V$ be a representation of a compact Lie subgroup $K$ of a Lie group $G$, let $E:\bbx(V)^K\to\bbx(G\times^KV)^G$ be the functor of Theorem \ref{Thm:InclusionFunctor}, and let $P:\bbx(G\times^KV)^G\to\bbx(V)^K$ be a choice of functor as in Theorem \ref{Thm:ProjectionFunctor}.
Then we have that $PE = 1_{\bbx(V)^K}$.
\end{theorem}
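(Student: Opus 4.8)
The plan is to verify $PE=1_{\bbx(V)^K}$ at the level of objects and morphisms separately, using Lemma \ref{Lemma:FunctorShortcut}-style bookkeeping, i.e. showing that $P_0E_0=\id_{\ffX(V)^K}$ and $P_1E_1=\id_{C^\infty(V,\ffk)^K}$; since both functors act on action groupoids and are determined by their object and morphism maps (a function $F_0$ and a group homomorphism $F_1$ satisfying the intertwining condition), these two identities suffice to give equality of functors, not just a natural isomorphism. The morphism part is the easier one: for $\psi\in C^\infty(V,\ffk)^K$ and $v\in V$, we have by definition $(E_1\psi)([1,v])=\Ad(1)\iota(\psi(v))=\iota(\psi(v))$, and then $(P_1E_1\psi)(v)=\bbp\big((E_1\psi)(j(v))\big)=\bbp(\iota(\psi(v)))$. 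Since $\bbp:\ffg\to\ffk$ is the $K$-equivariant projection associated to the splitting $\ffg=\ffk\oplus\ffq$ and $\iota:\ffk\hookrightarrow\ffg$ is the inclusion, $\bbp\circ\iota=\id_{\ffk}$, so $(P_1E_1\psi)(v)=\psi(v)$, giving $P_1E_1=\id$.

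For the object part, fix $X\in\ffX(V)^K$ and $v\in V$; I must show $(P_0E_0X)(v)=X(v)$. Unwinding definitions, $(P_0E_0X)(v)=j^*(\Phi\circ E_0X)(v)$, i.e. the unique tangent vector in $T_vV$ that is $j$-related to $\Phi_{j(v)}\big((E_0X)(j(v))\big)$. Now $(E_0X)(j(v))=(E_0X)([1,v])=1\cdot(Tj)X(v)=(Tj)X(v)$, which is a vertical vector at $[1,v]$ in the associated bundle $G\times^KV\to G/K$, since $j$ maps into the fiber over the identity coset. The key point is that the equivariant connection $\Phi$ constructed in Remark \ref{Rem:ConnectionFromSplitting} restricts to the identity on vertical vectors — this is part of what it means to be a connection — so $\Phi_{j(v)}\big((Tj)X(v)\big)=(Tj)X(v)$. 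Therefore the vector $j$-related to it is exactly $X(v)$, whence $(P_0E_0X)(v)=X(v)$.

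To make the verification that $\Phi$ is the identity on verticals precise, I would use the explicit description of $\Phi$ from diagram (\ref{Diag:ConnectionDefined}) together with the identifications in (\ref{Eq:AssociatedBundleIdentification}) and (\ref{Eq:AssociatedBundleVerticalBundle}): under the identification $T(G\times^KV)\cong G\times^K(V\times\ffq\times V)$, a vertical vector at $[1,v]$ has the form $[1,v,0,w]$, and the connection map $\widehat\Phi$ sends $[1,v,0,w]\mapsto[1,v,w]$, which corresponds to the same vertical vector; tracing this back through the isomorphism $\calv(G\times^KV)\cong G\times^K(V\times V)$ and the embedding $Tj$ shows the vertical vector $(Tj)X(v)$ is fixed by $\Phi$. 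I expect the main obstacle to be purely notational: carefully matching up the several competing descriptions of the tangent bundle of the associated bundle (as $TG\times^{TK}TV$, as $(G\times\ffg)\times^{K\times\ffk}(V\times V)$, and as $G\times^K(V\times\ffq\times V)$) and checking that the map $Tj$ lands in the vertical summand and that the evaluation/quotient maps from diagram (\ref{Diag:EvaluationAndQuotient}) are compatible with these identifications. No genuinely hard analysis or geometry is involved; once the bookkeeping is set up, both $P_0E_0=\id$ and $P_1E_1=\id$ follow, and hence $PE=1_{\bbx(V)^K}$ as functors.
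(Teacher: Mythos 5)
Your proposal is correct and follows essentially the same route as the paper: the morphism identity comes from $\bbp\circ\iota=\id_{\ffk}$ (via $\Ad(1)=\id$), and the object identity comes from observing that $E_0X$ is vertical, so the connection $\Phi$ fixes $(E_0X)(j(v))=(Tj)X(v)$ and hence $P_0E_0X$ is the unique vector field $j$-related to $E_0X$, namely $X$. Your extra remark spelling out why $\Phi$ acts as the identity on vertical vectors via the identification $T(G\times^KV)\cong G\times^K(V\times\ffq\times V)$ just makes explicit a step the paper leaves implicit in the definition of $P_0$.
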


\begin{proof}
Let $j:V\hookrightarrow G\times^KV$ be the slice embedding defined by $j(v):=[1,v]$, and let $\bbp:\ffg\to\ffk$ be the equivariant projection corresponding to the choice of functor $P$ (see Theorem \ref{Thm:ProjectionFunctor}).
Note that for any path $\psi$ of infinitesimal gauge transformations on $V$ and any point $v\in V$, we have that:
\begin{equation*}
(P_1E_1\psi)(v)
= \bbp \Big((E_1\psi) j(v)\Big)
= \bbp\Big(\left(\Ad(1)\psi(v)\right)\Big)
=\bbp\Big(\left(\psi(v)\right)\Big)
=\psi(v),
\end{equation*}
where the last equality follows since $\psi(v)\in\ffk$.
Hence, the composition $E_1\circ P_1$ is the identity.

Now consider an arbitrary equivariant vector field $X$ on $V$.
The vector field $(E_0X)$ is vertical in the bundle $G\times^KV\to G/K$, so $(P_0E_0X)$ is the unique vector field that is $j$-related to the vector field $(E_0X)$ by definition of the map $P_0$ (see Theorem \ref{Thm:ProjectionFunctor}).
On the other hand, 
\begin{equation*}
(E_0X) j(v)
=1\cdot (Tj)X(v)
=(Tj)X(v),
\qquad v\in V.
\end{equation*}
Hence, the vector field $X$ is $j$-related to the vector field $(E_0X)$.
Thus, the vector fields $P_0(E_0(X))$ and $X$ are equal.
Hence, the composition $E_0\circ P_0$ is the identity, completing the proof.
\end{proof}

We now proceed to construct the natural isomorphism that gives the decomposition in (\ref{Eq:Decomposition}).
We need a technical lemma:

\begin{lemma}\label{Lemma:HorizontalTriviality}
Let $V$ be a representation of a compact Lie subgroup $K$ of a Lie group $G$.
Furthermore,
\begin{enumerate}
\item Let $\ffg=\ffk\oplus\ffq$ be a $K$-equivariant splitting of the Lie algebra $\ffg$ of the Lie group $G$, where $\ffk$ is the Lie algebra of the Lie subgroup $K$.
\item Let $\calh\to G\times^KV$ be the horizontal bundle of the connection $\Phi$ induced by the splitting $\ffg=\ffk\oplus\ffq$ on the associated bundle $G\times^KV\to G/K$ (Remark \ref{Rem:ConnectionFromSplitting}).
That is, the total space $\calh$ of the horizontal bundle is $\calh:=G\times^K(V\times\ffq)$.
\item Let $\Gamma(\calh \to G\times^KV)^G$ be the space of equivariant vector fields on the associated bundle $G\times^KV$ that are horizontal with respect to the connection $\Phi$.
\end{enumerate}
Then the map:
\begin{equation}\label{Eq:HorizontalTrivialityMap}
C^\infty(V,\ffq)^K\to \Gamma(\calh\to G\times^KV)^G,
\qquad \psi\mapsto X^\psi,
\end{equation}
where the vector field $X^\psi$ is defined by:
\begin{equation*}
X^\psi([g,v]):=T\ev_{[g,v]}\left(\Ad(g)\psi(v)\right), \qquad
[g,v]\in G\times^KV,
\end{equation*}
is a linear isomorphism.
\end{lemma}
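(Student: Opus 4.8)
The plan is to verify that the map in (\ref{Eq:HorizontalTrivialityMap}) is well-defined, linear, and bijective, constructing an explicit inverse. The linearity is immediate from the linearity of $T\ev_{[g,v]}$ and of the adjoint maps, so the content lies in the other three points.

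First I would check that $X^\psi$ is well-defined, i.e.\ independent of the representative $(g,v)$ of $[g,v]$: replacing $(g,v)$ by $(gk^{-1},k\cdot v)$ for $k\in K$, and using that $\psi$ is $K$-equivariant ($\psi(k\cdot v)=\Ad(k)\psi(v)$) together with the naturality of the exponential, one computes that $T\ev_{[gk^{-1},k\cdot v]}(\Ad(gk^{-1})\psi(k\cdot v))=T\ev_{[g,v]}(\Ad(g)\psi(v))$. This is the same type of computation as in Lemma \ref{Lemma:InvPsiM} and in the verification of diagram (\ref{Diag:BoundaryInclusion}). I would then check $X^\psi$ is $G$-equivariant, which follows directly from the identity $g'\cdot T\ev_{[g,v]}=T\ev_{[g'g,v]}\Ad(g')$ used in Theorem \ref{Thm:InclusionFunctor}. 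Next, that $X^\psi$ is \emph{horizontal}: using the identification (\ref{Eq:AssociatedBundleIdentification}) of $T(G\times^KV)$ with $G\times^K(V\times\ffq\times V)$, I would trace through diagram (\ref{Diag:EvaluationAndQuotient}) to express $X^\psi([g,v])=T\ev_{[g,v]}(\Ad(g)\psi(v))$ in the coordinates $[g,v,\xi,w]$; since $\psi$ takes values in $\ffq$, the $\ffq$-component is $\psi(v)$ and the $V$-component $w$ vanishes (because $\ev_{(1,v)}$ has constant $V$-coordinate, by (\ref{Eq:Evaluation})), so $X^\psi([g,v])=[g,v,\psi(v),0]$, which lies in $\ker\widehat\Phi=G\times^K(V\times\ffq\times\{0\})$, the horizontal bundle $\calh$.

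For bijectivity I would exhibit the inverse. Every equivariant section of $\calh\to G\times^KV$ has, in the coordinates above, the form $[g,v]\mapsto[g,v,\chi(g,v),0]$ for some smooth $\chi:G\times V\to\ffq$; $G$-invariance of the section forces $\chi(g'g,v)=\chi(g,v)$, so $\chi$ factors as $\chi(g,v)=\Ad(g)\widehat\psi(v)$ — here the $\Ad(g)$ appears because the $\ffq$-slot transforms by $\Ad(k)$ under the $K$-action (\ref{Eq:TangentAssociatedBundleAction}), and the well-definedness over $K$-orbits forces $\widehat\psi(k\cdot v)=\Ad(k)\widehat\psi(v)$, i.e.\ $\widehat\psi\in C^\infty(V,\ffq)^K$. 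The assignment $X\mapsto\widehat\psi$, concretely $\widehat\psi(v):=\mathrm{pr}_\ffq\bigl(T\ev_{(1,v)}^{-1}\circ(\text{lift of }X\text{ at }[1,v])\bigr)$ using (\ref{Eq:Evaluation}), is manifestly linear and is a two-sided inverse to $\psi\mapsto X^\psi$ by construction.

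The main obstacle is bookkeeping in the chain of identifications (\ref{Eq:AssociatedBundleIdentification})–(\ref{Diag:TangentAssociatedBundleIdentification}): one must keep straight how each of the four slots in $G\times^K(V\times\ffq\times V)$ transforms under the $K\times\ffk$-action (\ref{Eq:TangentAssociatedBundleAction}), and in particular correctly identify the vertical versus horizontal summands after the splitting $\ffg=\ffk\oplus\ffq$ is used. Once the formula $X^\psi([g,v])=[g,v,\psi(v),0]$ is established in these coordinates, every remaining claim — horizontality, equivariance, and the construction of the inverse — reads off essentially immediately, so I would concentrate the care there and treat the rest as routine.
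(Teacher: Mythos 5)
Your proposal is correct in substance, and it rests on the same two ingredients as the paper's proof, namely the trivialization of $\calh\,|\,V$ by $(v,\xi)\mapsto T\ev_{[1,v]}(\xi)$ and the fact that a $G$-equivariant section of $\calh\to G\times^KV$ is determined by its restriction to the slice $\{[1,v]\mid v\in V\}$. The difference is one of organization: you carry out a direct verification in the coordinates of (\ref{Eq:AssociatedBundleIdentification}), establishing $X^\psi([g,v])=[g,v,\psi(v),0]$ and then constructing the inverse by reading off the $\ffq$-slot over the slice, whereas the paper factors the map (\ref{Eq:HorizontalTrivialityMap}) through the commutative diagram (\ref{Diag:HorizontalTriviality}) as the composite $\epsilon\circ\eta_*\circ\left(1_V\times(-)\right)$ of three linear isomorphisms (pushforward inverse to $\pr_V$, pushforward of the bundle isomorphism $\eta$, and equivariant extension with inverse the restriction along $j$), so that well-definedness, equivariance, and horizontality never have to be checked by hand. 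Your route buys the explicit coordinate formula, which makes the inverse transparent; the paper's route buys brevity, since the only computation needed is that the composite equals $\psi\mapsto X^\psi$.

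One bookkeeping slip should be fixed in your inverse construction. You write that $G$-invariance forces $\chi(g'g,v)=\chi(g,v)$ and then conclude $\chi(g,v)=\Ad(g)\widehat\psi(v)$; these two statements are incompatible. In the coordinates $[g,v,\xi,0]$, equivariance makes the $\ffq$-slot independent of $g$, i.e.\ $\chi(g,v)=\widehat\psi(v)$, and the relation $[g,v,\xi,0]=[gk^{-1},k\cdot v,\Ad(k)\xi,0]$ is what forces $\widehat\psi(k\cdot v)=\Ad(k)\widehat\psi(v)$; the factor $\Ad(g)$ only appears when $[g,v,\widehat\psi(v),0]$ is rewritten as $T\ev_{[g,v]}\left(\Ad(g)\widehat\psi(v)\right)$. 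Your final recipe, reading off the $\ffq$-component at $[1,v]$, is the correct two-sided inverse, so the argument goes through once this is stated consistently.
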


\begin{remark}
Recall from Remark \ref{Rem:ConnectionFromSplitting} that the total space of the horizontal bundle $\calh$ in the statement of Lemma \ref{Lemma:HorizontalTriviality} is canonically a vector bundle over $G\times^KV$ but an associated bundle over $G/K$.
Since $\calh:=G\times^K(V\times \ffq)$ is an associated bundle over $G/K$, it is a standard fact that there is a bijection:
\begin{equation*}
\Gamma\Big(\calh\to G/K \Big) \cong C^\infty(G,V\times\ffq)^K,
\end{equation*}
where the space on the left-hand side is the space of sections of the associated bundle projection $\calh\to G/K$ (see, for example, \cite[Theorem~10.12]{KMS93}).
Lemma \ref{Lemma:HorizontalTriviality} is a similar result, but for the space of $G$-equivariant sections of the vector bundle projection $\calh\to G\times^KV$.
\end{remark}

\begin{proof}[Proof of Lemma \ref{Lemma:HorizontalTriviality}]
We show that the map in (\ref{Eq:HorizontalTrivialityMap}) is such that the following diagram commutes, where all the other maps are linear isomorphisms:
\begin{equation}\label{Diag:HorizontalTriviality}
\begin{gathered}
\xy
{(-20,10)}*+{C^\infty(V, \ffq)^K} = "1";
{(20,10)}*+{\Gamma(\calh)^G} = "2";
{(-20,-10)}*+{\Gamma(V\times \ffq)^K} = "3";
{(20,-10)}*+{\Gamma(\calh \,|\, V)^K} = "4";
{\ar@{->} "1";"2"};
{\ar@{->}_{1_{V}\times \,(-)} "1";"3"};
{\ar@{->}_{\eta_*} "3";"4"};
{\ar@{->}_{\epsilon} "4";"2"};
\endxy
\end{gathered}
\end{equation}
where $V\times \ffq \xrightarrow{\pr_{V}} V$ is a trivial bundle with the action of the group $K$ on the total space given by:
\begin{equation*}
k\cdot (v,\xi) := (k\cdot v,\,\Ad(k)\xi), \qquad
k\in K,\, (v,\xi) \in  V\times\ffq,
\end{equation*}
and the action of $K$ on $V$ being the given representation of $K$ on $V$.

The map $1_V\times\,(-)$ in (\ref{Diag:HorizontalTriviality}) is the map defined by $\psi \mapsto 1_{V}\times\psi$.
This map is also a linear isomorphism since its inverse is given by the pushforward of the bundle projection $\pr_{V}: V\times\ffq\to V$.

To define the map $\eta_*$, note that the restricted bundle $\calh|\, V\to V$ is trivializable:
\begin{equation*}
\calh|V
\cong K\times ^K (V\times \ffq)
\cong  V\times \ffq.
\end{equation*}
Explicitly, the isomorphism is given by:
\begin{equation*}
\eta: V\times\ffq \to\calh |\, V,\qquad
(v,\xi)\mapsto T\ev_{[1,v]}(\xi).
\end{equation*}
The map $\eta_*$ is the pushforward of this diffeomorphism $\eta$.
The linearity of the map $\eta$ can be verified directly.
The inverse of $\eta_*$ is given by the pushforward of the inverse of $\eta$.

The map $\epsilon$ in (\ref{Diag:HorizontalTriviality}) corresponds to equivariant extension and is defined by $X\mapsto \epsilon (X)$ where:
\begin{equation}\label{Eq:EquivariantExtension}
\epsilon (X)([g,v]):=g\cdot X([1,v]),
\end{equation}
where the action is as in Notation (\ref{Notation}.\ref{Not:TangentAction}).
The linearity can be verified directly.
On the other hand, the inverse of the map $\epsilon$ is the pullback by the slice embedding $j: V\hookrightarrow(G\times^KV)$ defined by $j(v):=[1,v]$.
By all of the above, the map in (\ref{Eq:HorizontalTrivialityMap}) can be factored as in (\ref{Diag:HorizontalTriviality}) into linear isomorphisms, completing the proof.
\end{proof}

\begin{theorem}\label{Thm:NaturalIsomorphism}
Let $V$ be a representation of a compact Lie subgroup $K$ of a Lie group $G$, let $E:\bbx(V)^K\to\bbx(G\times^KV)^G$ be the functor of Theorem \ref{Thm:InclusionFunctor}, and let $P:\bbx(G\times^KV)^G\to\bbx(V)^K$ be a choice of functor as in Theorem \ref{Thm:ProjectionFunctor} corresponding to an equivariant splitting $\ffg=\ffk\oplus\ffq$.
Then there exists a natural isomorphism $h:EP\cong 1_{\bbx(G\times^KV)^G}$.
That is, for any choice of functor $P$ as in Theorem \ref{Thm:ProjectionFunctor}, there is a linear map:
\begin{equation}\label{Eq:TheMaph}
h:\ffX(G\times^KV)^G\to C^\infty(G\times^KV,\ffg)^G,\qquad
X\mapsto h(X),
\end{equation}
such that:
\begin{equation}\label{Eq:DecompositionRevisited}
X=E_0(P_0(X))+\partial(h(X)),\qquad
X\in\ffX(G\times^KV)^G.
\end{equation}
Furthermore, for every equivariant vector field $X\in\ffX(G\times^KV)^G$, the restriction of the infinitesimal gauge transformation $h(X):G\times^KV\to\ffg$ to $\{[1,v]\mid v\in V\}\cong V$ takes values in $\ffq$.
\end{theorem}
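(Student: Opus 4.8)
The goal is to construct a linear map $h:\ffX(G\times^KV)^G\to C^\infty(G\times^KV,\ffg)^G$ realizing the natural isomorphism $EP\cong 1$, i.e.\ satisfying (\ref{Eq:DecompositionRevisited}), and then verify the stated restriction property. The strategy is to isolate, for a given equivariant vector field $X$, the ``difference'' $X-E_0(P_0(X))$, show it is an equivariant \emph{vertical} vector field that is moreover \emph{horizontal} for the connection $\Phi$ (in the sense of Lemma \ref{Lemma:HorizontalTriviality}, i.e.\ it lies in $\Gamma(\calh)^G$), and then use the linear isomorphism of Lemma \ref{Lemma:HorizontalTriviality} to produce an element of $C^\infty(V,\ffq)^K$, which we finally push forward to an infinitesimal gauge transformation on $G\times^KV$ via the extension map $E_1$ (restricted appropriately, since $\ffq\subseteq\ffg$).

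\textbf{Step 1: The difference is vertical.} First I would observe that $X-E_0(P_0(X))$ is vertical in $G\times^KV\to G/K$. Indeed, using the identification $T(G\times^KV)\cong G\times^K(V\times\ffq\times V)$ of Remark \ref{Rem:ConnectionFromSplitting} and the description of the vertical bundle as $G\times^K(V\times\{0\}\times V)$, one checks that the $\ffq$-component of $X$ at $[g,v]$ is precisely what $\Phi$ records and hence what $P_0$ captures after restriction to the slice; by construction $E_0(P_0(X))$ has the same $\ffq$-component (and is itself vertical), so the difference has zero $\ffq$-component, i.e.\ is vertical. This is mostly an unwinding of the definitions of $P_0$ (via $j^*(\Phi\circ X)$) and $E_0$ together with diagram (\ref{Diag:ConnectionDefined}) and the identifications in Remark \ref{Rem:ConnectionFromSplitting}.

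\textbf{Step 2: The difference comes from a horizontal field, hence from $C^\infty(V,\ffq)^K$.} Here is the subtle point. A vertical equivariant vector field need not be of the form $\partial(\psi)$ for an infinitesimal gauge transformation $\psi$ — the obstruction being that $T\ev_{[g,v]}$ on the slice has kernel $\ffk_v$ (the isotropy Lie algebra) and image only $T_{[g,v]}(G\cdot[g,v])$, which is strictly smaller than the full vertical space $T_{[g,v]}(\{g\}\times^K V)$ unless one is careful. The resolution is that a \emph{generic} vertical equivariant field is not what we get: the difference $X-E_0(P_0(X))$ is vertical \emph{and tangent to the $G$-orbits} — one checks this because $X$ itself, after subtracting the genuinely transverse part $E_0(P_0(X))$, has its remaining component lying in $T(G\cdot[g,v])$. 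Actually the cleanest route: show $X - E_0(P_0(X))$ lies in the image of the map $\psi\mapsto X^\psi$ of Lemma \ref{Lemma:HorizontalTriviality}. So I would verify that the difference is (a) equivariant, (b) at each point $[g,v]$ equal to $T\ev_{[g,v]}(\Ad(g)\zeta)$ for some $\zeta\in\ffq$ depending smoothly and $K$-equivariantly on $v$. Part (b) reduces, by $G$-equivariance, to analyzing the difference at slice points $[1,v]$, where $E_0(P_0(X))([1,v]) = (Tj)(j^*(\Phi\circ X))(v) = \Phi_{[1,v]}(X([1,v]))$; so $X([1,v]) - E_0(P_0(X))([1,v]) = (1-\Phi_{[1,v]})(X([1,v]))$ is the \emph{horizontal} part of $X([1,v])$ relative to $\Phi$. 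By Lemma \ref{Lemma:HorizontalTriviality} (specifically the isomorphism $\eta$ and its $G$-equivariant extension), this horizontal vector equals $T\ev_{[1,v]}(\zeta_v)$ for a unique $\zeta_v\in\ffq$, smooth and $K$-equivariant in $v$. Set $\psi^\circ(v):=\zeta_v$, an element of $C^\infty(V,\ffq)^K$.

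\textbf{Step 3: Define $h$ and conclude.} Define $h(X):=E_1(\psi^\circ)$ where $E_1$ is the extension of infinitesimal gauge transformations from Theorem \ref{Thm:InclusionFunctor} applied to $\psi^\circ$ viewed via $\ffq\hookrightarrow\ffg$; explicitly $h(X)([g,v])=\Ad(g)\,\psi^\circ(v)$, which is an element of $C^\infty(G\times^KV,\ffg)^G$ by the same computation as in Theorem \ref{Thm:InclusionFunctor}. Then $\partial(h(X))([g,v]) = T\ev_{[g,v]}(\Ad(g)\psi^\circ(v)) = X^{\psi^\circ}([g,v]) = X([g,v]) - E_0(P_0(X))([g,v])$ by Step 2, giving (\ref{Eq:DecompositionRevisited}). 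Linearity of $h$ follows from linearity of $P_0$, $E_0$, of the isomorphism in Lemma \ref{Lemma:HorizontalTriviality}, and of $E_1$. Naturality ($h(\psi\cdot X) = \psi\cdot$-compatible, i.e.\ $h$ gives a natural isomorphism $EP\cong 1$) follows from the diagrams (\ref{Diag:BoundaryInclusion}) and (\ref{Diag:BoundaryProjection}) together with $PE=1$: one computes $h(X+\partial(\psi)) - h(X)$ and shows it corresponds to $\psi - E_1(P_1(\psi))$, the discrepancy between $\psi$ and its ``vertical reconstruction,'' which is exactly what a natural isomorphism $EP\cong 1$ requires. Finally, the last assertion is immediate from the construction: $h(X)|_{\{[1,v]\}} = h(X)([1,v]) = \Ad(1)\psi^\circ(v) = \psi^\circ(v) \in \ffq$.

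\textbf{Main obstacle.} The delicate step is Step 2 — establishing that $X - E_0(P_0(X))$, which is manifestly vertical, is actually realized by the map $\psi\mapsto X^\psi$ of Lemma \ref{Lemma:HorizontalTriviality}, i.e.\ that its value at each slice point is a \emph{horizontal} tangent vector in the sense of that lemma and lies in the image of $T\ev$. The point is that ``vertical for $\pi:G\times^KV\to G/K$'' and ``tangent to $G$-orbits'' nearly coincide here but one must use that $E_0(P_0(X))$ has been designed to absorb exactly the $\Phi$-vertical ($=$ fiber-direction) part, leaving the $\Phi$-horizontal part of $X$ at the slice, which Lemma \ref{Lemma:HorizontalTriviality} identifies with $C^\infty(V,\ffq)^K$. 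Keeping the two notions of ``vertical/horizontal'' (for $G\times^KV\to G/K$ via $\Phi$, versus the decomposition $T(G\cdot m) = (\text{fiber})\oplus(\text{orbit-direction})$) straight, and tracking the $K$-equivariance of $v\mapsto\zeta_v$ through the chain of identifications in Remark \ref{Rem:ConnectionFromSplitting}, is where the real care is needed; everything else is bookkeeping with the already-established functoriality and the linear-algebra lemma.
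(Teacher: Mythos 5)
Your construction is essentially the paper's proof: you form $\alpha(X)=X-E_0(P_0(X))$, identify it with an element of $C^\infty(V,\ffq)^K$ via Lemma \ref{Lemma:HorizontalTriviality}, extend equivariantly by $[g,v]\mapsto\Ad(g)\psi^\circ(v)$ to define $h(X)$, and verify linearity, the decomposition, the same naturality identity $h(X+\partial(\psi))+E_1(P_1(\psi))=\psi+h(X)$, and the restriction property, exactly as in the paper. One slip to fix: in Step 1 the difference $X-E_0(P_0(X))$ is \emph{horizontal}, not vertical (the connection $\Phi$ records the fiber/vertical component, which $E_0(P_0(X))$ absorbs, so the difference has zero vertical part and retains the $\ffq$-component), but your Step 2 computation $(1-\Phi_{[1,v]})\bigl(X([1,v])\bigr)$ is the correct statement and is all the construction actually uses.
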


\begin{proof}
Let $\ffg=\ffk\oplus\ffq$ be the $K$-equivariant splitting giving the choice functor $P$, let $\Phi\in\Omega^1(G\times^KV;\calv(G\times^KV))^G$ be the induced connection on the bundle $G\times^KV\to G/K$, and let $\calh\to G\times^KV$ be the corresponding horizontal bundle (see Remark \ref{Rem:ConnectionFromSplitting}).
We define the map $h$ in (\ref{Eq:TheMaph}) as the composition filling in the dashed arrow in the following diagram:
\begin{equation}\label{Diag:NatIso}
\begin{gathered}
\xy
{(-30,0)}*+{\ffX(G\times^KV)^G} = "1";
{(30,0)}*+{C^\infty(G\times^KV,\ffg)^G} = "2";
{(-30,15)}*+{\Gamma(\calh)^G} = "3";
{(30,15)}*+{C^\infty(V,\ffg)^K} = "4";
{(0,30)}*+{C^\infty(V,\ffq)^K} = "5";
{\ar@{-->}^{h} "1";"2"};
{\ar@{->}^{\alpha} "1";"3"};
{\ar@{->}^{\beta} "3";"5"};
{\ar@{->}^{\iota_*} "5";"4"};
{\ar@{->}^{\epsilon} "4";"2"};
\endxy
\end{gathered}
\end{equation}
where the maps in this diagram are defined as follows.

The map $\alpha$ is the map defined by $X\mapsto X-E_0(P_0(X))$.
This is well-defined since the vector field $X-E_0(P_0(X))$ is horizontal in the bundle $G\times^KV\to G/K$ because the space $\Gamma(\calh)^G$ is the kernel of the linear map $P_0$ and $P_0(E_0(Y))=Y$ for any $Y\in\ffX(V)^K$ by Theorem \ref{Thm:ExtendThenProject}.
The map $\beta$ is the inverse of the linear isomorphism in the statement of Lemma \ref{Lemma:HorizontalTriviality}.
The map $\iota_*$ is the pushforward of the canonical inclusion $\iota:\ffq\hookrightarrow \ffg$, where we have used the usual identifications.
Finally, the map $\epsilon$ is the equivariant extension map defined by $\psi\mapsto \epsilon(\psi)$ where:
\begin{equation}\label{Eq:EpsilonForIso}
\epsilon(\psi)([g,v]):=\Ad(g)\psi(v), \qquad
[g,v]\in G\times^KV.
\end{equation}

We now show that equation (\ref{Eq:DecompositionRevisited}) holds.
For this, note that for all $[g,v]\in G\times^KV$ we have that:
\begin{align*}
\partial & (\epsilon(\iota_*(\psi))) ([g,v]) \\
&=T \ev_{[g,v]}(\epsilon(\iota_*(\psi))([g,v)) \\
&=T \ev_{[g,v]}(\Ad(g)\iota_*(\psi(v)))) \\
&=T \ev_{[g,v]}(\Ad(g)(\psi(v))) \\
&= \beta^{-1}(\psi)([g,v]).
\end{align*}
Hence, the following diagram commutes:
\begin{equation*}
\xy
{(-26,10)}*+{C^\infty(V,\ffq)^K} = "1";
{(26,10)}*+{\Gamma(\calh)^G} = "2";
{(-26,-10)}*+{C^\infty(G\times^KV,\ffg)^G} = "3";
{(26,-10)}*+{\ffX\left(G\times^KV\right)^G} = "4";
{\ar@{->}^{\beta^{-1}} "1";"2"};
{\ar@{->}_{\epsilon\circ\iota_*} "1";"3"};
{\ar@{->}_{\partial} "3";"4"};
{\ar@{->}^{} "2";"4"};
\endxy
\end{equation*}
where the right vertical map is the canonical inclusion (which is well-defined since $\Gamma(\calh)^G=\ker P_0$).
Therefore, for all equivariant vector fields $X$ on the associated bundle $G\times^KV$ we have that:
\begin{equation*}
\partial(h(X))
=\partial\circ\epsilon\circ\iota_*\circ\beta\circ\alpha(X)
=\beta^{-1}\circ\beta\circ\alpha(X)
=\alpha(X)
=X-E_0(P_0(X)),
\end{equation*}
which is what we needed.
That is, equation (\ref{Eq:DecompositionRevisited}) holds, which says that for all objects $X$ of the category of equivariant vector fields $\bbx(G\times^KV)^G$ we have an isomorphism $h(X)$ with source $E_0(P_0(X))$ and target $X$.

It remains to verify that this transformation is natural.
That is, given equivariant vector fields $X$ and $Y$ on $G\times^KV$ and an infinitesimal gauge transformation $\psi:G\times^KV\to \ffg$ such that $Y=X+\partial(\psi)$, naturality of the transformation $h$ consists of verifying that the following diagram commutes:
\begin{equation}\label{Diag:FirstNaturalitySquare}
\xy
{(-26,10)}*+{E_0(P_0(X))} = "1";
{(26,10)}*+{X} = "2";
{(-26,-10)}*+{E_0(P_0(Y))} = "3";
{(26,-10)}*+{Y} = "4";
{\ar@{->}^{\Big(E_0(P_0(X)),\,h(X)\Big)} "1";"2"};
{\ar@{->}_{\Big(E_0(P_0(X),\,E_1(P_1(\psi))\Big)} "1";"3"};
{\ar@{->}_{\Big(E_0(P_0(Y)),\,h(Y)\Big)} "3";"4"};
{\ar@{->}^{(X,\psi)} "2";"4"};
\endxy
\end{equation}
where the arrows represent isomorphisms in the category $\bbx(G\times^KV)^G$ of equivariant vector fields on $G\times^KV$ (see Remark \ref{Rem:ActionGroupoids}).
Since composition of morphisms in $\bbx(G\times^KV)^G$ corresponds to addition of the infinitesimal gauge transformations, the equality of morphisms in diagram (\ref{Diag:FirstNaturalitySquare}) corresponds to the equality:
\begin{equation*}
\Big(E_0(P_0(X)),\, h(Y) + E_1(P_1(\psi))\Big)
=
\Big(E_0(P_0(X)),\, \psi+h(X)\Big).
\end{equation*}
Thus, naturality amounts to verifying the equation:
\begin{equation}\label{Eq:Naturality}
h\Big(X+\partial(\psi)\Big) + E_1(P_1(\psi)) = \psi + h(X),
\end{equation}
where $X$ is an arbitrary equivariant vector field on $G\times^KV$ and $\psi$ is an arbitrary infinitesimal gauge transformation on $G\times^KV$.

To verify equation (\ref{Eq:Naturality}), we first need to make an observation.
Let $\varphi\in C^\infty(G\times^KV,\ffq)^K$ be an infinitesimal gauge transformation on $G\times^KV$ such that $\phi(v)\in\ffq$ for all $v\in V$.
We can pull this back by the embedding $j:V\hookrightarrow G\times^KV$ to obtain an infinitesimal gauge transformation $\varphi|\,\in C^\infty(V,\ffq)^K$ on $V$ defined by:
\begin{equation}\label{Eq:RestrictedPath}
\varphi | (v) := \varphi(j(v)) \equiv \varphi([1,v]),\qquad
v\in V.
\end{equation}
Applying the map $\beta^{-1}$ of Lemma \ref{Lemma:HorizontalTriviality} returns the equivariant field induced by the unrestricted map $\varphi$.
That is, $\beta^{-1}(\varphi|)=\partial(\varphi)$, which in turn implies that $\varphi | = \beta(\partial(\varphi))$.
Now, if $\psi\in C^\infty(G\times^KV,\ffg)^G$ is an arbitrary infinitesimal gauge transformation, then the infinitesimal gauge transformation $\psi- E_1(P_1(\psi))$ is a map in $C^\infty\left(G\times^KV,\ffq\right)^K$ by the definition of $P_1$ (see the statement of Theorem \ref{Thm:ProjectionFunctor}).
Hence, we have that:
\begin{equation}\label{Eq:RestrictedPathOfInterest}
\beta\partial\Big(\left(\psi- E_1(P_1(\psi))\right)\Big) = \Big(\psi- E_1(P_1(\psi)) \Big)\Big|,
\end{equation}
where the map $\Big(\psi- E_1(P_1(\psi)) \Big)\Big|$ is a restriction as in (\ref{Eq:RestrictedPath}).

With this observation in hand, we can proceed to verify the naturality of $h$.
Let $X$ be an arbitrary equivariant vector field on $G\times^KV$ and let $\psi$ be an arbitrary infinitesimal gauge transformation on $G\times^KV$. 
Then note that:
\begin{align*}
h\Big(X&+\partial(\psi)\Big) + E_1(P_1(\psi))\\
&=h(X) + h(\partial(\psi)) + E_1(P_1(\psi)) 
\qquad\qquad \text{by the linearity of }h\\
&=h(X) + \epsilon\iota_*\beta\alpha(\partial(\psi)) + E_1(P_1(\psi)) 
\qquad\qquad \text{by (\ref{Diag:NatIso})}\\
&=h(X) + \epsilon\iota_*\beta\Big(\partial\Big(\psi+E_1(P_1(\psi))\Big)\Big) \\
&\qquad\qquad\qquad\qquad\qquad\qquad + E_1(P_1(\psi)) \qquad \text{by the definition of }\alpha\\
&=h(X) + \epsilon\iota_* \Big( \Big(\psi- E_1(P_1(\psi)) \Big)\Big|\, \Big) + E_1(P_1(\psi)) 
\qquad\text{by (\ref{Eq:RestrictedPathOfInterest})}\\
&=h(X) + \psi -E_1(P_1(\psi)) + E_1(P_1(\psi)) \\
&\qquad\qquad\qquad\qquad\qquad\qquad\qquad
\text{by the definition of }\iota_*\text{ and }\epsilon\text{ in (\ref{Eq:EquivariantExtension})}\\
&= h(X) + \psi,
\end{align*}
which confirms the naturality equation (\ref{Eq:Naturality}).
This proves that $h$ is a {\it natural} transformation.
It is actually a natural {\it isomorphism} since every morphism in the category $\bbx(G\times^KV)^G$ is invertible (recall it is an action groupoid).

Finally, for every $X\in\ffX(G\times^KV)^G$ and for any $v\in V$:
\begin{align*}
&h(X)([1,v])\\
&= \left((\epsilon\circ\iota_*\circ\beta\circ\alpha)(X)\right)([1,v])
\qquad \text{by diagram (\ref{Diag:NatIso})}\\
&=\beta(\alpha(X))(v),
\end{align*}
where the second equality follows since $\iota_*$ is an inclusion and by the definition of $\epsilon$ in (\ref{Eq:EpsilonForIso}).
Hence, by the definition of $\beta$, the restriction $h(X)|V$ of the infinitesimal gauge transformation $h(X):G\times^KV\to\ffg$ to $\{[1,v]\in G\times^KV \mid v\in V\} \cong V$ is such that:
\begin{equation*}
h(X)|V = \beta(\alpha(X)) \in C^\infty(V,\ffq)^K,
\end{equation*}
meaning that $h(X)|V$ takes values in $\ffq$ as claimed.
\end{proof}

We now use all of the results proved so far in this subsection to prove Theorem \ref{Thm:EquivalenceVFs}.

\begin{proof}[Proof of Theorem \ref{Thm:EquivalenceVFs}]
We prove the equivalence $\bbx(G\times^KV)^G\simeq \bbx(V)^K$ by exhibiting functors and natural isomorphisms as in Notation \ref{Notation}.\ref{Not:Categories}.
There is a canonical functor $E:\bbx(V)^K\to\bbx(G\times^KV)^G$ by Theorem \ref{Thm:InclusionFunctor}, and there is a choice of functor $P:\bbx(G\times^KV)^G\to\bbx(V)^K$ by Theorem \ref{Thm:ProjectionFunctor}. 
For such a pair of functors we proved in Theorem \ref{Thm:ExtendThenProject} that $PE = 1_{\bbx(V)^K}$ , and in Theorem \ref{Thm:NaturalIsomorphism} we constructed a natural isomorphism $EP\cong 1_{\bbx(G\times^KV)^G}$.
Thus, we have constructed an equivalence $\bbx(G\times^KV)^G\simeq \bbx(V)^K$.
\end{proof}

We can now prove the global decomposition in Theorem \ref{Thm:Decomposition}:

\begin{proof}[Proof of \ref{Thm:Decomposition}]
Let $V$ be the canonical slice representation $T_mM/T_m(G\cdot m)$ of the isotropy $K$ of the relative equilibrium $m$ of $X$.
By Theorem \ref{Thm:EquivalenceVFs} the canonical inclusion functor $E:\bbx(V)^K\to \bbx(G\times^KV)^G$ is part of an equivalence of categories $\bbx(V)^K\simeq \bbx(G\times^KV)^G$.
Let $P:\bbx(G\times^KV)\to \bbx(V)^K$ be a choice of projection functor as in Theorem \ref{Thm:ProjectionFunctor}.
Furthermore, let $D$ be a $K$-invariant neighborhood of the origin in $V$ such that there exist a $K$-equivariant embedding $\iota:D\hookrightarrow M$ such that (\ref{Diag:SliceEmbeddingDiagram}) holds.
It is straightforward to verify that all the maps in the equivalence $\bbx(V)^K\simeq \bbx(G\times^KV)^G$ restrict to give an equivalence $\bbx(D)^K\simeq \bbx(G\times^KD)^G$.
In particular, there exists an infinitesimal gauge transformation $h^X:G\cdot D\to\ffg$ such that:
\begin{equation}\label{Eq:FirstStep}
X|_
{G\cdot D} = E_0P_0| ( X|_{G\cdot D} ) + \partial (h^X).
\end{equation}

We now extend the infinitesimal gauge transformation $h^X$ to an infinitesimal gauge transformation $\psi^X$ on $M$.
For this, we construct a particular $K$-invariant smooth bump function.
Let $B$ be a $K$-invariant open ball around the origin $(0,0)\in V$ contained in $D$, let $\widehat B$ be a $K$-invariant closed ball in $D$ containing the ball $B$, and consider a $K$-invariant smooth bump function $\mu:D\to \bbr$.
That is, the function $\mu$ is such that:
\begin{equation}\label{Eq:KinvBump}
\begin{split}
\mu(v)=\begin{cases}
\phantom{0 \le \mu(}1 & \text{ if } v\in B \\
0 \le \mu(v) \le 1 & \text{ if }  v\in \widehat B - B \\
\phantom{0 \le \mu(}0 & \text{ if } v\in D-\widehat B
\end{cases}
\end{split}
\end{equation}
and $\mu(k\cdot v)=\mu(v)$ for all $k\in K$ and $v\in D$.
To construct such a bump function, just take any smooth bump function satisfying (\ref{Eq:KinvBump}), then average that bump function with respect to the action of $K$ to obtain the desired $K$-invariant smooth bump function, which is possible since $K$ is a compact Lie group and the sets $B$ and $\widehat B$ are $K$-invariant.

We can extend the bump function $\mu:D\to\bbr$ to a bump function $\epsilon (\mu): G\cdot D\to \bbr$ defined by:
\begin{equation*}
\epsilon(\mu)(g\cdot v):=\mu (v)
\end{equation*}
That is, $\epsilon(\mu)$ is a bump function that is $1$ on the $G$-invariant neighborhood $G\cdot B$ of $G\cdot m$ and $0$ for points in $G\cdot D$ outside $G\cdot\widehat B$.
We verify that this is well-defined.
First, note that if $g\cdot \iota(v)=g'\cdot \iota(v')\in G\cdot D$, then $g^{-1}g'\in K$ since $g^{-1}g'\cdot \iota(v')=\iota(v)\in \iota(D)$ and $\iota(D)\cong D$ is a slice.
Hence, since $v=g^{-1}g'\cdot v'$ because $\iota$ is a $K$-equivariant embedding and $g^{-1}g'\in K$, we have that:
\begin{equation*}
\mu(v)
=\mu\left(g^{-1}g'\cdot v'\right)
=\mu\left(v'\right),
\end{equation*}
where the last equality follows since $\mu$ is $K$-invariant.
Hence, $\epsilon(\mu)$ is well-defined on $G\cdot D$, and is a bump function as desired.
Furthermore, the bump function $\epsilon(\mu)$ is $G$-invariant.
Let $g\cdot \iota(v)\in G\cdot D$ and let $g'\in G$.
Then:
\begin{equation*}
\epsilon(\mu)(g' \cdot (g\cdot \iota(v)))
=\epsilon(\mu)(g'g\cdot \iota(v))
=\mu(v)
=\epsilon(\mu)(g\cdot \iota(v)).
\end{equation*}
Hence, $\epsilon(\mu):G\cdot D\to\bbr$ is a $G$-invariant bump function.

We can now define the desired infinitesimal gauge transformation $\psi^X:M\to \ffg$ by:
\begin{equation*}
\psi^X(m):=\begin{cases}
\left(\epsilon(\mu)(m)\right)h^X(m) & \text{ for }m\in G\cdot D\\
0 & \text{for }m\in M-G\cdot D.
\end{cases}
\end{equation*}
This map is well-defined since $\epsilon(\mu)$ is $0$ on $G\cdot D - G\cdot \widehat B$, meaning that the product $\epsilon(\mu)h^X$ extends by $0$ to all of $M$ as desired.
It suffices to check on $G\cdot D$ to verify that this map is equivariant.
Thus, let $g\in G$ and $ p\in G\cdot D$, then:
\begin{align*}
\psi^X(g\cdot p)
&=\left(\epsilon(\mu)(g\cdot p)\right) h^X(g\cdot p) \\
&=\left(\epsilon(\mu)(p)\right) h^X(g\cdot p) \qquad \text{since }\epsilon(\mu) \text{ is $G$-invairant}\\
&=\left(\epsilon(\mu)(p)\right) \Ad(g)\left(h^X(p)\right) \qquad\text{by the equivariance of }h^X\\
&=\Ad(g)\left(\left(\epsilon(\mu)(p)\right)\, h^X(p)\right) \qquad \text{by the linearity of }\Ad(g)\\
&=\Ad(g)\left(\psi^X(p)\right).
\end{align*}
Thus, $\psi^X$ is an infinitesimal gauge transformation on $M$.

Now note that $Y^X:=X-\partial(\psi^X)$ is an equivariant vector field on $G\times^KV$ such that for $p\in G\cdot B$ we have:
\begin{align*}
Y^X(p)
&=X(p)-\partial(\psi^X(p))\\
&=X(p)-\partial(h^X(p)) \qquad \text{ by the definition of }\psi^X\\
&=E_0(P_0(X(p))) \qquad \text{ by (\ref{Eq:FirstStep})}.
\end{align*}
Thus, the vector field $Y^X$ is transverse to the group orbit $G\cdot m$.
Furthermore, the vector $Y^X(m)=E_0(P_0(X(m)))$ is the vertical part of the vector $X(m)$ in the associated bundle $G\times^KD\to G/K$.
That is, it is tangent to the slice $\iota(D)$.
On the other hand, since $\iota(D)$ for the action at $m$, we have that:
\begin{equation*}
T_mM=T_m(G\cdot m)\oplus T_m\iota(D).
\end{equation*}
And since the point $m$ is a relative equilibrium of $X$ then $X(m)\in T_m(G\cdot m)$, meaning that the vertical part of the vector $X(m)$ is $0$.
That is, $Y^X(m)=E_0(P_0(X(m)))=0$, so $m$ is an equilibrium of the vector field $Y^X$.
Thus, we have constructed the decomposition:
\begin{equation*}
X=Y^X+\partial(\psi^X)
\end{equation*}
in the statement of the theorem.
\end{proof}

We show that different choices of projection functors lead to isomorphic vector fields (see Remark \ref{Rem:LiteratureObservation1}, and also compare with \cite[Lemma~3.17]{L15}):

\begin{proposition}\label{Prop:ProjectionChoice}
Let $V$ be a representation of a compact Lie subgroup $K$ of a Lie group $G$, let $\ffg=\ffk\oplus\ffq_1$ and $\ffg=\ffk\oplus\ffq_2$ be two $K$-equivariant splittings and let $P^1:\bbx(G\times^KV)^G\to \bbx(V)^K$ and $P^2:\bbx(G\times^KV)^G\to \bbx(V)^K$ be the two projection functors corresponding, respectively, to these splittings as in Theorem \ref{Thm:ProjectionFunctor}.
Then for every equivariant vector field $X\in\ffX(G\times^KV)^G$ we have that:
\begin{equation*}
P^1_0(X)=P^2_0(X)+\partial\left(P^1_1\left(h^2(X)\right)\right),
\end{equation*}
where $h^2:\ffX(G\times^KV)^G\to C^\infty(G\times^KV,\ffg)^G$ is the map corresponding to the natural isomorphism $E\circ P^2\cong 1_{\bbx(G\times^KV)^G}$.
In particular, the projected vector fields $P^1_0(X)$ and $P^2_0(X)$ on $V$ are isomorphic.
\end{proposition}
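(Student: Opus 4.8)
The plan is to apply the projection functor $P^1$ to the canonical decomposition of $X$ produced by $P^2$. By Theorem \ref{Thm:NaturalIsomorphism} applied to the functor $P^2$, the map $h^2$ furnishes an infinitesimal gauge transformation $h^2(X)\in C^\infty(G\times^KV,\ffg)^G$ with
\begin{equation*}
X = E_0\bigl(P^2_0(X)\bigr) + \partial\bigl(h^2(X)\bigr);
\end{equation*}
equivalently, $(h^2(X), E_0(P^2_0(X)))$ is a morphism in $\bbx(G\times^KV)^G$ with source $E_0(P^2_0(X))$ and target $X$.

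First I would apply the linear map $P^1_0$ to both sides of this identity. Using the linearity of $P^1_0$ together with the commutativity of diagram (\ref{Diag:BoundaryProjection}) for the functor $P^1$ — that is, $P^1_0\circ\partial = \partial\circ P^1_1$ — this yields
\begin{equation*}
P^1_0(X) = P^1_0\bigl(E_0(P^2_0(X))\bigr) + \partial\bigl(P^1_1(h^2(X))\bigr).
\end{equation*}
Next I would simplify the first term on the right using Theorem \ref{Thm:ExtendThenProject}: since $P^1E = 1_{\bbx(V)^K}$, in particular $P^1_0(E_0(Y)) = Y$ for every $Y\in\ffX(V)^K$, so $P^1_0(E_0(P^2_0(X))) = P^2_0(X)$. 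Substituting gives precisely
\begin{equation*}
P^1_0(X) = P^2_0(X) + \partial\bigl(P^1_1(h^2(X))\bigr),
\end{equation*}
which is the claimed formula. The ``in particular'' assertion is then immediate from Definition \ref{Def:IsoVF}: the vector fields $P^1_0(X)$ and $P^2_0(X)$ on $V$ differ by $\partial$ of the infinitesimal gauge transformation $P^1_1(h^2(X))\in C^\infty(V,\ffk)^K$, hence are isomorphic in $\bbx(V)^K$.

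I do not expect any analytic difficulty: the argument is entirely formal, and conceptually it only says that $P^1$, being a functor of action groupoids, carries the isomorphism $E_0(P^2_0(X))\to X$ given by $h^2$ to the isomorphism $P^2_0(X)\to P^1_0(X)$ whose underlying gauge transformation is $P^1_1(h^2(X))$. The one point that genuinely requires care is the bookkeeping: $h^2(X)$ is manufactured from the splitting $\ffg = \ffk\oplus\ffq_2$ via Theorem \ref{Thm:NaturalIsomorphism}, whereas the functor applied to it, $P^1$, comes from the other splitting $\ffg = \ffk\oplus\ffq_1$; the two must not be conflated, and one must invoke Theorem \ref{Thm:ExtendThenProject} — valid for \emph{any} choice of projection functor — to know that $P^1$ sends $E_0(P^2_0(X))$ back to $P^2_0(X)$.
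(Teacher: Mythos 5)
Your proof is correct, and it is a genuine streamlining of the paper's argument. The paper proves the same identity by writing down \emph{both} decompositions $X=E_0(P^1_0(X))+\partial(h^1(X))=E_0(P^2_0(X))+\partial(h^2(X))$, subtracting them to get $E_0\bigl(P^1_0(X)-P^2_0(X)\bigr)=\partial\bigl(h^2(X)-h^1(X)\bigr)$, and then applying $P^1_0$; this forces an extra step, namely showing that $P^1_1\bigl(h^1(X)\bigr)=0$, which the paper obtains from the kernel description of $P^1_1$ together with the final clause of Theorem \ref{Thm:NaturalIsomorphism} (that $h^1(X)$ restricted to the slice takes values in $\ffq_1$). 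You avoid that entirely by applying $P^1_0$ directly to the single $h^2$-decomposition of $X$ and using only the two facts that hold for \emph{any} projection functor: $P^1_0\circ\partial=\partial\circ P^1_1$ (diagram (\ref{Diag:BoundaryProjection})) and $P^1_0\circ E_0=\mathrm{id}$ (Theorem \ref{Thm:ExtendThenProject}). Both routes use the same underlying lemmas, but yours needs one less ingredient — the structural fact about the image of $h^1(X)$ on the slice plays no role — and it makes the conceptual content transparent: the functor $P^1$ simply transports the isomorphism $h^2(X):E_0(P^2_0(X))\to X$ to an isomorphism $P^2_0(X)\to P^1_0(X)$ with underlying gauge transformation $P^1_1(h^2(X))$. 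Your cautionary remark about not conflating the splitting that produces $h^2$ with the splitting defining $P^1$ is exactly the right bookkeeping point.
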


\begin{proof}
Let $X\in \ffX(G\times^KV)^G$ be an equivariant vector field.
By Theorem \ref{Thm:EquivalenceVFs}, there exist $h^1:\ffX(G\times^KV)^G\to C^\infty(G\times^KV,\ffg)^G$ and $h^2:\ffX(G\times^KV)^G\to C^\infty(G\times^KV,\ffg)^G$ such that:
\begin{equation*}
E_0(P^1_0(X)) + \partial(h^1(X)) = X = E_0(P^2_0(X)) + \partial(h^2(X)).
\end{equation*}
Therefore, by the linearity of $E_0$ and $\partial$, we have:
\begin{equation}\label{Eq:ChoiceEquality1}
E_0\left(P^1_0(X)-P^2_0(X)\right) = \partial\left(h^2(X)-h^1(X)\right).
\end{equation}
Now let $j:V\hookrightarrow G\times^KV$ be the embedding defined by $j(v):=[1,v]$, and observe that:
\begin{equation}\label{Eq:P1Kernel}
\ker \left(P^1_1\right)
=\left\{
\psi\in C^\infty(G\times^KV,\ffg)^G \mid \text{im}\left(\psi|j(V)\right) \subseteq \ffq_1
\right\}
\end{equation}
by the definition of $P^1_1$ (Theorem \ref{Thm:ProjectionFunctor}).
On the other hand, the image of the restriction to $j(V)$ of the map $h^1(X)\in C^\infty(G\times^KV,\ffg)^G$ is contained in $\ffq_1$ by (\ref{Diag:NatIso}).
Therefore, we have that $h^1(X)\in \ker P^1_1$ by (\ref{Eq:P1Kernel}).
Thus:
\begin{align*}
P^1_0(X)&-P^2_0(X)\\
&= P^1_0\left(E_0\left(P^1_0(X)-P^2_0(X)\right)\right) \qquad \text{since }P^1_0E_0 =\text{id} \text{ by Theorem \ref{Thm:ExtendThenProject}} \\
&= P^1_0\left( \partial\left(h^2(X)-h^1(X)\right) \right) \qquad \text{by (\ref{Eq:ChoiceEquality1})}\\
&=\partial\left ( P^1_1\left(h^2(X)-h^1(X)\right) \right) \qquad \text{by (\ref{Diag:BoundaryProjection})}\\
&=\partial\left ( P^1_1\left(h^2(X)\right)-P^1_1\left(h^1(X)\right) \right) \qquad \text{by the linearity of }P^1_1\\
&=\partial\left(P^1_1\left(h^2(X)\right)\right) \qquad\qquad \text{since } h^1(X)\in \ker P^1_1.
\end{align*}
This is what we wanted to prove.
\end{proof}

Next we show that the choice of slice leads to isomorphic vector fields.
For this it suffices to consider a slice inside an associated bundle as follows:

\begin{proposition}\label{Prop:SliceChoice}
Let $V$ be a representation of a compact Lie subgroup $K$ of a Lie group $G$, let $\ffg=\ffk\oplus\ffq$ be a $K$-invariant splitting, and let:
\begin{equation*}
P^V:\bbx(G\times^KV)^G\to \bbx(V)^K
\end{equation*}
be the equivariant projection functor with respect to the given splitting of $\ffg$ (Theorem \ref{Thm:ProjectionFunctor}).
Furthermore, suppose $D$ is a slice through $[1,0]\in G\times^KV$ for the action of $G$ on the associated bundle $G\times^KV$, let $\iota:D\hookrightarrow G\times^KV$ be the $K$-equivariant embedding of the slice, and let:
\begin{equation*}
P^D:\bbx(G\cdot\iota(D))^G\to \bbx(D)^K
\end{equation*}
be the equivariant projection functor with respect to the same splitting of $\ffg$ (Theorem \ref{Thm:ProjectionFunctor} and Remark \ref{Rem:ProjectionOnSlices}).
Then, after perhaps shrinking $D$, there exists a $K$-equivariant embedding $\phi:D\hookrightarrow V$ such that, for any equivariant vector field $X\in\ffX(G\cdot \iota(D))^G$, the vector fields $P^V_0(X)$ and $\phi_*\left(P^D_0(X)\right)$ are isomorphic on $V$.
In particular, we have:
\begin{equation}\label{Eq:SliceChoiceEquation}
P^V_0(X)=
\phi_*\Big(P^D_0(X)\Big)
+ \partial\Big( P^V_1\left(h^D(X)\right)\Big),
\end{equation}
where $h^D:\ffX(G\cdot \iota(D))^G\to C^\infty(G\cdot \iota(D),\ffg)^G$ is the map corresponding to the natural isomorphism $E^DP^D \cong 1_{\bbx(G\cdot\iota(D))^G}$ as in Theorem \ref{Thm:NaturalIsomorphism}.
\end{proposition}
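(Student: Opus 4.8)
The plan is to reduce the statement to the single functorial identity $P^V_0\circ E^D_0=\phi_*$ on $\ffX(D)^K$, and then to extract the displayed equation by applying $P^V_0$ to the decomposition supplied by Theorem \ref{Thm:NaturalIsomorphism}.

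First I would set up the two slice reductions of the $G$-invariant open set $U:=G\cdot\iota(D)$. On one hand, the slice embedding $\iota$ gives a $G$-equivariant diffeomorphism $G\times^KD\to U$, so Theorems \ref{Thm:InclusionFunctor}, \ref{Thm:ProjectionFunctor} and \ref{Thm:NaturalIsomorphism}, applied with $D$ in place of $V$ (see Remarks \ref{Rem:TubularNeighborhood} and \ref{Rem:ProjectionOnSlices}), produce functors $E^D,P^D$ and, for the given splitting $\ffg=\ffk\oplus\ffq$, a natural isomorphism whose associated map $h^D$ satisfies $X=E^D_0(P^D_0(X))+\partial(h^D(X))$ for all $X\in\ffX(U)^G$. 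On the other hand, after shrinking $D$ we may assume $U=G\times^KV_0$ where $V_0:=\{v\in V\mid[1,v]\in U\}$ is a $K$-invariant open neighborhood of the origin in $V$; then the slice reduction for $V_0\subseteq V$ with the same splitting gives a projection functor $\ffX(U)^G\to\ffX(V_0)^K$ (Theorem \ref{Thm:ProjectionFunctor}, Remark \ref{Rem:ProjectionOnSlices}), which I keep denoting $P^V$, consistently with the statement. Applying the linear map $P^V_0$ to the decomposition above, using its linearity and the identity $P^V_0\circ\partial=\partial\circ P^V_1$ (diagram (\ref{Diag:BoundaryProjection})), I obtain
\begin{equation*}
P^V_0(X)=P^V_0\big(E^D_0(P^D_0(X))\big)+\partial\big(P^V_1(h^D(X))\big).
\end{equation*}
So the proposition reduces to producing a $K$-equivariant embedding $\phi\colon D\hookrightarrow V$ with $P^V_0\circ E^D_0=\phi_*\colon\ffX(D)^K\to\ffX(V_0)^K$.

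To build $\phi$, I would observe that both $\iota(D)$ and a neighborhood of $[1,0]$ in the fiber $j(V)=\{[1,v]\mid v\in V\}$ of $\pi\colon G\times^KV\to G/K$ are slices through $[1,0]$ with isotropy $K$: the fiber $j(V)$ is transverse to the orbit at $[1,0]$ since, by Remark \ref{Rem:ConnectionFromSplitting}, the horizontal space of $\Phi$ at $[1,0]$ is $T\ev_{[1,0]}(\ffq)=T_{[1,0]}(G\cdot[1,0])$. By the uniqueness of slices up to $K$-equivariant diffeomorphism, a standard consequence of Palais's slice theorem (cf. \cite{Pa61,DK00}), after shrinking $D$ there is a $K$-equivariant diffeomorphism $\phi$ from $D$ onto a $K$-invariant neighborhood of the origin in $V$; I would pin it down by requiring that $\iota(d)$ and $[1,\phi(d)]$ lie on the same $G$-orbit for every $d\in D$, a normalization compatible with $K$-equivariance and exactly the one needed below.

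Finally I would verify $P^V_0\circ E^D_0=\phi_*$ directly. By Theorem \ref{Thm:InclusionFunctor}, $E^D_0(Y)$ is the unique $G$-equivariant vector field on $U$ that is vertical for the bundle $G\times^KD\to G/K$ and restricts to $(T\iota)\circ Y$ along $\iota(D)$; by Theorem \ref{Thm:ProjectionFunctor}, $P^V_0(Z)$ is obtained by applying the connection $\Phi$ to $Z$ along $j(V)$ and pulling back by $j$. Hence, for $v\in V_0$, writing $[1,v]=g\cdot\iota(d)$ with $g\in G$, $d\in D$, one finds
\begin{equation*}
P^V_0(E^D_0(Y))(v)=(Tj)^{-1}\,\Phi_{[1,v]}\big(g\cdot(T\iota)\,Y(d)\big),
\end{equation*}
and the normalization of $\phi$ is precisely what forces $d=\phi^{-1}(v)$ and the remaining linear map to equal $T_{\phi^{-1}(v)}\phi$; in particular $P^V_0\circ E^D_0$ is a linear isomorphism. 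I expect this last step — arranging $\phi$ so that the intertwining $P^V_0\circ E^D_0=\phi_*$ holds exactly, rather than merely up to isomorphism — to be the main obstacle: the difficulty is that the given slice $\iota(D)$ need not lie inside the fiber $j(V)$, so passing from $D$ to $V$ forces one to move along $G$-orbits, and one must check both that this can be carried out $K$-equivariantly and that it agrees with the connection-based projection $P^V_0$. This is the categorical analogue of \cite[Lemma~3.21]{L15}, now with the role of the natural isomorphism $h^D$ made explicit.
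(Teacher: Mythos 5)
Your opening reduction is correct and is essentially the paper's own closing computation: applying $P^V_0$ to $X=E^D_0(P^D_0(X))+\partial(h^D(X))$ and using linearity together with diagram (\ref{Diag:BoundaryProjection}) is exactly how the gauge term $\partial\big(P^V_1(h^D(X))\big)$ arises there, and the identity you reduce to, $P^V_0\circ E^D_0=\phi_*$, follows from the identity the paper actually proves, namely $E^D_0(Y)=E^V_0(\phi_*Y)$ for $Y\in\ffX(D)^K$, combined with $P^V_0E^V_0=\mathrm{id}$ (Theorem \ref{Thm:ExtendThenProject}). So the entire content of the proposition sits in the two steps you defer -- constructing $\phi$ and proving the intertwining -- and that is where your proposal has a genuine gap.

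First, the normalization ``$\iota(d)$ and $[1,\phi(d)]$ lie on the same $G$-orbit'' does not determine $\phi$: if $[1,w]\in G\cdot\iota(d)$ then so is $[1,k\cdot w]$ for every $k\in K$, so Palais uniqueness plus this condition yields many equivariant candidates, none of them tied to the splitting $\ffg=\ffk\oplus\ffq$; yet (\ref{Eq:SliceChoiceEquation}) is an exact identity with a \emph{fixed} gauge term $P^V_1(h^D(X))$, so it cannot hold for an arbitrary normalized $\phi$. The paper instead builds $\phi$ from the splitting: the $K$-equivariant local section of $G\to G/K$ with values in $\calo\subseteq\exp(\ffq)$ trivializes $G\times^KV\to G/K$ near $[1,0]$, $\phi$ is the $V$-component of $\iota$ in that trivialization, and this produces a specific $f:D\to\calo$ with $j(\phi(d))=f(d)\cdot\iota(d)$ as in (\ref{Eq:SliceDiffeoRelationship}). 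Second, your closing computation asserts the conclusion at precisely the delicate point. Differentiating $j\circ\phi$ at $d$ gives $T(j\circ\phi)Y(d)=T\big(f(d)_{G\times^KV}\big)T\iota\,Y(d)+T\ev_{[1,\phi(d)]}(\zeta_d)$, where $\zeta_d\in\ffg$ comes from differentiating $d\mapsto f(d)$; since $E^D_0(Y)([1,\phi(d)])=T\big(f(d)_{G\times^KV}\big)T\iota\,Y(d)$, applying $\Phi_{[1,\phi(d)]}$ removes only the $\ffq$-part of the discrepancy $T\ev_{[1,\phi(d)]}(\zeta_d)$ (Remark \ref{Rem:ConnectionFromSplitting}), so your claim that ``the remaining linear map equals $T_{\phi^{-1}(v)}\phi$'' requires an argument that the $\ffk$-component $\bbp(\zeta_d)$ does not contribute -- equivalently, the paper's key lemma $E^D_0(Y)=E^V_0(\phi_*Y)$ -- and this is exactly where the specific $\exp(\ffq)$-valued $f$ and the freedom to shrink $D$ must enter. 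You flag this as ``the main obstacle'' but supply no argument for it, so as written the proposal does not prove the proposition.
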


\begin{proof}
There are two main steps in this proof: (1) constructing the embedding $\phi:D\to V$, and (2) constructing an isomorphism $P^V_0(X) \cong \phi_*\Big(P^D_0(X)\Big)$ such that (\ref{Eq:SliceChoiceEquation}) holds.
Our argument for the first step follows the argument in \cite[Lemma~3.11]{L15}, while the second step is different (see also the comments at the beginning of this section).
Consider the principal $K$-bundle $\pi:G\to G/K$.
Using the $K$-invariant splitting $\ffg=\ffk\oplus\ffq$, there exists a $K$-invariant neighborhood $\calo\subseteq \exp(\ffq)$ of the identity $1\in G$ and a $K$-equivariant section of $\pi:G\to G/K$ given by:
\begin{equation*}
s: \calo/K \to \calo, \qquad gK \mapsto g.
\end{equation*}
This in turn trivializes the associated bundle $\varpi:G\times^KV\to G/K$ via:
\begin{equation*}
\varphi:\calo/K \times V \xrightarrow{\cong} \varpi^{-1}(\calo/K), \qquad (gK,v)\mapsto s(gK)\cdot j(v) = \left[s(gK),v\right].
\end{equation*}
By perhaps shrinking $D$ we may assume $\varphi^{-1}(\iota(D))\subseteq \calo/K \times V$ and since $D$ is a slice at $[1,0]\in G\times^KV$ we have:
\begin{equation*}
T_{(K,0)}(\calo/K \times V) = T_{(K,0)}(\calo/K) \oplus T_{(K,0)} D.
\end{equation*}
Thus, by perhaps shrinking $D$ again, we may assume without loss of generality that the restriction to $\varphi(\iota(D))$ of the projection $\pr_2:\calo/K\times V \to V$ is a $K$-equivariant diffeomorphism onto its image in $V$.
Thus, the desired map $K$-equivariant embedding is the map:
\begin{equation*}
\phi:=\left(\pr_2|D\right)^{-1}\circ \varphi^{-1} \circ \iota : D \to V.
\end{equation*}
Note in particular that $\phi$ is given by a map $f:D\to \calo$ such that:
\begin{equation}\label{Eq:SliceDiffeoRelationship}
j(\phi(d))= f(d)_{G\times^KV}(\iota(d)), \qquad d\in D,
\end{equation}
where $f(d)_{G\times^KV}: G\times^KV\to G\times^KV$ is the diffeomorphism given by $[g,v]\mapsto f(d)\cdot [g,v]$.

We now proceed to show that the map $\phi$ is as desired.
Given an equivariant vector field $X\in \ffX(G\cdot \iota (D))^G$, our first step is to relate the two different decompositions as in Theorem \ref{Thm:EquivalenceVFs} in this context.
Let:
\begin{align*}
&E^D_0:\ffX(G\cdot \iota(D))^G\hookrightarrow \ffX(D)^K \\
&E^V_0:\ffX(G\times^KV)^G\hookrightarrow \ffX(V)^K
\end{align*}
be the equivariant inclusions (Theorem \ref{Thm:InclusionFunctor}), and let:
\begin{align*}
&h^D : \ffX(G\cdot \iota(D))^G \to C^\infty(G\cdot\iota(D),\ffg)^G\\
&h^V : \ffX(G\times^KV)^G \to C^\infty(G\times^KV,\ffg)^G
\end{align*}
be the maps corresponding, respectively, to the natural isomorphisms $E^DP^D\cong 1_{\bbx(G\cdot\iota(D))^G}$ and $E^VP^V\cong 1_{\bbx(G\times^KV)^G}$ (Theorem \ref{Thm:NaturalIsomorphism}).
By Theorem \ref{Thm:EquivalenceVFs}, we have decompositions:
\begin{equation*}
E^D_0\left(P^D_0(X)\right) + \partial\left( h^D(X)\right)
= X = 
E^V_0\left(P^V_0(X)\right) + \partial\left( h^V(X)\right).
\end{equation*}
Hence, in particular, we have that:
\begin{equation}\label{Eq:DifferenceProjections}
E^V_0\left(P^V_0(\phi_*X)\right) - E^D_0\left(P^D_0(X)\right)
=
\partial\left( h^D(X) -  h^V(\phi_*X)\right).
\end{equation}

Now we relate the equivariant extensions from each of the slices.
Given an equivariant vector field $Y\in\ffX(D)^K$, note that for all $d\in D$:
\begin{align*}
E^D_0(Y)&(j(\phi(d)))\\
&=E^D_0(Y)\Big(f(d)_{G\times^KV}\left(\iota(d)\right)\Big) \qquad \text{by (\ref{Eq:SliceDiffeoRelationship})}\\
&=T\left(f(d)_{G\times^KV}\right) E^D_0(Y) (\iota(d)) \qquad \text{by equivariance of }E^D_0 \\
&=T\left(f(d)_{G\times^KV}\right) T\iota\, Y (d) \qquad \text{ by definition of }E^D_0 \\
&=T(f(d)_{G\times^KV}\circ \iota ) Y(d) \\
&=T(j\circ \phi) Y(d) \qquad \text{ by (\ref{Eq:SliceDiffeoRelationship})}\\
&=Tj T\phi Y \phi^{-1} \phi(d) \\
&=Tj (\phi_* Y) \phi(d) \\
&=E^V_0(\phi_*Y) \left(j(\phi(d))\right) \qquad \text{by definition of }E^V_0.
\end{align*}
By equivariance this means that:
\begin{equation*}
E^D_0(Y)=E^V_0(\phi_*Y).
\end{equation*}
Hence, in particular, we have that:
\begin{equation}\label{Eq:EquivariantExtensionsRelated}
E^D_0\left(P^D_0(X)\right) 
=
E^V_0\left(\phi_*P^D_0(X)\right).
\end{equation}

Finally, putting all this together we obtain that:
\begin{align}\label{Eq:ProjectionDifferenceReady}
\begin{split}
&E^V_0 \Big( P^V_0(X) - \phi_*P^D_0(X) \Big)\\
&=E^V_0\left(P^V_0(X)\right) - E^V_0\left(\phi_*P^D_0(X)\right) \qquad \text{by the linearity of } E^V_0\\
&=E^V_0\left(P^V_0(X)\right) - E^D_0\left(P^D_0(X)\right) \qquad \text{by (\ref{Eq:EquivariantExtensionsRelated})}\\
&=\partial\left( h^D(X) -  h^V(\phi_*X)\right) \qquad \text{by \ref{Eq:DifferenceProjections}}.
\end{split}
\end{align}
Thus, applying $P^V_0$ to the resulting equality we have that:
\begin{align*}
&P^V_0(X) - \phi_*P^D_0(X)\\
&=P^V_0\Big(E^V_0 \Big( P^V_0(X) - \phi_*P^D_0(X) \Big)\Big) \qquad \text{ since } P^V_0E^V_0=\text{id} \text{ by Theorem \ref{Thm:ExtendThenProject}}\\
&=P^V_0\left(
\partial\left( h^D(X) -  h^V(\phi_*X)\right)
\right) \qquad \text{by (\ref{Eq:ProjectionDifferenceReady}) }\\
&=\partial\Big( P^V_1\left(h^D(X)\right) - P^V_1\left(h^V(\phi_*X)\right)\Big) \qquad \text{by (\ref{Diag:BoundaryProjection})}\\
&=\partial\Big( P^V_1\left(h^D(X)\right)\Big) \qquad \text{ since }h^v(\phi_*X)\in\ker P^V_1\text{ as in (\ref{Eq:P1Kernel})}.
\end{align*}
Hence, the infinitesimal gauge transformation:
\begin{equation*}
P^V_1\left(h^D(X)\right):V\to\ffk
\end{equation*}
gives an isomorphism between the vector fields $P^V_0(X)$ and $\phi_*P^D_0(X)$ on $V$ as claimed.
\end{proof}

We conclude this subsection by providing examples of the decomposition of equivariant vector fields using Theorem \ref{Thm:EquivalenceVFs}.

\begin{example}
Consider $M:=\bbr^3$ and fix a vector $\overrightarrow{w}\in \bbr^3$. 
Consider the action of $\bbr\times\bbs^1$ on $M$ given by:
\begin{equation*}
(r,\theta)\cdot v := R_\theta(v) + r \overrightarrow{w}, \qquad (\theta, r, v)\in \bbs^1\times\bbr\times M,
\end{equation*}
where $R_\theta$ is the counter-clockwise rotation by $\theta$ with respect to the axis spanned by $\overrightarrow{w}$.
The isotropy $K$ of any point $m\in M$ on the axis of rotation is the circle $\{0\}\times\bbs^1\cong\bbs^1$, and the canonical slice representation for such an action is isomorphic to $\bbr^2\cong \bbc$ with the standard action of the circle by counterclockwise rotations around the origin.
Thus, the associated bundle in this case is:
\begin{equation*}
\left(\bbr \times \bbs^1\right) \times^{\bbs^1}\bbc \cong \bbr\times \bbc.
\end{equation*}
This associated bundle is equivariantly diffeomorphic to the axis of rotation and the plane orthogonal to it passing through the given point, so we have:
\begin{equation*}
M \cong \bbr \times \bbc.
\end{equation*}
We want to describe the decomposition of equivariant vector fields on $M\cong \bbr\times\bbc$ following from Theorem \ref{Thm:EquivalenceVFs}.

Let $E:\bbx(\bbc)^{\bbs^1}\to \bbx(\bbr\times\bbc)^{\bbr\times\bbs^1}$ be the equivariant inclusion functor of Theorem \ref{Thm:InclusionFunctor}, and let $P:\bbx(\bbr\times\bbc)^{\bbr\times\bbs^1}\to \bbx(\bbc)^{\bbs^1}$ be the equivariant projection, as in Theorem \ref{Thm:ProjectionFunctor}, corresponding to the canonical $\bbs^1$-equivariant splitting $\bbr\oplus\bbr$ of the Lie algebra of $\bbr\times\bbs^1$.
Furthermore, let $h:\ffX(\bbr\times\bbc)^{\bbr\times\bbs^1}\to C^\infty(\bbr\times\bbc,\bbr\oplus\bbr)^{\bbr\times\bbs^1}$ be the corresponding natural isomorphism $h:EP\cong 1_{\bbx(\bbr\times\bbc)^{\bbr\times\bbs^1}}$ of Theorem \ref{Thm:NaturalIsomorphism}.
Then, by Theorem \ref{Thm:EquivalenceVFs}, for any equivariant vector field $X:\bbr\times\bbc\to\bbr\times\bbc$, we may write:
\begin{equation}\label{Eq:RS1StdVF}
X=E_0(P_0(X))+\partial(h(X)).
\end{equation}
We now use the decomposition (\ref{Eq:RS1StdVF}) to give a standard form for ($\bbr\times\bbs^1$)-equivariant vector fields on $M$, similar to the examples at the end of subsection \ref{categoryVFs}.

We start by describing the infinitesimal gauge transformations on $\bbr\times\bbc$.
Since $\bbr\times\bbs^1$ is abelian, the Adjoint representation of $\bbr\times\bbs^1$ on the Lie algebra is trivial.Hence, so the infinitesimal gauge transformations are the $(\bbr\times\bbs^1)$-invariant functions.
Let $\psi=(\psi_1,\psi_2):\bbr\times\bbc\to\bbr\oplus\bbr$ be an arbitrary infinitesimal gauge transformation on $\bbr\times\bbc$.
By $\bbr$-invariance, the first component $\psi_1:\bbr\times\bbc\to\bbr$ is completely determined by its restriction to $\{0\}\times\bbc$.
Hence, the restriction $\psi_1|:\bbc\to\bbr$ is an $\bbs^1$-invariant function.
Thus, as in Example \ref{Ex:Circle}, by Schwarz's Theorem \cite{Sch75}, we may write:
\begin{equation*}
\psi_1|(z)=\widehat \psi_1 \left(|z|^2\right),
\qquad z\in\bbc,
\end{equation*}
where $\widehat\psi_1:\bbr\to\bbr$ is a smooth function.
Exactly the same way, we may write the restrictioon to $\{0\}\times\bbc\cong\bbc$ of the second component $\psi_2:\bbr\times\bbc\to\bbr$ as:
\begin{equation*}
\psi_2|(z)=\widehat \psi_2 \left(|z|^2\right),
\qquad z\in\bbc,
\end{equation*}
where $\widehat\psi_2:\bbr\to\bbr$ is a smooth function.
Hence, by invariance, we have that:
\begin{equation}\label{Eq:R3InfGauge}
\psi(w,z)=
\left(
\widehat \psi_1 \left(|z|^2\right),\,
\widehat\psi_2\left(|z|^2\right)
\right)
\qquad (w,z)\in\bbr\times\bbc.
\end{equation}
The induced vector field $\partial(\psi)$ as in (\ref{Eq:InducedVF}) is given by:
\begin{equation}\label{Eq:R3InducedVF}
\partial(\psi)(w,z)=
\left(\begin{array}{c}
\widehat\psi_1\left(|z|^2\right) \\ 
\widehat\psi_2\left(|z|^2\right) iz\\
\end{array}\right),
\qquad (z,w)\in\bbc\times\bbr.
\end{equation}

Since $P_0(X)$ is an $\bbs^1$-equivariant vector field on $\bbc$, as described in Example \ref{Ex:Circle}, by Scharwz's Theorem \cite{Sch75} and Po\'enaru's Theorem \cite{Poe76}, we may write:
\begin{equation*}
P_0(X)(z)=f\left(|z|^2\right) z + g\left(|z|^2\right) i z,
\qquad z\in\bbc,
\end{equation*}
where $f:\bbr\to\bbr$ and $g:\bbr\to\bbr$ are smooth functions.
Thus, we have that:
\begin{equation}\label{Eq:RS1StdVFFirstPart}
E_0(P_0(X))(w,z)
=
\left(\begin{array}{c}
0 \\
f\left(|z|^2\right) z + g\left(|z|^2\right) iz
\end{array}\right),
\qquad (w,z)\in\bbr\times\bbc.
\end{equation}
On the other hand, by (\ref{Eq:R3InfGauge}), the map $h(X):\bbr\times\bbc \to \bbr\oplus\bbr$ is determined by its restriction to the slice $\{0\}\times\bbc\cong \bbc$.
By Theorem \ref{Thm:NaturalIsomorphism}, the restriction of $h(X)$ to $\{0\}\times\bbc\cong \bbc$ takes values in $\bbr\times\{0\}$.
Consequently, the infinitesimal gauge transformation $h(X):\bbr\times\bbc\to\bbr\oplus\bbr$ is given by:
\begin{equation*}
h(X)(w,z)=\left(\widehat h \left(|z|^2\right),\, 0\right),
\qquad (w,z)\in\bbr\times\bbc,
\end{equation*}
for some smooth function $\widehat h:\bbr\to\bbr$.
And hence, by (\ref{Eq:R3InducedVF}), the equivariant vector field $\partial(h(X))$ on $\bbr\times \bbc$ is given by:
\begin{equation}\label{Eq:RS1StdVFSecondPart}
\partial(h(X))(w,z)
=
\left(\begin{array}{c}
\widehat h\left(|z|^2\right) \\
0
\end{array}\right),
\qquad (w,z)\in\bbr\times\bbc.
\end{equation}
Using (\ref{Eq:RS1StdVFFirstPart}) and (\ref{Eq:RS1StdVFSecondPart}), the decomposition (\ref{Eq:RS1StdVF}) gives the desired standard form for the given equivariant vector field $X$ on $\bbr\times\bbc$:
\begin{equation}\label{Eq:R3Decomposition}
X(w,z)=
\underbrace{
\left(\begin{array}{c}
0 \\
f\left(|z|^2\right) z + g\left(|z|^2\right) iz
\end{array}\right)
}_{E_0(P_0(X))}
+
\underbrace{
\left(\begin{array}{c}
\widehat h\left(|z|^2\right) \\
0
\end{array}\right)
}_{\partial(h(X))}
,
\end{equation}
for $(z,w)\in\bbc\times\bbr$, where $f:\bbr\to\bbr$, $g:\bbr\to\bbr$, and $\widehat h:\bbr\to\bbr$ are smooth functions.
Using the equivariant diffeomorphism $M\cong \bbr\times\bbc$, the decomposition (\ref{Eq:R3Decomposition}) of equivariant vector fields on $M$ is a decomposition of equivariant vector fields on $M$ into a component that's tangent to the plane orthogonal to the axis of rotation and a component in the direction of the axis of rotation.
In particular, every $\bbr\times\bbs^1$-equivariant vector field on $M$ is isomorphic in the category $\bbx(\bbr\times\bbc)^{\bbr\times\bbs^1}$ to a vector field tangent to the planes orthogonal to the axis of rotation.
However, we can say more.
We can rearrange the decomposition (\ref{Eq:R3Decomposition}) as follows:
\begin{equation}\label{Eq:R3Decomposition2}
X(w,z)=
\left(\begin{array}{c}
0 \\
f\left(|z|^2\right) z
\end{array}\right)
+
\underbrace{
\left(\begin{array}{c}
\widehat h\left(|z|^2\right) \\
g\left(|z|^2\right) iz
\end{array}\right)
}_{\partial(\psi)}
,
\end{equation}
for $(w,z)\in\bbr\times\bbc$, where we are using (\ref{Eq:R3InducedVF}) for the infinitesimal gauge transformation $\psi:\bbr\times\bbc\to\bbr\oplus\bbr$ defined by:
\begin{equation*}
\psi(w,z):=\Big(  h\left(|z|^2\right) ,\,  g\left(|z|^2\right)\Big),
\qquad (w,z)\in\bbr\times\bbc.
\end{equation*} 
Hence, the equivariant vector field $X$ is also isomorphic in the category $\bbx(\bbr\times\bbc)^{\bbr\times\bbs^1}$ to the equivariant vector field defined by:
\begin{equation*}
\widehat X (w,z):=
\left(\begin{array}{c}
0 \\
f\left(|z|^2\right) z
\end{array}\right),
\qquad (w,z)\in\bbr\times\bbc.
\end{equation*}
Thus, every $(\bbr\times\bbs^1)$-equivariant vector field on $\bbr\times\bbc$ is isomorphic in the category $\bbx(\bbr\times\bbc)^{\bbr\times\bbs^1}$ to a radial vector field on a plane orthogonal to the axis of rotation.
\end{example}

\begin{example}\label{Ex:SO3}
Consider the group $G:=SO(3)$ of $3\times 3$ orthogonal matrices with determinant equal to $1$.
Let $K:=\bbs^1\cong SO(2)$ be the circle in $SO(3)$ consisting of the matrices:
\begin{equation*}
k_\theta:=\left(\begin{array}{ccc}
\cos\theta & -\sin\theta & 0 \\
\sin\theta & \cos\theta & 0 \\
0 & 0 & 1
\end{array}\right),
\qquad \theta\in\bbr.
\end{equation*}
Let $V:=\bbc$ be the standard representation of the circle $K$ given by:
\begin{equation*}
k_\theta\cdot z := e^{i\theta}z, \qquad k_\theta \in K, \, z\in\bbc.
\end{equation*}
We will decompose the $SO(3)$-equivariant vector fields on the associated bundle $SO(3)\times^{\bbs^1}\bbc$ using Theorem \ref{Thm:EquivalenceVFs}.
Thus, we are working in the category $\bbx(SO(3)\times^{\bbs^1}\bbc)^{SO(3)}$ of $SO(3)$-equivariant vector fields on the associated bundle $SO(3)\times^{\bbs^1}\bbc$.

Recall that the Lie algebra $\ffg:=\ffs\ffo(3)$ of $SO(3)$ can be identified with $\bbr^3$ with the Adjoint representation of $SO(3)$ corresponding to the standard application of the matrix $g\in SO(3)$ to the vector $w\in \bbr^3$.
Furthermore, the vectors $w\in\ffs\ffo(3)$ can be thought of as corresponding to the axes of rotation of the matrices $g\in SO(3)$, when the latter are thought of as rotation matrices.
In particular, the Lie algebra $\ffk\cong\bbr$ of $K\cong\bbs^1$ can be identified with the third copy of $\bbr$ in $\bbr^3\cong\ffs\ffo(3)$.
Thus, the canonical splitting of $\bbr^3$:
\begin{equation}\label{Eq:SO3Splitting}
\underbrace{\bbr^3}_{\ffg} = \underbrace{\bbr^2}_{\ffq}\oplus\underbrace{\bbr}_{\ffk}.
\end{equation}
is a $K$-equivariant splitting of the Lie algebra $\ffg\cong\ffs\ffo(3)$.
We will choose this splitting as the $K$-equivariant splitting of $\ffg$ needed to apply Theorem \ref{Thm:EquivalenceVFs}.
In particular, it determines an equivariant connection on the associated bundle $SO(3)\times^{\bbs^1}\bbc\to SO(3)/\bbs^1$ (see Remark \ref{Rem:ConnectionFromSplitting} and (\ref{Eq:SO3Horizontal}) below).

It is convenient to recall the description given in Remark \ref{Rem:ConnectionFromSplitting} of the tangent bundle $T(SO(3)\times^{\bbs^1}\bbc)$, and of its decomposition into the vertical bundle $\calv(SO(3)\times^{\bbs^1}\bbc)$ and the horizontal bundle $\calh(SO(3)\times^{\bbs^1}\bbc)$ of the associated bundle $SO(3)\times^{\bbs^1}\bbc \to SO(3)/\bbs^1$, the horizontal bundle being the one corresponding to the splitting (\ref{Eq:SO3Splitting}).
We start by recalling that the total space of the tangent bundle $T(SO(3)\times^{\bbs^1}\bbc)\to SO(3)\times^{\bbs^1}\bbc$ can be identified with:
\begin{align}\label{Eq:SO3Tangent}
\begin{split}
T(SO(3)\times^{\bbs^1}\bbc)
&\cong G\times^K(V\times\ffq\times V) \\
&\cong SO(3)\times^{\bbs^1}(\bbc \times \bbr^2 \times \bbc),
\end{split}
\end{align}
where for an element $[g,z,\xi,w]\in SO(3)\times^{\bbs^1}(\bbc \times \bbr^2 \times \bbc)$ the point $[g,z]\in SO(3)\times^{\bbs^1}\bbc$ is the base point and the rest is the vector part.
The total space of the vertical bundle $\calv(SO(3)\times^{\bbs^1}\bbc)\to SO(3)\times^{\bbs^1}\bbc$ of the associated bundle $SO(3)\times^{\bbs^1}\bbc\to SO(3)/\bbs^1$ can be identified with:
\begin{align}\label{Eq:SO3Vertical}
\begin{split}
\calv(SO(3)\times^{\bbs^1}\bbc)
&\cong G\times^K(V\times \{0\}\times V)\\
&\cong SO(3)\times^{\bbs^1}(\bbc\times \{0\}\times \bbc),
\end{split}
\end{align}
where $\{0\}$ is the $0$-subspace in $\bbr^2\cong \ffq$, and for an element $[g,z,0,w]\in SO(3)\times^{\bbs^1}(\bbc\times\{0\}\times \bbc)$ the point $[g,z]\in SO(3)\times^{\bbs^1}\bbc$ is the base point and the rest is the vector part.
Finally, recall that the splitting (\ref{Eq:SO3Splitting}) determines a connection on the associated bundle $SO(3)\times^{\bbs^1}\bbc \to SO(3)/\bbs^1$ (Remark \ref{Rem:ConnectionFromSplitting}).
The total space of the corresponding horizontal bundle $\calh(SO(3)\times^{\bbs^1}\bbc)\to SO(3)\times^{\bbs^1}\bbc$ of the associated bundle $SO(3)\times^{\bbs^1}\bbc\to SO(3)/\bbs^1$ can be identified with:
\begin{align}\label{Eq:SO3Horizontal}
\begin{split}
\calh(SO(3)\times^{\bbs^1}\bbc)
&\cong G\times^K(V\times\ffq\times\{0\})\\
&\cong SO(3) \times^{\bbs^1} (\bbc\times\bbr^2\times\{0\}),
\end{split}
\end{align}
where $\{0\}$ is the $0$-subspace in $\bbc$, and for an element $[g,z,\xi,0]\in SO(3) \times^{\bbs^1} (\bbc\oplus\bbr^2\times\{0\})$ the point $[g,z]\in SO(3)\times^{\bbs^1}\bbc$ is the base point and the rest is the vector part.
Thus, we use (\ref{Eq:SO3Tangent}), (\ref{Eq:SO3Vertical}), and (\ref{Eq:SO3Horizontal}) to describe vector fields as sections of each of these bundles over $SO(3)\times^{\bbs^1}\bbc$.

Now let $E:\bbx(\bbc)^{\bbs^1}\to \bbx(SO(3)\times^{\bbs^1}\bbc)^{SO(3)}$ be the canonical inclusion functor of Theorem \ref{Thm:InclusionFunctor}, let $P:\bbx(SO(3)\times^{\bbs^1}\bbc)^{SO(3)}\to \bbx(\bbc)^{\bbs^1}$ be the equivariant projection functor corresponding to the splitting (\ref{Eq:SO3Splitting}) as in Theorem \ref{Thm:ProjectionFunctor}, and let $h:\ffX(SO(3)\times^{\bbs^1}\bbc)^{SO(3)}\to C^\infty(SO(3)\times^{\bbs^1}\bbc)^{SO(3)}$ be the corresponding natural isomorphism of Theorem \ref{Thm:NaturalIsomorphism}.
Then, by Theorem \ref{Thm:EquivalenceVFs}, we can decompose any equivariant vector field:
\begin{equation*}
X:SO(3)\times^{\bbs^1}\bbc \to SO(3)\times^{\bbs^1}(\bbc \times \bbr^2 \times \bbc),
\end{equation*}
in the form:
\begin{equation}\label{Eq:SO3Decomposition}
X = E_0(P_0(X)) + \partial(h(X)).
\end{equation}
We now describe the components in (\ref{Eq:SO3Decomposition}).

The vector field $P_0(X)$ is an $\bbs^1$-equivariant vector field on $\bbc$.
Thus, by Example \ref{Ex:Circle}, there exist smooth functions $f:\bbr\to\bbr$ and $q:\bbr\to\bbr$ such that:
\begin{equation}\label{Eq:SO3P0Part}
P_0(X)(z)=f\left(|z|^2\right)z + q\left(|z|^2\right)iz, \qquad z\in \bbc.
\end{equation}
Recall that, by definition, the vector field $E_0(P_0(X))$ is vertical in the associated bundle $SO(3)\times^{\bbs^1}\bbc\to SO(3)/\bbs^1$.
Thus, by (\ref{Eq:SO3Vertical}), the vector field $E_0(P_0(X))$ takes values in $SO(3)\times^{\bbs^1}(\bbc\times\{0\}\times \bbc)$.
Therefore, using (\ref{Eq:SO3P0Part}), we have that:
\begin{align}\label{Eq:SO3FirstPart}
\begin{split}
E_0(P_0(X))([g,z])
=\Big[g,z,0,f\left(|z|^2\right)z + q\left(|z|^2\right)iz \Big] 
\end{split}
\end{align}
for all $[g,z]\in SO(3)\times^{\bbs^1}\bbc$.

Now observe that $h(X):SO(3)\times^{\bbs^1}\bbc \to \bbr^3$ is an $SO(3)$-equivariant infinitesimal gauge transformation that is completely determined by its values on the slice representation $\bbc \cong \{[I,z]\mid z\in\bbc\}$ and takes values in $\ffq\cong\bbr^2$ (Theorem \ref{Thm:NaturalIsomorphism}).
That is, it is completely determined by the $\bbs^1$-invariant restriction $h(X)|:\bbc\to \bbr^2$.
Thus, using Schwarz's Theorem \cite{Sch75} as we did for the infinitesimal gauge transformations in Example \ref{Ex:T2C2}, there exist smooth maps $\widehat h_1:\bbc\to \bbr$ and $\widehat h_2:\bbc\to\bbr$ such that:
\begin{equation*}
h(X)|(z)=\left(\widehat h_1\left(|z|^2\right),\,\widehat h_2\left(|z|^2\right) \right)\in\bbr^2\cong\ffq,
\qquad z\in \bbc.
\end{equation*}
Hence, by $SO(3)$-equivariance, we have that:
\begin{equation*}
h(X)([g,z])=g\cdot \left(\widehat h_1\left(|z|^2\right),\, \widehat h_2\left(|z|^2\right),\, 0\right) \in\bbr^3\cong\ffg,
\end{equation*}
for all $[g,z]\in SO(3)\times^{\bbs^1}\bbc$ and where the action on the right-hand side is the standard action of $SO(3)$ on $\bbr^3$ via rotations.
By the construction of $h$, the $SO(3)$-equivariant vector field $\partial(h(X))$ takes values in the horizontal bundle corresponding to the chosen splitting of $\ffg$ since it gives the horizontal part of the vector field $X$ (Theorem \ref{Thm:NaturalIsomorphism}).
Thus, using (\ref{Eq:SO3Horizontal}), the vector field $\partial(h(X))$ takes values in $SO(3)\times^{\bbs^1}(\bbc\times\bbr^2\times\{0\})$, so we have that:
\begin{align}\label{Eq:SO3SecondPart}
\begin{split}
\partial(h(X))\left([g,z]\right)
=\Big[g,z,\left(\widehat h_1\left(|z|^2\right),\,\widehat h_2\left(|z|^2\right) \right),0\Big]
\end{split}
\end{align}
for all $[g,z]\in SO(3)\times^{\bbs^1}\bbc$.
Therefore, using (\ref{Eq:SO3FirstPart}) and (\ref{Eq:SO3SecondPart}), we see that the decomposition (\ref{Eq:SO3Decomposition}) is:
\begin{align}\label{Eq:SO3DecompositionDeux}
\begin{split}
X([g,z])&= \underbrace{\Big[g,z,0,f\left(|z|^2\right)z + q\left(|z|^2\right)iz \Big] 
}_{E_0(P_0(X))
}\\
&\qquad\qquad\qquad + \underbrace{\Big[g,z,\left(\widehat h_1\left(|z|^2\right),\,\widehat h_2\left(|z|^2\right) \right),0\Big]}_{
\partial(h(X))
}
\end{split}
\end{align}
for all $[g,z]\in SO(3)\times^{\bbs^1}\bbc$, where the vector $X([g,z])$ is contained in the associated bundle $SO(3)\times^{\bbs^1}(\bbc\times\bbr^2\times\bbc)$, and we have used the identifications (\ref{Eq:SO3Tangent}), (\ref{Eq:SO3Vertical}), and (\ref{Eq:SO3Horizontal}).
In particular, every $SO(3)$-equivariant vector field on $SO(3)\times^{\bbs^1} \bbc$ is isomorphic in the category $\bbx(SO(3)\times^{\bbs^1}\bbc)^{SO(3)}$ to a vector field that is tangent to the slice:
\begin{equation}\label{Eq:SO3Slice}
\bbc\cong \left\{ \left[I,z\right]\in SO(3)\times^{\bbs^1}\bbc \mid z\in \bbc \right\},
\end{equation}
and that has a decomposition on the slice $\bbc$ as an $\bbs^1$-equivariant vector field as in Example \ref{Ex:Circle}.
We can say more.
Let $\psi:SO(3)\times^{\bbs^1}\bbc\to \bbr^3$ be the infinitesimal gauge transformation given by:
\begin{equation*}
\psi\left([g,z]\right):=g\cdot \left(\widehat h_1\left(|z|^2\right),\,\widehat h_2\left(|z|^2\right),\, q\left(|z|^2\right)\right),
\end{equation*}
for $[g,z]\in SO(3)\times^{\bbs^1}\bbc$, and where the action on the right-hand side is the standard action of $SO(3)$ on $\bbr^3$ via rotations.
Rewriting (\ref{Eq:SO3DecompositionDeux}) we see that we can write $X$ as:
\begin{align*}
X([g,z])
=\Big[g,z,0,f\left(|z|^2\right)z \Big]
+\underbrace{\Big[g,z,\left(\widehat h_1\left(|z|^2\right),\,\widehat h_2\left(|z|^2\right) \right),q\left(|z|^2\right)iz\Big]}_{\partial(\psi)}
\end{align*}
for all $[g,z]\in SO(3)\times^{\bbs^1}\bbc$.
Thus, every $SO(3)$-equivariant vector field $X$ on $SO(3)\times^{\bbs^1}\bbc$ is isomorphic to a vector field tangent to the slice (\ref{Eq:SO3Slice}) that is radial on the slice.
\end{example}

\subsection{Decomposition preserves relative equilibria}
In the previous subsection we proved a decomposition of equivariant vector fields near relative equilibria in the form of an equivalence of categories $\bbx(G\times^KV)^G\simeq\bbx(V)^K$ (Theorem \ref{Thm:ProjectionFunctor} and Theorem \ref{Thm:EquivalenceVFs}).
This equivalence is given by a functor $P:\bbx(G\times^KV)^G\to\bbx(V)^K$ that projects equivariant vector fields onto the slice representation $V$.
The other functor in the equivalence is given by the canonical equivariant extension functor $E:\bbx(V)^K\to\bbx(G\times^KV)^G$ (Theorem \ref{Thm:InclusionFunctor}).
It is natural, and necessary for the rest of the work in this paper, to show that both of these functors preserve relative equilibria.
We do this in this brief subsection.

For this we need the following lemma:

\begin{lemma}\label{Lemma:PBRelEq}
Let $M$ and $N$ be proper $G$-manifolds and let $f\colon M\to N$ be a $G$-equivariant diffeomorphism.
Suppose that $X$ and $Y$ are $f$-related equivariant vector fields on $M$ and $N$ respectively.
Then a point $m\in M$ is a relative equilibrium of the vector field $X$ if and only if the point $f(m)$ is a relative equilibrium of the vector field $Y$.
Thus, pullbacks and pushforwards of vector fields by equivariant diffeomorphisms preserve relative equilibria.
\end{lemma}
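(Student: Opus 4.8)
The plan is to use the characterization of a relative equilibrium in Definition \ref{Def:RelEq} — that $m$ is a relative equilibrium of $X$ iff $X(m) \in T_m(G\cdot m)$ — and to track how the tangent map $Tf$ interacts both with the vector fields (via $f$-relatedness) and with the group orbits (via $G$-equivariance of $f$). First I would observe that since $f$ is a $G$-equivariant diffeomorphism, it carries the orbit $G\cdot m$ diffeomorphically onto the orbit $G\cdot f(m)$; indeed $f(G\cdot m) = G\cdot f(m)$ by equivariance, and hence $T_mf$ restricts to a linear isomorphism $T_m(G\cdot m) \xrightarrow{\cong} T_{f(m)}(G\cdot f(m))$. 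Concretely, this follows from the identity $f\circ \ev_m = \ev_{f(m)}$ on $G$ (equivariance of $f$), whence $T_mf \circ T\ev_m = T\ev_{f(m)}$, and then (\ref{Eq:TangentEvaluation}) gives $T_mf(T_m(G\cdot m)) = T_mf(T\ev_m(\ffg)) = T\ev_{f(m)}(\ffg) = T_{f(m)}(G\cdot f(m))$.

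Next I would use that $X$ and $Y$ are $f$-related, which by definition means $T_mf(X(m)) = Y(f(m))$ for all $m\in M$. Combining the two facts: if $m$ is a relative equilibrium of $X$, then $X(m) \in T_m(G\cdot m)$, so $Y(f(m)) = T_mf(X(m)) \in T_mf(T_m(G\cdot m)) = T_{f(m)}(G\cdot f(m))$, which says exactly that $f(m)$ is a relative equilibrium of $Y$. For the converse, since $f$ is a diffeomorphism, $X$ and $Y$ being $f$-related is equivalent to $Y$ and $X$ being $f^{-1}$-related (and $f^{-1}$ is again a $G$-equivariant diffeomorphism), so the same argument applied to $f^{-1}$, $Y$, $X$ gives that $f(m)$ a relative equilibrium of $Y$ implies $m = f^{-1}(f(m))$ is a relative equilibrium of $X$. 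The final sentence of the lemma — the statement about pullbacks and pushforwards — is then immediate: if $f:M\to N$ is an equivariant diffeomorphism and $X\in\ffX(M)^G$, then $f_*X$ is by definition the unique vector field on $N$ that is $f$-related to $X$ (and equivariance of $f_*X$ follows from equivariance of $f$ and $X$, as noted in Notation \ref{Notation}), so the first part applies with $Y = f_*X$; similarly for $f^*$.

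I do not expect any serious obstacle here: the proof is essentially a diagram chase through the definitions, and the only mild care needed is making sure the equivariance of $f$ is used correctly to identify $T_mf(T_m(G\cdot m))$ with $T_{f(m)}(G\cdot f(m))$ rather than merely containing it — this is where the hypothesis that $f$ is a diffeomorphism (not just an equivariant map) is doing its work, since it makes $T_mf$ an isomorphism and hence the inclusion an equality. One could alternatively phrase the orbit-tangent-space step using Remark \ref{Rem:VelocityFacts}: a velocity $\xi\in\ffg$ of $m$ for $X$ is, after applying $T_mf$ and using $T_mf\circ T\ev_m = T\ev_{f(m)}$, a velocity of $f(m)$ for $Y$, which gives the result in a form that will also be convenient downstream.
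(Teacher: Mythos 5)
Your proof is correct and follows essentially the same route as the paper: both use $f$-relatedness $Y(f(m)) = T_mf(X(m))$ together with the fact that equivariance of the diffeomorphism $f$ gives $T_mf(T_m(G\cdot m)) = T_{f(m)}(G\cdot f(m))$, and both reduce the converse to the symmetric argument (the paper says "completely analogous" where you invoke $f^{-1}$). Your extra detail via $f\circ\ev_m = \ev_{f(m)}$ and (\ref{Eq:TangentEvaluation}) is a fine way to justify the orbit-tangent identification the paper leaves implicit.
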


\begin{proof}
The verification is a straightforward computation using the equation $T f\circ X = Y\circ f$.
First, suppose $m$ is a $G$-relative equilibrium of the vector field $X$. 
Then
\begin{equation*}
Y(f(m)) =(T f)_m(X(m))\in (T f)_m(T_m(G\cdot m))=T_{f(m)}(G\cdot f(m)),
\end{equation*}
where $(T f)_m(T_m(G\cdot m))=T_{f(m)}(G\cdot f(m))$ follows by the equivariance of the diffeomorphism~$f$. 
Thus, the point $f(m)$ is a $G$-relative equilibrium of the vector field $Y$. 
The converse is completely analogous.
\end{proof}

\begin{lemma}\label{Lemma:EPreservesRelEq}
Let $V$ be a finite-dimensional real representation of a compact Lie subgroup $K$ of a Lie group $G$, let $X$ be a $K$-equivariant vector field on $V$, and let $E:\bbx(V)^K\to\bbx(G\times^KV)^G$ be the canonical equivariant extension functor of Theorem \ref{Thm:InclusionFunctor}.
If $v\in V$ is a $K$-relative equilibrium of $X$ then $[1,v]\in G\times^KV$ is a $G$-relative equilibrium of $E_0(X)\in \ffX(G\times^KV)^G$.
\end{lemma}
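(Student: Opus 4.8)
The plan is to evaluate $E_0(X)$ directly at the point $[1,v]$ and to recognize the result as a tangent vector to the group orbit. First I would observe that, by the definition of $E_0$ in Theorem \ref{Thm:InclusionFunctor},
\begin{equation*}
E_0(X)([1,v]) = 1\cdot (Tj)X(v) = (Tj)\big(X(v)\big),
\end{equation*}
where $j:V\hookrightarrow G\times^KV$ is the slice embedding $j(v):=[1,v]$. Since $v$ is a $K$-relative equilibrium of $X$, applying Remark \ref{Rem:VelocityFacts} to the $K$-action on $V$ produces a velocity $\xi\in\ffk$ with $X(v)=T\ev_v(\xi)$, where $\ev_v:K\to V$ is the evaluation map $\ev_v(k):=k\cdot v$ for the $K$-action on $V$.

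The key step is to compare this $K$-evaluation map on $V$ with the $G$-evaluation map at $[1,v]$ on $G\times^KV$. Using the defining relation $[gk^{-1},k\cdot w]=[g,w]$ of the associated bundle, one has for all $k\in K$:
\begin{equation*}
j(\ev_v(k)) = [1,k\cdot v] = [k,v] = k\cdot[1,v] = \ev_{[1,v]}(k),
\end{equation*}
so that $j\circ\ev_v$ is precisely the restriction to $K$ of the $G$-evaluation map $\ev_{[1,v]}:G\to G\times^KV$. Differentiating this identity at the identity of $K$, and writing $\iota:\ffk\hookrightarrow\ffg$ for the Lie algebra inclusion, yields
\begin{equation*}
(Tj)\big(T\ev_v(\xi)\big) = T\ev_{[1,v]}\big(\iota(\xi)\big).
\end{equation*}
Combining the displays above gives $E_0(X)([1,v]) = T\ev_{[1,v]}(\iota(\xi))$, which lies in $T\ev_{[1,v]}(\ffg) = T_{[1,v]}(G\cdot[1,v])$ by (\ref{Eq:TangentEvaluation}). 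Hence $[1,v]$ is a $G$-relative equilibrium of $E_0(X)$, and in fact $\iota(\xi)$ is one of its velocities.

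There is no serious obstacle here: the only point requiring care is the bookkeeping for the associated bundle quotient relation, which identifies the restricted $G$-evaluation map with $j$ composed with the $K$-evaluation map; once that compatibility is in hand, the conclusion is immediate from the characterization (\ref{Eq:TangentEvaluation}) of tangent spaces to orbits. One could alternatively deduce the statement from Lemma \ref{Lemma:PBRelEq} together with the local product structure near the orbit, but the direct computation is shorter and has the advantage of recording the velocity of the extended relative equilibrium explicitly, which will be convenient later.
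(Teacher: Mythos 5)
Your proof is correct, and it takes a somewhat different route from the paper's. The paper argues abstractly: it notes that $X$ and $E_0(X)$ are $j$-related, invokes Lemma \ref{Lemma:PBRelEq} (applied to the $K$-actions) to conclude that $[1,v]$ is a $K$-relative equilibrium of $E_0(X)$, and then uses the inclusion $T_{[1,v]}(K\cdot[1,v])\subseteq T_{[1,v]}(G\cdot[1,v])$ to upgrade this to a $G$-relative equilibrium. You instead unwind the definitions: picking a velocity $\xi\in\ffk$ with $X(v)=T\ev_v(\xi)$, verifying the identity $j\circ\ev_v=\ev_{[1,v]}|_K$ from the associated-bundle relation, and differentiating to get $E_0(X)([1,v])=T\ev_{[1,v]}(\iota(\xi))\in T_{[1,v]}(G\cdot[1,v])$. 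Your computation of $[1,k\cdot v]=[k,v]=k\cdot[1,v]$ and the chain-rule step are both right. What your approach buys is twofold: it avoids leaning on Lemma \ref{Lemma:PBRelEq}, which as stated assumes an equivariant \emph{diffeomorphism} while $j$ is only an embedding (a point the paper passes over silently, though it is easily repaired by viewing $j$ as a diffeomorphism onto the slice $j(V)$, to which $E_0(X)$ is tangent); and it records explicitly that $\iota(\xi)$ is a velocity of the extended relative equilibrium, which is indeed the kind of information used later (e.g.\ in Theorem \ref{Thm:BifBranchEquivalence}). The paper's route, conversely, is shorter given that Lemma \ref{Lemma:PBRelEq} is already in place and is reused symmetrically for the projection in Lemma \ref{Lemma:PPreservesRelEq}.
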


\begin{proof}
Let $j:V\hookrightarrow G\times^KV$ be the equivariant embedding defined by $j(v):=[1,v]$.
Note that the vector fields $X$ and $E_0(X)$ are $j$-related by the definition of $E_0$ (Theorem \ref{Thm:InclusionFunctor}).
Thus, by Lemma \ref{Lemma:PBRelEq}, we know that $[1,v]$ is a $K$-relative equilibrium of the vector field $E_0(X)$.
That is, $E_0(X)([1,v])\in T_{[1,v]}(K\cdot [1,v])$. 
Since $j$ is an embedding, the tangent space $T_{[1,v]}(K\cdot [1,v])$ is contained in the tangent space $T_{[1,v]}(G\cdot [1,v])$. 
Hence, the point $[1,v]$ is a $G$-relative equilibrium of $E_0(X)$ as claimed.
\end{proof}

\begin{lemma}\label{Lemma:PPreservesRelEq}
Let $V$ be a finite-dimensional real representation of a compact Lie subgroup $K$ of a Lie group $G$, let $X$ be a $G$-equivariant vector field on $G\times^KV$, and let $P:\bbx(G\times^KV)^G\to\bbx(V)^K$ be an equivariant projection functor corresponding to a $K$-equivariant splitting as in Theorem \ref{Thm:ProjectionFunctor}.
If $[g,v]\in G\times^KV$ is a $G$-relative equilibrium of $X$ then $v\in V$ is a $K$-relative equilibrium of $P_0(X)$.
\end{lemma}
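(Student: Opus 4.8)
The plan is to reduce to the slice base point and then read off a velocity of $P_0(X)$ at $v$ from a velocity of $X$ at $[1,v]$ using the connection $\Phi$ defining the projection functor.

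First I would reduce to the case $g=1$. Since $X$ is $G$-equivariant, relative equilibria are preserved by the $G$-action: if $X(m)\in T_m(G\cdot m)$ then $X(g'\cdot m)=Tg'_M(X(m))\in Tg'_M(T_m(G\cdot m))=T_{g'\cdot m}(G\cdot m)$. Taking $m=[g,v]$ and $g'=g^{-1}$ shows that $[1,v]=g^{-1}\cdot[g,v]=j(v)$ is also a $G$-relative equilibrium of $X$, where $j:V\hookrightarrow G\times^KV$ is the slice embedding $j(v):=[1,v]$ (Theorem~\ref{Thm:ProjectionFunctor}). As the conclusion only concerns the point $v\in V$, it suffices to treat $g=1$. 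So I assume $j(v)$ is a $G$-relative equilibrium of $X$; then $X(j(v))\in T_{j(v)}(G\cdot j(v))=T\ev_{j(v)}(\ffg)$ by~\eqref{Eq:TangentEvaluation}, and I fix $\xi\in\ffg$ with $X(j(v))=T\ev_{j(v)}(\xi)$.

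Next I would compute $(T_vj)(P_0(X)(v))=\Phi_{j(v)}(X(j(v)))=\Phi_{j(v)}(T\ev_{j(v)}(\xi))$, the first equality being the definition of the projection functor (Theorem~\ref{Thm:ProjectionFunctor}). The key point is the identity $\Phi_{j(v)}(T\ev_{j(v)}(\xi))=T\ev_{j(v)}(\bbp(\xi))$ for every $\xi\in\ffg$, where $\bbp:\ffg\to\ffk$ is the $K$-equivariant projection defining $P$. This is exactly the computation carried out in the proof of Theorem~\ref{Thm:ProjectionFunctor} when verifying that diagram~\eqref{Diag:BoundaryProjection} commutes: chaining~\eqref{Diag:EvaluationAndQuotient},~\eqref{Eq:Evaluation}, and~\eqref{Diag:ConnectionDefined} one gets $\Phi_{j(v)}\,\varpi\,T\ev_{(1,v)}(\xi)=\Phi_{j(v)}\varpi(\xi,v)=\varpi(\bbp\times\id)(\xi,v)=\varpi(\bbp(\xi),v)=\varpi\,T\ev_{(1,v)}(\bbp(\xi))=T\ev_{j(v)}(\bbp(\xi))$, while the left-hand side equals $\Phi_{j(v)}(T\ev_{j(v)}(\xi))$ again by~\eqref{Diag:EvaluationAndQuotient}.

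Finally I would identify the right-hand side with a tangent vector to a $K$-orbit. Since $\bbp(\xi)\in\ffk$, the one-parameter subgroup $\exp(\tau\bbp(\xi))$ lies in $K$, so $\exp(\tau\bbp(\xi))\cdot j(v)=[\exp(\tau\bbp(\xi)),v]=[1,\exp(\tau\bbp(\xi))\cdot v]=j(\exp(\tau\bbp(\xi))\cdot v)$; differentiating at $\tau=0$ shows $T\ev_{j(v)}(\bbp(\xi))=(T_vj)(T\ev^{K}_{v}(\bbp(\xi)))$, where $\ev^{K}_{v}:K\to V$, $k\mapsto k\cdot v$, is the evaluation map for the $K$-action on $V$. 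Combining this with the previous step and using injectivity of $T_vj$ (as $j$ is an embedding), I conclude $P_0(X)(v)=T\ev^{K}_{v}(\bbp(\xi))\in T\ev^{K}_{v}(\ffk)=T_v(K\cdot v)$, the last equality by~\eqref{Eq:TangentEvaluation} applied to the $K$-action on $V$. Hence $v$ is a $K$-relative equilibrium of $P_0(X)$, with $\bbp(\xi)$ as a velocity. The only delicate point is the key identity, but since that computation already appears essentially verbatim inside the proof of Theorem~\ref{Thm:ProjectionFunctor}, there is no genuine obstacle; the single thing demanding care is the bookkeeping with the identification $T(G\times^KV)\cong TG\times^{TK}TV$ and the quotient map $\varpi$.
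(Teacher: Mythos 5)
Your proof is correct, but it takes a genuinely different route from the paper's. The paper argues structurally: it invokes Theorem \ref{Thm:EquivalenceVFs} to write $X=E_0(P_0(X))+\partial(\psi^X)$, uses Lemma \ref{Lemma:IsoShareRel} to transfer the relative equilibrium at $j(v)$ to the isomorphic vector field $E_0(P_0(X))$, then uses verticality of $E_0(P_0(X))$ and the slice decomposition $T_{j(v)}\left(G\cdot j(V)\right)\cap T_{j(v)}j(V)=T_{j(v)}(K\cdot j(v))$ to conclude that $j(v)$ is a $K$-relative equilibrium, finishing with Lemma \ref{Lemma:PBRelEq} and $j$-relatedness. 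You instead compute directly from the definition of $P_0$: after reducing to the base point $[1,v]$ by equivariance, you pick a velocity $\xi\in\ffg$ and apply the pointwise identity $\Phi_{j(v)}\circ T\ev_{j(v)}=T\ev_{j(v)}\circ\bbp$, which is indeed established (with $\xi=\psi(j(v))$, but by a computation that never uses equivariance of $\psi$) inside the proof of Theorem \ref{Thm:ProjectionFunctor}; then $\bbp(\xi)\in\ffk$ and the compatibility $T\ev_{j(v)}(\bbp(\xi))=(T_vj)\,T\ev^K_v(\bbp(\xi))$ plus injectivity of $T_vj$ finish the argument. Your approach avoids the natural-isomorphism machinery and the tangent-space intersection fact, and it buys something extra that the paper's proof does not state: an explicit velocity, namely $\bbp(\xi)$, for the relative equilibrium $v$ of $P_0(X)$. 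The paper's approach, in exchange, is more robust to the specific formula for $P_0$, since it only uses that $X$ and $E_0(P_0(X))$ are isomorphic and that $E_0(P_0(X))$ is vertical. Both are valid; just make sure, as you note, that the extraction of the key identity from the proof of Theorem \ref{Thm:ProjectionFunctor} is stated for an arbitrary $\xi\in\ffg$ rather than only for values of an infinitesimal gauge transformation, which your write-up does correctly.
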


\begin{proof}
First, as in the prooof of Lemma \ref{Lemma:EPreservesRelEq}, let $j:V\hookrightarrow G\times^KV$ be the equivariant embedding defined by $j(v):=[1,v]$.
Furthermore, note that if $[g,v]\in G\times^KV$ is a relative equilibrium of $X$, then $j(v)=[1,v]$ is also a $G$-relative of $X$ since these two points are in the same $G$-orbit in $G\times^KV$.
By Theorem \ref{Thm:EquivalenceVFs}, there exists an infinitesimal gauge transformation $\psi^X\in C^\infty(G\times^KV,\ffg)^G$ such that:
\begin{equation*}
X=E_0(P_0(X)) + \partial(\psi^X).
\end{equation*}
That is, $X$ is isomorphic to the vector field $E_0(P_0(X))$.
Since isomorphisms preserve relative equilibria by Lemma \ref{Lemma:IsoShareRel}, the point $j(v)$ is a $G$-relative equilibrium of the vector field $E_0(P_0(X))$.
On the other hand, the vector field $E_0(P_0(X))$ is vertical in the vector bundle $G\times^KV\to G/K$.
Hence, the vector $E_0(P_0(X))(j(v))$ is also tangent to the slice $j(V)\cong V$.
That is, we now have:
\begin{equation*}
E_0(P_0(X))(j(v))\in T_{j(v)}\left(G\cdot j(V)\right) \cap T_{j(v)}j(V) = T_{j(v)} (K\cdot j(v)),
\end{equation*}
meaning that the point $j(v)$ is a $K$-relative equilibrium of the vector field $E_0(P_0(X))$.
By Lemma \ref{Lemma:PBRelEq}, this implies that $v\in V$ is a $K$-relative equilibrium of $P_0(X)$ since the vector fields $E_0(P_0(X))$ and $P_0(X)$ are $j$-related by the definition of $E_0$ (Theorem \ref{Thm:InclusionFunctor}).
\end{proof}

Thus, we have shown that the functors in the equivalence $\bbx(G\times^KV)^G\simeq \bbx(V)^K$ preserve relative equilibria.

\section{Motion of relative equilibria and isomorphisms}\label{motionsection}
In this section, we consider the motion of relative equilibria on manifolds with compact symmetry groups from the perspective of the category of equivariant vector fields.
Using these tools, we provide some brief observations about such motion.
The results in this section will be useful to characterize the behavior of bifurcating solutions in section \ref{ch2}, but they may also be of interest in the control of equivariant dynamical systems.

It is well-known that the motion of a relative equilibrium of an equivariant vector field on such a manifold is equivalent to linear motion on a torus (see \cite{F80,K90} and also Theorem \ref{Thm:FieldKrupaTorus}).
In fact, there is a bound on the number of independent frequencies of the motion; that is, on the dimension of the torus containing the motion \cite{F80,K90}.
This bound is attained generically, but one can seek to modify the equivariant vector field to reduce, or otherwise adjust, the number of independent frequencies of the relative equilibrium's motion to obtain nongeneric motions.

Given an equivariant vector field with a relative equilibrium on a $K$-manifold, with compact symmetry group $K$, we describe conditions for constructing an isomorphic vector field that has any desired number of independent frequencies at the relative equilibrium (Proposition \ref{Prop:FrequencyStabilizationConditions}).
Since the resulting vector field is isomorphic to the given one, it determines the same flow on the orbit space, and hence the same dynamics modulo the symmetries (Theorem \ref{Lemma:LermanFlows}).
In particular, we show that this is always possible for actions of tori (Theorem \ref{Thm:jStabilization}).

\begin{remark}
By a torus we mean a compact, connected, and abelian Lie group.
Recall that any compact Lie group $K$ has a maximal torus Lie subgroup, and that all maximal tori in $K$ have the same dimension.
The rank of $K$ is then the dimension of any maximal torus in $K$.
For convenience in dealing with cases where relative equiliobria are strict equilibria, we will consider the trivial group to be a torus of dimension $0$.
\end{remark}

We begin by recalling the following well-known result asserting that the motion of a relative equilibrium for a compact group action is contained in a torus:

\begin{theorem}[Field \cite{F80} and Krupa \cite{K90}]\label{Thm:FieldKrupaTorus}
Let $M$ be a $K$-manifold with $K$ compact, let $X$ be an equivariant vector field on $M$ with a relative equilibrium at a point $m\in M$ with isotropy $K_m$.
Then the integral curve $\phi_m(-)$ of $X$ starting at $m$ is equivalent to linear motion on a torus.
That is, the closure of the image of the curve:
\begin{equation*}
T_m:=\overline{\left\{
\phi_m(\tau)
\right\}}
\end{equation*}
is diffeomorphic to a torus.
Furthermore, the dimension of $T_m$ is bounded by the rank of the Lie group $N(K_m)/K_m$, where $N(K_m)$ is the normalizer of the stabilizer $K_m$.
\end{theorem}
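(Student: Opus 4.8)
The plan is to realize the motion as a one-parameter subgroup inside a compact torus. Fix a velocity $\xi\in\ffk$ of the relative equilibrium $m$, so that by the lemma above on integral curves of relative equilibria one has $\phi_m(\tau)=\exp(\tau\xi)\cdot m$ for all $\tau\in\bbr$. The first point is that $\xi$ may be chosen to lie in the Lie algebra of the normalizer $N(K_m)$. The cleanest way is to apply Theorem \ref{Thm:Decomposition} (valid since a compact-group action is proper): writing $X=Y^X+\partial(\psi^X)$ with $Y^X(m)=0$, Lemma \ref{Lemma:IsoShareRel} shows $\xi:=\psi^X(m)$ is a velocity of $m$, and the equivariance of $\psi^X$ gives $\Ad(k)\xi=\xi$ for all $k\in K_m$; thus $\xi$ lies in the Lie algebra of the centralizer $C_K(K_m)\subseteq N(K_m)$. (Classically one instead averages any velocity $\xi_0$ over $K_m$ against Haar measure: since $k_M\circ\ev_m=\ev_m\circ c_k$ for $k\in K_m$, with $c_k$ conjugation by $k$, each $\Ad(k)\xi_0$ is again a velocity, so $\Ad(k)\xi_0-\xi_0\in\ffk_m$ and $\int_{K_m}\Ad(k)\xi_0\,dk$ is an $\Ad(K_m)$-fixed velocity.)

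Having chosen such a $\xi$, set $A:=\overline{\{\exp(\tau\xi):\tau\in\bbr\}}\subseteq K$. As $K$ is compact, $A$ is a compact connected abelian Lie subgroup, hence a torus; and since $\xi$ lies in the Lie algebra of the closed subgroup $C_K(K_m)$ we get $A\subseteq C_K(K_m)\subseteq N(K_m)$. The orbit $A\cdot m$ is a compact embedded submanifold of $M$, and the orbit map induces a diffeomorphism $A/A_m\xrightarrow{\ \cong\ }A\cdot m$ with $A_m=A\cap K_m$; since $A/A_m$ is a compact connected abelian Lie group it is itself a torus. Because $\{\exp(\tau\xi)\}$ is dense in $A$ by construction, its image under the orbit map is dense in the compact set $A\cdot m$, so $T_m=\overline{\{\phi_m(\tau)\}}=A\cdot m\cong A/A_m$ is a torus, and under this identification $\phi_m$ is the curve $\tau\mapsto\exp(\tau\xi)A_m$, a one-parameter subgroup of $T_m$ --- that is, linear motion on the torus.

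For the dimension bound, let $q\colon N(K_m)\to N(K_m)/K_m$ be the quotient homomorphism; this is well defined and $N(K_m)/K_m$ is a compact Lie group because $K_m$ is a closed normal subgroup of the compact group $N(K_m)$. Then $q(A)\cong A/(A\cap K_m)=A/A_m\cong T_m$ is a torus contained in $N(K_m)/K_m$, and every torus in a compact Lie group lies in a maximal torus, hence has dimension at most the rank of the group. Therefore $\dim T_m=\dim q(A)\le\operatorname{rank}\bigl(N(K_m)/K_m\bigr)$.

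I expect the main obstacle to be the first step --- producing a velocity in the Lie algebra of $N(K_m)$ --- together with the identification of $T_m$ with the subtorus $q(A)$ of $N(K_m)/K_m$ needed to invoke the rank bound. The remaining ingredients are standard structure theory (the closure of a one-parameter subgroup of a compact group is a torus, a torus orbit through a point is diffeomorphic to a quotient torus, and torus subgroups have dimension at most the rank), which I would cite rather than reprove.
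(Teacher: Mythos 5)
Your proof is correct, and it shares the paper's overall skeleton: realize $\phi_m$ as a one-parameter subgroup whose closure is a torus, then bound the dimension by the rank of $N(K_m)/K_m$. The one step you handle genuinely differently is how the velocity gets into the normalizer. The paper chooses no special velocity: it observes that the isotropy is constant along the integral curve, so $\exp(\tau\xi)K_m\exp(-\tau\xi)=K_m$ for all $\tau$, hence \emph{any} velocity $\xi$ automatically lies in the Lie algebra of $N(K_m)$; it then transfers $\phi_m$ to the curve $\tau\mapsto\exp(\tau\xi)K_m$ via the equivariant diffeomorphism $K\cdot m\cong K/K_m$ and takes the closure inside $N(K_m)/K_m$. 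You instead manufacture an $\Ad(K_m)$-fixed velocity, either as $\psi^X(m)$ from Theorem \ref{Thm:Decomposition} or by Haar averaging, which places $\xi$ in the Lie algebra of the centralizer $C_K(K_m)\subseteq N(K_m)$; you then form the torus $A=\overline{\exp(\bbr\xi)}$ upstairs in $K$, identify $T_m=A\cdot m\cong A/A_m$ directly in $M$, and push forward to $q(A)\subseteq N(K_m)/K_m$ for the rank bound. Your route costs a bit more machinery (the global decomposition or the averaging argument) but buys the slightly stronger facts that the velocity can be taken $\Ad(K_m)$-invariant and that $T_m$ is literally a torus orbit in $M$; the paper's route is more economical, needing only constancy of isotropy along trajectories and the orbit identification $K\cdot m\cong K/K_m$. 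Both arguments are complete.
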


\begin{proof}
Let $\xi\ffk$ be a velocity of the relative equilibrium, so that in particular $\phi_m(\tau)=\exp(\tau\xi)\cdot m$.
The isotropy is constant along the integral curve $\phi_m$.
That is, the stabilizer $K_{\phi_m(\tau)}$ is equal to $K_m$ for all times $\tau\in\bbr$.
Thus, for all $k\in K_m$ and all $\tau\in\bbr$, we have that:
\begin{equation*}
\exp(\tau\xi)^{-1}k \exp(\tau\xi)\in K_m.
\end{equation*}
That means that $\exp(\tau\xi)\in N(K_m)$ for all times $\tau\in\bbr$, and the vector $\xi$ is in the Lie algebra $\ffn$ of the normalizer $N(K_m)$.
Consider the linear motion on the quotient group $N(K_m)/K_m$ induced by the vector $\xi\in\ffn$:
\begin{equation*}
\gamma_{\xi+\ffk_m}:\bbr\to N(K_m)/K_m, \qquad \gamma_{\xi+\ffk_m}(\tau):=\exp(\tau\xi)K_m\equiv\exp(\tau (\xi+\ffk_m))K_m.
\end{equation*}
The set:
\begin{equation*}
T:=\overline{\left\{
\exp(\tau\xi)K_m
\right\}}
\end{equation*}
is a $1$-parameter, connected, abelian, closed, and compact Lie subgroup of $N(K_m)/K_m$.
Hence, $T$ is isomorphic to a torus.
Now recall that the group orbit $K\cdot m$ is equivariantly diffeomorphic to the homogenous space $K/K_m$ via the map:
\begin{equation*}
K\cdot m \xrightarrow{\cong} K/K_m, \qquad k\cdot m \mapsto k K_m.
\end{equation*}
Under this diffeomorphism the curve $\gamma_{\xi+\ffk_m}$ corresponds to the integral curve $\phi_m$, and the set $T_m$ corresponds to $T$.
That is, the integral curve $\phi_m$ is equivalent to the linear motion $\gamma_{\xi+\ffk_m}$ on the torus $T$ in $N(K_m)/K_m$.
The dimension of $T$ is clearly bounded above by the dimension of the maximal torus in $N(K_m)/K_m$.
Hence, so is the dimension of $T_m$.
\end{proof}

The bound on the number of independent frequencies in Theorem \ref{Thm:FieldKrupaTorus} is an equality for generic relative equilibria of equivariant vector fields.
That's because generically the closure of $1$-parameter subgroups in compact Lie groups are maximal tori, and the torus containing the motion of the relative equilibrium is equivalent to a $1$-parameter subgroup in the Lie group $N(K_m)/K_m$.
Thus, if the Lie group $N(K_m)/K_m$ has a maximal torus of dimension greater than $1$, then relative equilibria generically exhibit quasi-periodic motion.
This may be desirable in some cases, but not in others.

Thus, we consider how one can modify the number of independent frequencies of a relative equilibrium.
For example, this may be of interest in applications where one seeks to constrain the frequencies of the motion of the relative equilibrium.
It is natural to look for modifications that don't change the underlying dynamics modulo the symmetries.
That is, we look for an equivariant vector field that determines the same dynamics on the orbit space as the given one, but has a relative equilibrium with a different number of independent frequencies.
We introduce the following definition to make this idea precise:

\begin{definition}\label{Def:FrequencyStabilization}
Let $M$ be a $K$-manifold where $K$ is a compact Lie group, and let $X$ be an equivariant vector field on $M$ with a relative equilibrium $m$.
Suppose that the torus:
\begin{equation*}
T_m:=\overline{\{\phi_m(\tau)\}}
\end{equation*}
has dimension $d>0$, where $\phi_m$ is the integral curve of $X$ starting at $m$.
Then, for $0\le j \le d$, an infinitesimal gauge transformation $\psi\in C^\infty(M,\ffk)^K$ {\it stabilizes the frequencies of $X$ at $m$ to order $j$} if the torus:
\begin{equation*}
T_m^\psi:=\overline{\{\varphi_m(\tau) \}}
\end{equation*}
has dimension $j$, where $\varphi_m$ is the integral curve of $X+\partial(\psi)$ starting at $m$.
\end{definition}

It is immediate from Theorem \ref{Thm:Decomposition} that one can always get rid of all the independent frequencies and stabilize to order $0$:

\begin{proposition}\label{Prop:0thOrderFrequencyStabilization}
Let $M$ be a $K$-manifold where $K$ is a compact Lie group and let $X$ be an equivariant vector field on $M$ with a relative equilibrium at a point $m\in M$.
Then there exists $\psi\in C^\infty(M,\ffk)^K$ that stabilizes the the frequencies of $X$ at $m$ to order $0$.
That is, the infinitesimal gauge transformation $\psi$ is such that the vector field $X+\partial(\psi)$ has an equilibrium at $m$.
\end{proposition}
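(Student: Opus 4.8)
The plan is to read off the statement directly from the decomposition theorem. First I would observe that, since $K$ is a compact Lie group, every smooth action of $K$ is proper, so $M$ is a proper $K$-manifold and the hypotheses of Theorem \ref{Thm:Decomposition} are met with $G=K$. Applying Theorem \ref{Thm:Decomposition} to the equivariant vector field $X$ at its relative equilibrium $m$ produces an equivariant vector field $Y^X\in\ffX(M)^K$ with $Y^X(m)=0$ and an infinitesimal gauge transformation $\psi^X\in C^\infty(M,\ffk)^K$ such that $X=Y^X+\partial(\psi^X)$.

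Next I would set $\psi:=-\psi^X$. Since $C^\infty(M,\ffk)^K$ is a vector space, $\psi$ is again an infinitesimal gauge transformation, and since $\partial$ is linear we have $\partial(\psi)=-\partial(\psi^X)$. Hence $X+\partial(\psi)=Y^X$, so
\begin{equation*}
(X+\partial(\psi))(m)=Y^X(m)=0,
\end{equation*}
i.e.\ the vector field $X+\partial(\psi)$ has an equilibrium at $m$.

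Finally I would translate this into the language of Definition \ref{Def:FrequencyStabilization}: because $m$ is an equilibrium of $X+\partial(\psi)$, the integral curve $\varphi_m$ of $X+\partial(\psi)$ starting at $m$ is the constant curve $\varphi_m(\tau)\equiv m$, so $T_m^\psi=\overline{\{\varphi_m(\tau)\}}=\{m\}$ is a torus of dimension $0$. Thus $\psi$ stabilizes the frequencies of $X$ at $m$ to order $0$, as claimed.

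There is no real obstacle here; the only points deserving a word are that the properness hypothesis of Theorem \ref{Thm:Decomposition} is automatic because $K$ is compact, and that ``having an equilibrium at $m$'' is precisely the $j=0$ case of Definition \ref{Def:FrequencyStabilization}. (If one also wants to cover the degenerate case in which $T_m$ already has dimension $0$, then $m$ is itself an equilibrium of $X$ and one may simply take $\psi=0$.)
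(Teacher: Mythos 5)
Your proposal is correct and follows essentially the same route as the paper: apply Theorem \ref{Thm:Decomposition} to write $X=Y^X+\partial(\psi^X)$ with $Y^X(m)=0$, and take $\psi=-\psi^X$ so that $X+\partial(\psi)=Y^X$ has an equilibrium at $m$. The extra remarks about properness of compact group actions and the $j=0$ reading of Definition \ref{Def:FrequencyStabilization} are fine but not needed beyond what the paper's proof already does.
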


\begin{proof}
By Theorem \ref{Thm:Decomposition} there exists a decomposition of $X$ of the form:
\begin{equation*}
X=Y^X+\partial(\psi^X),
\end{equation*}
where $Y^X$ is an equivariant vector field with an equilibrium at $m$ and $\psi^X\in C^\infty(M,\ffk)^K$.
Thus, the infinitesimal gauge transformation $\psi:=-\psi^X\in C^\infty(M,\ffk)$ is such that:
\begin{equation*}
X+\partial(\psi)=X-\partial(\psi^X)=Y^X,
\end{equation*}
so $X + \partial(\psi)$ has an equilibrium at $m$.
\end{proof}

Due to Proposition \ref{Prop:0thOrderFrequencyStabilization}, we are interested in the cases where $0<j \le d$.
For this it helps to describe the independent frequencies of a relative equilibrium using the velocity.
To do this we recall some basic facts about tori:

\begin{definition}\label{Def:LatticeBasis}
Let $T$ be an $n$-dimensional torus with Lie algebra $\fft$, and let $\exp:\fft\to T$ be the exponential map.
A {\it lattice basis} of $\fft$ is a set of $n$ $\bbr$-linearly independent vectors $\{t_1,\dots,t_d\}$ in $\fft$ such that:
\begin{equation*}
\ker(\exp)=\text{span}_{\bbz}\{t_1,\dots,t_d\}.
\end{equation*}
That is, every vector in the integral lattice $\bbz_\bbt:=\ker(\exp)$ can be written as an integer linear combination of the vectors $t_1,\dots,t_d$.
\end{definition}

\begin{remark}
The kernel of the exponential map of a torus is a discrete subgroup and thus the Lie algebra of a torus has a lattice basis as in Definition \ref{Def:LatticeBasis} (see, for example, the proof of Theorem 11.2 in \cite{H15}).
The integral lattice $\bbz_T$ of a torus $T$ is such that $T=\fft/\bbz_T$, and the lattice basis induces an isomorphism with the standard torus $\bbt^n:=\bbr^n/\bbz^n$ that satisfies the following diagram:
\begin{equation}\label{Diag:TorusIsomorphism}
\begin{gathered}
\xy
(-36,8)*+{\fft}="1";
(-12,8)*+{\bbr^n}="2";
(-36,-8)*{\,T}="3";
(-12,-8)*+{\bbt^n}="4";
{\ar@{->}^{\cong} "1";"2"};
{\ar@{->}_{\exp} "1";"3"};
{\ar@{->}^{\exp} "2";"4"};
{\ar@{->}_{\cong} "3";"4"};
\endxy
\end{gathered}
\end{equation}
where the top map sends a vector $\xi$ to its coordinate representation with respect to the lattice basis.
\end{remark}

Lattice bases can be used to determine when a vector in the Lie algebra of a torus induces a dense curve:

\begin{lemma}\label{Lemma:QuasiPeriodicInTorus}
Let $T$ be a $d$-dimensional torus, where $d>1$, and let $\fft$ be its Lie algebra.
Let $\xi\in \fft$ be a Lie algebra vector that such that $\xi_1,\dots,\xi_d$ are its coordinates with respect to any lattice basis of $\fft$.
Then the curve:
\begin{equation*}
\gamma_\xi:\bbr\to T, \qquad \gamma_\xi(\tau):=\exp(\tau\xi),
\end{equation*}
is dense in $T$ if and only if the numbers $\xi_1,\dots,\xi_d$ are $\bbq$-linearly independent.
\end{lemma}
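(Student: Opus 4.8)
The plan is to reduce to the standard torus $\bbt^d=\bbr^d/\bbz^d$ via the isomorphism of diagram (\ref{Diag:TorusIsomorphism}) and then argue by Kronecker--Weyl type considerations. First I would use the chosen lattice basis to identify $T$ with $\bbt^d$, $\fft$ with $\bbr^d$, and $\exp\colon\fft\to T$ with the quotient map $\bbr^d\to\bbr^d/\bbz^d$; under this identification the vector $\xi$ corresponds to $(\xi_1,\dots,\xi_d)$ and the curve $\gamma_\xi$ becomes $\tau\mapsto(\tau\xi_1,\dots,\tau\xi_d)\bmod\bbz^d$. Since the property of being dense and the property of the coordinates being $\bbq$-linearly independent are both transported by this isomorphism, it suffices to prove the statement for $T=\bbt^d$.

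Next I would analyze the closure $H:=\overline{\gamma_\xi(\bbr)}$. As $\gamma_\xi$ is a continuous homomorphism $\bbr\to\bbt^d$, its image is a subgroup, the closure of a subgroup of a topological group is a subgroup, and $H$ is connected since it is the closure of the connected set $\gamma_\xi(\bbr)$ (which contains the identity). Hence $H$ is a closed connected subgroup of $\bbt^d$, i.e.\ a subtorus, and $\gamma_\xi$ is dense precisely when $H=\bbt^d$, equivalently when $H$ is not contained in any proper closed subgroup of $\bbt^d$.

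The key step is to detect proper closed subgroups using characters. Every character of $\bbt^d$ has the form $\chi_k(x)=e^{2\pi i(k_1x_1+\dots+k_dx_d)}$ for some $k=(k_1,\dots,k_d)\in\bbz^d$, and a closed subgroup $H\subsetneq\bbt^d$ is contained in $\ker\chi_k$ for some nonzero $k$: the quotient $\bbt^d/H$ is a nontrivial compact abelian Lie group, hence admits a nontrivial character by Pontryagin duality, and this pulls back to a nontrivial character of $\bbt^d$ vanishing on $H$. Now $\gamma_\xi(\bbr)\subseteq\ker\chi_k$ holds iff $e^{2\pi i\tau(k_1\xi_1+\dots+k_d\xi_d)}=1$ for all $\tau\in\bbr$, which happens iff $k_1\xi_1+\dots+k_d\xi_d=0$. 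Therefore $\gamma_\xi$ fails to be dense iff there is a nonzero $k\in\bbz^d$ with $k_1\xi_1+\dots+k_d\xi_d=0$, i.e.\ iff $\xi_1,\dots,\xi_d$ are $\bbz$-linearly dependent, equivalently $\bbq$-linearly dependent. This yields both directions of the equivalence simultaneously.

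I expect the only non-elementary ingredient, and hence the main obstacle, to be the structural input that a proper closed subgroup of $\bbt^d$ lies in the kernel of a nontrivial character, which rests on Pontryagin duality (or, equivalently, on the classification of closed subgroups of $\bbt^d$). If one preferred to avoid duality, an alternative route is to prove the ``if'' direction directly via Weyl's equidistribution theorem: when the $\xi_i$ are $\bbq$-linearly independent, the averages $\frac1T\int_0^T\chi_k(\gamma_\xi(\tau))\,d\tau$ tend to $0$ for every nonzero $k\in\bbz^d$, so by Stone--Weierstrass the orbit is equidistributed and in particular dense, while the ``only if'' direction is the same elementary observation as above. I would carry out the character-kernel argument as the primary proof, since it is the shortest and delivers the converse for free.
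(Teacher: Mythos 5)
Your proof is correct and follows essentially the same route as the paper: both reduce via the lattice-basis isomorphism (diagram (\ref{Diag:TorusIsomorphism})) to the standard torus $\bbt^d$, where density of the line $\tau\mapsto\tau(\xi_1,\dots,\xi_d)\bmod\bbz^d$ is equivalent to $\bbq$-linear independence of the coordinates. The only difference is that the paper simply cites this classical Kronecker-type criterion as well known, whereas you supply a proof of it via characters and Pontryagin duality, which makes your argument more self-contained but does not change the structure of the proof.
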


\begin{proof}
It is well-known that a vector $(\xi_1,\dots,\xi_d)\in\bbr^d$ is such that the curve:
\begin{equation}\label{Eq:StandardTorusCurve}
\tau \mapsto \left(\xi_1\tau,\dots,\xi_d\tau\right) +\bbz^d, \qquad \tau\in\bbr,
\end{equation}
is dense in the torus $\bbt^d:=\bbr^d/\bbz^d$ if and only if the numbers $\xi_1,\dots,\xi_d$ are $\bbq$-linearly independent.
Now let $t_1,\dots,t_d$ be a lattice basis of $\fft$, and let $\xi_1,\dots,\xi_d$ be the coordinates of $\xi$ with respect to this basis.
The curve $\gamma_\xi$ corresponds to the curve in (\ref{Eq:StandardTorusCurve}) under the corresponding isomorphism $T\cong \bbt^d$ as in (\ref{Diag:TorusIsomorphism}).
The claim follows since the isomorphism is, in particular, a homeomorphism.
\end{proof}

With this we can now prove the following condition for frequency stabilization:

\begin{proposition}\label{Prop:FrequencyStabilizationConditions}
Let $M$ be a $K$-manifold where $K$ is a compact Lie group, and let $X$ be an equivariant vector field on $M$ with a relative equilibrium $m$ with velocity $\xi\in\ffk$.
Let $T^d$ be the torus:
\begin{equation*}
T^d:=\overline{\{
\exp(\tau\xi)K_m
\}}
\end{equation*}
in the Lie group $N(K_m)/K_m$, let $t_1,\dots,t_d$ be a lattice basis of the Lie algebra $\fft^d$ of $T^d$, and let $\xi_1,\dots,\xi_d$ be the corresponding coordinates of the vector $\xi+\ffk_m \in \fft^d$.
If $\psi\in C^\infty(M,\ffk)^K$ is an infinitesimal gauge transformation such that:
\begin{equation*}
\psi(m)+\ffk_m=-\xi_{j+1}t_{j+1}- \,\dots\, -\xi_dt_d,
\end{equation*}
then $\psi$ stabilizes the frequencies of $X$ at $m$ to order $j$.
\end{proposition}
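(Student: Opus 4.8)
The plan is to realize the torus $T_m^\psi$ of Definition \ref{Def:FrequencyStabilization} as an explicit subtorus of $N(K_m)/K_m$ and to compute its dimension from the coordinates $\xi_1,\dots,\xi_d$. Write $Y:=X+\partial(\psi)$, an equivariant vector field isomorphic to $X$. First I would apply Lemma \ref{Lemma:IsoShareRel}: the point $m$ is again a relative equilibrium of $Y$, now with velocity $\tilde\xi:=\xi+\psi(m)\in\ffk$, and by the standard fact that the integral curve through a relative equilibrium with velocity $\eta$ equals $\tau\mapsto\exp(\tau\eta)\cdot m$, the integral curve of $Y$ starting at $m$ is $\varphi_m(\tau)=\exp(\tau\tilde\xi)\cdot m$. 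Since the $K$-isotropy of the point $m$ is $K_m$ irrespective of the vector field, Theorem \ref{Thm:FieldKrupaTorus} applies to $Y$; its proof shows that $\tilde\xi$ lies in the Lie algebra $\ffn$ of $N(K_m)$ and that, under the equivariant diffeomorphism $K\cdot m\cong K/K_m$, the curve $\varphi_m$ corresponds to $\tau\mapsto\exp(\tau\tilde\xi)K_m$. Hence $T_m^\psi$ is diffeomorphic to the subtorus $T^\psi:=\overline{\{\exp(\tau\tilde\xi)K_m\}}$ of $N(K_m)/K_m$, and it suffices to prove $\dim T^\psi=j$.

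The key computation is the coordinate description of $\tilde\xi$ inside $\fft^d$. Since $\xi+\ffk_m=\xi_1t_1+\dots+\xi_dt_d$ and, by hypothesis, $\psi(m)+\ffk_m=-\xi_{j+1}t_{j+1}-\dots-\xi_dt_d$, adding these gives $\tilde\xi+\ffk_m=\xi_1t_1+\dots+\xi_jt_j$, which in particular lies in the linear subspace $V_j:=\text{span}_{\bbr}\{t_1,\dots,t_j\}$ of $\fft^d$. Next I would observe that, because $T^d=\overline{\{\exp(\tau(\xi+\ffk_m))\}}$ has dimension exactly $d$, Lemma \ref{Lemma:QuasiPeriodicInTorus} forces $\xi_1,\dots,\xi_d$ to be $\bbq$-linearly independent; for $d=1$ this just says $\xi_1\neq 0$, which likewise must hold since $T^1$ is $1$-dimensional. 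Consequently any initial segment $\xi_1,\dots,\xi_j$ is $\bbq$-linearly independent as well.

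Finally I would note that, since $\{t_1,\dots,t_d\}$ is a lattice basis, the subspace $V_j$ meets the integral lattice of $\fft^d$ in exactly $\text{span}_{\bbz}\{t_1,\dots,t_j\}$, a rank-$j$ lattice in the $j$-dimensional space $V_j$; hence $\exp$ carries $V_j$ onto a closed $j$-dimensional subtorus $T^j$ of $T^d$, corresponding under the identification $T^d\cong\bbt^d$ of diagram (\ref{Diag:TorusIsomorphism}) to $\bbt^j\times\{0\}$. The curve generating $T^\psi$ stays in $T^j$, and applying Lemma \ref{Lemma:QuasiPeriodicInTorus} inside $T^j$ — handling the elementary cases $j\le 1$ by hand — it is dense in $T^j$ precisely because $\xi_1,\dots,\xi_j$ are $\bbq$-linearly independent (and for $j=0$ the curve is constant, so $T^\psi$ is a point). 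Thus $T^\psi=T^j$ has dimension $j$, whence $\dim T_m^\psi=j$ and $\psi$ stabilizes the frequencies of $X$ at $m$ to order $j$, as claimed.

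The step I expect to be the main obstacle is showing that $\dim T^\psi$ equals $j$ rather than merely some value $\le j$: this is exactly where one must exploit the hypothesis that $T^d$ is genuinely $d$-dimensional — via Lemma \ref{Lemma:QuasiPeriodicInTorus} — to extract the $\bbq$-linear independence of the coordinates $\xi_i$, and where the low-dimensional cases $j\in\{0,1\}$ and $d=1$, not covered by the statement of Lemma \ref{Lemma:QuasiPeriodicInTorus}, require separate elementary arguments.
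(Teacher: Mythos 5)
Your proposal is correct and follows essentially the same route as the paper's proof: pass to the isomorphic field $X+\partial(\psi)$ via Lemma \ref{Lemma:IsoShareRel}, transfer the integral curve to $N(K_m)/K_m$ as in Theorem \ref{Thm:FieldKrupaTorus}, extract $\bbq$-linear independence of $\xi_1,\dots,\xi_d$ from the density of the original curve in $T^d$ via Lemma \ref{Lemma:QuasiPeriodicInTorus}, and conclude density of the new curve in the subtorus $T^j$ spanned by $t_1,\dots,t_j$. Your additional attention to the cases $d=1$ and $j\in\{0,1\}$, which fall outside the literal hypotheses of Lemma \ref{Lemma:QuasiPeriodicInTorus}, is a point of care the paper glosses over.
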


\begin{proof}
Since $\xi\in\ffk$ is a velocity of the relative equilibrium $m$ for $X$, the integral curve of $X$ starting at $m$ is given by $\phi_m(\tau)=\exp(\tau\xi)\cdot m$ for $\tau\in\bbr$.
As described in the proof of Theorem \ref{Thm:FieldKrupaTorus}, the integral curve $\phi_m$ corresponds to the curve $\tau\mapsto \exp(\tau\xi)K_m$ on the Lie group $N(K_m)/K_m$, which has velocity $\xi+\ffk_m\in\ffk/\ffk_m$.
Therefore, by Lemma \ref{Lemma:QuasiPeriodicInTorus}, the numbers $\xi_1,\dots,\xi_d$ are $\bbq$-linearly independent.

The vector fields $X$ and $X+\partial(\psi)$ are isomorphic, so by Lemma \ref{Lemma:IsoShareRel}, $m$ is also a relative equilibrium of the vector field $X+\partial(\psi)$ and has velocity $\xi+\psi(m)$.
Hence, the integral curve of $X+\partial(\psi)$ is given by:
\begin{equation*}
\phi^\psi_m:\bbr\to M, \qquad \phi^\psi_m(\tau)=\exp(\tau(\xi+\psi(m)))\cdot m.
\end{equation*}
This corresponds to the curve:
\begin{equation*}
\bbr\to N(K_m)/K_m, \qquad \tau\mapsto \exp(\tau(\xi+\psi(m)))K_m,
\end{equation*}
in the Lie group $N(K_m)/K_m$, which has velocity $\xi+\psi(m)+\ffk_m\in\ffk/\ffk_m$.
Let $T^j$ be the subtorus of $T^d$ in $N(K_m)/K_m$ corresponding to the Lie subalgebra:
\begin{equation*}
\fft^j:=\text{span}_\bbr\{t_1,\dots,t_j\}
\end{equation*}
of the Lie algebra $\fft^d$ of $T^d$.
By the choice of $\psi$, we have that:
\begin{equation*}
\xi+\psi(m)+\ffk_m = \xi_1t_1 + \dots + \xi_jt_j,
\end{equation*}
where the numbers $\xi_1,\dots,\xi_j$ are $\bbq$-linearly independent.
Hence, the curve $\tau\mapsto \exp(\tau(\xi+\psi(m)))K_m$ is dense in the subtorus $T^j$ by Lemma \ref{Lemma:QuasiPeriodicInTorus}.
Consequently, $\psi$ stabilizes the frequencies of $X$ at $m$ to order $j$ as desired.
\end{proof}

\begin{example}
Let $n>0$ and consider the toric action of the standard $\bbt^n\cong (\bbs^1)^n\subseteq \bbc^n$ on the complex space $\bbc^n \cong \bbr^{2n}$ given by:
\begin{equation*}
\left(e^{i\theta_1},\dots, e^{i\theta_n}\right) \cdot (z_1,\dots, z_n) := \left(e^{i\theta_1}z_1,\dots, e^{i\theta_n}z_n\right),
\end{equation*}
for $\left(e^{i\theta_1},\dots, e^{i\theta_n}\right)\in \bbt^n$ and $(z_1,\dots,z_n)\in\bbc^n$.
As in Example \ref{Ex:T2C2}, the equivariant vector fields are of the form:
\begin{equation*}
X(z_1,\dots,z_n):=
\left(
\begin{array}{c}
f_1\left(|z_1|^2,\dots,|z_n|^2\right) z_1\\
\vdots\\
f_n\left(|z_1|^2,\dots,|z_n|^2\right) z_n\\
\end{array}
\right)
+
\left(
\begin{array}{c}
g_1\left(|z_1|^2,\dots,|z_n|^2\right) iz_1\\
\vdots\\
g_n\left(|z_1|^2,\dots,|z_n|^2\right) iz_n
\end{array}
\right),
\end{equation*}
for $(z_1,\dots,z_n)\in\bbc^n$ and where the $f_i:\bbr^n\to\bbr$ and $g_i:\bbr^n\to\bbr$ are smooth functions.
This is already in the form of Theorem \ref{Thm:Decomposition}.
That is, the decomposition in that theorem is global and the relative equilibria correspond to zeros of the map $f:\bbr^n\to\bbr^n$ defined by $f:=(f_1,\dots,f_n)$.

The isotropy of any nonzero point $w=(w_1,\dots,w_n)\in\bbc^n$ is trivial, so the Field-Krupa bound in Theorem \ref{Thm:FieldKrupaTorus} says that if $X$ has a relative equilibrium at $w$ then its independent frequencies can be anything between $0$ and $\text{rank}(N(\bbt^n_w)/\bbt^n_w)=\text{rank}(\bbt^n)=n$.
Let $X$ be an equivariant vector field, given by $f_i$ and $g_i$ as above, with a relative equilibrium at a nonzero point $w\in\bbc^n$.
Let $\psi^X:\bbc^n\to\bbr^n$ be the infinitesimal gauge transformation defined by:
\begin{equation*}
\psi^X(z_1,\dots,z_n):=
\left(
\begin{array}{c}
g_1\left(|z_1|^2,\dots,|z_n|^2\right)\\
\vdots\\
g_n\left(|z_1|^2,\dots,|z_n|^2\right)
\end{array}
\right)
\end{equation*}
for $(z_1,\dots,z_n)\in\bbc^n$.
Then $\psi^X(w)$ is a velocity of the relative equilibrium $w\in\bbc^n$.
Suppose, for the sake of simplicity in this example, that the number of independent frequencies is $n$.
That is, the numbers:
\begin{equation*}
\frac{\xi_i}{2\pi}:=\frac{g_i\left(w\right)}{2\pi}, \qquad i=1,\dots,n,
\end{equation*}
are $\bbq$-linearly idependent.
Then for any $0\le j \le n$, the infinitesimal gauge transformation:
\begin{equation*}
\psi:\bbc^n\to\bbr^n, \qquad
\psi(z_1,\dots,z_n):=
\left(
\begin{array}{c}
0 \\
\vdots\\
0\\
-g_{j+1}\left(|z_1|^2,\dots,|z_n|^2\right)\\
\vdots\\
-g_n\left(|z_1|^2,\dots,|z_n|^2\right)
\end{array}
\right)
\end{equation*}
stabilizes the frequencies of $X$ at $w$ to order $j$.
\end{example}

This example is representative of actions of tori.
That is, as we now show, we can always stabilize the frequencies to any order within the Field-Krupa bound:

\begin{theorem}\label{Thm:jStabilization}
Let $M$ be a $K$-manifold, where $K$ is a torus, and let $X$ be an equivariant vector field with a relative equilibrium at $m$.
Then there exists an infinitesimal gauge transformation $\psi\in C^\infty(M,\ffk)^K$ that stabilizes the frequencies of $X$ at $m$ to any order $j$ up to the dimension of the closure of the integral curve of $X$ starting at $m$. 
Furthermore, $\psi$ may be chosen so that $X+\partial(\psi)$ only differs from $X$ in an arbitrarily small neighborhood of the group orbit of $G\cdot m$.
\end{theorem}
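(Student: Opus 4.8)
The plan is to deduce the statement from Proposition \ref{Prop:FrequencyStabilizationConditions} by constructing a suitable infinitesimal gauge transformation by hand; the only real content is arranging it to be $K$-invariant and supported near the orbit.

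First I would record the simplifications afforded by the hypothesis that $K$ is a torus. Since $K$ is abelian, the adjoint representation of $K$ is trivial, so $C^\infty(M,\ffk)^K$ is precisely the space of $K$-invariant smooth maps $M\to\ffk$, the normalizer $N(K_m)$ equals $K$, and the quotient $K/K_m$ is again a torus with Lie algebra $\ffk/\ffk_m$. Fix a velocity $\xi\in\ffk$ of the relative equilibrium $m$. By Theorem \ref{Thm:FieldKrupaTorus}, the closure $T^d:=\overline{\{\exp(\tau\xi)K_m\}}\subseteq K/K_m$ is a torus of some dimension $d$, which is exactly the dimension of the closure of the integral curve of $X$ starting at $m$, and $d\le\text{rank}(K/K_m)$. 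Choose a lattice basis $t_1,\dots,t_d$ of the Lie algebra $\fft^d\subseteq\ffk/\ffk_m$ of $T^d$ and let $\xi_1,\dots,\xi_d$ be the coordinates of $\xi+\ffk_m$ in this basis, exactly as in Proposition \ref{Prop:FrequencyStabilizationConditions}.

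Now fix $j$ with $0\le j\le d$ and set
\[
v:=-\xi_{j+1}t_{j+1}-\dots-\xi_d t_d\ \in\ \fft^d\subseteq\ffk/\ffk_m ,
\]
with the convention that $v=0$ when $j=d$ (and the sum is empty when $d=0$). Since $K$ is abelian the $\Ad$-action of $K_m$ on $\ffk$ is trivial, so $\ffk_m$ admits a linear complement $\ffq$ in $\ffk$; let $\widetilde v\in\ffq\subseteq\ffk$ be the lift of $v$. Next I would choose an arbitrarily small $K$-invariant open neighborhood $W$ of the compact orbit $K\cdot m$ (for instance, inside a Palais tube as in Remark \ref{Rem:TubularNeighborhood}), together with a $K$-invariant smooth bump function $\mu\colon M\to[0,1]$ that is identically $1$ on a neighborhood of $K\cdot m$ and has $\text{supp}(\mu)\subseteq W$; such a $\mu$ is produced by taking any (not necessarily invariant) bump function with these properties and averaging it over $K$ against normalized Haar measure, which preserves both properties because the relevant sets are $K$-invariant and $K$ is compact. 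Define $\psi\colon M\to\ffk$ by $\psi(p):=\mu(p)\,\widetilde v$. This is smooth and $K$-invariant, hence lies in $C^\infty(M,\ffk)^K$ (invariance equals equivariance here since $\Ad$ is trivial), and $\psi(m)=\widetilde v$, so $\psi(m)+\ffk_m=v=-\xi_{j+1}t_{j+1}-\dots-\xi_d t_d$. Proposition \ref{Prop:FrequencyStabilizationConditions} then shows directly that $\psi$ stabilizes the frequencies of $X$ at $m$ to order $j$.

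Finally, by formula (\ref{Eq:InducedVF}) the vector field $\partial(\psi)$ vanishes wherever $\psi$ does, so $X+\partial(\psi)$ agrees with $X$ outside $\text{supp}(\mu)\subseteq W$, which was chosen to be an arbitrarily small neighborhood of $K\cdot m$; this gives the last assertion. I expect no serious obstacle in carrying this out: the one step that requires a little care is the construction of the $K$-invariant bump function and the verification that averaging does not destroy the value $1$ on a neighborhood of the orbit, but this is routine given compactness of $K$ and $K$-invariance of the sets involved. The extreme cases fall out of the same construction: $j=d$ forces $\widetilde v=0$ and $\psi\equiv 0$ (so $X+\partial(\psi)=X$), and the strict-equilibrium case $d=0$ is covered in the same way with $j=0$.
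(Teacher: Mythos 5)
Your proof is correct, and it reaches the same target — the hypothesis of Proposition \ref{Prop:FrequencyStabilizationConditions} — by a somewhat simpler construction than the paper's. The paper first invokes the decomposition $X=Y^X+\partial(\psi^X)$ of Theorem \ref{Thm:Decomposition} to produce a globally defined gauge transformation $\psi^X$ with $\psi^X(m)$ a velocity, then chooses an inner product adapted to a lattice basis and sets $\psi(p):=-\sum_{i=j+1}^{d}\langle\psi^X(p),t_i\rangle t_i$, finally cutting off with an invariant bump to get the locality statement. You bypass Theorem \ref{Thm:Decomposition} entirely: since $K$ is abelian the adjoint action is trivial, so the constant map $p\mapsto\widetilde v$ (a lift of $-\xi_{j+1}t_{j+1}-\dots-\xi_d t_d$ to $\ffk$) multiplied by a $K$-invariant bump function is already an infinitesimal gauge transformation with the required value at $m$, and $\partial(\psi)$ visibly vanishes off the support of the bump. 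This is more elementary and handles the locality requirement in one stroke; what the paper's route buys is a formula for $\psi$ built naturally from the decomposition data $\psi^X$, without reference to a cutoff until the very last step. Two cosmetic points: the remark that abelianness is needed for $\ffk_m$ to admit a linear complement is unnecessary (any subspace has one; abelianness is only needed so that constants, and your $\psi$, are equivariant), and your choice to work with the closure $\overline{\{\exp(\tau\xi)K_m\}}$ in $K/K_m$ matches the hypotheses of Proposition \ref{Prop:FrequencyStabilizationConditions} exactly, which is in fact the cleaner bookkeeping.
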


\begin{proof}
By Theorem \ref{Thm:Decomposition}, there exists a decomposition of $X$:
\begin{equation*}
X=Y^X+\partial\left(\psi^X\right),
\end{equation*}
where $Y^X\in\ffX(M)^K$ has an equilibrium at $m$ and $\psi^X\in C^\infty(M,\ffk)^K$.
Hence, in particular $\psi^X(m)\in\ffk$ is a velocity for $m$.
Pick a $K$-invariant inner product on the Lie algebra $\ffk$.
Use this to identify the quotient $\ffk/\ffk_m$ with the orthogonal complement $\ffk_m^\perp$ of $\ffk_m$ in $\ffk$.
Let $\xi\in\ffk^\perp_m$ be the component of the velocity $\psi^X(m)\in\ffk$ in $\ffk^\perp_m$.
Note, in particular, that the vector $\xi$ is also a velocity of the relative equilibrium $m$ of $X$ (Remark \ref{Rem:VelocityFacts}).

Let $T^d$ be the following torus in $K$:
\begin{equation*}
T^d:=\overline{\{
\exp(\tau\xi)\mid \tau\in\bbr
\}},
\end{equation*}
and let $d$ be its dimension.
Pick a lattice basis $t_1,\dots,t_d$ of the Lie algebra $\fft^d$ of $T^d$.
Let $\xi_1,\dots,\xi_d$ be the coordinates of the velocity $\xi$ with respect to this basis, and let $0\le j \le d$.
By \ref{Prop:FrequencyStabilizationConditions}, to construct the desired infinitesimal gauge transformation $\psi$, it suffices to find one such that:
\begin{equation*}
\psi(m):=-\xi_{j+1}t_{j+1}- \,\dots\, -\xi_dt_d,
\end{equation*}

For this, complete the basis $t_1,\dots,t_d$ to a lattice basis $t_1,\cdots,t_d,t_{d+1},\cdots,t_n$ of the Lie algebra $\ffk$ of the torus $K$.
Use this basis to define an invariant inner product $\langle\cdot,\cdot\rangle$ (that's possibly different from the one chosen initially) by:
\begin{equation*}
\langle t_i,t_l \rangle := \delta_{i,l}, 
\qquad 
\text{where }\delta_{i,l}:=\begin{cases}
0 & i\not=l \\
1 & i=l.
\end{cases}
\end{equation*}
Then define the map:
\begin{equation}\label{Eq:TheStabilizer}
\psi:M\to\ffk, \qquad \psi(p):=-\sum_{i=j+1}^{d} \left\langle \psi^X(p),t_i\right\rangle t_i.
\end{equation}
Since $K$ is abelian, the Ajdoint reprsentation of $K$ on $\ffk$ is trivial.
Thus, for all $(k,p)\in K\times M$ we have:
\begin{align*}
\psi(k\cdot p)
&=-\sum_{i=j+1}^{d} \left\langle \psi^X(k\cdot p),t_i\right\rangle t_i\\
&=-\sum_{i=j+1}^{d} \left\langle \Ad(k)\psi^X(p),t_i\right\rangle t_i
\qquad\text{ since }\psi^X\text{ is equivariant}\\
&=-\sum_{i=j+1}^{d} \left\langle \psi^X(p),t_i\right\rangle t_i
\qquad\text{ since the }\Ad\text{ representation is trivial}\\
&=\psi(p),
\end{align*}
meaning that $\psi$ is an infinitesimal gauge transformation.
Furthermore:
\begin{equation*}
\psi(m)
=-\sum_{i=j+1}^{d} \left\langle \psi^X(m),t_i\right\rangle t_i
=-\xi_{j+1}t_{j+1}-\,\dots\,-\xi_dt_d
\end{equation*}
as required.
Hence, by Proposition \ref{Prop:FrequencyStabilizationConditions}, the infinitesimal gauge transformation $\psi$ stabilizes the frequencies of $X$ at $m$ to order $j$.

To see that $\psi$ may be chosen so that $X+\partial(psi)$ is the same as $X$ in an arbitrary small $K$-invariant neighborhood of $G\cdot m$ we can multiply $\psi$ by a $K$-invariant bump function on a sufficiently small $K$-invariant neighborhood of $m$ as we now describe.
Let $B$ be a small $K$-invariant neighborhood of the orbit $G\cdot m$ and let $\widehat B$ be a closed $K$-invariant set containing $B$.
Then let $\mu_B$ be a $K$-invariant smooth bump function for $B$.
That is, $\mu_B$ is such that:
\begin{equation*}
\mu_B(p):=
\begin{cases}
1 & \text{ if } p\in B \\
0 \le \mu_B(p) \le 1 & \text{ if } p \in \widehat B - B \\
0 & \text{ if } p \in M - \widehat B
\end{cases}
\end{equation*}
and $\mu_B(k\cdot p)=\mu_B(p)$ for all $(k,p)\in K\times M$.
Such an invariant bump function may be obtained by averaging over the group action any smooth bump function over the desired neighborhoods.
Then consider the infinitesimal gauge transformation:
\begin{equation*}
\widetilde\psi:M\to\ffk, \qquad \widetilde\psi(p):=\mu_B(p)\,\psi(p),
\end{equation*}
where $\psi:M\to\ffk$ is the infinitesimal gauge transformation in (\ref{Eq:TheStabilizer}).
Since $\mu_B(m)=1$ we have:
\begin{equation*}
\widetilde\psi(m)=\psi(m)=-\xi_{j+1}t_{j+1}-\,\dots\,-\xi_dt_d.
\end{equation*}
Hence, $\widetilde\psi$ stabilizes the frequencies of $X$ at $m$ to order $j$ just like $\psi$ also equals $X$ outside of the closed set $\widehat B$.
\end{proof}

\section{Bifurcations of equivariant vector fields 
to and from relative equbilibria}\label{ch2}
In this section we consider bifurcations of $1$-parameter families of equivariant vector fields, or equivalently bifurcations of paths of equivariant vector fields.
Due to the presence of group symmetries, one expects bifurcations to relative equilibria in place of bifurcations to equilibria.
We prove a test for bifurcations to relative equilibria on representations (Theorem \ref{Thm:BifToRelEq}).
On the one hand, this test is conceptually simple: it is essentially a generalization of the Equivariant Branching Lemma considered up to isomorphism of equivariant vector fields.
On the other hand, it can be quite general: it can predict bifurcations to steady-state, periodic, or quasi-periodic motion on tori.
The motion on the bifurcating branches depends on an isomorphism of equivariant vector fields used in the test.
In particular, isomorphisms of equivariant vector fields are central to reducing and reconstructing the dynamics of the bifurcation in our test.

We then extend this test to bifurcations {\it from} relative equilibria in the presence of noncompact symmetries.
We reduce to the slice representation at the relative equilibrium.
Compared to similar slice reductions that can be found in the literature \cite{K90, FSSW96} we reconstruct the dynamics from the slice reduction using isomorphisms of equivariant vector fields.

In section \ref{paths}, we provide a straightforward extension of the category of equivariant vector fields to a category of paths of equivariant vector fields.
We also show that the decomposition at relative equilibria in Theorem \ref{Thm:Decomposition} extends to paths of equivariants vector fields.
In section \ref{bifurcationsrep} we prove the test for bifurcations to relative equilibria on representations mentioned before.
Finally, in section \ref{bifurcationsproper} we extend this test to proper actions.

\subsection{Categories of paths of equivariant vector fields}\label{paths}
In this subsection we show that the category of equivariant vector fields from subsection \ref{categoryVFs} extends naturally to $1$-parameter families of equivariant vector fields.

We will think of $1$-parameter families of equivariant vector fields on a $G$-manifold $M$ as ``smooth'' paths in the space of equivariant vector fields $\ffX(M)^G$.
Thus, we need to discuss what it means for such a path to be ``smooth''.
We must address the same question for paths in the space of gauge transformations $C^\infty(M,\ffg)^G$.
There are several ways to do it.
For instance, we can turn the spaces $\ffX(M)^G$ and $C^\infty(M,\ffg)^G$ into Fr\'echet spaces.
However, it is enough for our purposes to use the following simpler definition:

\begin{definition}\label{Def:SmoothPath}
Let $M$ be a $G$-manifold.
A map $X:\bbr\to\ffX(M)^G$ is a {\it smooth path of equivariant vector fields} on $M$ if the associated map:
\begin{equation*}
\widehat X: \bbr\times M\to TM, \qquad
\widehat X (\lambda,m) := X(\lambda)(m),
\end{equation*}
is smooth in the usual sense.
An analogous definition gives {\it smooth paths of infinitesimal gauge transformations} $\psi:\bbr\to C^\infty(M,\ffg)^G$.
\end{definition}

From now on we will assume all such paths are smooth, so we drop the word smooth when referring to them.
A path of infinitesimal gauge transformations $\psi:\bbr\to C^\infty\left(M,\ffg\right)^G$ on a $G$-manifold $M$ induces a path of equivariant vector fields $\partial(\psi):\bbr\to \ffX(M)^G$.
It is given by:
\begin{equation}\label{Eq:InducedPathVF}
\partial(\psi)_\lambda(m):=\frac{\d}{\d \tau}\Big|_0 \exp(\tau\psi_\lambda(m))\cdot m,
\end{equation}
for any $\lambda\in\bbr$ and any $m\in M$.
The map $\partial:C^\infty\left(\bbr,C^\infty(M,\ffg)^G\right)\to C^\infty\left(\bbr,\ffX(M)^G\right)$ is linear.
The abelian group $C^\infty\left(\bbr,C^\infty(M,\ffg)^G\right)$ acts on the space $C^\infty\left(\bbr,\ffX(M)^G\right)$.
The action is given by:
\begin{equation}\label{Eq:PathAction}
\psi\cdot X := X + \partial(\psi),
\end{equation}
where $\psi$ is a path of infinitesimal gauge transformations, $X$ is a path of equivariant vector fields, and the addition is the pointwise addition.
Note that the map $\partial$ is essentially the map for vector fields $\partial:C^\infty(M,\ffg)^G\to\ffX(M)^G$ in (\ref{Eq:InducedVF}) taken parameter-wise, and the action (\ref{Eq:PathAction}) is the action of infinitesimal gauge transformations $C^\infty(M,\ffg)^G$ on equivariant vector fields $\ffX(M)^G$ in (\ref{Eq:Action}) taken parameter-wise.
Put another way, the action of paths in (\ref{Eq:PathAction}) reduces ot the action on equivariant vector fields (\ref{Eq:Action}) when the paths are all constant.
Thus, we have a category:

\begin{definition}\label{Def:GpoidFamilies}
The {\it category } $C^\infty\left(\bbr,\bbx(M)^G\right)$ {\it of paths of equivariant vector fields} on a $G$-manifold $M$ is the action groupoid (see Remark \ref{Rem:ActionGroupoids}) of the action (\ref{Eq:PathAction}) of the space of paths of infinitesimal gauge transformations $C^\infty\left(\bbr,C^\infty(M,\ffg)^G\right)$ on the space of paths of equivariant vector fields $C^\infty\left(\bbr,\ffX(M)^G\right)$.
\end{definition}

\begin{remark}
As with the category of equivariant vector fields, the category of paths of equivariant vector fields in Definition \ref{Def:GpoidFamilies} is a $2$-vector space.
That is, the space of paths of equivariant vector fields and the space of paths of infinitesimal gauge transformations are vector spaces, and all the structure maps of the category are linear maps.
\end{remark}

\begin{definition}\label{Def:IsoFams}
Two paths of equivariant vector fields $X$ and $Y$ on a $G$-manifold $M$ are {\it isomorphic} if they are isomorphic as objects of the category of paths of equivariant vector fields (Definition \ref{Def:GpoidFamilies}).
That is, they are isomorphic if there exists a path of infinitesimal gauge transformations $\psi:\bbr\to C^\infty(M,\ffg)^G$ such that $Y=X+\partial(\psi)$.
\end{definition}

\begin{notation}
In the category of paths of equivariant vector fields on a $G$-manifold $M$, an isomorphism  $X\to Y$ is given by a pair $(\psi,X)$, where $\psi$ is an infinitesimal gauge transformation such that $Y=X+\partial(\psi)$.
We will sometimes refer to the path $\psi$ as an isomorphism between $X$ and $Y$ for the sake of simplicity.
\end{notation}

We note the equivalence in Theorem \ref{Thm:EquivalenceVFs} between the categories of equivariant vector fields $\bbx(V)^K\simeq \bbx(G\times^KV)^G$ can be extended to an equivalence between the categories of paths of equivariant vector fields:

\begin{theorem}\label{Thm:EquivalencePaths}
Let $V$ be a representation of a compact Lie subgroup $K$ of a Lie group $G$.
Then there is an equivalence of categories:
\begin{equation*}
C^\infty\left(\bbr,\bbx(G\times^KV)\right) \simeq C^\infty\left(\bbr,\bbx(V)^K\right)
\end{equation*}
between the categories of paths of equivariant vector fields on $G\times^KV$ and $V$ respectively (Definition \ref{Def:GpoidFamilies}).
In particular, the functor $E:\bbx(V)^K\to\bbx(G\times^KV)^G$ of Theorem \ref{Thm:InclusionFunctor}, a functor $P:\bbx(G\times^KV)^G\to\bbx(V)^K$ as in Theorem \ref{Thm:ProjectionFunctor}, and the corresponding natural isomorphism $h:EP\cong 1_{\bbx(G\times^KV)^G}$ can all be applied parameter-wise to obtain the equivalence on paths.
\end{theorem}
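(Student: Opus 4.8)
The plan is to observe that the three ingredients of the equivalence in Theorem~\ref{Thm:EquivalenceVFs}---the extension functor $E$ of Theorem~\ref{Thm:InclusionFunctor}, a projection functor $P$ of Theorem~\ref{Thm:ProjectionFunctor}, and the natural isomorphism $h$ of Theorem~\ref{Thm:NaturalIsomorphism}---are each given by formulas that act on equivariant vector fields and infinitesimal gauge transformations \emph{pointwise on the manifold} and are assembled entirely out of fixed smooth data: the slice embedding $j\colon V\hookrightarrow G\times^KV$, the tangent action of Notation~\ref{Notation}.\ref{Not:TangentAction}, the equivariant connection $\Phi$ determined by a splitting $\ffg=\ffk\oplus\ffq$, the maps $\Ad$, $\iota$, $\bbp$, and the exponential. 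Consequently each of them applies verbatim ``in the parameter $\lambda$'', and the proof reduces to checking that (i) the resulting parameter-wise maps send smooth paths to smooth paths in the sense of Definition~\ref{Def:SmoothPath}, and (ii) they assemble into functors and a natural isomorphism between the path categories. The relations $PE=1_{\bbx(V)^K}$ and $EP\cong 1_{\bbx(G\times^KV)^G}$ then transfer from Theorems~\ref{Thm:ExtendThenProject} and~\ref{Thm:NaturalIsomorphism} parameter by parameter.

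First I would set up the parameter-wise maps: given a smooth path $X\colon\bbr\to\ffX(V)^K$ define the path $\lambda\mapsto E_0(X(\lambda))$, and analogously for $P_0$, $E_1$, $P_1$, and $h$. Smoothness is the only substantive point. For $E_0$, the associated map $\bbr\times(G\times^KV)\to T(G\times^KV)$ sending $(\lambda,[g,v])$ to $g\cdot(Tj)\bigl(\widehat X(\lambda,v)\bigr)$ is a composite of the smooth map $\widehat X\colon\bbr\times V\to TV$ with the smooth maps $Tj$ and the tangent action, hence smooth; so the image path is smooth. The same argument covers $P_0$ (using that $\Phi$ is a smooth bundle map and $j^*$ is smooth), $E_1$ and $P_1$ (using smoothness of $\Ad$, $\iota$, $\bbp$), and $h$: by diagram~(\ref{Diag:NatIso}) the map $h$ is the composite $\epsilon\circ\iota_*\circ\beta\circ\alpha$ of linear maps each of which is either a pointwise-in-$M$ formula built from fixed smooth data ($\alpha$, $\iota_*$, $\epsilon$) or the pushforward/pullback along a fixed smooth diffeomorphism ($\beta$, $\beta^{-1}$ via the maps $\eta$ and $\epsilon$ of Lemma~\ref{Lemma:HorizontalTriviality}); all of these commute with passage to smooth $\bbr$-families.

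Next I would verify functoriality. The abelian groups $C^\infty\bigl(\bbr,C^\infty(V,\ffk)^K\bigr)$ and $C^\infty\bigl(\bbr,C^\infty(G\times^KV,\ffg)^G\bigr)$ act by~(\ref{Eq:PathAction}) on $C^\infty\bigl(\bbr,\ffX(V)^K\bigr)$ and $C^\infty\bigl(\bbr,\ffX(G\times^KV)^G\bigr)$, realizing the path categories as action groupoids (Definition~\ref{Def:GpoidFamilies}). The parameter-wise $E_1$ and $P_1$ are group homomorphisms (linearity is inherited level-wise from Theorems~\ref{Thm:InclusionFunctor} and~\ref{Thm:ProjectionFunctor}), and they intertwine the actions as in~(\ref{Eq:TargetCommute}) because the corresponding identity holds at each $\lambda$. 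Hence Lemma~\ref{Lemma:FunctorShortcut}, applied with $A_i$, $B_i$ the spaces of paths, yields functors $E\colon C^\infty(\bbr,\bbx(V)^K)\to C^\infty(\bbr,\bbx(G\times^KV)^G)$ and $P$ in the other direction. For the natural isomorphism, equation~(\ref{Eq:DecompositionRevisited}) and the naturality identity~(\ref{Eq:Naturality}) are equalities of vector fields and gauge transformations that hold at each parameter value, so the parameter-wise $h$ is a natural isomorphism $EP\cong 1$; likewise $PE=1$ follows level-wise from Theorem~\ref{Thm:ExtendThenProject}. Together these exhibit the claimed equivalence $C^\infty(\bbr,\bbx(G\times^KV)^G)\simeq C^\infty(\bbr,\bbx(V)^K)$.

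The only part I expect to require care is the smoothness verification~(i): checking that a pointwise-in-$M$ formula, applied to a jointly smooth $\widehat X(\lambda,m)$, again produces something jointly smooth in $(\lambda,-)$, and in particular that the map $\beta$ of Lemma~\ref{Lemma:HorizontalTriviality}, through which $h$ is defined, behaves well in $\bbr$-families. Everything else is a routine ``apply the existing proof parameter-wise'' argument built on top of Theorem~\ref{Thm:EquivalenceVFs} and Lemma~\ref{Lemma:FunctorShortcut}.
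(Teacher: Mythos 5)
Your proposal is correct and follows essentially the same route as the paper's proof: apply $E$, $P$, and $h$ parameter-wise, verify functoriality via Lemma~\ref{Lemma:FunctorShortcut} exactly as in Theorems~\ref{Thm:InclusionFunctor} and~\ref{Thm:ProjectionFunctor}, and transfer $PE=1$ and $EP\cong 1$ level-wise from Theorems~\ref{Thm:ExtendThenProject} and~\ref{Thm:NaturalIsomorphism}. Your explicit attention to the smoothness of the resulting paths (Definition~\ref{Def:SmoothPath}) is a point the paper leaves implicit, but it does not change the argument.
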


\begin{proof}
The functor $E:\bbx(V)^K\to\bbx(G\times^KV)^G$ of Theorem \ref{Thm:InclusionFunctor} can be used parameter-wise to obtain a functor:
\begin{equation}\label{Eq:InclusionFunctorPaths}
E:C^\infty\left(\bbr,\bbx(V)^K\right)\to C^\infty\left(\bbr,\bbx(G\times^KV)^G\right).
\end{equation}
The functor $E$ sends a path of equivariant vector fields $X:\bbr\to\ffX(V)^K$ to the path $E_0(X):\bbr\to\ffX(G\times^KV)^G$ defined by:
\begin{equation*}
(E_0X)_\lambda([g,v]):=g\cdot (Tj) X_\lambda(v),\qquad
[g,v]\in G\times^KV,\, \lambda\in\bbr,
\end{equation*}
where $j:V\hookrightarrow G\times^KV$ is the slice embedding defined by $j(v):=[1,v]$.
Similarly, the functor $E$ sends a path of infinitesimal gauge transformations $\psi:\bbr\to C^\infty(V,\ffk)^K$ to the path $E_1(\psi):\bbr\to C^\infty(V,\ffk)^K$ defined by:
\begin{equation*}
(E_1\psi)_\lambda([g,v]):=\Ad(g)\iota\left(\psi_\lambda(v)\right),\qquad
[g,v]\in G\times^KV,\, \lambda\in\bbr,
\end{equation*}
where $\iota:\ffk\hookrightarrow\ffg$ is the Lie algebra inclusion.

Similarly, the functor $P:\bbx(G\times^KV)^G\to\bbx(V)^K$ of Theorem \ref{Thm:ProjectionFunctor} corresponding to some $K$-equivariant splitting $\ffg=\ffk\oplus\ffq$ can be taken parameter-wise to obtain a fucntor:
\begin{equation}\label{Eq:ProjectionFunctorPaths}
P:C^\infty\left(\bbr,\bbx(G\times^KV)^G\right) \to C^\infty\left(\bbr,\bbx(V)^K\right).
\end{equation}
The functor $P$ sends a path of equivariant vector fields $X:\bbr\to\ffX(G\times^KV)^G$ to the path of equivariant vector fields $P_0(X):\bbr\to\ffX(V)^K$ defined by:
\begin{equation*}
(P_0X)(v):= j^* (\Phi\circ X_\lambda) (v), \qquad
v\in V,\, \lambda\in\bbr,
\end{equation*}
where $\Phi\in \Omega^1\big(G\times^KV; \calv(G\times^KV)\big)$ is the equivariant connection on the vector bundle $G\times^KV \to G/K$ corresponding to the splitting $\ffg=\ffk\oplus\ffq$.
Similarly, the functor $P$ sends a path of infinitesimal gauge transformations $\psi:\bbr\to C^\infty(G\times^KV,\ffg)^G$ to the path of infinitesimal gauge transformations $P_1(\psi):\bbr\to C^\infty(V,\ffk)^K$ defined by:
\begin{equation*}
(P_1\psi)_\lambda(v):=\bbp(\psi_\lambda(j(v))),\qquad
v\in V,\,\lambda\in\bbr,
\end{equation*}
where $\bbp:\ffg\to\ffk$ is the $K$-equivariant projection corresponding to the splitting $\ffg=\ffk\oplus\ffq$.

The functoriality of $E$ and $P$ obtained this way can be verified using Lemma \ref{Lemma:FunctorShortcut} in a completely analogous way to the proofs of Theorem \ref{Thm:InclusionFunctor} and Theorem \ref{Thm:ProjectionFunctor}.
In fact, as can be noted by the description above, for fixed values of $\lambda\in\bbr$ the values of the functor $E$ in (\ref{Eq:InclusionFunctorPaths}) and of $P$ in (\ref{Eq:ProjectionFunctorPaths}) on equivariant vector fields and infinitesimal gauge transformations agree with those of Theorem \ref{Thm:InclusionFunctor} and Theorem \ref{Thm:ProjectionFunctor}.
Similarly, the fact that $PE=1_{C^\infty(\bbr,\bbx(V)^K)}$ holds since it is true parameter-wise (for any fixed $\lambda\in\bbr$) by Theorem \ref{Thm:ExtendThenProject}.

Finally, a completely analogous proof to that of Theorem \ref{Thm:NaturalIsomorphism} yields, for a choice of functor $P$ as in (\ref{Eq:ProjectionFunctorPaths}), a linear map:
\begin{equation}\label{Eq:TheMaphPaths}
h:C^\infty\left(\bbr,\ffX(G\times^KV)\right)^G\to C^\infty\left(\bbr,C^\infty(G\times^KV,\ffg)^G\right),\qquad
X\mapsto h(X),
\end{equation}
such that:
\begin{equation}\label{Eq:DecompositionRevisitedPaths}
X=E_0(P_0(X))+\partial(h(X)),\qquad
X\in C^\infty\left(\bbr,\ffX(G\times^KV)^G\right).
\end{equation}
That is, $h$ defines a natural isomorphism $EP\cong 1_{C^\infty(\bbr,\bbx(G\times^KV)^G)}$.
Consequently, the functor $P$ and $E$ in (\ref{Eq:InclusionFunctorPaths}) and (\ref{Eq:ProjectionFunctorPaths}), and the natural isomorphism $h: EP\cong 1$ is an equivalence of categories.
\end{proof}

\subsection{Isomorphisms and bifurcations to steady-state, periodic, and quasi-periodic motion on representations}\label{bifurcationsrep}
Let $V$ be a representation of a compact Lie group $K$.
Given a path of equivariant vector fields $X:\bbr\to\ffX(V)^K$, one often looks for nontrivial solution curves of $X$.
That is, one looks for a curve $\gamma:=(\nu,\lambda):I\to V\times\bbr$ such that $\gamma(0)=(0,0)\in V\times\bbr$ and $\nu(\delta)\in V$ is an equilibrium of the vector field $X_{\lambda(\delta)}$ for all $\delta\in I$.
An equilibrium is, in particular, a relative equilibrium.
And a relative equilibrium tends to have motion dense in a torus with dimension $\text{rank}\left(N(K_m)/K_m\right)$, which in turn is often greater than $0$ (\cite{} and Theorem \ref{Thm:FieldKrupaTorus}).
Hence, it is more natural to look for {\it relative} solution curves.
In this subsection we prove a test for finding relative solution curves consisting of relative equilibria.
The test consists of checking isomorphic vector fields for bifurcations to {\it strict} equilibria, and using the isomorphisms to reduce and reconstruct the dynamics.
This test can predict and describe bifurcations to steady-state (strict equilibrium), periodic, or quasi-periodic motion.

We introduce the following definition:

\begin{definition}\label{Def:SolutionBranch}
Let $M$ be a $G$-manifold and let $X:\bbr\to\ffX(M)^G$ be a path of equivariant vector fields on $M$ such that a point $m\in M$ is a relative equilibrium of the vector fields $X_\lambda$ for $\lambda\in\bbr$.
A {\it relative solution curve} of $X$ is a curve $\gamma=(p,\lambda):I\to M\times\bbr$ such that $\gamma(0)=(m,0)\in M\times\bbr$ and such that:
\begin{equation*}
X_{\lambda(\delta)}(p(\delta)) \in T_{p(\delta)}\left(G\cdot p(\delta)\right),
\qquad
\text{ for } \delta\in I,
\end{equation*}
where $I$ is an open interval containing $0$.
That is, $p(\delta)$ is a relative equilibrium of the vector field $X_{\lambda(\delta)}$.
A relative solution set is a {\it trivial relative solution curve} if the curve $p(\delta)=m$ for all $\delta\in I$, and {\it nontrivial} otherwise.
The {\it invariant relative solution set} generated by a relative solution curve $\gamma$ is the set:
\begin{equation*}
\Gamma(\gamma):=G\cdot \gamma(I) \subseteq V\times\bbr,
\end{equation*}
where $G$ acts on $M\times\bbr$ by $g\cdot(p,\lambda)=(g\cdot p, \lambda)$ for all $(g,p,\lambda)\in G\times M \times \bbr$.
If, for all $\delta\in I$, the $p(\delta)\in M$ are equilibria of $X_{\lambda(\delta)}$, then we will simply say the curve $\gamma$ is a {\it solution curve} and the set $\Gamma(\gamma)$ is an {\it invariant solution set}.
\end{definition}

\begin{remark}
For a representation $V$ of a Lie group $K$, the origin $0\in V$ has a $0$-dimensional orbit space consisting of the point $0$ only, since the action is by linear transformations.
Thus, if the origin is a relative equilibrium of an equivariant vector field, it is in particular an equilibrium.
\end{remark}

Given a path of equivariant vector fields $X$ on a representation $V$ of a compact Lie group $K$, various results in the literature look for solution curves of $X$ in one-dimensional fixed-point subspaces of some subgroup $\Sigma\subseteq K$:
\begin{equation*}
\text{Fix}(\Sigma)=\left\{ v\in V \mid k\cdot v = v \text{ for all }k\in \Sigma\right\}.
\end{equation*}
For example, that is part of the content of the Equivariant Branching Lemma \cite{C81,
V82}.
As a first step towards a criterion for the existence of relative solution curves, we prove the following generalization of this method:

\begin{lemma}\label{Lemma:OneDimFirstStep}
Let $V$ be a finite-dimensional representation of a compact Lie group $K$, and let $W$ be a one-dimensional subspace of $V$.
Suppose that $Y:\bbr\to\ffX(V)^K$ is a path of equivariant vector fields and such that:
\begin{enumerate}
\item $Y_\lambda(0)=0$ for all $\lambda\in\bbr$.
\item There exists a small neighborhood $B\subseteq W$ of $0$ in $W$ and a small $\epsilon>0$ such that $Y_\lambda|B$ is tangent to $W$ for all $\lambda\in (-\epsilon,\epsilon)$.
\item For $\lambda\in (-\epsilon,\epsilon)$, the linearizations $D(Y_\lambda|B)(0)=\sigma(\lambda)\text{Id}_W$ are such that $\sigma(0)=0$ and $\sigma'(0)\not=0$.
\end{enumerate}
Then $Y$ has a strict solution curve $\gamma:I\to W\times\bbr\subseteq V\times\bbr$.
\end{lemma}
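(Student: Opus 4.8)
The plan is to reduce this to a one-variable bifurcation statement on the line $W$, where it becomes an elementary application of the implicit function theorem. First I would use hypotheses (1) and (2) to write down, after choosing a linear coordinate $t$ on $W\cong\bbr$ identifying $B$ with an interval around $0$, the restricted vector field $Y_\lambda|B$ as a smooth scalar function $f(t,\lambda)$ with $f(0,\lambda)=0$ for all $\lambda\in(-\epsilon,\epsilon)$ (by hypothesis (1)). Since $f$ vanishes identically on $\{t=0\}$, I can factor $f(t,\lambda)=t\,g(t,\lambda)$ with $g$ smooth (this is the standard Hadamard-lemma division by $t$), so the nontrivial zeros of $Y_\lambda|B$ near the origin are exactly the zeros of $g$.

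Next I would identify $g(0,0)$ and $\partial_t g(0,0)$ with the data in hypothesis (3). Since $D(Y_\lambda|B)(0)=\sigma(\lambda)\,\mathrm{Id}_W$, differentiating $f(t,\lambda)=t\,g(t,\lambda)$ at $t=0$ gives $g(0,\lambda)=\sigma(\lambda)$; in particular $g(0,0)=\sigma(0)=0$ and $\partial_\lambda g(0,0)=\sigma'(0)\neq 0$. Therefore the implicit function theorem applied to $g$ at $(0,0)$, solving for $\lambda$ as a function of $t$, yields a smooth function $\lambda=\ell(t)$ defined on a small interval $I$ around $0$ with $\ell(0)=0$ and $g(t,\ell(t))=0$. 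Setting $\gamma(t):=(t,\ell(t))\in W\times\bbr\subseteq V\times\bbr$ (viewing $t$ as a point of $W$), this curve satisfies $\gamma(0)=(0,0)$ and $Y_{\ell(t)}(t)=f(t,\ell(t))=t\,g(t,\ell(t))=0$, so each $\gamma(t)$ is an equilibrium of $Y_{\ell(t)}$. By the remark preceding this lemma, an equilibrium at a point with $0$-dimensional orbit (here $0\in V$, but more to the point these are genuine zeros of the vector field on $W$) is in particular a strict equilibrium, so $\gamma$ is a strict solution curve of $Y$ in the sense of Definition \ref{Def:SolutionBranch}.

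The only points requiring a little care, rather than a genuine obstacle, are: (a) checking that the division $f=t\,g$ with smooth $g$ is legitimate in the $t$-variable while $g$ remains jointly smooth in $(t,\lambda)$, which follows from the smooth version of Hadamard's lemma applied to $f$ as a function on $(-\delta,\delta)\times(-\epsilon,\epsilon)$; and (b) noting that hypothesis (2)'s ``$Y_\lambda|B$ is tangent to $W$'' is precisely what makes $Y_\lambda|B$ a well-defined vector field (hence scalar function) on the one-dimensional $W$, so that all of the above takes place genuinely on $W$. I do not anticipate any serious obstacle; the content of the lemma is simply the classical steady-state bifurcation picture, and equivariance plays no role beyond guaranteeing, via hypothesis (2), that $W$ is an invariant line for the restricted dynamics.
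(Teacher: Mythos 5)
Your proposal is correct and follows essentially the same route as the paper's proof: restrict to the line $W$, factor out the trivial branch ($f(w,\lambda)=w\,F(w,\lambda)$, in your notation $f=t\,g$), identify $F(0,\lambda)=\sigma(\lambda)$, and apply the implicit function theorem using $\sigma(0)=0$, $\sigma'(0)\neq 0$ to solve for $\lambda$ as a function of the $W$-coordinate. The only cosmetic difference is that you invoke Hadamard's lemma for the factorization where the paper uses first-order Taylor expansion with smooth remainder; these are interchangeable here.
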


\begin{proof}
It is convenient to think of $Y$ as a map $Y:V\times\bbr\to V$, using that $V$ is a finite-dimensional vector space.
Let $w\in\bbr$ denote a coordinate on $W$.
Shrinking $\epsilon>0$ if necessary, we may suppose without loss of generality that $B$ is the set of all vectors in $W$ such that $|w|<\epsilon$ and that $Y(w,\lambda)\in W$ for all $w$ and $\lambda$ such that $|w|<\epsilon$ and $|\lambda|<\epsilon$.
Since $W$ is $1$-dimensional, the linearizations $D(Y|B)_\lambda(0)$ each have a single real eigenvalue $\sigma(\lambda)$, which by assumption satisfy $\sigma(0)=0$ and $\sigma'(0)\not=0$.
Thus, using Taylor's Theorem, we have:
\begin{align*}
(Y|B)(w,\lambda)
&=\sigma(\lambda) w + \calo(w^2)\\
&=\sigma(\lambda) w + w^2 h(w,\lambda) \qquad \text{ some smooth function }h\\
&=w \Big(\sigma(\lambda) + w h(w,\lambda) \Big).
\end{align*}
Note that the zero set $\{w=0\}$ is the trivial branch of equilibria of the $Y|B$, so set $F(w,\lambda):=\sigma(\lambda) + w h(w,\lambda)$.
Thus, nontrivial solutions correspond to zero sets of $F(w,\lambda)$.
Now note that $F(0,0)=\sigma(0)+0 h(w,\lambda) = 0$ and that:
\begin{equation*}
\frac{\partial F}{\partial \lambda}(0,0)
=\Big( \sigma'(\lambda) + w \frac{\partial h}{\partial \lambda}(w,\lambda)\Big)|_{\lambda=0,w=0}
=\sigma'(0) + 0 \frac{\partial h}{\partial \lambda}(0,0)
=\sigma'(0)
\not=0.
\end{equation*}
Thus, by the Implicit Function Theorem, there exists a small interval $I$ around $0$ and a smooth map $\widehat\gamma:I\to \bbr$ with $\widehat\gamma(0)=0$ such that:
\begin{equation}\label{Eq:BranchEq}
Y(w,\widehat\gamma(w))= w F(w,\widehat\gamma(w)) = 0, \qquad w\in I.
\end{equation}
That is, we have an invariant solution curve of $Y$ given by:
\begin{equation*}
\gamma:I\to V\times\bbr, \qquad \gamma(w):=\left(w\,\overrightarrow{w_0},\widehat\gamma(w)\right),
\end{equation*}
where $\overrightarrow{w_0}$ is the basis vector of $W$ giving the coordinate $w$ used above.
\end{proof}

Even though it considers arbitrary one-dimensional subspaces, Lemma \ref{Lemma:OneDimFirstStep} is a rather restrictive test: not many equivariant vector fields are tangent to a given one-dimensional subspace.
It also only yields {\it strict} solution curves.

We can simultaneously expand the paths of equivariant vector fields that can be considered and find {\it relative} solution curves by considering paths of equivariant vector fields {\it up to isomorphism}.
By requiring that a path $X:\bbr\to\ffX(V)^K$ is only isomorphic to a path $Y:\bbr\to\ffX(V)^K$ satisfying the assumptions of Lemma \ref{Lemma:OneDimFirstStep}, we can consider more paths of equivariant vector fields.
That's because we are allowing the restrictions $X_\lambda|W$ to be tangent to the group orbits.
On the other hand, recall that isomorphic vector fields share relative equilibria \ref{Lemma:IsoShareRel}, so any {\it strict} solution curve that $Y$ has on $W$ will yield a {\it relative} solution curve of $X$.
Furthermore, the path of infinitesimal gauge transformations giving the isomorphism also gives velocities for the relative equilibria.

Putting all this together, we prove the following test, which is the main result of this subsection:

\begin{theorem}\label{Thm:BifToRelEq}
Let $V$ be a representation of a compact Lie group $K$, let $W$ be a $1$-dimensional subspace of $V$, let $X:\bbr\to\ffX(V)^K$ be a path of equivariant vector fields, and let $\psi:\bbr\to C^\infty(V,\ffk)^K$ be a path of infinitesimal gauge transformations.
Suppose that:
\begin{enumerate}
\item $X$ has a trivial branch of solutions at $0\in V$.
\item The path of equivariant vector fields $Y:=X-\partial(\psi)$ is tangent to $W$ in a neighborhood $B$ of $0$ in $W$.
\item The eigenvalues $\sigma(\lambda)\in\bbr$ of the linearizations of $Y|B$ satisfy $\sigma(0)=0$ and $\sigma'(0)\not=0$.
\end{enumerate}
Then $X$ has a relative solution curve $\gamma=(\nu,\lambda):I\to V\times\bbr$.
Furthermore, for $\delta\in I$, the relative equilibrium $\nu(\delta)\in V$ of $X_{\lambda(\delta)}$ has velocity $\psi_{\lambda(\delta)}(\nu(\delta))\in \ffk$.
\end{theorem}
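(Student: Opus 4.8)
The plan is to reduce Theorem \ref{Thm:BifToRelEq} to Lemma \ref{Lemma:OneDimFirstStep} by recognizing that the hypotheses on $Y = X - \partial(\psi)$ are exactly (modulo checking equilibria at the origin) the hypotheses of that lemma, and then to transport the conclusion back to $X$ using the fact that isomorphic paths of equivariant vector fields share relative equilibria (Lemma \ref{Lemma:IsoShareRel}, applied parameter-wise as in section \ref{paths}).

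First I would verify that $Y$ satisfies hypothesis (1) of Lemma \ref{Lemma:OneDimFirstStep}, namely $Y_\lambda(0) = 0$ for all $\lambda$. Since $X$ has a trivial branch of solutions at $0 \in V$ and $0$ has a zero-dimensional orbit in the representation $V$, the origin is in fact a genuine equilibrium of each $X_\lambda$ (this is the content of the remark following Definition \ref{Def:SolutionBranch}); thus $X_\lambda(0) = 0$. Because $\psi_\lambda : V \to \ffk$ is $K$-equivariant and $0$ is a fixed point, $\psi_\lambda(0) \in \ffk_0$, and then $\partial(\psi)_\lambda(0) = T\ev_0(\psi_\lambda(0)) = 0$ since $\ffk_0 = \ker T\ev_0$. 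Hence $Y_\lambda(0) = X_\lambda(0) - \partial(\psi)_\lambda(0) = 0$. Hypotheses (2) and (3) of the Theorem are verbatim hypotheses (2) and (3) of Lemma \ref{Lemma:OneDimFirstStep} (tangency of $Y|B$ to $W$, eigenvalue crossing $\sigma(0) = 0$, $\sigma'(0) \neq 0$), so Lemma \ref{Lemma:OneDimFirstStep} applies and produces a strict solution curve $\gamma = (\nu,\lambda) : I \to W \times \bbr \subseteq V \times \bbr$ of $Y$, with $\gamma(0) = (0,0)$ and $Y_{\lambda(\delta)}(\nu(\delta)) = 0$ for all $\delta \in I$.

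Next I would promote this to a relative solution curve of $X$. The paths $X$ and $Y = X - \partial(\psi)$ are isomorphic in $C^\infty(\bbr, \bbx(V)^K)$ via the path $-\psi$ of infinitesimal gauge transformations, since $X = Y + \partial(\psi)$. Applying Lemma \ref{Lemma:IsoShareRel} at the parameter value $\lambda(\delta)$ to the vector fields $Y_{\lambda(\delta)}$ and $X_{\lambda(\delta)} = Y_{\lambda(\delta)} + \partial(\psi_{\lambda(\delta)})$: the point $\nu(\delta)$ is an equilibrium (hence a relative equilibrium with velocity $0$) of $Y_{\lambda(\delta)}$, so $\nu(\delta)$ is a relative equilibrium of $X_{\lambda(\delta)}$ with velocity $0 + \psi_{\lambda(\delta)}(\nu(\delta)) = \psi_{\lambda(\delta)}(\nu(\delta)) \in \ffk$. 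This gives precisely $X_{\lambda(\delta)}(\nu(\delta)) \in T_{\nu(\delta)}(K \cdot \nu(\delta))$ for all $\delta \in I$, so $\gamma = (\nu,\lambda) : I \to V \times \bbr$ is a relative solution curve of $X$ in the sense of Definition \ref{Def:SolutionBranch} (with $\gamma(0) = (0,0)$), and the velocity statement is the last assertion of the Theorem.

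The only subtle point — and the one I expect to take the most care — is the very first reduction step: confirming that $Y_\lambda(0) = 0$ so that Lemma \ref{Lemma:OneDimFirstStep}'s hypothesis (1) genuinely holds. This rests on the observation that the phrase ``trivial branch of solutions at $0$'' forces $0$ to be an honest equilibrium (not merely a relative equilibrium) because the orbit through $0$ is a point, together with the equivariance of $\psi_\lambda$ killing the induced vector field $\partial(\psi)_\lambda$ at the fixed point $0$. Everything else is a direct invocation of Lemma \ref{Lemma:OneDimFirstStep} and the parameter-wise version of Lemma \ref{Lemma:IsoShareRel}; there is no new analytic content beyond what those results already supply.
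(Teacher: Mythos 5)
Your proposal is correct and follows essentially the same route as the paper: apply Lemma \ref{Lemma:OneDimFirstStep} to $Y=X-\partial(\psi)$ and then transfer the strict solution curve to $X$ via Lemma \ref{Lemma:IsoShareRel}, which also supplies the velocities $\psi_{\lambda(\delta)}(\nu(\delta))$. Your explicit check that $Y_\lambda(0)=0$ (using that the origin is a genuine equilibrium and that $\partial(\psi)_\lambda$ vanishes at the fixed point $0$) is a detail the paper leaves implicit, and it is argued correctly.
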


\begin{proof}
Note that the path $Y:\bbr\to\ffX(V)^K$ of equivariant vector fields satisfies the assumptions of Lemma \ref{Lemma:OneDimFirstStep}.
Hence, there exists a solution curve $\gamma=(\nu,\lambda):I\to W\times\bbr$ of $Y$.
That is, for every $\delta\in I$, the point $\nu(\delta)\in W$ is an equilibrium of the vector field $Y_{\lambda(\delta)}$.
Note that the vector field $Y_{\lambda(\delta)}$ is isomorphic to the vector field $X_{\lambda(\delta)}$, in the sense of Definition \ref{Def:IsoVF}, since the infinitesimal gauge transformtion $\psi_{\lambda(\delta)}:V\to\ffk$ is such that:
\begin{equation*}
X_{\lambda(\delta)}=Y+\partial(\psi_{\lambda(\delta))}).
\end{equation*}
Thus, by Lemma \ref{Lemma:IsoShareRel}, the point $\nu(\delta)\in W$ is a relative equilibrium of the vector field $X_{\lambda(\delta)}$ with velocity $\psi_{\lambda(\delta)}(\nu(\delta))\in\ffk$.
Hence, the curve $\gamma=(\nu,\delta):I\to W\times\bbr$ is a relative solution curve as desired. 
\end{proof}

\begin{example}\label{Ex:S1}
Consider the standard representation of $\bbs^1$ on $\bbc$ by rotations given in Example \ref{Ex:Circle}.
Bifurcations to relative equilibria are well-understood in this representation.
Nevertheless, it serves as a simple illustration of the general strategy for using Theorem \ref{Thm:BifToRelEq}.
Using the discussion in Example \ref{Ex:Circle}, the paths of equivariant vector fields are of the form:
\begin{equation*}
X(z,\lambda)=f\left(|z|^2,\lambda\right) z + g\left(|z|^2,\lambda\right) i z , \qquad z\in\bbc,\, \lambda\in\bbr.
\end{equation*}
Any such path is isomorphic to its radial part:
\begin{equation*}
Y(z,\lambda)=f\left(|z|^2,\lambda\right) z , \qquad z\in\bbc,\, \lambda\in\bbr,
\end{equation*}
via the isomorphism given by the path $\psi:\bbc\times\bbr\to\bbr$ defined by:
\begin{equation*}
\psi(z,\lambda):=g\left(|z|^2,\lambda\right), \qquad z\in\bbc,\, \lambda\in\bbr.
\end{equation*}
The radial part $Y$ restricts to the real axis, so let $W$ be the real axis as a subspace of $\bbc$.
The linearizations $D(Y|W)_\lambda(0)$ have eigenvalues $f(0,\lambda)\in\bbr$.
Hence, if $\frac{\partial f}{\partial \lambda}(0,0)\not=0$, Theorem \ref{Thm:BifToRelEq} says there exists a curve:
\begin{equation*}
\gamma:I\to\bbr\times\bbr, \qquad \gamma(\delta)=(\delta, \lambda(\delta)),
\end{equation*}
such that:
\begin{equation*}
X(\delta, \lambda(\delta)) = g\left(\delta^2,\lambda(\delta)\right) i\delta, \qquad \delta\in I.
\end{equation*}
In particular, the point $\delta\in\bbc$ is a relative equilibrium of $X_{\lambda(\delta)}$ with velocity $g\left(\delta^2,\lambda(\delta)\right)\in\bbr$.
Hence, by Theorem \ref{Thm:FieldKrupaTorus} and Lemma \ref{Lemma:QuasiPeriodicInTorus}, the integral curve of $X_{\lambda(\delta)}$ starting at $\delta\in\bbc$ is:
\begin{equation*}
\begin{cases}
\text{steady-state} & \text{ if } g\left(\delta^2,\lambda(\delta)\right)=0 \\
\text{periodic with period } \frac{2\pi}{g\left(\delta^2,\lambda(\delta)\right)} & \text{ if } g\left(\delta^2,\lambda(\delta)\right)\not=0.
\end{cases}
\end{equation*}
The invariant relative solution set $\Gamma(\gamma)$ corresponding to this curve consists of circles.
That is, the circle:
\begin{equation*}
C_{\delta}:=\left\{
z\in\bbc \mid |z|^2=\delta^2
\right\}
\end{equation*} 
is a collection of relative equilibria of the vector field $X_{\lambda(\delta)}$.
The velocity of a relative equilibrium $\delta e^{i\theta}\in C_{\delta}$ is given by $\psi_{\lambda(\delta)}(\delta e^{i\theta})=g\left(\delta^2,\lambda(\delta)\right)\in\bbr$.
From this, we can recover the well-known case of bifurcations to circle limit cycles arising from a pitchfork bifurcation in the radial vector field.
\end{example}


\begin{example}\label{Ex:TheTorusExample}
Consider the representation of the torus $\bbt^2$ on the product $\bbr^4\cong\bbc^2$ discussed in Example \ref{Ex:T2C2}.
For a nonzero point $\overrightarrow{z}=(z_1,z_2)\in\bbc^2$, the Field-Krupa bound in Theorem \ref{Thm:FieldKrupaTorus} is either $1$ (when only one coordinates is zero) or $2$ (when neither coordinates is $0$).
Hence, relative equilibria may exhibit both periodic and quasi-periodic motion from relative equilibria in this case.
We use Theorem \ref{Thm:BifToRelEq} to describe how bifurcations to each may occur, including how bifurcations to equilibria may become bifurcations to periodic or quasi-periodic motion for an isomorphic path of vector fields.

Using the discussion in Example \ref{Ex:T2C2}, the paths of equivariant vector fields are of the form:
\begin{equation}\label{Eq:PathT2C2}
X(z_1,z_2,\lambda)=
\left(\begin{array}{c}
f_1\left(|z_1|^2,|z_2|^2,\lambda\right)z_1 + g_1\left(|z_1|^2,|z_2|^2,\lambda\right) iz_1\\
f_2\left(|z_1|^2,|z_2|^2,\lambda\right)z_2 + g_2\left(|z_1|^2,|z_2|^2,\lambda\right) iz_2
\end{array}\right),
\end{equation}
where $(z_1,z_1)\in\bbc^2,\,\lambda\in\bbr$, and where $f_i:\bbr^2\times\bbr\to\bbr$ and $g_i:\bbr^2\times\bbr\to\bbr$ are smooth functions.
The functions $g_i$ define a path of infinitesimal gauge transformations.
Hence, the path $X$ is isomorphic to its radial parts given by:
\begin{equation*}
Y(z_1,z_2,\lambda)=
\left(\begin{array}{c}
f_1\left(|z_1|^2,|z_2|^2,\lambda\right)z_1\\
f_2\left(|z_1|^2,|z_2|^2,\lambda\right)z_2
\end{array}\right),
\end{equation*}
for $(z_1,z_1)\in\bbc^2,\,\lambda\in\bbr$.
We restrict $Y$ to two different one-dimensional subspaces and apply Theorem \ref{Thm:BifToRelEq} in each.

First, consider the subspace:
\begin{equation*}
W:=\left\{
(x,0)\in\bbc^2 \mid x\in\bbr
\right\}.
\end{equation*}
The path $Y$ restricts to $W$ to give the path:
\begin{equation*}
(Y|W)=
\left(\begin{array}{c}
f_1\left(x^2,0,\lambda\right)x\\
0
\end{array}\right),
\qquad x\in\bbr,\,\lambda\in\bbr.
\end{equation*}
The eigenvalues of the linearizations $D(Y|W)_\lambda(0)$ are $f_1(0,0,\lambda)\in\bbr$.
Hence, if $f_1(0,0,0)=0$ and $\frac{\partial f_1}{\partial\lambda}(0,0,0)\not=0$, by Theorem \ref{Thm:BifToRelEq} there exists a curve:
\begin{equation*}
\gamma:I\to W\times\bbr, \qquad \gamma(\delta)=\left(\delta,0,\widehat\gamma(\delta)\right),
\end{equation*}
such that $(\delta,0)$ is a relative euqilibrium of $X(-,-,\widehat\gamma(\delta))$ with velocity given by:
\begin{equation*}
\psi(\delta,0,\widehat\gamma(\delta))=
\left(\begin{array}{c}
g_1\left(\delta^2,0,\widehat\gamma(\delta)\right)\\
g_2\left(\delta^2,0,\widehat\gamma(\delta)\right)
\end{array}\right).
\end{equation*}
The stabilizer of this point is $\{1\}\times \bbs^1$, so the velocity modulo the Lie algebra of the stabilizer can be taken to be the value:
\begin{equation*}
\psi(\delta,0,\widehat\gamma(\delta)) + \{0\}\times\bbr \cong g_1\left(\delta^2,0,\widehat\gamma(\delta)\right).
\end{equation*}
Hence, by Theorem \ref{Thm:FieldKrupaTorus} and Lemma \ref{Lemma:QuasiPeriodicInTorus}, the motion of the relative equilibrium is steady-state if $g_1\left(\delta^2,0,\widehat\gamma(\delta)\right)=0$ and periodic if this is nonzero.
As in the case of Example \ref{Ex:S1}, the relative invariant solution set corresponding to this relative solution curve consists of circles of relative equilibria.
The real axis in the second copy of $\bbc^2$ yields an analogous case of bifurcations to periodic trajectories.

For potentially quasi-periodic bifurcations consider the subspace:
\begin{equation*}
W:=\left\{ (x,x)\in\bbc^2 \mid x\in\bbr \right\}.
\end{equation*}
Suppose that $f_1$ and $f_2$ in (\ref{Eq:PathT2C2}) agree on the diagonal $\Delta:=\{(x,x)\in\bbr^2\mid x\in\bbr\}$ in $\bbr^2$.
Then $Y$ restricts to $W$ to give the path:
\begin{equation*}
(Y|W)(x,x,\lambda)=
\left(\begin{array}{c}
f_1\left(x,x,\lambda\right)x\\
f_2\left(x,x,\lambda\right)x
\end{array}\right)
=
f(x,\lambda)
\left(\begin{array}{c}
x\\
x
\end{array}\right),
\qquad x\in\bbr,\,\lambda\in\bbr,
\end{equation*}
where $f(x,\lambda):=f_1(x,x,\lambda)=f_2(x,x,\lambda)$.
The eigenvalues of the linearizations $D(Y|W)_\lambda(0)$ are $f(0,\lambda)\in\bbr$.
Hence, if $f(0,0)=0$ and $\frac{\partial f}{\partial\lambda}(0,0)\not=0$, by Theorem \ref{Thm:BifToRelEq}, there exists a curve:
\begin{equation*}
\gamma:I\to W\times\bbr, \qquad \gamma(\delta)=\left(\delta,\delta,\widehat\gamma(\delta)\right),
\end{equation*}
such that $(\delta,\delta)\in\bbc^2$ is a relative equilibrium of the vector field $X(-,-,\widehat\gamma(\delta))$.
The relative invariant solution set $\Gamma(\gamma)$ corresponding to this relative solution curve consists of tori:
\begin{equation*}
T_{\delta,\delta}=\left\{
\left(\delta e^{i\theta},\delta e^{i\varphi}\right) \in \bbc^2 \mid \theta,\varphi \in\bbr
\right\}.
\end{equation*}
The velocity of a relative equilibrium $\left(\delta e^{i\theta},\delta e^{i\varphi}\right)$ of the vector field $X(-,-,\widehat\gamma(\delta))$ is given by the value of the infinitesimal gauge transformation:
\begin{equation*}
\psi\left(\delta e^{i\theta},\delta e^{i\varphi},\widehat\gamma(\delta)\right) :=
\Big(
g_1\left(\delta^2, \delta^2, \widehat\gamma(r)\right),
g_2\left(\delta^2, \delta^2, \widehat\gamma(r)\right)
\Big)\in\bbr^2.
\end{equation*}
The stabilizer of this point is trivial if $\delta\not=0$.
Hence, by Theorem \ref{Thm:FieldKrupaTorus} and Lemma \ref{Lemma:QuasiPeriodicInTorus}, the motion of this relative equilibrium depends on the numbers:
\begin{equation*}
\frac{g_1\left(\delta^2, \delta^2, \widehat\gamma(r)\right)}{2\pi},\, \frac{g_2\left(\delta^2, \delta^2, \widehat\gamma(r)\right)}{2\pi}.
\end{equation*}
If the value of the infinitesimal gauge transformation is $(0,0)\in\bbr^2$, then the motion is steady-state.
If these numbers are $\bbq$-linearly dependent, then the motion is periodic.
If these numbers are $\bbq$-linearly independent, then the motion is quasi-periodic and dense in a $2$-torus.
\end{example}

\subsection{Isomorphisms and bifurcations on proper actions}\label{bifurcationsproper}
In this subsection we consider bifurcations to relative equilibria on proper actions.
In particular, we extend the test for bifurcations to relative equilibria in subsection \ref{bifurcationsrep} to this context (Theorem \ref{Thm:BifToRelEqProper}).
We do this by reducing bifurcation problems to the slice representation using the decompositions in subsection \ref{decomposition}.
More generally we show how relative solution curves of a given path of equivariant vector fields correspond to relative solution curves on the slice representation (Theorem \ref{Thm:BifBranchEquivalence}), and how isomorphisms of equivariant vector fields relate the velocities of the corresponding relative equilibria.

Consider a proper $G$-manifold $M$, and let $X:\bbr\to\ffX(M)^G$ be a path of equivariant vector fields on $M$ with a relative equilibrium at a point $m\in M$.
The question of equivariant bifurcation from $m$ is local near the group orbit $G\cdot m$.
Therefore, throughout the subsection we fix a finite-dimensional real representation $V$ of a compact Lie subgroup $K$ of a Lie group $G$, and consider the associated bundle $G\times^KV$ (see Remark \ref{Rem:TubularNeighborhood}).

Recall that a choice of $K$-equivariant splitting $\ffg=\ffk\oplus\ffq$ gives rise to a projection of equivariant vector fields $P_0:\ffX(G\times^KV)^G\to\ffX(V)^K$ (Theorem \ref{Thm:ProjectionFunctor}).
We can apply this projection parameter-wise to a path $X:\bbr\to\ffX(G\times^KV)^G$ to obtain a path of equivariant vector fields on $V$:
\begin{equation*}
P_0(X):\bbr\to\ffX(V)^K, \qquad P_0(X)_\lambda:=P_0\left(X_\lambda\right).
\end{equation*}
Similarly, the canonical inclusion of equivariant vector fields $E_0:\ffX(V)^K\to\ffX(G\times^KV)^G$ of Theorem \ref{Thm:InclusionFunctor} and the natural isomorphism $h:\ffX(G\times^KV)^G\to C^\infty(G\times^KV,\ffg)^G$ can be applied parameter-wise.
Thus, as described in Theorem \ref{Thm:EquivalencePaths}, we have a decomposition of {\it paths} of equivariant vector fields:
\begin{equation*}
X=E_0(P_0(X))+\partial(h(X)), \qquad X\in C^\infty(\bbr,\ffX(G\times^KV)^G).
\end{equation*}
As we now show, this decomposition lets us find relative solution curves of $X$ on $G\times^KV$ by finding relative solution curves of $P_0(X)$ on $V$.

\begin{theorem}\label{Thm:BifBranchEquivalence}
Let $X:\bbr\to\ffX(G\times^KV)^G$ be a path of equivariant vector fields on $G\times^KV$ with the point $[1,0]\in G\times^KV$ as a relative equilibrium.
Let $P_0(X):\bbr\to\ffX(V)^K$ be the projected path on $V$ with respect to the chosen projection $P_0$.
Then a curve:
\begin{equation*}
\gamma:I\to V\times\bbr, \qquad \gamma(\delta):=(\nu(\delta),\,\lambda(\delta)),
\end{equation*}
is a relative solution curve of $P_0(X)$ starting at $0\in V$ if and only if the curve:
\begin{equation*}
j_*\gamma:I\to(G\times^KV)\times\bbr, \qquad j_*\gamma(\delta):=\left([1,\nu(\delta)],\,\lambda(\delta)\right),
\end{equation*}
is a relative solution curve of $X$ starting at $[1,0]\in G\times^KV$.
Furthermore, for $\delta\in I$, a vector $\xi\in\ffk$ is a velocity of the relative equilibrium $\nu(\delta)\in V$ of $P_0(X_{\lambda(\delta)})$ if and only if the vector:
\begin{equation*}
\xi+h\left(X_{\lambda(\delta)}\right)([1,\nu(\delta)])\in\ffg
\end{equation*}
is a velocity of the relative equilibrium $[1,\nu(\delta)]\in G\times^KV$ of $X_{\lambda(\delta)}$, where $h$ is the natural isomorphism $E\circ P\cong 1_{\bbx(G\times^KV)^G}$ from Theorem \ref{Thm:NaturalIsomorphism}.
\end{theorem}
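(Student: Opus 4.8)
The plan is to reduce everything to the parameter-wise decomposition $X_\lambda = E_0(P_0(X_\lambda)) + \partial(h(X_\lambda))$ supplied by Theorem~\ref{Thm:EquivalencePaths}, together with the two facts already established: that isomorphic equivariant vector fields share relative equilibria and have velocities differing by the value of the gauge transformation at the point (Lemma~\ref{Lemma:IsoShareRel}), and that the extension functor $E$ and the projection functor $P$ each preserve relative equilibria in the appropriate sense (Lemma~\ref{Lemma:EPreservesRelEq} and Lemma~\ref{Lemma:PPreservesRelEq}). Fixing $\delta\in I$ and writing $\lambda:=\lambda(\delta)$, $v:=\nu(\delta)$, the whole statement becomes a statement about the single vector field $X_\lambda$ on $G\times^KV$, its projection $P_0(X_\lambda)$ on $V$, and the isomorphism $h(X_\lambda)$ between $X_\lambda$ and $E_0(P_0(X_\lambda))$, evaluated at the point $[1,v]=j(v)$.

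First I would prove the equivalence of the two relative-solution-curve conditions. For the forward direction, assume $v$ is a relative equilibrium of $P_0(X_\lambda)$ for every $\delta$. By Lemma~\ref{Lemma:EPreservesRelEq}, $[1,v]=j(v)$ is a $G$-relative equilibrium of $E_0(P_0(X_\lambda))$; since $X_\lambda = E_0(P_0(X_\lambda)) + \partial(h(X_\lambda))$ is isomorphic to $E_0(P_0(X_\lambda))$, Lemma~\ref{Lemma:IsoShareRel} gives that $[1,v]$ is a relative equilibrium of $X_\lambda$, so $j_*\gamma$ is a relative solution curve of $X$; and $j_*\gamma(0)=[1,0]$ follows from $\gamma(0)=0$. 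For the converse, assume $[1,v]$ is a $G$-relative equilibrium of $X_\lambda$; by Lemma~\ref{Lemma:PPreservesRelEq}, $v\in V$ is a $K$-relative equilibrium of $P_0(X_\lambda)$, and the starting points match since the slice embedding $j$ sends $0$ to $[1,0]$. I would also check that smoothness of $j_*\gamma$ is immediate since $j$ is a smooth embedding and $\nu,\lambda$ are smooth, and conversely that $\gamma$ is smooth since $j$ is an embedding, so "relative solution curve" in the sense of Definition~\ref{Def:SolutionBranch} is genuinely preserved in both directions.

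Next I would handle the velocity correspondence. Suppose $\xi\in\ffk$ is a velocity of $v$ as a relative equilibrium of $P_0(X_\lambda)$, i.e.\ $P_0(X_\lambda)(v) = T\ev_v(\xi)$ in $V$ (the evaluation being with respect to the $K$-action). Applying $Tj$ and using the definition of $E_0$ from Theorem~\ref{Thm:InclusionFunctor} together with the chain-rule identity $Tj\circ T\ev_v = T\ev_{[1,v]}\circ\iota$ (where $\iota:\ffk\hookrightarrow\ffg$, used already in the proof of Theorem~\ref{Thm:InclusionFunctor}) gives $E_0(P_0(X_\lambda))([1,v]) = T\ev_{[1,v]}(\xi)$ viewing $\xi\in\ffg$. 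Then Lemma~\ref{Lemma:IsoShareRel} applied to the gauge transformation $\psi:=h(X_\lambda)$, which satisfies $X_\lambda = E_0(P_0(X_\lambda)) + \partial(\psi)$, shows that $\xi + h(X_\lambda)([1,v])\in\ffg$ is a velocity of $[1,v]$ as a relative equilibrium of $X_\lambda$. The converse uses the uniqueness-modulo-isotropy statement for velocities (Remark~\ref{Rem:VelocityFacts}): if $\eta\in\ffg$ is any velocity of $[1,v]$ for $X_\lambda$, then by the forward direction applied to a velocity $\xi$ of $v$ for $P_0(X_\lambda)$, both $\eta$ and $\xi+h(X_\lambda)([1,v])$ are velocities of $[1,v]$ for $X_\lambda$, so they differ by an element of $\ffg_{[1,v]}=\ffk_v$ (the isotropy Lie algebra, which is contained in $\ffk$), and subtracting $h(X_\lambda)([1,v])$ from both shows $\eta - h(X_\lambda)([1,v])$ is a velocity of $v$ for $P_0(X_\lambda)$; unwinding this is exactly the claimed "if and only if''.

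The main obstacle I anticipate is bookkeeping rather than conceptual: keeping straight which evaluation map and which group-action is meant at each stage (the $K$-action on $V$, the $G$-action on $G\times^KV$, and the restricted $K$-action on the slice $j(V)$), and verifying cleanly that $h(X_\lambda)([1,v])$ is the relevant quantity --- here one uses the last clause of Theorem~\ref{Thm:NaturalIsomorphism}, namely that the restriction of $h(X_\lambda)$ to $\{[1,v]\mid v\in V\}$ takes values in $\ffq$, so that $\xi + h(X_\lambda)([1,v])$ is genuinely the $\ffk\oplus\ffq$-decomposed velocity and no hidden identification is being abused. One should also note at the outset that all the parameter-wise constructions $P_0(X)$, $E_0(\cdot)$, $h(X)$ are themselves smooth paths (Theorem~\ref{Thm:EquivalencePaths}), so that the velocity formula, as $\delta$ varies, is consistent with $j_*\gamma$ being a bona fide relative solution curve; but since the statement only asserts the velocity correspondence pointwise in $\delta$, this is not strictly needed for the proof.
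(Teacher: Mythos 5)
Your proposal is correct and follows essentially the same route as the paper's proof: the equivalence of relative solution curves via Lemma \ref{Lemma:EPreservesRelEq}, Lemma \ref{Lemma:PPreservesRelEq}, and Lemma \ref{Lemma:IsoShareRel} applied to the parameter-wise decomposition $X_{\lambda}=E_0(P_0(X_{\lambda}))+\partial(h(X_{\lambda}))$, and the forward velocity statement via $j$-relatedness together with Lemma \ref{Lemma:IsoShareRel}. The only (harmless) divergence is your converse velocity step, which uses uniqueness of velocities modulo the isotropy algebra $\ffg_{[1,\nu(\delta)]}=\ffk_{\nu(\delta)}$ (Remark \ref{Rem:VelocityFacts}), whereas the paper simply applies Lemma \ref{Lemma:IsoShareRel} with the gauge transformation subtracted off and then uses $j$-relatedness of $E_0(P_0(X_{\lambda}))$ and $P_0(X_{\lambda})$; both arguments are valid.
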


\begin{proof}
Suppose first that $\gamma:I\to V\times\bbr$ is a relative solution curve of $P_0(X)$ starting at $0\in V$.
That means that, for $\delta\in I$, the point $\nu(\delta)\in V$ is a relative equilibrium of the vector field $P_0(X)_{\lambda(\delta)}$.
By Lemma \ref{Lemma:EPreservesRelEq}, the inclusion by equivariant extension map $E_0:\ffX(V)^K\to\ffX(G\times^KV)^G$ preserves relative equilibria.
Thus, the point $[1,\nu(\delta)]$ is a relative equilibrium of the vector field $E_0\left(P_0\left(X_{\lambda(\delta)}\right)\right)$.
By Theorem \ref{Thm:EquivalenceVFs} we have that:
\begin{equation}\label{Eq:TheEquivalenceHere}
X_{\lambda(\delta)} = E_0\left(P_0\left(X_{\lambda(\delta)}\right)\right) + \partial\left( h\left(X_{\lambda(\delta)}\right)\right),
\end{equation}
where $h\left(X_{\lambda(\delta)}\right)\in C^\infty\left(G\times^KV,\ffg\right)^G$.
Hence, the vector fields $X_{\lambda(\delta)}$ and $E_0\left(P_0\left(X_{\lambda(\delta)}\right)\right)$ are isomorphic.
By Lemma \ref{Lemma:IsoShareRel}, isomorphic equivariant vector fields share relative equilibria.
Thus, the point $[1,\nu(\delta)]$ is a relative equilibrium of the vector field $X_{\lambda(\delta)}$.
Hence, the curve $j_*\gamma$ as in the statement is a relative solution curve of the path $X$.

Conversely, let $j_*\gamma:I\to(G\times^KV)\times\bbr$ be a relative solution curve of $X$ as in the statement.
That means that, for $\delta\in I$, the point $[1,\nu(\delta)]\in G\times^KV$ is a relative equilibrium of the vector field $X_{\lambda(\delta)}$.
By Lemma \ref{Lemma:PPreservesRelEq}, the equivariant projection $P_0:\ffX(G\times^KV)^G\to\ffX(V)^K$ in the statement preserves relative equilibria.
Thus, the point $\nu(\delta)$ is a relative equilibrium of the vector field $P_0\left(X_{\lambda(\delta)}\right)$.
Hence, the curve $\gamma$ as in the statement is a relative solution curve of the path $P_0(X)$.

Now suppose that we have such relative solution curves and that $\xi\in\ffk$ is a velocity of the relative equilibrium $\nu(\delta)$ of $P_0\left(X_{\lambda(\delta)}\right)$, then $\xi$ is a velocity of $E_0\left(P_0\left(X_{\lambda(\delta)}\right)\right)$ since this vector field is $j$-related to $P_0\left(X_\lambda(\delta)\right)$ by the definition of .
And since $X_{\lambda(\delta)}$ and $E_0\left(P_0\left(X_{\lambda(\delta)}\right)\right)$ are isomorphic via the infinitesimal gauge transformation $h\left(X_{\lambda(\delta)}\right)$, the velocity of the relative equilibrium $[1,\nu(\delta)]$ of $X_{\lambda(\delta)}$ is:
\begin{equation*}
\xi+h\left(X_{\lambda(\delta)}\right)([1,\nu(\delta)]),
\end{equation*}
by Lemma \ref{Lemma:IsoShareRel}.

Conversely, suppose that $\xi+h\left(X_{\lambda(\delta)}\right)([1,\nu(\delta)])\in\ffg$ is a velocity of the relative equilibrium $[1,\nu(\delta)]$ of $X_{\lambda(\delta)}$.
By Lemma \ref{Lemma:IsoShareRel}, the velocity of the relative equilibrium $[1,\nu(\delta)]$ of $E_0\left(P_0\left(X_{\lambda(\delta)}\right)\right)$ is:
\begin{equation*}
\xi+h\left(X_{\lambda(\delta)}\right)([1,\nu(\delta)]) - h\left(X_{\lambda(\delta)}\right)([1,\nu(\delta)]) = \xi.
\end{equation*}
Since $E_0\left(P_0\left(X_{\lambda(\delta)}\right)\right)$ and $P_0\left(X_{\lambda(\delta)}\right)$ are $j$-related, the velocity of the relative equilibrium $\nu(\delta)$ of $X_{\lambda(\delta)}$ is also $\xi$.
\end{proof}

As mentioned in Remark \ref{Rem:LiteratureObservation1}, the decomposition of the path of equivariant vector fields $X$ used in Theorem \ref{Thm:BifBranchEquivalence} (see Theorem \ref{Thm:EquivalenceVFs} for the decomposition) is similar to a decomposition used by Krupa for compact group actions on Euclidean space \cite{K90}.
In fact, Krupa used his version to show that bifurcations of the components transversal to the group orbits leads to bifurcations to relative equilibria of the original vector fields.

One of the main benefits to the framework implicit in Theorem \ref{Thm:BifBranchEquivalence} is how it addresses the dependence on the choices involved in the reduction (see Proposition \ref{Prop:ProjectionChoice} and Proposition \ref{Prop:SliceChoice}).
For example, if we choose different invariant Riemannian metrics to define the Lie algebra splittings that give the projections or if we choose a different slice for the action through the point.
Here we show how these choices affect the velocities of the relative solution curves bifurcating from the relative equilibrium.
Together with the description of the relative solution curves and their velocities in Theorem \ref{Thm:BifBranchEquivalence}, the following results (Proposition \ref{Prop:BifurcationProjectionChoices} and Propsition \ref{Prop:BifurcationSliceChoices}) mean that one can describe the relative solution curves of the original path $X$ and their velocities regardless of the projection or slice chosen to perform the reduction.
We first address the choice of projection:

\begin{proposition}\label{Prop:BifurcationProjectionChoices}
Let $X:\bbr\to\ffX(G\times^KV)^G$ be a path of equivariant vector fields on $G\times^KV$ with the point $[1,0]\in G\times^KV$ as a relative equilibrium.
Suppose $P^1:\bbx(G\times^KV)^G\to\bbx(V)^K$ and $P^2:\bbx(G\times^KV)^G\to\bbx(V)^K$ are two projection functors as in Theorem \ref{Thm:ProjectionFunctor} (corresponding to different $K$-invariant splittings of $\ffg$).
Then a curve:
\begin{equation*}
\gamma:I\to V\times\bbr, \qquad \gamma(\delta):=(\nu(\delta),\,\lambda(\delta)),
\end{equation*}
is a relative solution curve of the path $P^1_0(X)$ starting at $0\in V$ if and only if is a relative solution curve of the path $P^2_0(X)$ starting at $0\in V$.
Furthermore, for $\delta\in I$, a vector $\xi\in\ffk$ is a velocity of the relative equilibrium $\nu(\delta)\in V$ of the vector field $P^1_0\left(X_{\lambda(\delta)}\right)$ if and only if the vector:
\begin{equation*}
\xi+P^2_1\left(h^1(X_{\lambda(\delta)})\right)(\nu(\delta))\in \ffk
\end{equation*}
is a velocity of the relative equilibrium $\nu(\delta)\in V$ of the vector field $P^2_0\left(X_{\lambda(\delta)}\right)$, where $h^1\left(X_{\lambda(\delta)}\right)\in C^\infty(G\times^KV,\ffg)^G$ is the map corresponding to the natural isomorphism $h^1:E\circ P^1 \cong 1_{\bbx(G\times^KV)^G}$ from Theorem \ref{Thm:NaturalIsomorphism}.
\end{proposition}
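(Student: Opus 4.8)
The plan is to reduce everything to Proposition \ref{Prop:ProjectionChoice} applied parameter-wise, together with the fact that isomorphic equivariant vector fields share relative equilibria while shifting their velocities in a controlled way (Lemma \ref{Lemma:IsoShareRel}).

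First I would invoke Proposition \ref{Prop:ProjectionChoice}, but with the roles of $P^1$ and $P^2$ interchanged: for every $\lambda\in\bbr$, since $X_\lambda\in\ffX(G\times^KV)^G$, it gives
\begin{equation*}
P^2_0(X_\lambda)=P^1_0(X_\lambda)+\partial\Big(P^2_1\big(h^1(X_\lambda)\big)\Big).
\end{equation*}
For each fixed $\lambda$ this exhibits an isomorphism in $\bbx(V)^K$ with source $P^1_0(X_\lambda)$ and target $P^2_0(X_\lambda)$, realized by the infinitesimal gauge transformation $\psi_\lambda:=P^2_1\big(h^1(X_\lambda)\big):V\to\ffk$. (One can also package this as an isomorphism of \emph{paths} $P^1_0(X)\cong P^2_0(X)$ in $C^\infty(\bbr,\bbx(V)^K)$ via Theorem \ref{Thm:EquivalencePaths}, since $h^1$ and $P^2_1$ act parameter-wise, but only the pointwise statement at $\lambda=\lambda(\delta)$ is needed below.)

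Next, given a curve $\gamma=(\nu,\lambda):I\to V\times\bbr$ with $\gamma(0)=(0,0)$, I would argue the equivalence of relative solution curves directly: for each $\delta\in I$ the vector fields $P^1_0(X_{\lambda(\delta)})$ and $P^2_0(X_{\lambda(\delta)})$ are isomorphic by the displayed identity, so by Lemma \ref{Lemma:IsoShareRel} the point $\nu(\delta)$ is a $K$-relative equilibrium of $P^1_0(X_{\lambda(\delta)})$ if and only if it is one of $P^2_0(X_{\lambda(\delta)})$. Since this holds for every $\delta\in I$ and the condition $\gamma(0)=(0,0)$ does not involve the projection, $\gamma$ is a relative solution curve of $P^1_0(X)$ starting at $0\in V$ exactly when it is a relative solution curve of $P^2_0(X)$ starting at $0\in V$.

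Finally, for the velocities I would fix $\delta\in I$ with $\nu(\delta)$ a relative equilibrium and apply Lemma \ref{Lemma:IsoShareRel} to the isomorphism $\psi_{\lambda(\delta)}=P^2_1\big(h^1(X_{\lambda(\delta)})\big)$, source $P^1_0(X_{\lambda(\delta)})$ and target $P^2_0(X_{\lambda(\delta)})$: if $\xi\in\ffk$ is a velocity of $\nu(\delta)$ for $P^1_0(X_{\lambda(\delta)})$, then $\xi+\psi_{\lambda(\delta)}(\nu(\delta))=\xi+P^2_1\big(h^1(X_{\lambda(\delta)})\big)(\nu(\delta))$ is a velocity of $\nu(\delta)$ for $P^2_0(X_{\lambda(\delta)})$. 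For the converse I would apply Lemma \ref{Lemma:IsoShareRel} to $-\psi_{\lambda(\delta)}$, which realizes $P^1_0(X_{\lambda(\delta)})=P^2_0(X_{\lambda(\delta)})+\partial(-\psi_{\lambda(\delta)})$, so subtracting $\psi_{\lambda(\delta)}(\nu(\delta))$ recovers $\xi$; this establishes the claimed ``iff''. The proof has no genuine obstacle; the only point requiring care is the bookkeeping of which projection is the ``source'' of the isomorphism, so that the correction term comes out as $P^2_1(h^1(X_{\lambda(\delta)}))$ rather than $P^1_1(h^2(X_{\lambda(\delta)}))$ — this is precisely why Proposition \ref{Prop:ProjectionChoice} must be invoked with $P^1$ and $P^2$ interchanged.
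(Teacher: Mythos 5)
Your proposal is correct and follows essentially the same route as the paper's own proof: apply Proposition \ref{Prop:ProjectionChoice} (with the roles of $P^1$ and $P^2$ arranged so the correction term is $P^2_1(h^1(X_{\lambda(\delta)}))$) parameter-wise at each $\lambda(\delta)$, and then conclude via Lemma \ref{Lemma:IsoShareRel} that relative equilibria are shared and velocities shift by $P^2_1(h^1(X_{\lambda(\delta)}))(\nu(\delta))$. Your explicit handling of the converse via $-\psi_{\lambda(\delta)}$ is a minor elaboration of the same argument.
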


\begin{proof}
For every $\delta\in I$,  the vector fields $P^1_0\left(X_{\lambda(\delta)}\right)$ and $P^2_0\left(X_{\lambda(\delta)}\right)$ on $V$ are isomorphic by Proposition \ref{Prop:ProjectionChoice}.
In particular, the infinitesimal gauge transformation:
\begin{equation*}
P^2_1\left(h^1(X_{\lambda(\delta)})\right) : V \to \ffk,
\end{equation*}
where $h^1(X_{\lambda(\delta)}) \in C^\infty(G\times^KV,\ffg)^G$  corresponds to the natural isomorphism $h^1:E\circ P^1 \cong 1_{\bbx(G\times^KV)^G}$ from Theorem \ref{Thm:NaturalIsomorphism}, is such that:
\begin{equation*}
P^2_0\left(X_{\lambda(\delta)}\right) = P^1_0\left(X_{\lambda(\delta)}\right) + \partial\left( P^2_1\left(h^1(X_{\lambda(\delta)})\right) \right).
\end{equation*}
Thus, by Lemma \ref{Lemma:IsoShareRel}, the point $\nu(\delta)\in V$ is a relative equilibrium of $P^1_0\left(X_{\lambda(\delta)}\right)$ with velocity $\xi\in\ffk$ if and only if it is a relative equilibrium of $P^2_0\left(X_{\lambda(\delta)}\right)$ with velocity:
\begin{equation*}
\xi+P^2_1\left(h^1(X_{\lambda(\delta)})\right)(\nu(\delta)),
\end{equation*}
as claimed.
\end{proof}

We now address the choice of slice:

\begin{proposition}\label{Prop:BifurcationSliceChoices}
Let $X:\bbr\to\ffX(G\times^KV)^G$ be a path of equivariant vector fields on $G\times^KV$ with the point $[1,0]\in G\times^KV$ as a relative equilibrium.
Suppose $D$ is another slice for the action through $[1,0]$, with equivariant embedding $\iota:D\hookrightarrow G\times^KV$, and let $\phi:D\hookrightarrow V$ be the $K$-equivariant embedding as in Proposition \ref{Prop:SliceChoice}.
Let $P^V_0(X):\ffX(G\times^KV)^G\to\ffX(V)^K$ and $P^D_0:\ffX(G\times\iota(D))^G\to\ffX(D)^K$ be equivariant projections as in Theorem \ref{Thm:ProjectionFunctor} with respect to the same splitting on $\ffg$.
Then a curve:
\begin{equation*}
\gamma:I\to D\times\bbr, \qquad \gamma(\delta):=(d(\delta),\,\lambda(\delta)),
\end{equation*}
is a relative solution curve of $P^D_0(X)$ on $D$ if and only if the curve:
\begin{equation*}
\phi_*\gamma:I\to V\times\bbr, \qquad \phi_*\gamma(\delta):=(\phi(d(\delta)),\lambda(\delta)),
\end{equation*}
is a relative solution curve of $P^V_0(X)$ on $V$.
Furthermore, for $\delta\in I$, a vector $\xi\in\ffk$ is a velocity of the relative equilibrium $d(\delta)\in D$ of $P^D_0(X_{\lambda(\delta)})$ if and only if the vector:
\begin{equation*}
\xi
+
P^V_1\Big(h^D(X_{\lambda(\delta)})\Big)\Big(\phi(d(\delta))\Big)
\in \ffk
\end{equation*}
is a velocity of the relative equilibrium $\phi(d(\delta))\in V$ of $P^V_0(X_{\lambda(\delta)})$, where $h^D\left(X_{\lambda(\delta)}\right)\in C^\infty(G\cdot\iota(D),\ffg)^G$ is the map corresponding to the natural isomorphism $h^D:E^D\circ P^D \cong 1_{\bbx(G\cdot\iota(D))^G}$ from Theorem \ref{Thm:NaturalIsomorphism}.
\end{proposition}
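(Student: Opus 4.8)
The plan is to follow the same pattern as the proof of Theorem \ref{Thm:BifBranchEquivalence}, replacing the equivalence of Theorem \ref{Thm:EquivalenceVFs} by the comparison of projections in Proposition \ref{Prop:SliceChoice}. The key point is that the embedding $\phi:D\hookrightarrow V$ and the identity (\ref{Eq:SliceChoiceEquation}) of Proposition \ref{Prop:SliceChoice} depend only on the slice $D$ and the chosen $K$-equivariant splitting of $\ffg$, not on the vector field. Hence, after passing to a smaller slice $D$ once and for all (the shrinking in Proposition \ref{Prop:SliceChoice} is $X$-independent) and shrinking $I$ if necessary, we may apply Proposition \ref{Prop:SliceChoice} to each $X_\lambda$ (restricted to the invariant neighborhood $G\cdot\iota(D)$) and obtain, for every $\lambda\in\bbr$,
\begin{equation*}
P^V_0(X_\lambda) = \phi_*\bigl(P^D_0(X_\lambda)\bigr) + \partial\bigl(P^V_1(h^D(X_\lambda))\bigr).
\end{equation*}
In particular $P^V_0(X_\lambda)$ and $\phi_*(P^D_0(X_\lambda))$ are isomorphic on $V$, with isomorphism given by the infinitesimal gauge transformation $P^V_1(h^D(X_\lambda)):V\to\ffk$. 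This is the only input beyond the elementary Lemmas \ref{Lemma:PBRelEq} and \ref{Lemma:IsoShareRel}.

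For the correspondence of relative solution curves, fix $\delta\in I$. Since $\phi$ is a $K$-equivariant diffeomorphism onto its image, the vector fields $P^D_0(X_{\lambda(\delta)})$ on $D$ and $\phi_*(P^D_0(X_{\lambda(\delta)}))$ on $\phi(D)$ are $\phi$-related, so by Lemma \ref{Lemma:PBRelEq} the point $d(\delta)$ is a $K$-relative equilibrium of $P^D_0(X_{\lambda(\delta)})$ if and only if $\phi(d(\delta))$ is a $K$-relative equilibrium of $\phi_*(P^D_0(X_{\lambda(\delta)}))$; and, since the latter vector field is isomorphic to $P^V_0(X_{\lambda(\delta)})$, Lemma \ref{Lemma:IsoShareRel} shows this is in turn equivalent to $\phi(d(\delta))$ being a $K$-relative equilibrium of $P^V_0(X_{\lambda(\delta)})$. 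Ranging over $\delta\in I$, and using that $\phi$ sends the center $\iota^{-1}([1,0])$ of the slice $D$ to $0\in V$ (which is immediate from the construction of $\phi$ in Proposition \ref{Prop:SliceChoice}), this gives that $\gamma$ is a relative solution curve of $P^D_0(X)$ if and only if $\phi_*\gamma$ is a relative solution curve of $P^V_0(X)$.

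For the velocities I would use two observations: first, a $K$-equivariant diffeomorphism $f$ preserves velocities of relative equilibria, i.e. if $Y = f_*X$ then $\xi\in\ffk$ is a velocity of $m$ for $X$ if and only if $\xi$ is a velocity of $f(m)$ for $Y$ --- this follows at once from the naturality identity $Tf\circ T\ev_m = T\ev_{f(m)}$; second, Lemma \ref{Lemma:IsoShareRel}, which records how a velocity changes under an isomorphism. Chaining these, $\xi\in\ffk$ is a velocity of the relative equilibrium $d(\delta)$ of $P^D_0(X_{\lambda(\delta)})$ if and only if $\xi$ is a velocity of $\phi(d(\delta))$ for $\phi_*(P^D_0(X_{\lambda(\delta)}))$, if and only if $\xi + P^V_1(h^D(X_{\lambda(\delta)}))(\phi(d(\delta)))$ is a velocity of $\phi(d(\delta))$ for $P^V_0(X_{\lambda(\delta)})$; the converse is the same chain read in reverse. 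I expect the only real difficulty here to be bookkeeping rather than substance: arranging the single $X$-independent shrinking of $D$, matching the base-point conventions of Definition \ref{Def:SolutionBranch}, and recording the small fact that equivariant pushforwards preserve velocities (which is not isolated as a lemma in the text but is immediate).
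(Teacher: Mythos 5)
Your proposal is correct and follows essentially the same route as the paper's own proof: apply Proposition \ref{Prop:SliceChoice} parameter-wise to get the isomorphism $P^V_0(X_\lambda)=\phi_*(P^D_0(X_\lambda))+\partial(P^V_1(h^D(X_\lambda)))$, transfer relative equilibria and velocities through the equivariant embedding $\phi$, and then invoke Lemma \ref{Lemma:IsoShareRel} to adjust the velocity. Your explicit remark that equivariant diffeomorphisms preserve velocities (via $Tf\circ T\ev_m = T\ev_{f(m)}$) is a point the paper uses implicitly, so it is a welcome but not substantive addition.
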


\begin{proof}
Suppose that $\gamma$ is a relative solution curve of $P^D_0(X)$.
Then, for $\delta\in I$, the point $d(\delta)\in D$ is a relative equilibrium of the vector field $P^D_0\left(X_{\lambda(\delta)}\right)$ on $D$.
Suppose the velocity of this relative equilibrium is $\xi\in\ffk$.
Since $\phi:D\to V$ is a $K$-equivariant diffeomorphism onto its image, the point $\phi(d(\delta))\in V$ is a relative equilibrium of $\phi_*P^D_0\left(X_{\lambda(\delta)}\right)$ on $V$ with velocity $\xi\in\ffk$.
By Theorem \ref{Prop:SliceChoice}, we have:
\begin{equation*}
P^V_0\left(X_{\lambda(\delta)}\right)=
\phi_*\Big(P^D_0\left(X_{\lambda(\delta)}\right)\Big)
+ \partial\Big( P^V_1\left(h^D\left(X_{\lambda(\delta)}\right)\right)\Big).
\end{equation*}
Hence, the vector field $\phi_*P^D_0\left(X_{\lambda(\delta)}\right)$ on $V$ is isomorphic to the vector field $P^V_0\left(X_{\lambda(\delta)}\right)$ on $V$ via the infinitesimal gauge transformation $P^V_1\left(h^D\left(X_{\lambda(\delta)}\right)\right):V\to\ffk$.
In other words, since isomorphic vector fields share relative equilibria by Lemma \ref{Lemma:IsoShareRel}, the point $\phi(d(\delta))\in V$ is a relative equilibrium of the vector field $P^V_0\left(X_{\lambda(\delta)}\right)$ with velocity:
\begin{equation*}
\xi
+
P^V_1\Big(h^D(X_{\lambda(\delta)})\Big)\Big(\phi(d(\delta))\Big)
\in \ffk
\end{equation*}
as claimed.
The argument for the converse is completely analogous.
\end{proof}

Theorem \ref{Thm:BifBranchEquivalence} gives a way to generalize the test for bifurcation to relative equilibria in Theorem \ref{Thm:BifToRelEq} to proper actions:

\begin{theorem}\label{Thm:BifToRelEqProper}
Let $X:\bbr\to\ffX(G\times^KV)^G$ be a path of equivariant vector fields on $G\times^KV$ with the point $[1,0]\in G\times^KV$ as a relative equilibrium.
Let $P_0(X):\bbr\to\ffX(V)^K$ be the projected path on $V$ with respect to the chosen projection $P_0$.
Suppose there exists a $1$-dimensional subspace $W$ of $V$ and a path $\psi:\bbr\to C^\infty(V,\ffk)^K$ of infinitesimal gauge transformations on $V$ such that:
\begin{enumerate}
\item $P_0(X)\in C^\infty(\bbr,\ffX(V)^K)$ has a trivial branch of solutions at $0\in V$.
\item The path of equivariant vector fields $Y:=P_0(X)-\partial(\psi)$ is tangent to $W$ in a neighborhood $B$ of $0$ in $W$.
\item The eigenvalues $\sigma(\lambda)\in\bbr$ of the linearizations of $Y|B$ satisfy $\sigma(0)=0$ and $\sigma'(0)\not=0$.
\end{enumerate}
Then $X$ has a relative solution curve:
\begin{equation*}
\gamma:I\to (G\times^KV)\times\bbr, \qquad \gamma(\delta)=\left([g(\delta),\nu(\delta)],\lambda(\delta)\right),
\end{equation*}
such that the velocity of the relative equlibrium $[g(\delta),\nu(\delta)]$ of $X_{\lambda(\delta)}$ is given by:
\begin{equation*}
\xi_\delta:=
\psi_{\lambda(\delta)} (\nu(\delta))
+ h\left(X_{\lambda(\delta)}\right) \left([1,\nu(\delta)]\right) \in \ffg,
\end{equation*}
where $h(X):\bbr\to C^\infty(G\times^KV,\ffg)^G$ is the path of infinitesimal gauge transformations given by the natural isomorphism of Theorem \ref{Thm:NaturalIsomorphism}.
\end{theorem}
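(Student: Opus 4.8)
The plan is to obtain this theorem as a two-step composition of results already in hand: first apply the representation-level test of Theorem \ref{Thm:BifToRelEq} to the projected path $P_0(X)$ on the slice representation $V$, and then transport the resulting relative solution curve and its velocity data up to $G\times^KV$ via the branch correspondence of Theorem \ref{Thm:BifBranchEquivalence}.

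First I would note that the triple of hypotheses (1)--(3) in the statement are, verbatim, the hypotheses of Theorem \ref{Thm:BifToRelEq} applied to the path of equivariant vector fields $P_0(X):\bbr\to\ffX(V)^K$ on the $K$-representation $V$, with the same $1$-dimensional subspace $W$ and the same path $\psi:\bbr\to C^\infty(V,\ffk)^K$; here $P_0(X)$ is a well-defined smooth path because the projection functor of Theorem \ref{Thm:ProjectionFunctor} may be applied parameter-wise, as established in Theorem \ref{Thm:EquivalencePaths}. Theorem \ref{Thm:BifToRelEq} then produces a relative solution curve $\gamma_V=(\nu,\lambda):I\to V\times\bbr$ of $P_0(X)$ with $\gamma_V(0)=(0,0)$ and, for each $\delta\in I$, with $\nu(\delta)\in W\subseteq V$ a relative equilibrium of $P_0(X)_{\lambda(\delta)}=P_0(X_{\lambda(\delta)})$ having velocity $\psi_{\lambda(\delta)}(\nu(\delta))\in\ffk$.

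Next I would feed $\gamma_V$ into Theorem \ref{Thm:BifBranchEquivalence}, which applies precisely because $[1,0]\in G\times^KV$ is a relative equilibrium of $X$ and $\gamma_V$ starts at $0\in V$. That theorem gives that $j_*\gamma_V:I\to(G\times^KV)\times\bbr$, $\delta\mapsto([1,\nu(\delta)],\lambda(\delta))$, is a relative solution curve of $X$ starting at $[1,0]$, and moreover that a vector $\xi\in\ffk$ is a velocity of $\nu(\delta)$ for $P_0(X_{\lambda(\delta)})$ if and only if $\xi+h(X_{\lambda(\delta)})([1,\nu(\delta)])\in\ffg$ is a velocity of $[1,\nu(\delta)]$ for $X_{\lambda(\delta)}$, where $h$ is the natural isomorphism of Theorem \ref{Thm:NaturalIsomorphism} applied parameter-wise. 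Taking $\xi=\psi_{\lambda(\delta)}(\nu(\delta))$ from the previous step yields exactly the claimed velocity $\xi_\delta=\psi_{\lambda(\delta)}(\nu(\delta))+h(X_{\lambda(\delta)})([1,\nu(\delta)])$. Setting $g(\delta):=1_G$ for all $\delta$, so that $[g(\delta),\nu(\delta)]=[1,\nu(\delta)]$, then gives the relative solution curve $\gamma$ asserted in the statement.

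The argument is therefore essentially bookkeeping: the only points demanding care are that the path $\psi$ is required to take values in $\ffk$ (not merely in $\ffg$), which is what makes it a legitimate input for the representation-level test and also for the velocity formula, and that the parameter-wise constructions $P_0$, $E_0$, and $h$ are smooth in $\lambda$, which is guaranteed by Theorem \ref{Thm:EquivalencePaths}. I do not expect a substantive obstacle beyond checking that the hypotheses line up and that the $\ffk$-valued velocity produced by Theorem \ref{Thm:BifToRelEq} is carried correctly through the isomorphism $h$; the genuinely nontrivial content lives in the earlier results being invoked.
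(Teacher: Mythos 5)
Your proposal is correct and follows essentially the same route as the paper's own proof: apply the representation-level test (Theorem \ref{Thm:BifToRelEq}) to $P_0(X)$ with the subspace $W$ and the path $\psi$, then transport the resulting relative solution curve and its $\ffk$-valued velocity to $G\times^KV$ via the branch correspondence of Theorem \ref{Thm:BifBranchEquivalence}, yielding the velocity $\psi_{\lambda(\delta)}(\nu(\delta))+h(X_{\lambda(\delta)})([1,\nu(\delta)])$. The only cosmetic difference is your explicit remark that one may take $g(\delta)=1_G$, which the paper leaves implicit.
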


\begin{proof}
The path of equivariant vector fields $P_0(X):\bbr\to\ffX(V)^K$ satisfies the assumptions of Theorem \ref{Thm:BifToRelEq} with respect to the subspace $W$ of $V$ and the path $\psi:\bbr\to C^\infty(V,\ffk)^K$.
Hence, there exists a relative solution curve $\gamma=(\nu,\lambda):I\to V\times\bbr$ of $P_0(X)$ starting at $0\in V$.
Furthermore, for $\delta\in I$, the velocity of the relative equilibrium $\nu(\delta)\in V$ of $P_0(X)_{\lambda(\delta)}\in \ffX(V)^K$ is given by $\psi_{\lambda(\delta)}\left(\nu(\delta)\right)\in\ffk$.
By Theorem \ref{Thm:BifBranchEquivalence}, the curve:
\begin{equation*}
j_*\gamma:I\to (G\times^KV)\times\bbr, \qquad 
j_*\gamma(\delta)=\left([1,\nu(\delta)],\lambda(\delta)\right),
\end{equation*}
is a relative solution curve of the path $X$.
By the same theorem, for $\delta\in I$, the velocity of the relative equilibrium $[1,\nu(\delta)]\in G\times^KV$ is given by:
\begin{equation*}
\xi_\delta:=
\psi_{\lambda(\delta)} (\nu(\delta))
+ h\left(X_{\lambda(\delta)}\right) \left([1,\nu(\delta)]\right),
\end{equation*}
as claimed.
\end{proof}

\section{Generic conditions for equivariant vector fields}\label{ch:residual}
In this section, we consider open and dense subsets of equivariant vector fields.
The first main result of this section is that open and dense collections of equivariant vector fields are ``preserved'' by isomorphisms of equivariant vector fields (Theorem \ref{Thm:MainResidual1}).
That is, the set of all equivariant vector fields isomorphic to those in an open and dense subset is also open and dense in the space of equivariant vector fields.
The second main result of this section is that the equivalence in Theorem \ref{Thm:EquivalenceVFs} ``preserves'' open and dense collections of equivariant vector fields up to isomorphism (Theorem \ref{Thm:MainResidual2}).
That is, in particular, the reduction to the slice representation via equivariant projection and the reconstruction of the dynamics via equivariant extension from this slice preserve open and dense subsets of equivariant vector fields \textit{up to isomorphism}.
These theorems also apply to paths of equivariant vector fields, and so they can be helpful for equivariant bifurcation problems.

As noted in the introduction, the equivariant projection and extension in decomposition (\ref{Eq:DecompositionIntro}) don't need to strictly preserve open and dense subsets.
In particular, the equivariant extension of an open and dense subset of vector fields on the slice representation doesn't need to be open and dense.
For this, note the equivariantly extended vector fields are all vertical in the corresponding associated bundle over the group orbit of the relative equilibrium, which is not an open and dense condition (see the discussion preceding (\ref{Eq:Decomposition})).
The result that such collections are preserved \textit{up to isomorphism} addresses this issue.

We begin by endowing the spaces of equivariant vector fields and of paths of equivariant vector fields with the Whitney $C^\infty$ topologies (section \ref{whitney}).
When endowed with a topology, the category $\bbx(M)^G$ becomes a topological abelian $2$-group.
That is, it becomes a category internal to the category of topological abelian groups: its space of objects and morphisms are topological abelian groups, and all the structure maps are continuous group homomorphisms.
This topological abelian $2$-group structure proves to be the necessary key for proving the main results of this section, so we discuss it in subsection \ref{top2groups}.
We prove theorems \ref{Thm:MainResidual1} and \ref{Thm:MainResidual2} in subsection \ref{opendense}.

\subsection{Whitney topologies}\label{whitney}
In this subsection, we describe the topologies we will use, and some preliminary results we need in the rest of this section.
We point to the literature when the proofs of lemmas can be found there.
However, we could not find proof of some lemmas that we needed, so proof is provided for those results here.
Recall the definition of the Whitney topologies:

\begin{definition}\label{Def:WhitneyTop}
Let $U$ and $V$ be smooth manifolds.
\begin{itemize}
\item Given an integer $r\in \bbz_{\ge 0}$, let $J^r(U,V)$ be the space of $r$-jets of mappings from $U$ to $V$.
For a subset $O$ of $J^r(U,V)$ define the collection:
\[
\calb^r(O):=\left\{f\in C^\infty(U,V)\mid j^rf(U)\subseteq O\right\}
\]
The {\bf Whitney $C^r$-topology} on $C^\infty(U,V)$ is the topology generated by the basis:
\[
\calb^r:=
\left\{\calb^r(O)\mid O\text{ is an open subset of }J^r(U,V)\right\}
\]
We will refer to the space $C^\infty(U,V)$ equipped with the Whitney $C^r$ topology as a {\bf Whitney $C^r$ space}.
\item The {\bf Whitney $C^\infty$-topology} on $C^\infty(U,V)$ is the topology generated by the basis:
\[
\calb^\infty:=\bigcup_{r=0}^\infty\calb^r.
\]
We will refer to the space $C^\infty(U,V)$ equipped with the Whitney $C^\infty$ topology as a {\bf Whitney $C^\infty$ space}.
\end{itemize}
\end{definition}

\begin{remark}\label{Rem:WhitneyTopIntuition}
Let $U$ and $V$ be smooth manifolds.
Following Golubitsky and Guillemin \cite[p. 43]{GG73}, we can get some intuition for the Whitney $C^r$-topology on $C^\infty(U,V)$ as follows.
Pick a distance function $d$ on the space of $r$-jets $J^r(U,V)$, compatible with the topology on $J^r(U,V)$.
Let $f$ be an arbitrary smooth map in $C^\infty(U,V)$, and let $\delta:U\to \bbr_+$ be a continuous function.
Then the set:
\[
B_\delta(f):=\left\{g\in C^\infty(U,V)\mid d(j^rf(u),j^rg(u))<\delta(u) \text{ for all }u\in U\right\}
\]
is a neighborhood of $f$ in the Whitney $C^r$-topology.
One can think of $B_\delta(f)$ as those maps in $C^\infty(U,V)$ that are, together with their first $r$ partial derivatives, $\delta$-close to the map $f$ and its first $r$ partial derivatives.
In fact, the collection:
\[
\left\{B_\delta(f)\mid \delta:U\to\bbr_+ \text{ is a continuous function}\right\}
\]
forms a neighborhood basis for the map $f$ in the Whitney $C^r$-topology.
\end{remark}

\begin{lemma}\label{Lemma:WhitneyProperties}
Let $U,V,W$, and $B$ be manifolds, and let $f:V\to W$, $g:V\to B$, and $h:W\to B$ be smooth maps.
\begin{enumerate}
\item The map:
\begin{equation*}
f_*:C^\infty(U,V)\to C^\infty(U,W),
\qquad
h\mapsto fh,
\end{equation*}
is continuous with respect to the Whitney topologies.
\item The canonical bijection of sets:
\begin{equation*}
C^\infty(U,V\times W)
\cong
C^\infty(U,V)\times C^\infty(U,W)
\end{equation*}
is a homeomorphism with respect to the Whitney topologies.
\item The canonical bijection of sets:
\begin{equation*}
C^\infty\left(U,V\,\fp{g,B,h}\,W\right)
\cong
C^\infty(U,V)\,\,\fp{g_*,C^\infty(U,B),h_*}\,\,C^\infty(U,W)
\end{equation*}
is a homeomorphism with respect to the Whitney topologies.
\end{enumerate}
\end{lemma}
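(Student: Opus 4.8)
\textbf{Proof proposal for Lemma \ref{Lemma:WhitneyProperties}.}

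The plan is to prove the three statements in order, leaning on the standard description of the Whitney topologies via jet prolongations from Definition \ref{Def:WhitneyTop}, together with the neighborhood-basis picture recalled in Remark \ref{Rem:WhitneyTopIntuition}. For part (1), I would first recall that a smooth map $f:V\to W$ induces, for each $r\ge 0$, a smooth map on jet spaces $J^rf_*:J^r(U,V)\to J^r(U,W)$ characterized by $J^rf_*\circ j^r h = j^r(fh)$ for every $h\in C^\infty(U,V)$ (composition of jets; see, e.g., \cite{GG73} or \cite{KMS93}). Given an open $O\subseteq J^r(U,W)$, the preimage $(J^rf_*)^{-1}(O)$ is open in $J^r(U,V)$, and one checks directly from the definitions that $f_*^{-1}\big(\calb^r(O)\big)=\calb^r\big((J^rf_*)^{-1}(O)\big)$: indeed $h\in f_*^{-1}(\calb^r(O))$ iff $j^r(fh)(U)\subseteq O$ iff $J^rf_*(j^rh(U))\subseteq O$ iff $j^rh(U)\subseteq (J^rf_*)^{-1}(O)$. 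Since the $\calb^r(O)$ generate the Whitney $C^r$-topology and $\calb^\infty=\bigcup_r\calb^r$, continuity with respect to the Whitney $C^\infty$-topology follows. (Alternatively one can argue using the metric balls $B_\delta(f)$ of Remark \ref{Rem:WhitneyTopIntuition}, since $f$ and its derivatives are locally Lipschitz on each $r$-jet space.)

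For part (2), I would identify $J^r(U,V\times W)$ canonically with the fiber product $J^r(U,V)\times_U J^r(U,W)$ over the source manifold $U$ (the $r$-jet of a pair is the pair of $r$-jets with the same source point), which is a diffeomorphism of jet manifolds compatible with prolongation: $j^r(h_1,h_2)(u)=\big(j^rh_1(u),j^rh_2(u)\big)$. Under this identification, a basic open set $\calb^r(O)$ for $O$ open in $J^r(U,V\times W)$ corresponds to $\{(h_1,h_2): (j^rh_1,j^rh_2)(U)\subseteq O\}$; conversely the product topology on $C^\infty(U,V)\times C^\infty(U,W)$ has basic opens $\calb^r(O_1)\times\calb^r(O_2)$ whose preimage is $\calb^r(O_1\times_U O_2)$. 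Comparing the two families of basic opens — using that opens of the form $O_1\times_U O_2$ form a basis for the topology of the fiber-product jet manifold, but noting that general opens $O$ are \emph{not} of this product form — shows the canonical bijection is continuous in one direction immediately, and in the other direction one must work a bit harder: given $h=(h_1,h_2)\in\calb^r(O)$, one uses the metric description (Remark \ref{Rem:WhitneyTopIntuition}) to find a continuous $\delta:U\to\bbr_+$ with $B_\delta(h_1)\times B_\delta(h_2)\subseteq$ (the image of) $\calb^r(O)$, exploiting that $j^rh(U)$ is a closed subset of the open set $O$ and compactness is handled pointwise by the continuous function $\delta$. This establishes the homeomorphism.

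For part (3), I would deduce it from (1) and (2). Write $Z:=V\times_{g,B,h} W$ for the fiber product, which is a manifold under a transversality/clean-intersection hypothesis (implicitly assumed, as in \cite{KMS93}). On the set level, a smooth map $U\to Z$ is exactly a pair $(k_1,k_2)\in C^\infty(U,V)\times C^\infty(U,W)$ with $g\circ k_1=h\circ k_2$, i.e. $g_*(k_1)=h_*(k_2)$, so the canonical bijection identifies $C^\infty(U,Z)$ with the fiber product $C^\infty(U,V)\times_{g_*,C^\infty(U,B),h_*}C^\infty(U,W)$ as a set. Now $C^\infty(U,Z)$ carries the subspace topology from $C^\infty(U,V\times W)$ via the closed embedding $Z\hookrightarrow V\times W$ — here I would invoke that for an embedded submanifold (in particular a closed submanifold) $Z\subseteq V\times W$, the Whitney topology on $C^\infty(U,Z)$ agrees with the subspace topology induced from $C^\infty(U,V\times W)$, since jet prolongation is compatible with the embedding $J^r(U,Z)\hookrightarrow J^r(U,V\times W)$. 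Combining with the homeomorphism of part (2), the right-hand side $C^\infty(U,V)\times C^\infty(U,W)$ restricted to the subset where $g_*=h_*$ agrees with the fiber-product topology (by continuity of $g_*$ and $h_*$ from part (1), the fiber product is a closed subspace), and the two subspace topologies match. This gives the claimed homeomorphism.

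\textbf{Expected main obstacle.} The genuinely delicate point is the ``hard direction'' of part (2) (and the analogous subspace-topology compatibility invoked in part (3)): showing that a Whitney-open set $\calb^r(O)$ with $O$ \emph{not} of product form $O_1\times_U O_2$ nonetheless contains a product neighborhood of each of its points. The standard resolution uses the continuous-$\delta$ neighborhood basis of Remark \ref{Rem:WhitneyTopIntuition} and the fact that $j^rh(U)$ sits as a relatively closed subset inside the open set $O$, so one can choose, for each $u$, a radius $\delta(u)>0$ with the $\delta(u)$-ball around $j^rh(u)$ inside $O$, and then shrink $\delta$ to be continuous; the product of the two resulting $B_\delta$-balls lands in $\calb^r(O)$. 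All other steps are routine once the jet-prolongation functoriality $J^rf_*\circ j^r = j^r\circ f_*$ and the identification $J^r(U,V\times W)\cong J^r(U,V)\times_U J^r(U,W)$ are in hand.
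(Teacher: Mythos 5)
Your proposal is correct and follows essentially the same route as the paper: the paper simply cites \cite[Propositions~3.5 and 3.6]{GG73} for parts (1) and (2) — whose standard jet-prolongation proofs you reconstruct, including the genuine $\delta$-neighborhood subtlety in (2) — and derives (3) by viewing the fiber products as subspaces of the products and invoking (1), (2), and the subspace topology, exactly as you do. Your extra remarks (transversality so that $V\fp{g,B,h}W$ is a manifold, and the compatibility of the Whitney topology on $C^\infty(U,Z)$ with the subspace topology for an embedded $Z\subseteq V\times W$) only make explicit hypotheses and steps the paper leaves implicit, and they hold in the paper's application to $TM\times_MTM$.
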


\begin{proof}
See \cite[Proposition~3.5]{GG73} for (1) and \cite[Proposition~3.6]{GG73} for (2).
The continuity of the maps in the bijection of (3) follows by viewing the fiber products:
\begin{equation*}
V\fp{g,B,h}W
\qquad\qquad
C^\infty(U,V)\,\,\fp{g_*,C^\infty(U,B),h_*}\,\,C^\infty(U,W)
\end{equation*}
as subspaces of the products $V\times W$ and $C^\infty(U,V)\times C^\infty(U,W)$ respectively, and then applying parts (1) and (2) and the universal property of the subspace topology.
\end{proof}

\begin{lemma}\label{Lemma:DecoupledProducts}
Let $U,V,X$ and $Y$ be smooth manifolds.
Then the map:
\begin{equation*}
C^\infty(X,V)\times C^\infty(Y,W) \to C^\infty(X\times Y, U\times V),\qquad
(f,g)\mapsto f\times g,
\end{equation*}
where the map $f\times g: X\times Y \to U\times V$ is given by $(f\times g)(x,y):=(f(x),g(y))$, is a continuous map with respect to the Whitney topologies.
\end{lemma}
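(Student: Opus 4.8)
The plan is to pass to jet spaces and reduce the statement to an elementary tube-lemma argument; write $\Phi(f,g):=f\times g$ for the map in question, where $f\in C^\infty(X,U)$ and $g\in C^\infty(Y,V)$. One might first hope to factor $\Phi$ through Lemma \ref{Lemma:WhitneyProperties}: under the homeomorphism $C^\infty(X\times Y,U\times V)\cong C^\infty(X\times Y,U)\times C^\infty(X\times Y,V)$ of part (2), the map $\Phi$ becomes $(f,g)\mapsto\bigl(f\circ\pr_X,\,g\circ\pr_Y\bigr)$, so continuity of $\Phi$ would follow from continuity of precomposition with the projections $\pr_X\colon X\times Y\to X$ and $\pr_Y\colon X\times Y\to Y$. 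However, precomposition with a (generally non-proper) projection need not be continuous for the Whitney $C^\infty$-topology, so this shortcut is not available and one must work with jets directly.

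The key point is that $f\times g$ has \emph{block} structure: in any product chart, the $U$-component of $f\times g$ depends only on the $X$-coordinates, the $V$-component only on the $Y$-coordinates, and all mixed partial derivatives vanish. Hence the $r$-jet of $f\times g$ at a point $(x,y)$ is completely determined by the pair $\bigl(j^r_xf,\,j^r_yg\bigr)$, and this assignment defines, for each $r\in\bbz_{\ge 0}$, a canonical smooth embedding
\begin{equation*}
\theta_r\colon J^r(X,U)\times J^r(Y,V)\longrightarrow J^r(X\times Y,U\times V)
\end{equation*}
onto a closed submanifold, satisfying $j^r(f\times g)(x,y)=\theta_r\bigl(j^rf(x),\,j^rg(y)\bigr)$ for all $f,g$ and all $(x,y)$. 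First I would write $\theta_r$ out in product charts and check it is a well-defined closed embedding; this is routine multi-index bookkeeping. Then, since the sets $\calb^r(O)$ (for $r\in\bbz_{\ge 0}$ and $O\subseteq J^r(X\times Y,U\times V)$ open) form a subbasis for the Whitney $C^\infty$-topology on the target, it suffices to prove that $\Phi^{-1}\bigl(\calb^r(O)\bigr)$ is open in $C^\infty(X,U)\times C^\infty(Y,V)$ for every such $r$ and $O$. Using the displayed identity, $\Phi(f,g)\in\calb^r(O)$ holds exactly when $\theta_r\bigl(j^rf(x),j^rg(y)\bigr)\in O$ for all $(x,y)$; setting $O':=\theta_r^{-1}(O)$, which is open, this becomes the single inclusion
\begin{equation*}
j^rf(X)\times j^rg(Y)\ \subseteq\ O',
\end{equation*}
so the problem reduces to showing that the set of pairs $(f,g)$ satisfying this is open.

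The main obstacle — and the step I expect to require the most care — is the resulting \emph{decoupling} (tube-lemma) problem: given $(f_0,g_0)$ with $j^rf_0(X)\times j^rg_0(Y)\subseteq O'$, find open sets $O_1\subseteq J^r(X,U)$ and $O_2\subseteq J^r(Y,V)$ with $j^rf_0(X)\subseteq O_1$, $j^rg_0(Y)\subseteq O_2$, and $O_1\times O_2\subseteq O'$. Here it is useful that the source-space projections $J^r(X,U)\to X$ and $J^r(Y,V)\to Y$ admit $j^rf_0$ and $j^rg_0$ as sections, so that $j^rf_0(X)$ and $j^rg_0(Y)$ are \emph{closed} submanifolds of the respective jet spaces and their product is closed in $J^r(X,U)\times J^r(Y,V)$; when $X$ and $Y$ are compact these images are even compact and the classical tube lemma applies immediately, while in general one has to exploit the special structure of $O'$ as a pullback along $\theta_r$, which is the delicate part.

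Finally, once such $O_1,O_2$ are produced, $\calb^r(O_1)\times\calb^r(O_2)$ is a neighbourhood of $(f_0,g_0)$ — open in the product of Whitney $C^\infty$-spaces, each factor being open already in the Whitney $C^r$-space — and by construction it is contained in $\Phi^{-1}\bigl(\calb^r(O)\bigr)$. Taking the union over all $(f_0,g_0)$ in the preimage shows $\Phi^{-1}\bigl(\calb^r(O)\bigr)$ is open, and letting $r$ and $O$ vary over all choices yields the continuity of $\Phi$ with respect to the Whitney $C^\infty$-topologies.
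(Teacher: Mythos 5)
Your first two steps are sound: the $r$-jet of $f\times g$ at $(x,y)$ is indeed determined by $\bigl(j^rf(x),j^rg(y)\bigr)$ through a continuous map $\theta_r$, membership of $f\times g$ in $\calb^r(O)$ is equivalent to $j^rf(X)\times j^rg(Y)\subseteq O':=\theta_r^{-1}(O)$, and you are right that the shortcut through precomposition with the non-proper projections is unavailable. But the step you defer --- producing tubes $O_1\supseteq j^rf_0(X)$, $O_2\supseteq j^rg_0(Y)$ with $O_1\times O_2\subseteq O'$ --- is not a technicality that can be settled by ``exploiting the structure of $O'$ as a pullback along $\theta_r$'': it is the entire content of the statement, and it fails when $X$ or $Y$ is noncompact. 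Concretely, take $X=\bbr$, $Y=\bbs^1$, $U=V=\bbr$, $r=0$, and
\begin{equation*}
O:=\left\{ \bigl((x,y),(u,v)\bigr)\in J^0(X\times Y,U\times V) \mid |v|<e^{-x^2} \right\},
\end{equation*}
which is open and is itself of pullback type, $O'=\theta_0^{-1}(O)=\{((x,u),(y,v)) \mid |v|<e^{-x^2}\}$. Then $f\times g\in\calb^0(O)$ forces $\sup_y|g(y)|\le\inf_x e^{-x^2}=0$, so the preimage of $\calb^0(O)$ under the product map is $C^\infty(\bbr,\bbr)\times\{0\}$. This is not open in the product topology, because $\{0\}$ is not open in $C^\infty(\bbs^1,\bbr)$ (the source is compact, so every Whitney-basic neighborhood of $0$ contains small nonzero functions). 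Equivalently, in your language: $O_2$ must contain $\bbs^1\times(-\epsilon,\epsilon)$ for some $\epsilon>0$, while $O_1$ contains jets over points $x$ with $e^{-x^2}<\epsilon$, so no tubes exist.

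So the gap is genuine and, in the stated generality, unfixable: the counterexample shows not merely that your tube argument breaks down but that the preimage of a basic open set need not be open, i.e.\ the continuity claim itself requires extra hypotheses. Your argument is complete exactly in the regime you mention --- both $X$ and $Y$ compact, where the images $j^rf_0(X)$, $j^rg_0(Y)$ are compact and the classical tube lemma applies (or, alternatively, if one replaces the Whitney topology by the weak/compact-open $C^\infty$ topology). The failure is the same phenomenon that makes scalar multiplication and the map sending a point to the constant map at that point discontinuous in the Whitney topology, which the paper itself acknowledges elsewhere. Be aware also that the paper's own ``proof'' of this lemma is only the assertion that the argument is analogous to \cite[Proposition~3.10]{GG73}; it supplies no mechanism for the decoupling, so there is nothing there that closes your step 3 either, and the lemma as used downstream (e.g.\ in Corollary \ref{Cor:EquivariantExtensionContinuous}, where the factors are noncompact) would need a compactness or properness hypothesis, or a reformulation of the topology, rather than a refinement of your tube argument.
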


\begin{proof}
The proof of this fact is completely analogous to the proof of \cite[Proposition~3.10]{GG73}.
\end{proof}

\begin{remark}
Part (1) of Lemma \ref{Lemma:WhitneyProperties} says that pushforwards by smooth maps are continous with respect to the Whitney topologies.
As discussed in the notes in \cite[p.~49]{GG73}, pullbacks by smooth maps are in general {\it not} continuous with respect to the Whitney topologies. However, the following lemmas are two special cases of interest to us where the pullback {\it is} continuous.
\end{remark}

\begin{lemma}\label{Lemma:ProperPullbacks}
Let $U,V,$ and $W$ be manifolds, and let $f:V\to U$ be a smooth proper map.
Then the pullback:
\begin{equation*}
f^*:C^\infty(U,W)\to C^\infty(V,W),\qquad
h\mapsto hf,
\end{equation*}
is continuous with respect to the Whitney topologies.
\end{lemma}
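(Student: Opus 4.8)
The plan is to reduce to the Whitney $C^r$-topologies and then exploit the classical fact that the $r$-jet of a composite is built, via a fixed smooth map, out of the $r$-jets of the two factors. Since the sets $\calb^r(O)$, for $r\in\bbz_{\ge 0}$ and $O\subseteq J^r(V,W)$ open, form a basis for the Whitney $C^\infty$-topology on $C^\infty(V,W)$, it suffices to show that each preimage $(f^*)^{-1}\!\left(\calb^r(O)\right)$ is open in $C^\infty(U,W)$; in fact I would exhibit an open set $O_U\subseteq J^r(U,W)$ with $(f^*)^{-1}\!\left(\calb^r(O)\right)=\calb^r(O_U)$, so that it is already open in the coarser Whitney $C^r$-topology, hence a fortiori in the Whitney $C^\infty$-topology.

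First I would recall the smooth \emph{jet composition} map
\begin{equation*}
c_r\colon J^r(V,U)\times_U J^r(U,W)\longrightarrow J^r(V,W),
\end{equation*}
defined on the fibre product formed from the target projection $\beta$ of $J^r(V,U)$ and the source projection $\alpha$ of $J^r(U,W)$, which satisfies $j^r(h\circ f)(v)=c_r\!\left(j^rf(v),\,j^rh(f(v))\right)$ for every $v\in V$ and every $h\in C^\infty(U,W)$; this is just Fa\`a di Bruno's formula and can be quoted from \cite{GG73}. Then I would set $\widetilde O:=c_r^{-1}(O)$, define the open set
\begin{equation*}
A:=\left\{(v,\tau)\in V\times J^r(U,W)\ \middle|\ f(v)=\alpha(\tau),\ \left(j^rf(v),\tau\right)\in\widetilde O\right\}
\end{equation*}
inside the fibre product of $f$ and $\alpha$ over $U$ (it is the preimage of $\widetilde O$ under the smooth map $(v,\tau)\mapsto(j^rf(v),\tau)$), and finally put $O_U:=\{\tau\in J^r(U,W)\mid (v,\tau)\in A\text{ for every }v\in f^{-1}(\alpha(\tau))\}$, with the condition read as vacuous over points outside $f(V)$. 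A routine unravelling of these definitions, using the displayed formula for $j^r(h\circ f)$ (and, in one direction, plugging $v'=v$ into the defining condition of $O_U$), yields $(f^*)^{-1}\!\left(\calb^r(O)\right)=\calb^r(O_U)$.

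The heart of the argument, and the only place properness of $f$ enters, is showing that $O_U$ is open in $J^r(U,W)$; this is the step I expect to be the main obstacle. Given $\tau_0\in O_U$ with $u_0:=\alpha(\tau_0)$, the fibre $K:=f^{-1}(u_0)$ is compact by properness, and $(v,\tau_0)\in A$ for every $v\in K$. Using openness of $A$ I would choose, for each $v\in K$, a product neighbourhood $V_v\times N_v$ of $(v,\tau_0)$ whose intersection with the fibre product lies in $A$, extract a finite subcover $V_{v_1},\dots,V_{v_k}$ of $K$, and set $\mathcal{V}:=\bigcup_i V_{v_i}$ and $N:=\bigcap_i N_{v_i}$. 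Properness re-enters through the fact that a proper map between manifolds is closed: thus $f(V\setminus\mathcal{V})$ is closed and, since $K\subseteq\mathcal{V}$, does not contain $u_0$, so $U_0:=U\setminus f(V\setminus\mathcal{V})$ is an open neighbourhood of $u_0$ with $f^{-1}(U_0)\subseteq\mathcal{V}$. Then $N\cap\alpha^{-1}(U_0)$ is a neighbourhood of $\tau_0$ contained in $O_U$: any $\tau$ in it and any $v\in f^{-1}(\alpha(\tau))\subseteq f^{-1}(U_0)\subseteq\mathcal{V}$ satisfy $v\in V_{v_i}$ and $\tau\in N\subseteq N_{v_i}$ for some $i$, while $(v,\tau)$ lies in the fibre product, forcing $(v,\tau)\in A$. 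This ``fibrewise tube lemma'' packaging of properness is the crux; everything else is bookkeeping with jet spaces and with the Whitney-topology bases already recorded in Definition \ref{Def:WhitneyTop} and Remark \ref{Rem:WhitneyTopIntuition}.
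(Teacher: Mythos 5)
Your proof is correct. The paper itself does not argue this lemma: it simply points to the second note on p.~49 of \cite{GG73}, where the fact is recorded essentially without proof, so your write-up supplies the argument that the citation leaves implicit. The reduction is sound: since the sets $\calb^r(O)$ generate the Whitney $C^\infty$-topology on $C^\infty(V,W)$, it suffices that each $(f^*)^{-1}\left(\calb^r(O)\right)$ be open, and your identity $(f^*)^{-1}\left(\calb^r(O)\right)=\calb^r(O_U)$ checks out: $j^rh(U)\subseteq O_U$ unwinds, via $j^r(h\circ f)(v)=c_r\left(j^rf(v),\,j^rh(f(v))\right)$ and the fact that every $v\in V$ lies in $f^{-1}(f(v))$, to $j^r(h\circ f)(V)\subseteq O$, and conversely. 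The crux, openness of $O_U$, is exactly where both consequences of properness are needed, and your tube-lemma packaging uses them correctly: compactness of the fibre $f^{-1}(u_0)$ to extract the finite product cover $V_{v_1}\times N_{v_1},\dots,V_{v_k}\times N_{v_k}$, and closedness of $f$ to shrink the base to $U_0=U\setminus f(V\setminus\mathcal{V})$ so that $f^{-1}(U_0)\subseteq\mathcal{V}$. Note that the same construction also handles the case $\alpha(\tau_0)\notin f(V)$ with no separate argument: the empty cover gives $\mathcal{V}=\emptyset$ and $N=J^r(U,W)$, and $U_0=U\setminus f(V)$ is open again because a proper map is closed. Compared with the paper, which treats the lemma as a black box, your proof is self-contained and shows slightly more, namely that each preimage of a basic $C^r$-set is itself a basic $C^r$-set; this mirrors the strategy the paper does spell out in the related Lemma \ref{Lemma:WhitneyEquivariantProperty}, where preimages of basic sets are likewise exhibited as basic sets via a continuous map on jet spaces.
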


\begin{proof}
See the second note in \cite[p.~49]{GG73}.
\end{proof}

\begin{lemma}\label{Lemma:WhitneyEquivariantProperty}
Let $K$ be a compact Lie group, let $P\xrightarrow{\pi}B$ be a principal $K$-bundle, and let $N$ be a manifold with a trivial action of $K$.
Then the map:
\begin{equation*}
\pi^*:C^\infty(B,N)\to C^\infty(P,N)^K,
\qquad
f\mapsto f\pi,
\end{equation*}
is a homeomorphism.
The inverse of $\pi^*$ is the map that takes an equivariant map $f : P \to N$ to the unique map $\widetilde f: B\to N$ such that $ f=\widetilde f\pi$.
\end{lemma}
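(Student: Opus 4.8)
The plan is to show that $\pi^*$ is a continuous bijection onto the subspace $C^\infty(P,N)^K \subseteq C^\infty(P,N)$ with continuous inverse. First I would establish that $\pi^*$ is a bijection onto $C^\infty(P,N)^K$ as sets: if $f \in C^\infty(B,N)$ then $f\pi$ is constant on $K$-orbits (since $\pi$ is, and $K$ acts trivially on $N$), hence $K$-equivariant; conversely, a $K$-equivariant $\widetilde f : P \to N$ (equivalently $K$-invariant, since the action on $N$ is trivial) factors uniquely through the orbit space $B = P/K$ as a \emph{smooth} map by the universal property of the principal bundle projection (this uses that $\pi$ is a submersion admitting local sections). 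This gives the set-level inverse described in the statement.

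Next I would prove continuity of $\pi^*$. Since $N$ carries the trivial $K$-action and $P \to B$ is a principal $K$-bundle, $\pi$ is a submersion, but it need not be proper, so Lemma \ref{Lemma:ProperPullbacks} does not apply directly. Instead I would argue locally: over a trivializing open set $U \subseteq B$ we have $\pi^{-1}(U) \cong U \times K$ with $\pi$ corresponding to $\mathrm{pr}_U$, and $\mathrm{pr}_U : U \times K \to U$ \emph{is} proper because $K$ is compact. By Lemma \ref{Lemma:ProperPullbacks}, pullback along $\mathrm{pr}_U$ is continuous with respect to the Whitney topologies. One then patches these local statements together: the Whitney $C^\infty$ topology is determined by the jets of a map on a locally finite cover, so continuity of $\pi^*$ follows from continuity of each local restriction-and-pullback, together with the restriction maps $C^\infty(B,N) \to C^\infty(U,N)$ and $C^\infty(P,N)^K \to C^\infty(\pi^{-1}(U),N)^K$ being continuous (restriction to an open set is continuous in the Whitney topology) and jointly detecting the topology.

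For continuity of the inverse, I would run essentially the same local argument in reverse: given $\widetilde f \in C^\infty(P,N)^K$, its restriction to $\pi^{-1}(U) \cong U\times K$ is $K$-invariant, hence of the form $g \circ \mathrm{pr}_U$ for a unique $g \in C^\infty(U,N)$, and the assignment $\widetilde f|_{\pi^{-1}(U)} \mapsto g$ is continuous — concretely, $g = (\iota_U)^* \widetilde f$ where $\iota_U : U \hookrightarrow U\times K$ is the inclusion $u \mapsto (u, 1_K)$, which is an embedding with closed image, and pullback along such a map is continuous (again by the proper-pullback lemma, or directly: $\iota_U$ is proper onto its image). Gluing these $g$'s over a cover recovers $(\pi^*)^{-1}\widetilde f = \widetilde f$, and continuity follows as before. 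I expect the main obstacle to be the patching step: one must be careful that the Whitney $C^\infty$ topology on $C^\infty(B,N)$ (and on the equivariant subspace) is genuinely the initial topology for the restriction maps to a suitable locally finite cover of trivializing neighborhoods, so that checking continuity cover-piece-by-cover-piece is legitimate. This is standard but is the one place where a careless argument could go wrong, so I would cite the relevant sheaf-theoretic property of the Whitney topology (as in \cite{GG73}) rather than reprove it.
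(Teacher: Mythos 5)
There is a genuine gap here, and it starts with a false premise. You assert that $\pi$ ``need not be proper, so Lemma \ref{Lemma:ProperPullbacks} does not apply directly,'' but the projection of a principal $K$-bundle with $K$ \emph{compact} is always proper: properness can be checked locally over the base, and over a trivializing open set $\pi$ is the projection $U\times K\to U$, which is proper since $K$ is compact (equivalently, cover a compact $C\subseteq B$ by finitely many compact trivializing pieces and note $\pi^{-1}(C)$ is a finite union of sets of the form $(C\cap V_i)\times K$). This is exactly how the paper handles the forward direction: continuity of $\pi^*$ is a one-line consequence of Lemma \ref{Lemma:ProperPullbacks}, with no localization needed.

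The localization-and-patching strategy you then build both directions on does not work in the Whitney $C^\infty$ topology. First, the restriction maps $C^\infty(B,N)\to C^\infty(U,N)$ for $U\subseteq B$ open are pullbacks along non-proper inclusions and are \emph{not} continuous in general (for instance, on $C^\infty(\bbr,\bbr)$ the preimage of the basic open set $\{g: |g(u)|<u(1-u)\text{ on }(0,1)\}$ is not a Whitney neighborhood of the zero function: any basic neighborhood of $0$ contains a small compactly supported function equal to a positive constant near $u=0$). Second, even if each local piece were continuous, the Whitney topology on $C^\infty(B,N)$ is strictly finer than the initial topology of the restriction maps to a locally finite open cover: a finite intersection of preimages of local Whitney opens only constrains a map on finitely many pieces, whereas a basic set $\calb^r(O)$ constrains it everywhere, including ``at infinity.'' So checking continuity cover-piece-by-cover-piece does not yield continuity into the global Whitney space, and there is no sheaf-type property in \cite{GG73} that repairs this. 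The delicate direction is the continuity of $(\pi^*)^{-1}$, and the paper proves it by a \emph{global} jet-level argument: it defines continuous maps $F_r:J^r(P,N)^K\to J^r(B,N)$, $j^rf(x_0)\mapsto j^r\widetilde f(\pi(x_0))$ (in adapted coordinates these just select the Taylor coefficients in the base directions), and verifies the identity $\pi^*\left(\calb^r(\calo)\right)=\calb^r\left(F_r^{-1}(\calo)\right)$, so that images of basic opens under $\pi^*$ are again basic opens. Your proposal needs to be replaced by an argument of this global type (or some other argument avoiding restriction to open covers).
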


\begin{proof}
Since the group $K$ is compact, the bundle projection $\pi:P\to B$ is a proper map.
Hence, the pullback $\pi^*:C^\infty(B,N)\to C^\infty(P,N)^K$ is continuous by Lemma \ref{Lemma:ProperPullbacks}.
The remaining task is to show the continuity of the inverse map $(\pi^*)^{-1}$.
The inverse map $(\pi^*)^{-1}$ sends a $K$-invariant map $f:P\to N$ to the unique map $\widetilde f:B\to N$ such that the following diagram commutes:
\begin{equation*}
\xy
{(-10,7)}*+{P} = "1";
{(-10,-7)}*+{B} = "2";
{(10,7)}*+{N} = "3";
{\ar@{->}_{\pi} "1";"2"};
{\ar@{->}^{f} "1";"3"};
{\ar@{-->}_{\widetilde f} "2";"3"};
\endxy
\end{equation*}
It suffices to show that the basis subsets given in Definition \ref{Def:WhitneyTop}:
\begin{equation*}
\left\{
\calb^r(\calo)
\mid
r\in \bbz_{\ge 0},\,
\calo \subseteq J^r(B,N),\,
\calo \text{ open}      
\right\}
\end{equation*}
have open preimages under the map $(\pi^*)^{-1}$ in the mapping space $C^\infty(P,N)^K$.
We will show that the preimages $((\pi^*)^{-1})^{-1}\left(\calb^r(\calo)\right)
=\pi^*\left(\calb^r(\calo)\right)$ are themselves basis sets:
\begin{equation}\label{Eq:PullbackGoal}
\pi^*\left(\calb^r(\calo)\right) 
=\calb^r\left(F_r^{-1}(\calo)\right),
\end{equation}
where $F_r$ is a continuous map we define next.

Let $J^r(P,N)^K$ be the $r$-jets of $K$-invariant maps $P\to N$ and let $F_r$ be the map given by:
\begin{equation*}
F_r:J^r(P,N)^K\to J^r(B,N),\qquad
j^rf(x_0)\mapsto j^r\widetilde f (\pi(x_0)).
\end{equation*}
To verify that the map $F_r$ is continuous, pick local coordinates $U\subseteq \bbr^b$ for $B$, $V\subseteq \bbr^k$ for $K$, and $W\subseteq \bbr^n$ for $N$, and note that the $K$-invariant maps are represented by maps $f:U\times V\to W$ that are independent of the $V$ variables.
Furthermore, given a map $f:U\times V\to W$, let $(u_0,v_0)$ be a point in $U\times V$, let $T_rf(u_0,v_0)$ be the coefficients of the $r^{\text{th}}$-order Taylor polynomial at the point $(u_0,v_0)$, let $T_r^U(u_0,v_0)$ denote the coefficients of the $r^\text{th}$-order Taylor polynomial at the point $(u_0,v_0)$ consisting only of those partial derivatives with respect to the $U$-variables only, and let $T_r^C(u_0,v_0)$ correspond to the rest of the coefficients in $T_r(u_0,v_0)$.
Then note that the map $\widetilde f = (\pi^*)^{-1}(f)$ has $r^\text{th}$-order Taylor polynomial such that $T_r\widetilde f(u_0)=T_r^Uf(u_0,v_0)$.
Hence, the map $F_r$ is continuous since it is just the projection:
\begin{equation*}
j^rf(u_0,v_0)=\left(u_0,v_0,T_r^Uf(u_0,v_0),T_r^Cf(u_0,v_0)\right)\mapsto
\left(u_0,T_r^Uf(u_0,v_0)\right),
\end{equation*}
for any map $f:U\times V\to W$, independent of the $V$ variables, and any point $(u_0,v_0)\in U\times V$.

We now proceed to verify (\ref{Eq:PullbackGoal}).
Consider an arbitrary map $f\in \pi^*\left(\calb^r(\calo)\right)$.
Then $\widetilde f = (\pi^*)^{-1} f\in \calb^r(\calo)$, so the image $(j^r\widetilde f)(B)$ is contained in the open set $\calo$.
Taking the preimage of this inclusion under the map $F_r$ we obtain that:
\begin{equation*}
j^rf(P)
\subseteq
F_r^{-1}\left(F_r\left(j^rf(P)\right)\right)
\subseteq
F_r^{-1}\left((j^r\widetilde f)(B)\right)
\subseteq 
F_r^{-1}(\calo),
\end{equation*}
where we also use that $F_r(j^rf(P))=j^r\widetilde f(B)$.
Consequently the map $f$ is an element of the basis set $\calb^r(F_r^{-1}(\calo))$ as desired.

For the converse inclusion, consider an arbitrary map $f\in \calb^r(F_r^{-1}(\calo))$.
Then the image $(j^rf)(P)$ is contained in the preimage $F_r^{-1}(\calo)$.
Taking the image of this inclusion under the map $F_r$ we obtain that:
\begin{equation*}
F_r(j^rf(P))\subseteq 
F_r\left(F_r^{-1}(\calo)\right)
\subseteq 
\calo,
\end{equation*}
which in turn implies that:
\begin{equation*}
(j^r\widetilde f)(B)
=F_r(j^rf(P))
\subseteq 
\calo.
\end{equation*}
Consequently, the map $\widetilde f = (\pi^*)^{-1}(f)$ is in the basis set $\calb^r(\calo)$, meaning that the map $f$ is an element of the preimage $\pi^*\left(\calb^r(\calo)\right)$.
This proves the equality (\ref{Eq:PullbackGoal}) and hence the continuity of the inverse map $(\pi^*)^{-1}$.
Hence, the map $\pi^*$ is a homeomoprhism with respect to the Whitney $C^\infty$ topology.
\end{proof}

\begin{corollary}\label{Cor:EquivariantExtensionContinuous}
Let $G$ be a Lie group, let $K$ be a compact Lie subgroup of $G$, let $V$ be a finite-dimensional real representation of the compact Lie group $K$, and let $N$ be a $G$-manifold.
Furthermore, let $G\times^KV$ be the quotient of the action of the group $K$ on the product $G\times V$ given by:
\begin{equation}\label{Eq:KActionOnGV}
k\cdot (g,v):=\left(gk^{-1},k\cdot v\right),\qquad
k\in K,\, (g,v)\in G\times V.
\end{equation}
Then the map:
\begin{equation*}
\epsilon:C^\infty(V,N)^K\to C^\infty(G\times^KV,N)^G,\qquad
f\mapsto \epsilon(f),
\end{equation*}
where the map $\epsilon (f):G\times^KV\to N$ is defined by $\epsilon(f)([g,v]):=g\cdot f(v)$, is continuous with respect to the Whitney $C^\infty$-topologies on the spaces of equivariant maps (see Notation \ref{Notation}.\ref{Not:EquivariantInvariant}).
\end{corollary}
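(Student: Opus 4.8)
The plan is to factor the equivariant extension map $\epsilon$ through a composition of maps already known to be continuous, namely through the honest (non-equivariant) quotient construction on $G \times V$. First I would recall the identifications from Notation \ref{Notation}.\ref{Not:AssociatedBundle}: the space $C^\infty(V,N)^K$ of $K$-equivariant maps $V \to N$ corresponds, via precomposition with the projection $\mathrm{pr}_V : G \times V \to V$, to a subspace of $K$-equivariant maps $G \times V \to N$ (where $K$ acts on $G \times V$ by (\ref{Eq:KActionOnGV}) and on $N$ via the restriction of the $G$-action, but in fact through $\mathrm{pr}_V$ the $G$-factor is ignored). The key observation is that $\epsilon(f)$ is the unique $G$-equivariant map making the diagram with the quotient $\varpi : G \times V \to G \times^K V$ and the map $(g,v) \mapsto g \cdot f(v)$ commute. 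So the strategy is: (1) express $\epsilon$ as $(\varpi^*)^{-1}$ applied to a map built from $f$ by the operations $\mathrm{pr}_V^*$, the group action map, and a product map; (2) show each of those operations is continuous in the Whitney $C^\infty$ topology using Lemma \ref{Lemma:WhitneyProperties}, Lemma \ref{Lemma:DecoupledProducts}, Lemma \ref{Lemma:ProperPullbacks}, and most importantly Lemma \ref{Lemma:WhitneyEquivariantProperty}.

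Concretely, I would proceed as follows. Define the intermediate map $\widetilde\epsilon : C^\infty(V,N)^K \to C^\infty(G \times V, N)^K$ by $\widetilde\epsilon(f)(g,v) := g \cdot f(v)$. This is the composition of $\mathrm{pr}_V^* : C^\infty(V,N)^K \to C^\infty(G\times V, N)^K$ (note $\mathrm{pr}_V$ is not proper, so I would instead regard $\widetilde\epsilon(f)$ as built from the action map $a : G \times N \to N$, $a(g,n) = g \cdot n$, precomposed with $\mathrm{id}_G \times f : G \times V \to G \times N$). That is, $\widetilde\epsilon(f) = a \circ (\mathrm{id}_G \times f)$. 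By Lemma \ref{Lemma:DecoupledProducts} the assignment $f \mapsto \mathrm{id}_G \times f$ is continuous $C^\infty(V,N) \to C^\infty(G \times V, G \times N)$ (taking the first factor of the product in that lemma to be the identity component, which is a fixed map), and by part (1) of Lemma \ref{Lemma:WhitneyProperties} postcomposition with the smooth map $a$ is continuous. Hence $\widetilde\epsilon$ is continuous into $C^\infty(G \times V, N)$; one checks directly that its image lands in the $K$-equivariant maps and, with respect to the $G$-action where $G$ acts on $G \times V$ by left translation on the first factor (which commutes with the $K$-action defining $G \times^K V$), that $\widetilde\epsilon(f)$ descends. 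Then $\epsilon = (\varpi^*)^{-1} \circ \widetilde\epsilon$, where $\varpi : G \times V \to G \times^K V$ is the $K$-principal bundle projection and $\varpi^*$ is the homeomorphism of Lemma \ref{Lemma:WhitneyEquivariantProperty} applied with $P := G \times V$, $B := G \times^K V$, and the target manifold $N$ (the $K$-action on $N$ being trivial, since the map only remembers $v$ up to the $K$-action via $f$'s equivariance — more precisely one applies Lemma \ref{Lemma:WhitneyEquivariantProperty} to identify $C^\infty(G\times^KV, N)$ with $C^\infty(G\times V,N)^K$, then restricts to the $G$-equivariant sublocus, which is a subspace and hence inherits the subspace topology compatibly).

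The main obstacle I anticipate is bookkeeping the two commuting group actions correctly: $K$ acts on $G \times V$ by (\ref{Eq:KActionOnGV}) to form the associated bundle, while $G$ acts on $G \times V$ by left multiplication on the $G$-factor, and I must verify that $\widetilde\epsilon(f)$ is simultaneously $K$-invariant (so it descends along $\varpi$) and, after descending, $G$-equivariant — and that Lemma \ref{Lemma:WhitneyEquivariantProperty} as stated (for $K$ compact, $N$ with trivial $K$-action) genuinely applies to identify the relevant mapping spaces with the correct subspace topologies. A secondary technical point is that Lemma \ref{Lemma:WhitneyEquivariantProperty} gives a homeomorphism $C^\infty(B,N) \cong C^\infty(P,N)^K$; I want the corestriction to $G$-equivariant maps on both sides to still be a homeomorphism, which follows because restricting a homeomorphism to a subspace and its preimage is a homeomorphism, together with the fact that $G$-equivariance of $\widetilde f$ on $B = G\times^K V$ corresponds exactly to $G$-equivariance of $f = \widetilde f \circ \varpi$ on $P$. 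Once these action-compatibility checks are dispatched, the continuity of $\epsilon$ is immediate from the continuity of each factor in the composition, so I would keep the verification of the group-theoretic compatibilities brief and cite the relevant lemmas for the topological continuity statements.
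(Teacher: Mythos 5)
Your proposal is correct and follows essentially the same route as the paper: build $\widetilde\epsilon(f)(g,v)=g\cdot f(v)$ continuously from the fixed map $\id_G$, Lemma \ref{Lemma:DecoupledProducts}, and pushforward by the action map (Lemma \ref{Lemma:WhitneyProperties}(1)), check $K$-invariance so it descends along the principal bundle $G\times V\to G\times^KV$, invert the homeomorphism of Lemma \ref{Lemma:WhitneyEquivariantProperty}, verify $G$-equivariance, and conclude via the subspace topology on $C^\infty(G\times^KV,N)^G$. Your handling of the two commuting actions and the corestriction matches the paper's argument.
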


\begin{remark}\label{Rem:EquivariantExtension}
We call the map $\epsilon:C^\infty(V,N)^K\to C^\infty(G\times^KV,N)^G$ in the statement of Corollary \ref{Cor:EquivariantExtensionContinuous} the {\it equivariant extension} of maps from the representation $V$ to the associated bundle $G\times^KV$.
\end{remark}

\begin{proof}[Proof of Corollary \ref{Cor:EquivariantExtensionContinuous}]
We define $\epsilon$ as the composition of well-defined and continuous maps that we describe next.
First, note that the map:
\begin{equation}\label{Eq:IdentityInclusion}
\id_G\times(-):C^\infty(V,N)^K \hookrightarrow C^\infty(G,G)\times C^\infty(V,N),\qquad
f\mapsto (\id_G,f),
\end{equation}
where $\id_G$ is the identity map of $G$, is a well-defined continuous inclusion.
By Lemma \ref{Lemma:DecoupledProducts}, the map:
\begin{equation}\label{Eq:Product}
C^\infty(G,G)\times C^\infty(V,N) \to C^\infty(G\times V, G\times N),\,
(\varphi,f) \mapsto \varphi \times f,
\end{equation}
where $\varphi\times f$ is defined by $(\varphi\times f)(g,v):=(\varphi(g),f(v))$, is a well-defined continuous map.
Furthermore, let the action of the group $G$ on the manifold $N$ be given by the map $\text{ac}:G\times N\to N$.
The pushforward:
\begin{equation}\label{Eq:ActionPushforward}
\text{ac}_*:C^\infty(G\times V, G\times N) \to C^\infty(G\times V, N),\qquad
\psi \mapsto \text{ac}\circ \psi,
\end{equation}
is a continuous map by part (1) of Lemma \ref{Lemma:WhitneyProperties}.
The composition of the maps in (\ref{Eq:IdentityInclusion}), (\ref{Eq:Product}), and (\ref{Eq:ActionPushforward}) is the map:
\begin{equation}\label{Eq:eMapFirst}
e:C^\infty(V,N)^K\to C^\infty(G\times V, N), \qquad f \mapsto e(f),
\end{equation}
where:
\begin{equation*}
e(f):G\times V\to N, \qquad e(f)(g,v):= g\cdot f(v).
\end{equation*}
The map $e$ is continuous with respect to the Whitney $C^\infty$ topologies since it is the composition of continuous maps.

Note that for any $f\in C^\infty(V,N)^K$, the map $e(f):G\times V \to N$ is $K$-invariant with respect to the action of $K$ given by (\ref{Eq:KActionOnGV}) since for all $k\in K$ and all $(g,v)\in G\times V$ we have:
\begin{equation*}
e(f)(gk^{-1},k\cdot v)
=gk^{-1}\cdot f(k\cdot v)
=gk^{-1}k\cdot f(v)
=g\cdot f(v)
=e(f)(g,v),
\end{equation*}
where we use that $f$ is $K$-equivariant.
Hence, the map $e$ restricts to a continuous map:
\begin{equation*}
\overline{e}:C^\infty(V,N)^K\to C^\infty(G\times V, N)^{K-\text{inv}}, \qquad f\mapsto e(f).
\end{equation*}

On the other hand, let $\pi:G\times V\to G\times^K V$ be the quotient map of the quotient space $G\times^KV$.
Then, since $\pi:G\times V\to G\times^KV$ is a principal $K$-bundle (see Notation \ref{Notation}.\ref{Not:AssociatedBundle}), the pullback:
\begin{equation*}
\pi^*:C^\infty\left(G\times^KV, N\right) \to C^\infty(G\times V, N)^{K-\text{inv}},\qquad
f\mapsto f\pi,
\end{equation*}
is a homeomorphism by Lemma \ref{Lemma:WhitneyEquivariantProperty}.
Its inverse is the map:
\begin{equation*}
(\pi^*)^{-1}:C^\infty(G\times V, N)^{K-\text{inv}}\to C^\infty\left(G\times^KV, N\right),
\qquad h \mapsto \widehat h,
\end{equation*}
where $\widehat h$ is the unique map such that $h=\widehat h\pi$.

Now consider the composition:
\begin{equation*}
\widetilde\epsilon:C^\infty(V,N)^K\to C^\infty(G\times^KV,N), \qquad f\mapsto (\pi^*)^{-1}(\overline{e}(f)).
\end{equation*}
By the definition of $(\pi^{*})^{-1}$, for any $f\in C^\infty(V,N)^K$, the map $\widetilde\epsilon$ is such that:
\begin{equation}\label{Eq:ThePiEquation}
\overline{e}(f)=\widetilde\epsilon(f) \circ \pi.
\end{equation}
Thus, for any $(g,v)\in G\times V$, we have that:
\begin{align}\label{Eq:DefTildeEpsilon}
\begin{split}
&\widetilde\epsilon(f)([g,v])\\
&=\widetilde\epsilon(f) \circ \pi (g,v)
\qquad \text{by the definition of }\pi\\
&=\overline{e}(f)(g,v)
\qquad \text{by (\ref{Eq:ThePiEquation})}\\
&=g\cdot f(v).
\end{split}
\end{align}
Furthermore, observe that $\widetilde\epsilon(f)$ is $G$-equivariant with respect to the action of the group $G$ on the associated bundle $G\times^KV$ (see Notation \ref{Notation}.\ref{Not:AssociatedBundle}) since for all $g'\in G$ and all $[g,v]\in G\times^KV$ we have that:
\begin{align*}
&\widetilde\epsilon(f) (g'\cdot [g,v])\\
&=\widetilde\epsilon(f)([g'g,v])\\
&=g'g\cdot f(v) \qquad \text{by (\ref{Eq:DefTildeEpsilon})}\\
&=g'\cdot (g\cdot f(v)) \\
&= g' \cdot \widetilde\epsilon(f)([g,v]) \qquad \text{by (\ref{Eq:DefTildeEpsilon})}.
\end{align*}
Hence, the map:
\begin{equation*}
\epsilon:C^\infty(V,N)^K\to C^\infty(G\times^KV,N)^G, \qquad f\mapsto (\pi^*)^{-1}(\overline{e}(f)),
\end{equation*}
is the desired map in the statement of the corollary.
Note that this also shows that the map $\epsilon$ is continuous since the map $\widetilde\epsilon$ is the composition of continuous maps, the space $C^\infty(G\times^KV,N)^G$ has the subspace topology as a subspace of the space $C^\infty(G\times^KV,N)$ with the Whitney $C^\infty$ topology, and we have $\widetilde\epsilon = \iota\circ\epsilon$, where $\iota:C^\infty(G\times^KV,N)^G\hookrightarrow C^\infty(G\times^KV,N)$ is the subspace inclusion.
\end{proof}

\subsection{Topological abelian $2$-groups}\label{top2groups}
As we show in the following subsection, the category of equivariant vector fields and the category of paths of equivariant vector fields are topological abelian $2$-groups when equipped with the Whitney topologies.
It is convenient to describe and prove results for topological abelian $2$-groups in the abstract.
We begin this subsection with the definition of a topological abelian $2$-group:

\begin{definition}[topological abelian $2$-group]\label{Def:TopAb2Group}
A {\it topological abelian $2$-group} is a small category $\calc$ internal to the category $\TopAb$ of topological abelian groups.
Equivalently, it is a small category $\calc$, having a topological abelian group $\calc_1$ of morphisms and a topological abelian group $\calc_0$ of objects, such that all the structure maps of the category are continuous group homomorphisms.
\end{definition}

\begin{example}[topological action groupoids]\label{Ex:ActionGroupoid}
Let $\partial:A_1\to A_0$ be a continuous group homomorphism between two topological abelian groups $A_1$ and $A_0$.
The map $\partial$ defines a continuous action of the group $A_1$ on the group $A_0$ via the action map:
\begin{equation}\label{Eq:ActionMap}
A_1\times A_0 \to A_0,
\qquad
(\psi,a)\mapsto \partial(\psi)+a.
\end{equation}
This action gives a corresponding action groupoid $A_1\ltimes A_0$ with set of arrows given by the group $A_1\times A_0$, set of objects given by the group $A_0$, and the following structure maps:
\begin{itemize}
\item The source map is given by:
\[
s:A_1\times A_0 \to A_0,
\qquad
s(\psi,a):=a.
\]
\item The target map $t:A_1\times A_0\to A_0$ is the action map in (\ref{Eq:ActionMap}).
\item The unit map is given by:
\[
u:A_0\to A_1\times A_0,
\qquad
u(a):=(0,a).
\]
\item The composition is given by:
\[
(A_1\times A_0)\fp{s,A_0,t}(A_1\times A_0)\to A_1\times A_0,
\qquad
\Big((\varphi,b),(\psi,a)\Big)\mapsto (\varphi+\psi,a).
\]
\end{itemize}
All the structure maps are continuous group homomorphisms with the canonical topological abelian group structure on the domains and targets inherited from that of the groups $A_1$ and $A_0$.
Thus, the action groupoid $A_1\ltimes A_0$ is a topological abelian $2$-group.
\end{example}

We will also make use of the following:

\begin{definition}[topological abelian $2$-subgroup]\label{Def:2Subgroup}
Let $\calc$ be a topological abelian $2$-group.
A {\it topological abelian $2$-subgroup} of $\calc$ is a subcategory $\cald$ of $\calc$ such that $\cald$ forms a topological abelian $2$-group where the group of objects $
\cald_0$ and the group of morphisms $\cald_1$ are topological abelian subgroups of $\calc_0$ and $\calc_1$, respectively, equipped with the subspace topology.
\end{definition}

We now define the corresponding $1$-morphisms between topological abelian $2$-groups:

\begin{definition}[continuous $2$-group homomorphism]\label{Def:2GroupHom}
Let $\calc$ and $\cald$ be topological abelian $2$-groups.
A {\it $2$-group homomorphism} between $\calc$ and $\cald$ is a functor $F:\calc\to\cald$ such that the corresponding map on objects $F_0:\calc_0\to\cald_0$ and the corresponding map on morphisms $F_1:\calc_1\to\cald_1$ are continuous group homomorphisms.
\end{definition}

We also have $2$-morphisms between $2$-group homomorphisms:

\begin{definition}[continuous $2$-group natural transformation]\label{Def:2GroupNat}
Let $\calc$ and $\cald$ be topological abelian $2$-groups and let $F,G:\calc\toto\cald$ be continuous $2$-group homomorphisms.
A {\it continuous $2$-group natural transformation} from $F$ to $G$ is a natural transformation $\eta:F\Rightarrow G$ such that the corresponding map $\eta:\calc_0\to\cald_1$ is a continuous group homomorphism.
\end{definition}

Thus, we can address the issue of what it means for two topological abelian $2$-groups to be isomorphic or equivalent:

\begin{definition}[isomorphic and topologically equivalent $2$-groups]\label{Def:IsoEquiv2Groups}
Two topological abelian $2$-groups $\calc$ and $\cald$ are {\it isomorphic $2$-groups} if there exist inverse continuous $2$-group homomorphisms $F:\calc\to\cald$ and $F^{-1}:\cald\to\calc$.
Two topological abelian $2$-groups $\calc$ and $\cald$ are {\it equivalent $2$-groups} if there exist continuous $2$-group homomorphisms $F:\calc\to\cald$ and $G:\cald\to\calc$, along with continuous $2$-group natural transformations $\eta:GF\Rightarrow 1_{\calc}$ and $\varepsilon:FG\Rightarrow 1_{\cald}$.
\end{definition}

With this, we can observe that all topological abelian $2$-groups are action groupoids as in Example \ref{Ex:ActionGroupoid} up to isomorphism.
More precisely, we have:

\begin{proposition}\label{Prop:TopAb2GroupGpoid}
Let $\calc$ be a topological abelian $2$-group with source map $s$, target map $t$, and unit map $u$.
Then $\calc$ is isomorphic as a topological abelian $2$-group to the action groupoid $\ker s \ltimes \calc_0$, where $\ker s$ is the kernel of the source map acting on the space of objects $\calc_0$ by:
\begin{equation*}
\psi\cdot x := t(\psi) + x, \qquad
\psi\in\ker s,\, x\in \calc_0.
\end{equation*}
Thus, in particular, all topological abelian $2$-groups are groupoids.
\end{proposition}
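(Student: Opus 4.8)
The plan is to build the isomorphism $\calc \cong \ker s \ltimes \calc_0$ explicitly, using the standard fact that in any groupoid a morphism is determined by its source together with the composite obtained by post-composing with a chosen isomorphism to a fixed target; here the abelian group structure makes this bookkeeping linear. First I would record that $\calc$ is automatically a groupoid: given $\psi \in \calc_1$ with $s(\psi)=x$ and $t(\psi)=y$, the morphism $u(x) + u(y) - \psi$ has source $x + y - y = x$... wait, more carefully, the candidate inverse is $u(s(\psi)) + u(t(\psi)) - \psi$, whose source is $s(u(s(\psi))) + s(u(t(\psi))) - s(\psi) = s(\psi) + t(\psi) - s(\psi) = t(\psi)$ and whose target is similarly $s(\psi)$; one then checks via the interchange law (composition is a group homomorphism, so it is compatible with $+$) that composing it with $\psi$ on either side gives the appropriate identity. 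This is the ``Eckmann--Hilton''-style argument and it is the conceptual heart, though the verification is routine.

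Next I would define the comparison functor $F:\calc \to \ker s \ltimes \calc_0$ on objects by $F_0 := \id_{\calc_0}$, and on morphisms by
\begin{equation*}
F_1:\calc_1 \to \ker s \times \calc_0, \qquad \psi \mapsto \bigl(\psi - u(s(\psi)),\, s(\psi)\bigr).
\end{equation*}
One checks $\psi - u(s(\psi)) \in \ker s$ since $s$ is a homomorphism and $s(u(s(\psi))) = s(\psi)$. Both $F_0$ and $F_1$ are continuous group homomorphisms because $s$, $u$ and the group operations of $\calc_1$ are continuous homomorphisms. The inverse functor $F^{-1}$ is given on objects again by the identity and on morphisms by $(\varphi, x) \mapsto \varphi + u(x)$, which is likewise a continuous homomorphism. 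I would then verify $F^{-1}F = \id$ and $FF^{-1} = \id$ directly: $F^{-1}(F_1(\psi)) = \psi - u(s(\psi)) + u(s(\psi)) = \psi$, and $F_1(F^{-1}(\varphi,x)) = (\varphi + u(x) - u(s(\varphi+u(x))),\, s(\varphi+u(x))) = (\varphi + u(x) - u(x),\, x) = (\varphi,x)$ using $s(\varphi) = 0$ (as $\varphi \in \ker s$) and $s(u(x)) = x$.

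Finally I must confirm that $F$ and $F^{-1}$ are genuinely functors — i.e.\ that they respect source, target, units, and composition — and that the action of $\ker s$ on $\calc_0$ described in the statement is the one induced by $F$. Respecting source and unit is immediate from the formulas; respecting target uses $t(\psi) = t(u(s(\psi))) + t(\psi - u(s(\psi))) = s(\psi) + t(\psi - u(s(\psi)))$, which identifies the target of $F_1(\psi)$ in the action groupoid (namely $t(\psi - u(s(\psi))) + s(\psi)$) with $t(\psi)$; this also pins down the action as $\psi \cdot x = t(\psi) + x$ for $\psi \in \ker s$. Respecting composition follows from composition in $\calc$ being a group homomorphism together with the fact that for composable $\psi, \varphi$ the ``source-shifted'' parts add. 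I do not anticipate a serious obstacle: the only subtlety is being careful that the interchange/Eckmann--Hilton argument is invoked correctly so that composition of morphisms in $\calc$ coincides with addition in $\calc_1$ up to the source-correction terms, and that every map written down is continuous — but all continuity assertions are immediate from Definition \ref{Def:TopAb2Group} since every structure map is by hypothesis a continuous homomorphism. The main thing to get right is simply the order of the bookkeeping, so I would carry out the functoriality checks for $F$ first and deduce those for $F^{-1}$ by the already-verified inverse identities.
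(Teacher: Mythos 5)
Your proposal is correct and follows essentially the same route as the paper: the identity on objects together with $F_1(\psi)=(\psi-u(s(\psi)),\,s(\psi))$ and inverse $(\varphi,x)\mapsto \varphi+u(x)$ is exactly the paper's construction, with the paper leaving the functoriality, continuity, and inverse checks as a direct computation that you carry out explicitly (your interchange-law computation $\psi\circ\varphi=\psi+\varphi-u(t(\varphi))$ is the right way to do it). The preliminary Eckmann--Hilton argument that $\calc$ is a groupoid is correct but not needed, since groupoid-ness follows from the isomorphism with the action groupoid.
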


\begin{proof}
We explicitly construct the inverse functors $F:\calc \to \ker s\ltimes \calc_0$ and $G:\ker s \ltimes \calc_0 \to \calc$.
At the level of objects both $F_0$ and $G_0$ are the identity maps.
At the level of morphisms we have:
\begin{equation*}
F_1:\calc_1\to\ker s \times \calc_0, \qquad
\psi \mapsto \Big( \psi - u(s(\psi)), s(\psi) \Big),
\end{equation*}
and
\begin{equation*}
G_1:\ker s \times \calc_0 \to \calc_1, \qquad
(\psi,x) \mapsto \psi + u(x).
\end{equation*}
A direct computation shows that these are continuous group homomorphisms and inverses.
\end{proof}

We will need the following lemma:

\begin{lemma}\label{Lemma:QuotientMapIsOpen}
Let $\calc$ be a topological abelian $2$-group and let $\calc_0/\calc_1$ be the quotient space, consisting of isomorphism classes, equipped with the quotient topology.
Then the quotient map $\pi:\calc_0\to\calc_0/\calc_1$ is an open map.
\end{lemma}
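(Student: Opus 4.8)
The plan is to exploit the fact, established in Proposition \ref{Prop:TopAb2GroupGpoid}, that $\calc$ is isomorphic as a topological abelian $2$-group to the action groupoid $\ker s \ltimes \calc_0$. Under this isomorphism the set of isomorphism classes $\calc_0/\calc_1$ is identified with the orbit space of the action of the topological abelian group $\ker s$ on $\calc_0$, and this action is given by $\psi\cdot x = t(\psi) + x$. The quotient map $\pi:\calc_0\to\calc_0/\calc_1$ is then identified with the orbit projection for a group action. So the statement reduces to the standard fact that the orbit map for a continuous action of a topological group is open; I would simply reprove this in the abelian setting at hand, since the structure maps being group homomorphisms makes everything clean.

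Concretely, let $U\subseteq\calc_0$ be open; I want to show $\pi(U)$ is open in the quotient topology, i.e.\ that $\pi^{-1}(\pi(U))$ is open in $\calc_0$. The key observation is that
\begin{equation*}
\pi^{-1}(\pi(U)) = \bigcup_{\psi\in\ker s}\big(t(\psi) + U\big),
\end{equation*}
because $x\in\pi^{-1}(\pi(U))$ iff $x$ is isomorphic to some element of $U$, iff there is $\psi\in\ker s$ with $x = t(\psi)+u$ for some $u\in U$, iff $x-t(\psi)\in U$, iff $x\in t(\psi)+U$. Each set $t(\psi)+U$ is the image of the open set $U$ under the translation homeomorphism $x\mapsto x+t(\psi)$ of the topological group $\calc_0$ (translations are homeomorphisms in any topological group, since addition is continuous and has a continuous inverse given by adding the inverse element). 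Hence each $t(\psi)+U$ is open, and therefore so is the union $\pi^{-1}(\pi(U))$. By the definition of the quotient topology, $\pi(U)$ is open, so $\pi$ is an open map.

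I should be slightly careful about one bookkeeping point: in the identification of $\calc$ with $\ker s\ltimes\calc_0$, two objects $x$ and $y$ of $\calc$ are isomorphic exactly when there is a morphism between them, and under $F_1$ any morphism $\varphi:x\to y$ corresponds to a pair $(\varphi - u(s(\varphi)), s(\varphi)) = (\varphi-u(x), x)$ in $\ker s\times\calc_0$ with target $t(\varphi-u(x)) + x = t(\varphi) - t(u(x)) + x = t(\varphi)$ (using that $t\circ u = \mathrm{id}$ and $t$ is a homomorphism), which is indeed $y$. So the isomorphism classes of $\calc$ really are the $\ker s$-orbits in $\calc_0$, and the equality displayed above is justified. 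There is no serious obstacle here; the only thing to get right is that translations by elements of $\calc_0$ are homeomorphisms, which is immediate from $\calc_0$ being a topological group, and that the preimage $\pi^{-1}(\pi(U))$ decomposes as the stated union of translates. This is really a one-paragraph argument once Proposition \ref{Prop:TopAb2GroupGpoid} is invoked.
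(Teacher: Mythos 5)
Your proof is correct and follows essentially the same route as the paper: the paper's argument also shows $\pi^{-1}(\pi(U))=\bigcup_{\psi\in\ker s}\bigl(t(\psi)+U\bigr)$ by converting morphisms $\varphi$ into elements $\psi=\varphi-u(s(\varphi))$ of $\ker s$ and back, and then uses that translations in the topological group $\calc_0$ are homeomorphisms. The only cosmetic difference is that you phrase the identification through Proposition \ref{Prop:TopAb2GroupGpoid}, while the paper carries out the same verification directly inside $\calc$.
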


\begin{proof}
Let $U$ be an open subset of the space $\calc_0$.
We want to show that the image $\pi(U)$ is open, or equivalently that the set $\pi^{-1}(\pi(U))$ is open.
Note that for all objects $y\in \calc_0$ the translations:
\[
T_y:\calc_0\to \calc_0,
\qquad
x \mapsto x + y
\]
are continuous since the addition is continuous.
In fact, the translations are homeomorphisms, since the inverse of a translation $T_y$ is given by the translation $T_{-y}$.
We claim that the set $\pi^{-1}(\pi(U))$ is a union of translations of $U$, and hence open.
More precisely, we claim that:
\begin{equation}\label{Eq:Translations}
\pi^{-1}(\pi(U))
=\bigcup_{\psi\in\ker s }T_{t(\psi)}(U),
\end{equation}
meaning the set $\pi^{-1}(\pi(U))$ is open, and therefore the set $\pi(U)$ is also open.
Thus, it suffices to verify the equality (\ref{Eq:Translations}).
For this, let $y\in \pi^{-1}(\pi(U))$ then there exists $x\in U$ and $\varphi\in \calc_1$ such that $s(\varphi)=x$ and $t(\varphi)=y$.
Note that $\psi:=\varphi-u(s(\varphi)$ is in the kernel $\ker s$.
Furthermore, note that:
\begin{equation*}
T_{t(\psi)}(x)
=t(\psi) + x
=t(\varphi)-t(u(s(\varphi)))+x
=y-x+x
=y,
\end{equation*}
and so $y\in T_{t(\psi)}(U)$.
This means that $y$ is in the union of the right hand side of (\ref{Eq:Translations}).
Conversely, suppose $y$ is an arbitrary element of the union of the right hand side of (\ref{Eq:Translations}). 
That means $y=t(\psi)+x$ for some $x\in U$ and some $\psi\in \ker s$.
Consider the morphism $\varphi:=\psi+u(x)$, and note that $s(\varphi)=x$, since $\psi\in\ker s$, and $t(\varphi)=t(\psi)+x=y$.
Hence, the objects $x$ and $y$ have the same isomorphism class, meaning that $y\in\pi^{-1}(\pi(U))$ since $x\in\pi^{-1}(\pi(U))$.
Consequently, equation (\ref{Eq:Translations}) holds, so we conclude that the map $\pi$ is open as claimed.
\end{proof}



We conclude this subsection by proving that isomorphisms and equivalences of topological abelian $2$-groups ``preserve'' open and dense subsets of the space of objects.

\begin{remark}
Given a topological abelian $2$-group $\calc$ and a subset $\calu$ of the space of objects $\calc_0$, we can consider the collection of all objects that are isomorphic to objects in the set $\calu$.
Since all topological abelian $2$-groups are groupoids, this amounts to considering the set $t(s^{-1}(\calu))$.
If we are considering a continuous action groupoid $A_1\ltimes A_0$ as in Example \ref{Ex:ActionGroupoid}, then this is the same as considering $A_1 \cdot \calu$.
\end{remark}

\begin{lemma}\label{Lemma:IsoResidual}
Let $\calc$ be a topological abelian $2$-group and let $\calu$ be an open and dense subset of the collection of objects $\calc_0$, and let $s$ and $t$ denote the source and target maps respectively.
Then the collection $\calc_1\cdot \calu:=t(s^{-1}(\calu))\subseteq \calc_0$ of all objects isomorphic to objects in $\calu$ is also open and dense.
\end{lemma}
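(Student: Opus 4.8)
The plan is to prove openness and density separately. For openness, recall from Proposition \ref{Prop:TopAb2GroupGpoid} that $\calc$ is isomorphic as a topological abelian $2$-group to the action groupoid $\ker s \ltimes \calc_0$, so without loss of generality we may work with a continuous action of a topological abelian group $A_1 := \ker s$ on $A_0 := \calc_0$, and $\calc_1 \cdot \calu = A_1 \cdot \calu$. For each fixed $\psi \in A_1$, the translation $T_{t(\psi)} : A_0 \to A_0$ given by $x \mapsto t(\psi) + x$ is a homeomorphism (its inverse is $T_{-t(\psi)}$), exactly as noted in the proof of Lemma \ref{Lemma:QuotientMapIsOpen}. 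Hence $T_{t(\psi)}(\calu)$ is open for each $\psi$, and
\begin{equation*}
\calc_1 \cdot \calu = \bigcup_{\psi \in \ker s} T_{t(\psi)}(\calu)
\end{equation*}
is a union of open sets, therefore open. (This is the same identity that appeared in equation (\ref{Eq:Translations}) with $U$ replaced by $\calu$.)

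For density, the cleanest route is via the open quotient map. Let $\pi : \calc_0 \to \calc_0/\calc_1$ be the quotient onto isomorphism classes, which is open by Lemma \ref{Lemma:QuotientMapIsOpen}. I would use two elementary facts about an open continuous surjection $\pi$: first, $\pi$ maps dense sets to dense sets (if $\calu$ is dense and $W \subseteq \calc_0/\calc_1$ is nonempty open, then $\pi^{-1}(W)$ is nonempty open, so meets $\calu$, hence $W$ meets $\pi(\calu)$); second, the set $\calc_1 \cdot \calu$ is exactly the saturation $\pi^{-1}(\pi(\calu))$, since two objects are related by $\calc_1$ precisely when they lie in the same isomorphism class. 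Now given any nonempty open $V \subseteq \calc_0$, the set $\pi(V)$ is nonempty open, and since $\pi(\calu)$ is dense, $\pi(V) \cap \pi(\calu) \neq \emptyset$; picking a point in this intersection and pulling back shows $V \cap \pi^{-1}(\pi(\calu)) = V \cap (\calc_1\cdot\calu) \neq \emptyset$. Hence $\calc_1 \cdot \calu$ is dense.

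Alternatively — and this is perhaps even shorter — density follows directly from the translation description: $\calc_1 \cdot \calu \supseteq T_{t(0)}(\calu) = \calu$ (taking $\psi = 0 \in \ker s$, so $t(0)$ is the identity object), and a superset of a dense set is dense. So in fact density is immediate once we have the identity $\calc_1 \cdot \calu = \bigcup_{\psi} T_{t(\psi)}(\calu)$, and the real content of the lemma is openness plus this identity. I would present it using the shorter argument: establish the identity, deduce openness from each $T_{t(\psi)}$ being a homeomorphism, and deduce density from $\calu \subseteq \calc_1 \cdot \calu$.

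I do not expect a serious obstacle here; the only point requiring a little care is the identity $\calc_1 \cdot \calu = t(s^{-1}(\calu)) = \bigcup_{\psi \in \ker s} T_{t(\psi)}(\calu)$, which is proved exactly as in Lemma \ref{Lemma:QuotientMapIsOpen}: given $y \in t(s^{-1}(\calu))$, write $y = t(\varphi)$ with $s(\varphi) = x \in \calu$, set $\psi := \varphi - u(s(\varphi)) \in \ker s$, and check $T_{t(\psi)}(x) = y$; conversely given $y = t(\psi) + x$ with $\psi \in \ker s$ and $x \in \calu$, the morphism $\varphi := \psi + u(x)$ has source $x$ and target $y$. Everything else is formal. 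One should also remember that the continuity of the structure maps (in particular that translations are homeomorphisms, using continuity of addition and of inversion in the topological abelian group $\calc_0$) is what makes the argument work — this is precisely the extra structure of a topological abelian $2$-group being used.
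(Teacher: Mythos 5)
Your proof is correct, but it is organized differently from the paper's. The paper proves both openness and density by passing to the quotient map $\pi:\calc_0\to\calc_0/\calc_1$: it identifies $\calc_1\cdot\calu$ with the saturation $\pi^{-1}(\pi(\calu))$, gets openness from the continuity and openness of $\pi$ (Lemma \ref{Lemma:QuotientMapIsOpen}), and gets density from a closure computation, $\overline{\pi^{-1}(\pi(\calu))}=\pi^{-1}(\overline{\pi(\calu)})\supseteq\pi^{-1}(\pi(\overline{\calu}))=\calc_0$. You instead work directly with the translation identity $\calc_1\cdot\calu=\bigcup_{\psi\in\ker s}T_{t(\psi)}(\calu)$ — which is exactly equation (\ref{Eq:Translations}) inside the proof of Lemma \ref{Lemma:QuotientMapIsOpen} — so openness is a union of homeomorphic images of an open set, and density is immediate from $\calu\subseteq\calc_1\cdot\calu$ (take $\psi=0$, or note every object is isomorphic to itself via the unit), since a superset of a dense set is dense. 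Your density argument is genuinely shorter and more elementary than the paper's closure computation; what the paper's route buys is that it runs entirely through the open quotient map, which is the same mechanism reused immediately afterwards in Theorem \ref{Thm:GeneralResidualPreservation}, where the translation trick alone no longer suffices because the two $2$-groups are only equivalent, not equal. Your alternative quotient-map argument for density is also fine and is essentially the paper's, phrased via "open continuous surjections send dense sets to dense sets" rather than via closures.
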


\begin{proof}
Let $\pi:\calc_0\to\calc_0/\calc_1$ be the quotient map.
Note that:
\begin{equation*}
\calc_1\cdot \calu=t(s^{-1}(\calu))=\pi^{-1}(\pi(\calu)).
\end{equation*}
Thus, this set is open by the continuity and the openness of $\pi$ (Lemma \ref{Lemma:QuotientMapIsOpen}).
To see that it is dense, note that:
\begin{align*}
\overline{\calc_1\cdot \calu}
&= \overline{\pi^{-1}(\pi(\calu))}\\
&=\pi^{-1}\left(\overline{\pi(\calu)}\right)
\qquad \text{ since }\pi\text{ is open}\\
&\supseteq\pi^{-1} \left(\pi\left(\,\overline{\calu}\,\right)\right)
\qquad \text{ since }\pi\text{ is continuous}\\
&=\pi^{-1}(\pi(\calc_0))\\
&=\calc_0.
\end{align*}
Hence, $\calc_1\cdot \calu$ is dense in in $\calc_0$ as claimed.
\end{proof}

\begin{theorem}\label{Thm:GeneralResidualPreservation}
Let $\calc$ and $\cald$ be topological abelian $2$-groups and let $E:\calc\to \cald$  and $P:\cald\to\calc$ be (part of) an equivalence between them (Definition \ref{Def:IsoEquiv2Groups}).
If $\calu$ is an open and dense subset of space of objects $\calc_0$, then $t(s^{-1}(E_0(\calu)))$ is an open and dense subset of the space of objects $\cald_0$.
\end{theorem}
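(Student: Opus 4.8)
The plan is to reduce this to Lemma \ref{Lemma:IsoResidual} by showing that the functor $E$ sends open and dense subsets of $\calc_0$ to open and dense subsets of $\cald_0$, and then applying Lemma \ref{Lemma:IsoResidual} inside $\cald$ to the set $E_0(\calu)$. Indeed, $t(s^{-1}(E_0(\calu)))$ is exactly the set $\cald_1\cdot E_0(\calu)$ of all objects of $\cald$ isomorphic to objects in $E_0(\calu)$, so once we know $E_0(\calu)$ is open and dense in $\cald_0$, Lemma \ref{Lemma:IsoResidual} gives the conclusion immediately.

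So the core task is: if $\calu\subseteq\calc_0$ is open and dense, then $E_0(\calu)\subseteq\cald_0$ is open and dense. For this I would use the equivalence data: there is a functor $P:\cald\to\calc$ and a continuous $2$-group natural isomorphism $\eta:P\circ E\cong 1_{\calc}$ (and $\varepsilon:E\circ P\cong 1_{\cald}$). First I would establish that $P_0$ and $E_0$ are continuous group homomorphisms of topological abelian groups, hence in particular continuous; this is part of the definition of a continuous $2$-group homomorphism (Definition \ref{Def:2GroupHom}). For \emph{openness} of $E_0(\calu)$: observe that $\varepsilon$ being a natural isomorphism means every object $Y$ of $\cald$ is isomorphic to $E_0(P_0(Y))$. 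Thus $E_0(\calc_0)$ meets every isomorphism class, and more to the point, $t(s^{-1}(E_0(\calu))) = \pi_{\cald}^{-1}(\pi_{\cald}(E_0(\calu)))$ where $\pi_{\cald}$ is the quotient map; this is the same reasoning as in Lemma \ref{Lemma:IsoResidual}. The subtlety is that $E_0(\calu)$ itself need not be open in $\cald_0$. The remedy is to not prove $E_0(\calu)$ open directly, but rather to prove that $P_0^{-1}(\calu)$ is open (by continuity of $P_0$) and dense in $\cald_0$, and then relate $t(s^{-1}(E_0(\calu)))$ to $t(s^{-1}(P_0^{-1}(\calu)))$.

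Here is the cleaner route I would actually carry out. Set $\calv := P_0^{-1}(\calu)\subseteq\cald_0$. Since $P_0$ is continuous, $\calv$ is open. I claim $\calv$ is dense: given any nonempty open $O\subseteq\cald_0$, the set $P_0(O)$ need not be open, but we can argue differently — actually the clean statement is that $\calv$ is dense iff $\overline{\calv}=\cald_0$; since $P_0$ restricted to $E_0(\calc_0)$ is (via $\eta$) essentially the inverse of $E_0$, and $E_0(\calc_0)$ is ``co-final'' up to isomorphism, one shows $P_0^{-1}(\calu)$ contains $E_0(\calu)$ — indeed $P_0(E_0(u))$ is isomorphic to $u$ hence lies in $\calc_0$; but we need it to lie in $\calu$, which requires $\calu$ to be saturated. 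This is the main obstacle: $\calu$ open and dense need not be a union of isomorphism classes. The fix: replace $\calu$ by $\calc_1\cdot\calu$, which by Lemma \ref{Lemma:IsoResidual} is still open and dense and is saturated, and note $t(s^{-1}(E_0(\calu)))$ is unchanged if we enlarge $\calu$ to $\calc_1\cdot\calu$ since $E$ is a functor and $\varepsilon,\eta$ intertwine isomorphisms — more precisely, $t(s^{-1}(E_0(\calc_1\cdot\calu))) = t(s^{-1}(E_0(\calu)))$ because $E$ sends isomorphisms to isomorphisms. So WLOG $\calu$ is saturated; then $P_0^{-1}(\calu)\supseteq E_0(\calu)$ (since $P_0E_0(u)\cong u$ and $\calu$ saturated, hence $P_0E_0(u)\in\calu$, so $E_0(u)\in P_0^{-1}(\calu)$), and conversely any $Y\in P_0^{-1}(\calu)$ satisfies $Y\cong E_0P_0(Y)\in E_0(\calu)$, so $t(s^{-1}(E_0(\calu)))=t(s^{-1}(P_0^{-1}(\calu)))=\pi_{\cald}^{-1}(\pi_{\cald}(P_0^{-1}(\calu)))$.

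It remains to show $P_0^{-1}(\calu)$ is open (clear, by continuity of $P_0$) and that $\pi_{\cald}^{-1}(\pi_{\cald}(P_0^{-1}(\calu)))$ is dense, for which — by the computation in Lemma \ref{Lemma:IsoResidual} applied with $\cald$ in place of $\calc$ — it suffices that $P_0^{-1}(\calu)$ is dense in $\cald_0$. To see density: $P_0:\cald_0\to\calc_0$ is surjective (since $\eta$ exhibits $P_0E_0\cong\mathrm{id}$, so $P_0$ has image meeting every class, and $P_0$ is a group homomorphism whose image is a subgroup containing a transversal of $\calc_1$-classes; combined with saturation one gets that for density purposes it is enough that the image of $P_0$ is dense, which follows since $P_0\circ E_0$ is an isomorphism so $\mathrm{im}(P_0)\supseteq\mathrm{im}(P_0E_0)=\calc_0$). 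Hence $P_0$ is surjective and continuous; then $P_0^{-1}(\calu)$ is dense because $\calu$ is dense and $P_0^{-1}$ of a dense set under a continuous surjection need not be dense in general — so instead use the open map $\pi_{\cald}$: we have $\pi_{\cald}^{-1}(\pi_{\cald}(P_0^{-1}(\calu)))$ and, since $P_0$ factors through $\pi_{\cald}$ up to the isomorphism $\varepsilon$ as the composite $\cald_0\xrightarrow{\pi_{\cald}}\cald_0/\cald_1\xrightarrow{\cong}\calc_0/\calc_1$ (an equivalence of $2$-groups induces a homeomorphism on isomorphism-class spaces, because the induced map is a continuous bijection with continuous inverse induced by $E$, using that $\pi$ is open by Lemma \ref{Lemma:QuotientMapIsOpen}), the image $\pi_{\cald}(P_0^{-1}(\calu))$ corresponds under this homeomorphism to $\pi_{\calc}(\calu)$, which is dense since $\calu$ is dense and $\pi_{\calc}$ is continuous; pulling back along the open continuous surjection $\pi_{\cald}$ preserves density. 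Therefore $t(s^{-1}(E_0(\calu)))$ is open and dense in $\cald_0$. I expect the bookkeeping around ``saturation'' and the induced homeomorphism on orbit spaces to be the only real content; everything else is formal manipulation with the open-map lemma.
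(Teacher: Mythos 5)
Your final argument is correct, and it rests on the same two pillars as the paper's own proof: the identity $t(s^{-1}(A))=\pi^{-1}(\pi(A))$ combined with openness of the quotient maps (Lemma \ref{Lemma:QuotientMapIsOpen}), and the fact that the equivalence induces mutually inverse homeomorphisms $[E],[P]$ between $\calc_0/\calc_1$ and $\cald_0/\cald_1$. The execution differs, though: the paper works directly with $E_0(\calu)$, observing that $\pi_\cald(E_0(\calu))=[E](\pi_\calc(\calu))$ is open and dense in the quotient — so one never needs $E_0(\calu)$ itself to be open — and then pulls back along the continuous open map $\pi_\cald$; you instead saturate $\calu$, replace $E_0(\calu)$ by $P_0^{-1}(\calu)$ (open by continuity of $P_0$), and identify its image in $\cald_0/\cald_1$ with $[P]^{-1}(\pi_\calc(\calu))$. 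Your detour is sound — the saturation step and the equality $t(s^{-1}(E_0(\calu)))=t(s^{-1}(P_0^{-1}(\calu)))$ both check out — but it buys nothing over the paper's more direct route, which answers your worry about $E_0(\calu)$ failing to be open simply by noting that only its saturation $\pi_\cald^{-1}([E](\pi_\calc(\calu)))$ has to be open. One stray remark in an abandoned aside is false and worth flagging: for a mere equivalence, $P_0E_0$ is only naturally isomorphic to the identity, so $\mathrm{im}(P_0E_0)$ need not be all of $\calc_0$ and $P_0$ need not be surjective; fortunately your final argument nowhere relies on surjectivity of $P_0$.
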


\begin{proof}
First, we observe that, since $PE\cong 1_\calc$ and $EP\cong 1_\cald$, the functors $E$ and $P$ determine inverse homeomorphisms $[E]$ and $[P]$ between the quotient spaces $\calc_0/\calc_1$ and $\cald_0/\cald_1$ as shown in the following diagram:
\begin{equation}\label{Diag:QuotientHomeomorphisms}
\begin{gathered}
\xy
(-36,8)*+{\calc_0}="1";
(-12,8)*+{\cald_0}="2";
(-36,-8)*{\calc_0/\calc_1}="3";
(-12,-8)*+{\cald_0/\cald_1}="4";
{\ar@{->}^{E_0} "1";"2"};
{\ar@{->}_{\pi_\calc} "1";"3"};
{\ar@{->}^{\pi_\cald} "2";"4"};
{\ar@{->}_{[E]} "3";"4"};
(12,8)*+{\cald_0}="1";
(36,8)*+{\calc_0}="2";
(12,-8)*+{\cald_0/\cald_1}="3";
(36,-8)*{\calc_0/\calc_1}="4";
{\ar@{->}^{P_0} "1";"2"};
{\ar@{->}_{\pi_\cald} "1";"3"};
{\ar@{->}^{\pi_\calc} "2";"4"};
{\ar@{->}_{[P]} "3";"4"};
\endxy
\end{gathered}
\end{equation}
where the vertical maps are the quotient maps.

Now observe that, as in the proof of Lemma \ref{Lemma:QuotientMapIsOpen}, we have the equality:
\begin{equation*}
t(s^{-1}(E_0(U)))=\pi_\cald^{-1}(\pi_\cald(E_0(U))).
\end{equation*}
Since the quotient map is open by Lemma \ref{Lemma:QuotientMapIsOpen} and the map $[E]$ is a homeomorphism, the set $\pi_\cald E_0(U)=[E]\pi_\calc(U)$ is open.
Hence, the set $\pi_\cald^{-1}(\pi_\cald(E_0(U)))$ is open by the continuity of $\pi_\cald$.

Finally, to see that the set $t(s^{-1}(E_0(U)))$ is dense, note that:
\begin{align*}
\overline{t(s^{-1}(E_0(U)))}
&=\overline{\pi_\cald^{-1}(\pi_\cald(E_0(U)))}\\
&=\pi_\cald^{-1}\left(\overline{\pi_\cald(E_0(U))}\right)
\qquad\text{ since $\pi_\cald$ is an open map}\\
&=\pi_\cald^{-1}\left(\overline{[E](\pi_\calc(U))}\right)
\qquad\text{ since }\pi_\cald E_0=[E]\pi_\calc\\
&=\pi_\cald^{-1}\left([E]\left(\overline{\pi_\calc(U)}\right)\right)
\qquad\text{ since $[E]$ is a homeomorphism}\\
&\supseteq\pi_\cald^{-1}\left([E]\left(\pi_\calc\left(\,\overline{U}\,\right)\right)\right)
\qquad\text{ by the continuity of the map $\pi_\calc$}\\
&=\pi_\cald^{-1}\left([E]\left(\pi_\calc\left(\calc_0\right)\right)\right)
\qquad\text{ since $U$ is dense in $\calc_0$}\\
&=\pi_\cald^{-1}\left([E]\left(\calc_0/\calc_1\right)\right)
\qquad\text{ by the surjectivity of $\pi_\calc$}\\
&=\pi_\cald^{-1}\left(B_0/B_1\right)
\qquad\text{ since $[E]$ is a homeomorphism}\\
&=B_0.
\end{align*}
Since the converse inclusion is trivial, the set is dense as claimed.
\end{proof}

\begin{remark}
Let $A_1\ltimes A_0$ and $B_1\ltimes B_0$ be two topological abelian $2$-groups that are action groupoids as described in example \ref{Ex:ActionGroupoid}.
Suppose that $E:A_1\ltimes A_0 \to B_1\ltimes B_0$ is (part of) an equivalence between them (Definition \ref{Def:IsoEquiv2Groups}).
Theorem \ref{Thm:GeneralResidualPreservation} says that if $\calu$ is an open and dense subset of the space $A_0$, then $B_1\cdot E_0(\calu)$ is an open and dense subset of $B_0$.
\end{remark}

\subsection{Open and dense collections of equivariant vector fields}\label{opendense}
In this subsection we prove that the category $\bbx(M)^G$ of equivariant vector fields on a proper $G$-manifold $M$ is a topological abelian $2$-group when equipped with the Whitney $C^\infty$ topology (Theorem \ref{Thm:TopAb2VFs}).
The same is true of the categories of paths of equivariant vector fields.
We then prove the first of the two main theorems of this section: that isomorphisms preserve open and dense subsets of equivariant vector fields (Theorem \ref{Thm:MainResidual1}).
Furthermore, the equivalence in Theorem \ref{Thm:EquivalenceVFs} is an equivalence of topological abelian $2$-groups  (Theorem \ref{Thm:EquivalenceContinuous}).
We use this to prove the second of the main theorems of this section: that the equivalence in Theorem \ref{Thm:EquivalenceVFs} preserves open and dense subsets of equivariant vector fields up to isomorphism (Theorem \ref{Thm:MainResidual2}).

Consider a proper $G$-manifold $M$.
We equip the space of equivariant vector fields $\ffX(M)^G$ and the space of infinitesimal gauge transformations $C^\infty(M,\ffg)^G$ with the Whitney $C^\infty$ topology (Definition \ref{Def:WhitneyTop}).

\begin{remark}
All of the results of this subsection apply just as well to the space of paths of equivariant vector fields $C^\infty(\bbr,\ffX(M)^G)$, the space of paths of infinitesimal gauge transformtiosn $C^\infty(\bbr,C^\infty(M,\ffg)^G)$, and the category of paths of equivariant vector fields $C^\infty(\bbr,\bbx(M)^G)$ of subsection \ref{paths}.
We equip these spaces with Whitney topologies by using the identification:
\begin{equation*}
C^\infty(\bbr,\ffX(M)^G) \cong \Gamma(TM \to \bbr\times M)^G
\end{equation*}
and the identification:
\begin{equation*}
C^\infty(\bbr,C^\infty(M,\ffg)^G) \cong C^\infty(\bbr\times M, \ffg)^G.
\end{equation*}
Then the proofs of all results in this subsection are completely analogous.
\end{remark}

Note that the scalar multiplication in the space $\ffX(M)^G$ need not be continuous (see the discussion after the proof of Proposition~3.5 in \cite[pp.~46-47]{GG73}).
Hence, the space $\ffX(M)^G$ is {\it not} a topological vector space.
The same observation applies to the space $C^\infty(M,\ffg)^G$.
As the following lemma shows, the addition and inversion maps on $\ffX(M)^G$ and $C^\infty(M,\ffg)^G$ are continuous:

\begin{lemma}\label{Lemma:PathSpacesAreTopAbGroups}
Let $M$ be a $G$-manifold.
The space of equivariant vector fields $\ffX(M)^G$ and the space of infinitesimal gauge transformations $C^\infty(M,\ffg)^G$ are topological abelian groups when equipped with the wWhitney $C^\infty$ topology (Definition \ref{Def:WhitneyTop}).
\end{lemma}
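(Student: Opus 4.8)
The plan is to realize both $\ffX(M)^G$ and $C^\infty(M,\ffg)^G$ as closed (in fact, subspace-topologized) subsets of ambient mapping spaces to which Lemma \ref{Lemma:WhitneyProperties} applies, and then transport the continuity of addition and inversion through those identifications. For the gauge transformations this is immediate: $C^\infty(M,\ffg)^G$ carries the subspace topology inside $C^\infty(M,\ffg)$, the latter being a topological vector space under pointwise operations in the vector space target $\ffg$ (addition is continuous by part (1) and part (2) of Lemma \ref{Lemma:WhitneyProperties}, viewing $+\colon\ffg\times\ffg\to\ffg$ as a smooth map and precomposing with the diagonal-type identification). Since the subset of equivariant maps is a linear subspace, addition and negation restrict to continuous maps on it, so $C^\infty(M,\ffg)^G$ is a topological abelian group.

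First I would make the analogous argument for $\ffX(M)^G$. A smooth vector field is a section of $TM\to M$, i.e. a smooth map $X\colon M\to TM$ with $\pi_{TM}\circ X=\mathrm{id}_M$; thus $\ffX(M)\subseteq C^\infty(M,TM)$ with the subspace topology, and $\ffX(M)^G\subseteq\ffX(M)$ again with the subspace topology. To see that fiberwise addition of vector fields is continuous, note that the zero section identifies the fiber product $TM\fp{\pi,M,\pi}TM$ with the Whitney bundle sum, and fiberwise addition is a smooth bundle map $a\colon TM\fp{\pi,M,\pi}TM\to TM$. By part (3) of Lemma \ref{Lemma:WhitneyProperties} the canonical bijection
\begin{equation*}
C^\infty\!\left(M, TM\fp{\pi,M,\pi}TM\right)\cong C^\infty(M,TM)\fp{\pi_*,C^\infty(M,M),\pi_*}C^\infty(M,TM)
\end{equation*}
is a homeomorphism, and $\ffX(M)\times\ffX(M)$ sits inside the right-hand fiber product as the (closed) subspace of pairs of sections; composing the inclusion with $a_*$ (continuous by part (1) of Lemma \ref{Lemma:WhitneyProperties}) gives a continuous map $\ffX(M)\times\ffX(M)\to C^\infty(M,TM)$ landing in $\ffX(M)$, hence continuous into $\ffX(M)$. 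Fiberwise negation is $(-1)_*$ for the smooth bundle automorphism $v\mapsto -v$ of $TM$, again continuous by part (1). Restricting to the equivariant subspaces, both operations preserve equivariance (the fiberwise sum and negation of equivariant sections are equivariant because the $G$-action on $TM$ is fiberwise linear, cf. Notation \ref{Notation}.\ref{Not:TangentAction}), so they restrict to continuous maps on $\ffX(M)^G\times\ffX(M)^G$ and $\ffX(M)^G$. Associativity, commutativity and the identity axioms hold pointwise, so $\ffX(M)^G$ is a topological abelian group.

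The main obstacle is bookkeeping rather than conceptual: one must be careful that "subspace topology inside a mapping space" genuinely makes addition continuous, which is exactly where part (3) of Lemma \ref{Lemma:WhitneyProperties} (the fiber-product homeomorphism) is essential — pointwise addition is \emph{not} a pushforward along a map $M\to M$, so parts (1)–(2) alone do not suffice, and one cannot invoke topological-vector-space structure because scalar multiplication is not continuous. The only other point needing care is checking that the relevant subsets ($\ffX(M)\subseteq C^\infty(M,TM)$, the diagonal/graph conditions, and the equivariance conditions) are given the subspace topology consistently, so that continuity of a map into the ambient space yields continuity into the subspace; this is routine from the universal property of the subspace topology. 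Finally I would remark that the identical argument, with $M$ replaced by $\bbr\times M$ and using the identifications $C^\infty(\bbr,\ffX(M)^G)\cong\Gamma(TM\to\bbr\times M)^G$ and $C^\infty(\bbr,C^\infty(M,\ffg)^G)\cong C^\infty(\bbr\times M,\ffg)^G$ noted above, shows the path spaces are topological abelian groups as well.
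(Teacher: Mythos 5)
Your proposal is correct and follows essentially the same route as the paper: view $\ffX(M)^G$ as a subspace of $C^\infty(M,TM)$, use the fiber-product homeomorphism of part (3) of Lemma \ref{Lemma:WhitneyProperties} together with the pushforward of fiberwise addition and negation (part (1)), and conclude via the universal property of the subspace topology, with the $C^\infty(M,\ffg)^G$ case handled as the degenerate case of a bundle over a point. The extra remarks on equivariance being preserved and on the path-space versions match the paper's treatment as well.
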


\begin{proof}
We consider the case of the space $\ffX(M)^G$.
The case of the space $C^\infty(M,\ffg)^G$ is analogous by thinkng of the Lie algebra $\ffg$ as a vector bundle over a point.
It suffices to prove that the addition and additive inverse maps on the vector space $\ffX(M)^G$ are continuous.
For the addition note that the following diagram commutes:
\begin{equation}\label{Diag:AdditionDiag}
\xy
(-32,10)*+{
\ffX(M)^G\times \ffX(M)^G
}="1";
(32,10)*+{
\ffX(M)^G
}="2";
(-32,-10)*{
C^\infty(M,TM)\fp{C^\infty(M, M)} C^\infty(M,TM)
}="3";
(32,-10)*+{
C^\infty(M,TM)
}="4";
{\ar@{->}^{\phantom{++++++}+} "1";"2"};
{\ar@{->}_{} "1";"3"};
{\ar@{->}^{} "2";"4"};
{\ar@{->}_{\phantom{++++++}+_*} "3";"4"};
\endxy
\end{equation}
where the vertical maps are subspace inclusions, the top map $+$ is the desired addition map, the pushforward $+_*$ is the pushforward of the fiberwise addition $+:TM\times_M TM\to TM$ with the domain of the pushforward identified with the fiber product via the canonical homeomorphism:
\begin{equation}\label{Eq:HomeoFiberProduct}
C^\infty(M,TM)\,\,\fp{C^\infty(M,M)}\,\, C^\infty(M,TM)
\cong
C^\infty\left(M,TM\times_M TM\right)
\end{equation}
of part (3) of Lemma \ref{Lemma:WhitneyProperties}.
The continuity of the addition $+$ now follows by the universal property of the subspace topology, the commutativity of diagram (\ref{Diag:AdditionDiag}), and the fact that the composition along the left and bottom of this diagram is continuous.

For the additive inverse, note that the following diagram commutes:
\begin{equation}\label{Diag:InverseDiag}
\xy
(-26,10)*+{
\ffX(M)^G
}="1";
(26,10)*+{
\ffX(M)^G
}="2";
(-26,-10)*{
C^\infty(M,TM)
}="3";
(26,-10)*+{
C^\infty(M,TM)
}="4";
{\ar@{->}^{-} "1";"2"};
{\ar@{->}_{} "1";"3"};
{\ar@{->}^{} "2";"4"};
{\ar@{->}_{-_*} "3";"4"};
\endxy
\end{equation}
where the vertical maps are inclusions, the top map is the desired additive inverse map, and the bottom map is the pushforward $-_*$ of the fiberwise additive inverse map $-:TM\to TM$.
The continuity of the additive inverse $-$ now follows by the universal property of the subspace topology, the commutativity of diagram (\ref{Diag:InverseDiag}, the continuity off the pushforward $-_*$ by part (1) of Lemma \ref{Lemma:WhitneyProperties}, and the continuity of the inclusion on the left.
\end{proof}

With this we can prove that the category of equivariant vector fields is a topological abelian $2$-group (Definition \ref{Def:TopAb2Group}).

\begin{theorem}\label{Thm:TopAb2VFs}
Let $M$ be a $G$-manifold.
The category $\bbx(M)^G$ of equivariant vector fields on $M$ (Definition \ref{Def:GpoidVFs}) is a topological abelian $2$-group when equipped with the Whitney topologies (Definition \ref{Def:WhitneyTop}).
\end{theorem}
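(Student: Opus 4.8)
The plan is to recognize that Definition \ref{Def:GpoidVFs} presents $\bbx(M)^G$ as the action groupoid of the action of the abelian group $C^\infty(M,\ffg)^G$ on the abelian group $\ffX(M)^G$ via the linear map $\partial$ (equation (\ref{Eq:Action})). By Example \ref{Ex:ActionGroupoid}, once we know that $\partial:C^\infty(M,\ffg)^G\to\ffX(M)^G$ is a continuous group homomorphism between topological abelian groups, the corresponding action groupoid $C^\infty(M,\ffg)^G\ltimes\ffX(M)^G$ is automatically a topological abelian $2$-group, and this action groupoid is exactly $\bbx(M)^G$. So the theorem reduces to two inputs: (i) $\ffX(M)^G$ and $C^\infty(M,\ffg)^G$ are topological abelian groups in the Whitney $C^\infty$ topology, and (ii) $\partial$ is continuous (it is already known to be a group homomorphism, indeed linear, from the discussion following Lemma \ref{Lemma:InvPsiM}).

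Input (i) is precisely Lemma \ref{Lemma:PathSpacesAreTopAbGroups}, so I would simply invoke it. For input (ii), I would exhibit $\partial$ as a composite of maps already known to be Whitney-continuous. Concretely, the formula (\ref{Eq:InducedVF}) shows $\partial(\psi)(m)=T\ev_m(\psi(m))$, so $\partial(\psi)$ is the composition of $\psi:M\to\ffg$ with the bundle map $M\times\ffg\to TM$, $(m,\xi)\mapsto T\ev_m(\xi)$, which is smooth (it is the infinitesimal action map). Thus $\partial$ is, up to the canonical identification $C^\infty(M,\ffg)^G\subseteq C^\infty(M,M\times\ffg)\cong C^\infty(M,M)\times C^\infty(M,\ffg)$ (using the obvious section $m\mapsto(m,\psi(m))$), a pushforward along a fixed smooth map. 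Pushforwards are continuous in the Whitney topology by part (1) of Lemma \ref{Lemma:WhitneyProperties}, and the maps building up the graph embedding $\psi\mapsto(\mathrm{id}_M,\psi)$ are continuous by parts (1) and (2) of the same lemma; composing with the subspace inclusions $\ffX(M)^G\hookrightarrow C^\infty(M,TM)$ and $C^\infty(M,\ffg)^G\hookrightarrow C^\infty(M,\ffg)$ and using the universal property of the subspace topology gives continuity of $\partial$ into $\ffX(M)^G$. This mirrors exactly the diagram-chase style used in the proof of Lemma \ref{Lemma:PathSpacesAreTopAbGroups}.

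With (i) and (ii) in hand, Example \ref{Ex:ActionGroupoid} finishes the proof: the source, target, unit, and composition maps of $\bbx(M)^G$ are the ones listed there, built from the continuous addition on $\ffX(M)^G$, the continuous addition on $C^\infty(M,\ffg)^G$, and the continuous homomorphism $\partial$, hence are all continuous group homomorphisms, so $\bbx(M)^G$ is a topological abelian $2$-group. I would also add one sentence pointing out, per the remark preceding the theorem, that the identical argument applies verbatim to the category $C^\infty(\bbr,\bbx(M)^G)$ of paths using the identifications $C^\infty(\bbr,\ffX(M)^G)\cong\Gamma(TM\to\bbr\times M)^G$ and $C^\infty(\bbr,C^\infty(M,\ffg)^G)\cong C^\infty(\bbr\times M,\ffg)^G$, so that later references (e.g. in Theorem \ref{Thm:MainResidual1}) to the path category being a topological abelian $2$-group are covered.

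The only mildly delicate point — the ``main obstacle,'' such as it is — is bookkeeping the subspace topologies: $\ffX(M)^G$ and $C^\infty(M,\ffg)^G$ carry the subspace topology inside $C^\infty(M,TM)$ and $C^\infty(M,\ffg)$, and one must check that the continuous maps on the ambient mapping spaces restrict to continuous maps on these equivariant subspaces landing in the equivariant subspaces, which is immediate once one verifies the target of each restricted map genuinely consists of equivariant objects (this was already done pointwise when $\partial$ was defined). No Whitney-topology subtleties beyond what Lemma \ref{Lemma:WhitneyProperties} already packages arise, since we never need continuity of scalar multiplication — only of addition, inversion, and the fixed homomorphism $\partial$.
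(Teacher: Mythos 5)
Your proposal is correct and follows essentially the same route as the paper's proof: invoke Lemma \ref{Lemma:PathSpacesAreTopAbGroups} for the object and morphism groups, and establish continuity of $\partial$ by writing it as the graph inclusion $\psi\mapsto(\psi,\mathrm{id}_M)$ followed by pushforward along the smooth infinitesimal action map $\ffg\times M\to TM$, then restrict via the subspace topologies, exactly as in diagram (\ref{Diag:PartialDiag}). The observation that Example \ref{Ex:ActionGroupoid} then yields the topological abelian $2$-group structure is also precisely the paper's argument.
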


\begin{proof}
Recall that the category of equivariant vector fields $\bbx(M)^G$ is an action groupoid, like the one in Example \ref{Ex:ActionGroupoid}, induced by the map:
\begin{equation}\label{Eq:ThePartialMap}
\partial:C^\infty(M,\ffg)^G \to \ffX(M)^G
\end{equation}
defined by (\ref{Eq:InducedVF}).
By Lemma \ref{Lemma:PathSpacesAreTopAbGroups}, we know that the domain and target of $\partial$ are topological abelian groups.
Thus, to verify that the category $\bbx(M)^G$ is a topological abelian $2$-group, by the observations in Example \ref{Ex:ActionGroupoid}, it suffices to verify that the map (\ref{Eq:ThePartialMap}) is a continuous group homomorphism.

To verify this, we prove that the following diagram commutes:
\begin{equation}\label{Diag:PartialDiag}
\begin{gathered}
\xy
(-30,10)*+{
C^\infty(M,\ffg)^G
}="1";
(30,10)*+{
\ffX(M)^G
}="2";
(-30,-10)*{
C^\infty\left(M,\ffg\right)\times
C^\infty\left(M,M\right)
}="3";
(30,-10)*+{
C^\infty\left(M,TM\right)
}="4";
{\ar@{->}^{\phantom{FFF}\partial} "1";"2"};
{\ar@{->} "1";"3"};
{\ar@{->} "2";"4"};
{\ar@{->}_{\phantom{FFFF}a_*} "3";"4"};
\endxy
\end{gathered}
\end{equation}
where the top map is the boundary map, the left-hand map is the inclusion defined by $\psi \mapsto (\psi,\text{id}_M)$, the right-hand map is the obvious inclusion, and the bottom map is the pushforward of the map:
\begin{equation}\label{Eq:InfinitesimalAction}
a:\ffg\times M \to TM,
\qquad
(\xi,m) \mapsto \frac{\d}{\d\tau}\Big|_0\exp(\tau\xi)\cdot m,
\end{equation}
where we have also used part (3) of Lemma \ref{Lemma:WhitneyProperties} to write the domain as a product.

Note that the map $a$ is smooth since it is obtained by differentiating the action $G\times M\to M$, with respect to the $G$-variables only, at the identity of $G$.
Hence, the pushforward $a_*$ is continuous by part (1) of Lemma \ref{Lemma:WhitneyProperties}.
On the other hand, the inclusion on the left-hand side of diagram (\ref{Diag:PartialDiag}) is continuous since it is the product of the inclusion $C^\infty(M,\ffg)^G \hookrightarrow C^\infty(M, \ffg)$ and the constant map:
\begin{equation*}
C^\infty(M,\ffg)^G \to C^\infty(M,M),
\qquad
\psi\mapsto \text{id}_M.
\end{equation*}
The continuity of the map $\partial$ now follows by the universal property of the subspace topology, the commutativity of diagram (\ref{Diag:PartialDiag}), and the fact that the composition along the left and bottom of this diagram is continuous. 
Thus, the category $\bbx(M)^G$ is a topological abelian $2$-group as claimed.
\end{proof}

Therefore, we obtain:

\begin{theorem}\label{Thm:MainResidual1}
Let $M$ be a proper $G$-manifold, let $\bbx(M)^G$ be the category of euqivariant vector fields on $M$, and let $\calu$ be an open and dense subset of the space of equivariant vector fields $\ffX(M)^G$.
Then the collection:
\begin{equation*}
C^\infty(M,\ffg)^G\cdot \calu \subseteq \ffX(M)^G
\end{equation*}
of all equivariant vector fields isomorphic to vector fields in $\calu$ is also open and dense in $\ffX(M)^G$.
\end{theorem}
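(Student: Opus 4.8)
The plan is to deduce this directly from the topological $2$-group structure established in Theorem~\ref{Thm:TopAb2VFs} together with the abstract openness-and-density result for topological abelian $2$-groups in Lemma~\ref{Lemma:IsoResidual}. First I would recall that $\bbx(M)^G$ is the action groupoid of the action of $C^\infty(M,\ffg)^G$ on $\ffX(M)^G$ given by $\psi\cdot X = X + \partial(\psi)$ (Definition~\ref{Def:GpoidVFs}), so that its group of objects is $\calc_0 = \ffX(M)^G$, its group of morphisms is $\calc_1 = C^\infty(M,\ffg)^G \times \ffX(M)^G$, the source map is $s(\psi,X) = X$, and the target map is $t(\psi,X) = X + \partial(\psi)$. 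By Theorem~\ref{Thm:TopAb2VFs}, when $\ffX(M)^G$ and $C^\infty(M,\ffg)^G$ carry the Whitney $C^\infty$ topology this action groupoid is a topological abelian $2$-group.

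Next I would identify the set in the statement with the saturation of $\calu$ under isomorphism in this groupoid. By Definition~\ref{Def:IsoVF}, an equivariant vector field $Y$ is isomorphic to some $X \in \calu$ precisely when $Y = X + \partial(\psi)$ for some $\psi \in C^\infty(M,\ffg)^G$, so
\[
C^\infty(M,\ffg)^G \cdot \calu = \{\, X + \partial(\psi) : X \in \calu,\ \psi \in C^\infty(M,\ffg)^G \,\} = t\big(s^{-1}(\calu)\big),
\]
which is exactly the set $\calc_1 \cdot \calu$ in the notation of Lemma~\ref{Lemma:IsoResidual}.

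The result then follows immediately: since $\calu$ is open and dense in $\calc_0 = \ffX(M)^G$, Lemma~\ref{Lemma:IsoResidual} applied to the topological abelian $2$-group $\bbx(M)^G$ yields that $\calc_1 \cdot \calu$ is open and dense in $\calc_0$, i.e.\ that $C^\infty(M,\ffg)^G \cdot \calu$ is open and dense in $\ffX(M)^G$.

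I do not expect a genuine obstacle here; the substantive work has already been carried out in Theorem~\ref{Thm:TopAb2VFs} (continuity of $\partial$ and of the group operations in the Whitney topology) and in Lemma~\ref{Lemma:IsoResidual}, which in turn rests on the openness of the quotient map $\calc_0 \to \calc_0/\calc_1$ from Lemma~\ref{Lemma:QuotientMapIsOpen}. The only points requiring a word of care are to state explicitly that ``open and dense'' is meant with respect to the Whitney $C^\infty$ topology, and to note that the same argument applies verbatim to paths of equivariant vector fields, via the identifications $C^\infty(\bbr,\ffX(M)^G)\cong\Gamma(TM\to\bbr\times M)^G$ and $C^\infty(\bbr,C^\infty(M,\ffg)^G)\cong C^\infty(\bbr\times M,\ffg)^G$ recorded at the start of the subsection, so that the analogous statement holds for the category $C^\infty(\bbr,\bbx(M)^G)$ as well.
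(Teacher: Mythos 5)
Your proposal is correct and follows exactly the paper's own argument: Theorem~\ref{Thm:TopAb2VFs} makes $\bbx(M)^G$ a topological abelian $2$-group, the set in question is identified with $t(s^{-1}(\calu))$, and Lemma~\ref{Lemma:IsoResidual} gives openness and density. The only difference is that you spell out the identification of $C^\infty(M,\ffg)^G\cdot\calu$ with the groupoid saturation, which the paper leaves implicit.
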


\begin{proof}
Since the category $\bbx(M)^G$ is a topological abelian $2$-group by Theorem \ref{Thm:TopAb2VFs}, the result follows immediately by Lemma \ref{Lemma:IsoResidual}.
\end{proof}

We now turn our attention to the equivalence of Theorem \ref{Thm:EquivalenceVFs}.
For this, let $V$ be a representation of a compact Lie subgroup $K$ of a Lie group $G$, and consider the associated bundle $G\times^KV$.
In particular, we want to prove the following theorem:

\begin{theorem}\label{Thm:EquivalenceContinuous}
Let $V$ be a representation of a compact Lie subgroup $K$ of a Lie group $G$.
Then the equivalence of Theorem \ref{Thm:EquivalenceVFs}:
\begin{equation*}
\bbx(G\times^KV)^G \simeq \bbx(V)^K
\end{equation*}
between the categories of equivariant vector fields on the associated bundle $G\times^KV$ and the $K$-representation $V$ is an equivalence of topological abelian $2$-groups with respect to the Whitney $C^\infty$ topology.
\end{theorem}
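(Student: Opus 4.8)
The plan is to upgrade the equivalence of categories from Theorem \ref{Thm:EquivalenceVFs} to an equivalence of topological abelian $2$-groups by checking that each ingredient already constructed — the functors $E$ and $P$, and the natural isomorphism $h:EP\cong 1$ — is continuous with respect to the Whitney $C^\infty$ topologies. By Theorem \ref{Thm:TopAb2VFs} (applied to both $G\times^KV$ and $V$) we already know $\bbx(G\times^KV)^G$ and $\bbx(V)^K$ are topological abelian $2$-groups. So by Definition \ref{Def:2GroupHom} and Definition \ref{Def:2GroupNat} it remains to verify: (i) $E_0$ and $E_1$ are continuous group homomorphisms; (ii) $P_0$ and $P_1$ are continuous group homomorphisms; (iii) $h:\ffX(G\times^KV)^G\to C^\infty(G\times^KV,\ffg)^G$ is continuous. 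Linearity (hence the homomorphism property) of all these maps is already established in Theorems \ref{Thm:InclusionFunctor}, \ref{Thm:ProjectionFunctor}, and \ref{Thm:NaturalIsomorphism}, so the only new content is continuity.

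First I would handle $E$. The map $E_1:C^\infty(V,\ffk)^K\to C^\infty(G\times^KV,\ffg)^G$ sending $\psi\mapsto\big([g,v]\mapsto\Ad(g)\iota(\psi(v))\big)$ is essentially the equivariant extension map of Corollary \ref{Cor:EquivariantExtensionContinuous} (with $N$ the $G$-manifold $\ffg$ carrying the adjoint action, precomposed with the continuous pushforward $\iota_*$ by the linear inclusion $\iota:\ffk\hookrightarrow\ffg$), hence continuous. For $E_0$, I would use the identification $T(G\times^KV)\cong TG\times^{TK}TV$ from Remark \ref{Rem:ConnectionFromSplitting} to express $E_0X$ as an equivariant extension in the sense of Corollary \ref{Cor:EquivariantExtensionContinuous} applied to the target manifold $TM$ (or its associated-bundle model), composed with the pushforward $(Tj)_*$ and the translation action, all of which are continuous by Lemma \ref{Lemma:WhitneyProperties}(1) and Corollary \ref{Cor:EquivariantExtensionContinuous}. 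For $P$, the map $P_1:\psi\mapsto\big(v\mapsto\bbp(\psi(j(v)))\big)$ factors as the pullback $j^*$ along the embedding $j:V\hookrightarrow G\times^KV$ followed by the pushforward $\bbp_*$; since $V$ is a slice in $G\times^KV$ we may take $j$ (after shrinking, or using the model $G\times^KV$ directly) to be a proper embedding, so $j^*$ is continuous by Lemma \ref{Lemma:ProperPullbacks}, and $\bbp_*$ is continuous by Lemma \ref{Lemma:WhitneyProperties}(1). The map $P_0X = j^*(\Phi\circ X)$ is the composition of the pushforward by the smooth bundle map $\Phi$ (continuous by Lemma \ref{Lemma:WhitneyProperties}(1)) with the same proper pullback $j^*$.

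The main obstacle, and the step I would spend the most care on, is the continuity of $h$. Here I would trace through diagram (\ref{Diag:NatIso}): $h=\epsilon\circ\iota_*\circ\beta\circ\alpha$, where $\alpha(X)=X-E_0(P_0(X))$ is continuous once $E_0$ and $P_0$ are (it is a difference of continuous maps, and subtraction is continuous by Theorem \ref{Thm:TopAb2VFs}/Lemma \ref{Lemma:PathSpacesAreTopAbGroups}), $\iota_*$ and $\epsilon$ are continuous by Lemma \ref{Lemma:WhitneyProperties}(1) and Corollary \ref{Cor:EquivariantExtensionContinuous} respectively, so the crux is the continuity of $\beta$, the inverse of the linear isomorphism of Lemma \ref{Lemma:HorizontalTriviality}. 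For that I would show the map of Lemma \ref{Lemma:HorizontalTriviality} and its inverse are continuous by revisiting the factorization (\ref{Diag:HorizontalTriviality}) into the maps $1_V\times(-)$, $\eta_*$, and $\epsilon$: the map $1_V\times(-)$ and its inverse are pushforwards by smooth bundle maps (the inclusion $V\times\ffq\hookrightarrow$ and the projection $\mathrm{pr}_V$), $\eta_*$ is the pushforward of a diffeomorphism of total spaces of bundles over $V$ (compact base part $K$, so the relevant pullback is proper), and $\epsilon$ together with its inverse $j^*$ are handled by Corollary \ref{Cor:EquivariantExtensionContinuous} and Lemma \ref{Lemma:ProperPullbacks}. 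Each of these is continuous with continuous inverse, so $\beta$ is continuous, and hence $h$ is continuous. Finally, I would invoke Theorem \ref{Thm:ExtendThenProject} ($PE=1$) and the natural isomorphism $h:EP\cong 1$, now known to be given by continuous $2$-group data, to conclude via Definition \ref{Def:IsoEquiv2Groups} that $\bbx(G\times^KV)^G\simeq\bbx(V)^K$ is an equivalence of topological abelian $2$-groups. One remark I would make is that, by Theorem \ref{Thm:EquivalencePaths} and the identifications in the remark opening subsection \ref{opendense}, the identical argument proves the analogous statement for the categories of paths of equivariant vector fields, which is what feeds into Theorem \ref{Thm:MainResidual2}.
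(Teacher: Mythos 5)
Your proposal is correct and follows essentially the same route as the paper: it reduces the statement to the continuity of $E_0$, $E_1$, $P_0$, $P_1$, and $h$ (the homomorphism property being already known), establishes these via Corollary \ref{Cor:EquivariantExtensionContinuous}, proper pullbacks (Lemma \ref{Lemma:ProperPullbacks}), pushforwards (Lemma \ref{Lemma:WhitneyProperties}), and the continuity of addition, and handles the crux $\beta$ exactly as in the paper by showing the factorization (\ref{Diag:HorizontalTriviality}) consists of homeomorphisms. The only cosmetic deviation is your factorization of $P_1$ through the proper slice embedding $j$ directly rather than through the intermediate pullback $\pi^*$ of Lemma \ref{Lemma:WhitneyEquivariantProperty}, which is equally valid since $j$ is a closed, hence proper, embedding.
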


Recall that the equivalence $\bbx(G\times^KV)^G\simeq \bbx(V)^K$ is given by a functor:
\begin{equation*}
E:\bbx(V)^K\to\bbx(G\times^KV)^G,
\end{equation*}
defined by equivariant extension on both objects and morphisms (Theorem \ref{Thm:InclusionFunctor}), a projection functor:
\begin{equation*}
P:\bbx(G\times^KV)^G\to\bbx(V)^K,
\end{equation*}
corresponding to a choice of equivariant connection on the vector bundle $G\times^KV\to G/K$ (Theorem \ref{Thm:ProjectionFunctor}), and a natural isomorphism $h:EP\cong 1_{\bbx(G\times^KV)^G}$ (Theorem \ref{Thm:NaturalIsomorphism}).
Recall that the composition $P\circ E$ is the identity by Theorem \ref{Thm:ExtendThenProject}.
Therefore, to prove that the equivalence $\bbx(G\times^KV)^G\simeq \bbx(V)^K$ is an equivalence of topological abelian $2$-groups it suffices to prove that the functors $E$ and $P$ are continuous $2$-group homomorphisms (Definition \ref{Def:2GroupHom}) and that the natural isomorphism $h$ is a continuous $2$-group natural isomorphism (Definition \ref{Def:2GroupNat}).
That all the maps involved are homomorphisms follows immediately from the definitions, thus it suffices to check that the maps are continuous.
This is what we do now in Proposition \ref{Prop:ECts}, Proposition \ref{Prop:PCts}, and Proposition \ref{Prop:hCts}.

\begin{proposition}\label{Prop:ECts}
Let $V$ be a representation of a compact Lie subgroup $K$ of a Lie group $G$.
The canonical functor $E:\bbx(V)^K\to \bbx(G\times^KV)^G$ of Theorem \ref{Thm:InclusionFunctor} is continuous with respect to the Whitney $C^\infty$ topology.
\end{proposition}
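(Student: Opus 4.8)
The plan is to show that both maps defining the functor $E$, namely the object map $E_0:\ffX(V)^K\to\ffX(G\times^KV)^G$ and the morphism map $E_1:C^\infty(V,\ffk)^K\to C^\infty(G\times^KV,\ffg)^G$, are continuous with respect to the Whitney $C^\infty$ topologies; since $E$ is a $2$-group homomorphism (Theorem \ref{Thm:InclusionFunctor}), continuity of these two maps is exactly what is needed. The key observation is that both $E_0$ and $E_1$ are, up to the identification of the relevant spaces of vector fields and infinitesimal gauge transformations with appropriate spaces of equivariant maps, instances of the \emph{equivariant extension} map $\epsilon$ of Corollary \ref{Cor:EquivariantExtensionContinuous}, which has already been proved continuous.

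First I would handle $E_1$, which is the more transparent case. Recalling Notation \ref{Notation}.\ref{Not:AssociatedBundle}, the space $C^\infty(G\times^KV,\ffg)^G$ of infinitesimal gauge transformations can be viewed as the space $C^\infty(G\times^KV,\ffg)^G$ of $G$-equivariant maps into $\ffg$ (with the $\Ad$ action on $\ffg$), and $C^\infty(V,\ffk)^K$ is the space of $K$-equivariant maps $V\to\ffk$. The map $E_1$ sends $\psi\mapsto E_1\psi$ with $(E_1\psi)([g,v])=\Ad(g)\iota(\psi(v))$, where $\iota:\ffk\hookrightarrow\ffg$ is the inclusion. I would factor $E_1$ as the pushforward $\iota_*:C^\infty(V,\ffk)^K\to C^\infty(V,\ffg)^K$ (continuous by part (1) of Lemma \ref{Lemma:WhitneyProperties}, after checking it restricts to equivariant maps, which it does since $\iota$ is $K$-equivariant) followed by the equivariant extension map $\epsilon:C^\infty(V,\ffg)^K\to C^\infty(G\times^KV,\ffg)^G$ of Corollary \ref{Cor:EquivariantExtensionContinuous} applied with $N=\ffg$ carrying the $\Ad$ action of $G$. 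Both maps are continuous, so $E_1$ is.

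Next I would handle $E_0$. Here $\ffX(V)^K$ is identified with $C^\infty(V,TV)^K$ and $\ffX(G\times^KV)^G$ with $C^\infty(G\times^KV,T(G\times^KV))^G$ (both as subspaces of the corresponding spaces of sections, with the Whitney topology, as in Lemma \ref{Lemma:PathSpacesAreTopAbGroups}), and the definition $(E_0X)([g,v])=g\cdot(Tj)X(v)$ shows that $E_0$ is again the composition of a pushforward by the smooth $K$-equivariant bundle map $Tj:TV\to T(G\times^KV)$ (continuous by Lemma \ref{Lemma:WhitneyProperties}(1)) with the equivariant extension map $\epsilon$ of Corollary \ref{Cor:EquivariantExtensionContinuous} applied with $N=T(G\times^KV)$ carrying the tangent-lifted $G$-action of Notation \ref{Notation}.\ref{Not:TangentAction}. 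One must check that these compositions genuinely land in and restrict to the subspaces of equivariant maps/sections over the correct base points, using that $Tj$ is $K$-equivariant and that $\epsilon(Tj\circ X)$ has the right base-point behaviour, i.e. that $E_0X$ is indeed a section of $T(G\times^KV)\to G\times^KV$; this is essentially the content of Theorem \ref{Thm:InclusionFunctor} combined with the universal property of the subspace topology.

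The main obstacle I expect is the bookkeeping around the identifications: making sure that the Whitney topology on $\ffX(V)^K$ and $\ffX(G\times^KV)^G$ as spaces of \emph{sections} of tangent bundles agrees with the subspace topology coming from the ambient mapping spaces $C^\infty(V,TV)$ and $C^\infty(G\times^KV,T(G\times^KV))$, and that Corollary \ref{Cor:EquivariantExtensionContinuous} — stated for equivariant \emph{maps} into a $G$-manifold $N$ — applies verbatim when $N$ is the total space of a tangent bundle and we additionally want the output to be a section rather than an arbitrary equivariant map. This is handled by noting that "being a section over the base-point map $[g,v]\mapsto[g,v]$" is a closed (indeed, fiber-product) condition cut out by continuous maps, so it defines a subspace on which the relevant restrictions are automatically continuous; concretely one uses Lemma \ref{Lemma:WhitneyProperties}(3) exactly as in the proof of Lemma \ref{Lemma:PathSpacesAreTopAbGroups}. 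Once this is in place, the continuity of $E_0$ and $E_1$, and hence of $E$, follows from Corollary \ref{Cor:EquivariantExtensionContinuous} and Lemma \ref{Lemma:WhitneyProperties}(1).
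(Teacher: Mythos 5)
Your proposal is correct and follows essentially the same route as the paper: both $E_0$ and $E_1$ are factored as a pushforward (by $Tj$ and by $\iota:\ffk\hookrightarrow\ffg$, respectively, continuous by Lemma \ref{Lemma:WhitneyProperties}(1)) followed by the equivariant extension map $\epsilon$ of Corollary \ref{Cor:EquivariantExtensionContinuous}, with the section/equivariance conditions handled through subspace inclusions and the universal property of the subspace topology. The bookkeeping you flag is exactly what the paper's commuting diagrams take care of, so there is no gap.
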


\begin{proof}
Let $j:V\hookrightarrow G\times^KV$ be the $K$-equivariant embedding defined by $j(v):=[1,v]$.
First, we check that the map on objects:
\begin{equation*}
E_0:\ffX(V)^K\to \ffX(G\times^KV)^G
\end{equation*}
is continuous.
For this, note that the map $E_0$ factors as in the following diagram:
\begin{equation*}
\xy
{(-32,-14)}*+{\ffX(V)^K} = "1";
{(32,-14)}*+{\ffX(G\times^KV)^G} = "2";
{(-32,0)}*+{C^\infty(V,TV)^K} = "3";
{(32,0)}*+{C^\infty\left(G\times^KV,T\left(G\times^KV\right)\right)^G} = "4";
{(0,18)}*+{C^\infty(V,\,\,T\left(G\times^KV\right))^K} = "5";
{\ar@{->}_{E_0} "1";"2"};
{\ar@{->} "1";"3"};
{\ar@{->} "2";"4"};
{\ar@{->}^{\left(T j\right)_*} "3";"5"};
{\ar@{->}^{\epsilon} "5";"4"};
\endxy
\end{equation*}
Here the vertical maps are subspace inclusions, the map $\left(T j\right)_*$ is the pushforward by the tangent map of the slice embedding $j: V\hookrightarrow G\times^KV$, and the map $\epsilon$ is the corresponding equivariant extension (Remark \ref{Rem:EquivariantExtension}).
The continuity of the map $E_0$ follow from the fact that the maps $\left(T j\right)_*$and $\epsilon$ are continuous (see part (1) of Lemma \ref{Lemma:WhitneyProperties} and Corollary \ref{Cor:EquivariantExtensionContinuous}), and the inclusions are subspace inclusions.

Since the categories of equivariant vector fields are action groupoids, to prove $E$ is continuous on morphisms, it suffices to check that the map on infinitesimal gauge transformations:
\begin{equation*}
E_1: C^\infty(V,\ffk)^K \to C^\infty(G\times^KV,\ffg)^G,
\end{equation*}
is continuous (see Theorem \ref{Thm:InclusionFunctor}).
For this, note that $E_1$ factors as in the following diagram:
\begin{equation*}
\xy
{(-30,0)}*+{C^\infty( V, \ffk)^K} = "1";
{(30,0)}*+{C^\infty( G\times^KV,\ffg)^G} = "2";
{(0,14)}*+{C^\infty( V, \ffg)^K} = "3";
{\ar@{->}_{E_1} "1";"2"};
{\ar@{->}^{\iota_*} "1";"3"};
{\ar@{->}^{\epsilon} "3";"2"};
\endxy
\end{equation*}
where $\iota_*$ is the pushforward of the inclusion $\iota:\ffk\hookrightarrow\ffg$, and the map $\epsilon$ is the equivariant extension map (Remark \ref{Rem:EquivariantExtension}).
The maps $\iota_*$ are $\epsilon$ are continuous (see part (1) of Lemma \ref{Lemma:WhitneyProperties} and Corollary \ref{Cor:EquivariantExtensionContinuous}).
Hence, the map $E_1$ is continuous.
Since the functor $E$ is continuous on both morphisms and objects, the functor $E$ is continuous with respect to the Whitney topologies.
\end{proof}

We now verify that any choice of functor $P$ as in Theorem \ref{Thm:ProjectionFunctor} is continuous:

\begin{proposition}\label{Prop:PCts}
Let $V$ be a representation of a compact Lie subgroup $K$ of a Lie group $G$.
Any choice of functor $P:\bbx(G\times^KV)^G\to\bbx(V)^K$ as in Theorem \ref{Thm:ProjectionFunctor} is continuous with respect to the Whitney $C^\infty$ topology.
\end{proposition}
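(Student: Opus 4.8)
The plan is to mirror the proof of Proposition \ref{Prop:ECts}. Since $\bbx(G\times^KV)^G$ and $\bbx(V)^K$ are action groupoids (Remark \ref{Rem:ActionGroupoids}, Example \ref{Ex:ActionGroupoid}), a functor between them of the form produced by Lemma \ref{Lemma:FunctorShortcut} is continuous precisely when the map on objects $P_0\colon\ffX(G\times^KV)^G\to\ffX(V)^K$ and the map on morphism labels $P_1\colon C^\infty(G\times^KV,\ffg)^G\to C^\infty(V,\ffk)^K$ are continuous (that these are group homomorphisms is immediate from the definitions). For $P_1$, recall $(P_1\psi)(v)=\bbp(\psi([1,v]))$, so $P_1=\bbp_*\circ j^*$, where $j\colon V\hookrightarrow G\times^KV$, $j(v):=[1,v]$, is the slice embedding and $\bbp\colon\ffg\to\ffk$ is the chosen $K$-equivariant projection. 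The image $j(V)$ is the fiber of the bundle $G\times^KV\to G/K$ over the identity coset, hence closed, so $j$ is a closed embedding and in particular proper; thus $j^*\colon C^\infty(G\times^KV,\ffg)\to C^\infty(V,\ffg)$ is continuous by Lemma \ref{Lemma:ProperPullbacks}, and $\bbp_*$ is continuous by part (1) of Lemma \ref{Lemma:WhitneyProperties}. Restricting these continuous maps to the subspaces of equivariant maps (which carry the subspace topologies) yields the continuity of $P_1$.

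For $P_0$, recall $(P_0X)(v)=j^*(\Phi\circ X)(v)$, where $\Phi\in\Omega^1(G\times^KV;\calv(G\times^KV))^G$ is the equivariant connection attached to the splitting $\ffg=\ffk\oplus\ffq$ (Remark \ref{Rem:ConnectionFromSplitting}), here regarded as a smooth, fiberwise-linear map $\Phi\colon T(G\times^KV)\to\calv(G\times^KV)$. I would factor $P_0$ in three stages. First, $X\mapsto\Phi\circ X$ is the pushforward $\Phi_*\colon C^\infty(G\times^KV,T(G\times^KV))\to C^\infty(G\times^KV,\calv(G\times^KV))$, continuous by Lemma \ref{Lemma:WhitneyProperties}(1); applied to a vector field it returns a vertical vector field. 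Second, precomposition with the proper embedding $j$ sends $\Phi\circ X$ to the smooth map $(\Phi\circ X)\circ j\colon V\to\calv(G\times^KV)$, continuously, by Lemma \ref{Lemma:ProperPullbacks}; since $\Phi\circ X$ is vertical and $j(V)$ is the fiber over the identity coset, this map takes values in the restricted vertical bundle $\calv(G\times^KV)|_{j(V)}$. Third, $Tj$ restricts to a bundle isomorphism $TV\xrightarrow{\ \cong\ }\calv(G\times^KV)|_{j(V)}$ over the diffeomorphism $j\colon V\to j(V)$, with smooth inverse $\rho:=(Tj)^{-1}$, and postcomposition with $\rho$ is the pushforward $\rho_*$, continuous by Lemma \ref{Lemma:WhitneyProperties}(1); composing the three stages recovers $P_0$ (again passing to the subspaces of equivariant fields at the end).

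The step that needs care is the interface between the second and third stages: $\rho$ is defined only on the closed submanifold $\calv(G\times^KV)|_{j(V)}\subseteq T(G\times^KV)$, so invoking Lemma \ref{Lemma:WhitneyProperties}(1) requires knowing that corestriction to a closed submanifold is compatible with the Whitney topologies, i.e.\ that $C^\infty(V,\calv(G\times^KV)|_{j(V)})$ carries the subspace topology inherited from $C^\infty(V,\calv(G\times^KV))$. I expect this to be the main obstacle, and there are two routes around it. One may verify it directly from the jet-space description of the Whitney topology (Definition \ref{Def:WhitneyTop}), which is routine for a closed embedded submanifold. Alternatively, one avoids it by transporting everything along the $G$-equivariant diffeomorphism $T(G\times^KV)\cong G\times^K(V\times\ffq\times V)$ of Remark \ref{Rem:ConnectionFromSplitting}, whose induced map on mapping spaces is a homeomorphism by Lemma \ref{Lemma:WhitneyProperties}(1) applied to it and its inverse; under this identification $\Phi\circ(-)$ becomes the fiberwise projection $[g,v,\xi,w]\mapsto[g,v,0,w]$, restriction to the slice becomes a proper pullback, and the final identification $\calv(G\times^KV)|_{j(V)}\cong V\times V\cong TV$ is a smooth product projection, so that $P_0$ is exhibited as a composite of continuous maps using only Lemmas \ref{Lemma:WhitneyProperties}, \ref{Lemma:DecoupledProducts}, \ref{Lemma:ProperPullbacks}, and \ref{Lemma:WhitneyEquivariantProperty} (the last to handle the $G\times^K(-)$ bookkeeping). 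Either way, continuity of $P_0$ and $P_1$ gives continuity of $P$.
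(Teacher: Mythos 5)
Your proposal is correct and follows essentially the same route as the paper's proof: $P_0$ is factored as the pushforward $\Phi_*$, followed by the proper (closed-embedding) pullback $j^*$, followed by the pushforward of $(Tj)|^{-1}$, and $P_1$ as $\bbp_*$ composed with restriction to the slice, citing the same continuity lemmas. The only minor differences are that the paper reaches $P_1$ by first pulling back along the principal-bundle projection $\pi^*:C^\infty(G\times^KV,\ffg)\to C^\infty(G\times V,\ffg)^K$ before restricting, rather than your more direct $\bbp_*\circ j^*$, and that the paper leaves implicit the corestriction point you flag (that $C^\infty(V,\calv(G\times^KV)|_{j(V)})$ carries the subspace topology), which either your jet-space check or the detour through $T(G\times^KV)\cong G\times^K(V\times\ffq\times V)$ settles.
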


\begin{proof}
First, we check that the map on objects:
\begin{equation*}
P_0: \ffX(G\times^KV)^G \to \ffX(V)^K
\end{equation*}
is continuous.
For this, note that the map $P_0$ factors as in the following diagram:
\begin{equation}\label{Diag:ProjectionMap}
\begin{gathered}
\xy
{(-28,0)}*+{\ffX(G\times^KV)^G} = "1";
{(28,0)}*+{\ffX(V)^K} = "2";
{(0,20)}*+{\Gamma(\calv(\bbr\times G \times^KV))} = "3";
{\ar@{->}_{P_0} "1";"2"};
{\ar@{->}^{\Phi_*} "1";"3"};
{\ar@{->}^{(T j)|^{-1}\circ\,(-)\,\circ\,  j} "3";"2"};
\endxy
\end{gathered}
\end{equation}
where the map $\Phi_*$ is the pushforward of the connection:
\begin{equation*}
\Phi:T\left( G\times^KV\right) \to \calv\left( G\times^KV\right)
\end{equation*}
on the associated bundle $\bbr\times G\times^KV\to G/K$ corresponding to $P$ (see Theorem \ref{Thm:ProjectionFunctor}), the space $\calv(G\times^KV)$ is the vertical bundle of $G\times^KV\to G/K$ corresponding to the connection $\Phi$, and the map $(T j)|^{-1}\circ\,(-)\,\circ\, j$ consists of pullback by the slice embedding $ j: V \hookrightarrow G\times^KV$ and pushforward by the inverse of the restriction of the tangent map $T j$ to a bundle map $T V\to \calv\left( G\times^KV\right)|\, V$ (here $\calv( G\times^KV)|\, V$ denotes the restriction of the vertical bundle of $ G\times^KV\to G/K$ to the slice $V$).

The pushforward $\Phi_*$ is continuous by part (1) of Lemma \ref{Lemma:WhitneyProperties}, the pushforward $\left((T j)|^{-1}\right)_*$ is continuous by part (1) of Lemma \ref{Lemma:WhitneyProperties}, and the pullback $ j^*$ is continuous by Lemma \ref{Lemma:ProperPullbacks} since the map $ j$ is a closed embedding (hence proper).
Consequently, the map $P_0$ is a continuous map.

Since the categories of equivariant vector fields are action groupoids, to prove $P$ is continuous on morphisms, it suffices to check that the map on infinitesimal gauge transformations:
\begin{equation*}
P_1:C^\infty(G\times^KV,\ffg)^G_1 \to C^\infty(V,\ffk)^K
\end{equation*}
is continuous (see Theorem \ref{Thm:ProjectionFunctor} for the definition of $P_1$).
For this, note that $ P_1$ factors as in the following diagram:
\begin{equation}\label{Diag:ProjectionGaugePaths}
\begin{gathered}
\xy
{(-28,0)}*+{C^\infty\left(G\times^KV,\ffg\right)^G} = "1";
{(28,0)}*+{C^\infty(V, \ffk)^K} = "2";
{(-28,15)}*+{C^\infty\left(G\times^KV,\ffg\right)} = "5";
{(28,15)}*+{C^\infty(V, \ffg)^K} = "6";
{(0,30)}*+{C^\infty(G\times V, \ffg)^K} = "7";
{\ar@{->}_{P_1} "1";"2"};
{\ar@{->}^{I} "1";"5"};
{\ar@{->}^{\bbp_*} "6";"2"};
{\ar@{->}^{\pi^*} "5";"7"};
{\ar@{->}^{ j^*} "7";"6"};
\endxy
\end{gathered}
\end{equation}
where the map $I$ is the inclusion, the map $\pi^*$ is the pullback via the principal bundle projection $\pi: G \times V \to G \times^KV$, the map $j^*$ is the pullback via the slice embedding $j : V \hookrightarrow G\times^KV$, and the map $\bbp_*$ is the pushforward of the projection $\bbp:\ffg\to\ffk$.
The map $ P_1$ is continuous since each of the other maps in diagram (\ref{Diag:ProjectionGaugePaths}) is such.
In particular, the pullback $\pi^*$ is continuous by Lemma \ref{Lemma:WhitneyEquivariantProperty}, the pullback $ j^*$ is continuous by Lemma \ref{Lemma:ProperPullbacks} since the map $ j$ is a closed embedding (hence proper), and the pushforward $\bbp_*$ is continuous by part (1) of Lemma \ref{Lemma:WhitneyProperties}.
Since the functor $P$ is continuous on both morphisms and objects, the functor $P$ is continuous with respect to the Whitney topology.
\end{proof}

Next, we deal with the natural isomorphism:

\begin{proposition}\label{Prop:hCts}
Let $V$ be a representation of a compact Lie subgroup $K$ of a Lie group $G$.
Let $E:\bbx(V)^K\to \bbx(G\times^KV)^G$ be the canonical functor in Theorem \ref{Thm:InclusionFunctor}, let $P:\bbx(G\times^KV)^G\to\bbx(V)^K$ be any choice of functor as in Theorem \ref{Thm:ProjectionFunctor}, and let $h:EP \cong 1_{\bbx(G\times^KV)^G}$ be the corresponding natural isomorphism of Theorem \ref{Thm:NaturalIsomorphism}.
Then the corresponding map:
\begin{equation}\label{Eq:NatIsoMapCts}
h:\ffX(G\times^KV)^G\to C^\infty(G\times^KV,\ffg)^G
\end{equation}
is continuous.
\end{proposition}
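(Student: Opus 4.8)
The plan is to exploit the explicit factorization of $h$ constructed in the proof of Theorem \ref{Thm:NaturalIsomorphism}. By diagram (\ref{Diag:NatIso}), the map $h$ in (\ref{Eq:NatIsoMapCts}) equals the composite
\begin{equation*}
\ffX(G\times^KV)^G \xrightarrow{\alpha} \Gamma(\calh)^G \xrightarrow{\beta} C^\infty(V,\ffq)^K \xrightarrow{\iota_*} C^\infty(V,\ffg)^K \xrightarrow{\epsilon} C^\infty(G\times^KV,\ffg)^G,
\end{equation*}
where $\alpha(X) := X - E_0(P_0(X))$, the map $\beta$ is the inverse of the trivialization isomorphism of Lemma \ref{Lemma:HorizontalTriviality}, the map $\iota_*$ is the pushforward of the inclusion $\iota:\ffq\hookrightarrow\ffg$, and $\epsilon$ is the equivariant extension (\ref{Eq:EpsilonForIso}). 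It therefore suffices to show that each of these four maps is continuous with respect to the Whitney $C^\infty$ topologies; here all the spaces of sections involved carry the subspace topologies induced from the corresponding mapping spaces, so continuity into or out of a subspace follows from the universal property of the subspace topology.

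Three of the four factors are routine. The map $\alpha$ is continuous because it is the difference of the identity of $\ffX(G\times^KV)^G$ and the map $X\mapsto E_0(P_0(X))$: the latter is continuous by Proposition \ref{Prop:ECts} and Proposition \ref{Prop:PCts}, and the subtraction of equivariant vector fields is continuous by Lemma \ref{Lemma:PathSpacesAreTopAbGroups}; since the image of $\alpha$ lies in $\Gamma(\calh)^G = \ker P_0$, the map $\alpha$ is continuous as a map into that subspace. The map $\iota_*$ is continuous by part (1) of Lemma \ref{Lemma:WhitneyProperties}. The map $\epsilon$ is continuous by Corollary \ref{Cor:EquivariantExtensionContinuous}, applied with $N := \ffg$ carrying the adjoint action of $G$, so that indeed $\epsilon(\psi)([g,v]) = g\cdot \psi(v) = \Ad(g)\psi(v)$.

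The remaining point is the continuity of $\beta$, and this is where I expect the main obstacle to lie. By Lemma \ref{Lemma:HorizontalTriviality} (diagram (\ref{Diag:HorizontalTriviality})), the trivialization map $C^\infty(V,\ffq)^K \to \Gamma(\calh)^G$ whose inverse is $\beta$ factors as a composite of three linear isomorphisms: the map $\psi\mapsto 1_V\times\psi$, the pushforward $\eta_*$ of the fibrewise diffeomorphism $\eta: V\times\ffq\to\calh|V$, and the equivariant extension of sections of $\calh$ from the slice. Consequently $\beta$ is the composite of the three corresponding inverses, taken in the reverse order, which I would identify as follows: the inverse of the equivariant extension is the pullback $j^*$ along the slice embedding $j:V\hookrightarrow G\times^KV$, $j(v):=[1,v]$, which is continuous by Lemma \ref{Lemma:ProperPullbacks} since $j$ is a closed embedding and hence proper; the inverse of $\eta_*$ is the pushforward of $\eta^{-1}$, continuous by part (1) of Lemma \ref{Lemma:WhitneyProperties}; and the inverse of $\psi\mapsto 1_V\times\psi$ is a pushforward along a bundle projection, again continuous by part (1) of Lemma \ref{Lemma:WhitneyProperties}. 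Hence $\beta$ is continuous, and therefore so is $h = \epsilon\circ\iota_*\circ\beta\circ\alpha$. The only genuine subtlety is this unwinding of the factorization in Lemma \ref{Lemma:HorizontalTriviality}: one must recognize each of the inverse maps as either a pushforward by a smooth map or a pullback along a proper map, the latter being the single place where properness of the slice embedding — and, via Lemma \ref{Lemma:ProperPullbacks}, compactness of $K$ — enters.
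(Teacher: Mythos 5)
Your proposal is correct and follows essentially the same route as the paper: factor $h$ as $\epsilon\circ\iota_*\circ\beta\circ\alpha$ via diagram (\ref{Diag:NatIso}), handle $\alpha$, $\iota_*$, $\epsilon$ with Lemma \ref{Lemma:PathSpacesAreTopAbGroups}, Lemma \ref{Lemma:WhitneyProperties}, and Corollary \ref{Cor:EquivariantExtensionContinuous}, and reduce the continuity of $\beta$ to unwinding the factorization in Lemma \ref{Lemma:HorizontalTriviality}, with the properness of the slice embedding entering through Lemma \ref{Lemma:ProperPullbacks}. This matches the paper's argument, which likewise shows $\beta^{-1}$ is a homeomorphism by exhibiting each factor in diagram (\ref{Diag:HorizontalTriviality}) as a pushforward by a smooth map or a pullback along the proper embedding $j$.
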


\begin{proof}
Recall from the proof of Theorem \ref{Thm:NaturalIsomorphism} that the map $h$ factors as in the following diagram:
\begin{equation*}
\begin{gathered}
\xy
{(-20,0)}*+{\ffX(G\times^KV)^G} = "1";
{(20,0)}*+{C^\infty(G\times^KV,\ffg)^G} = "2";
{(-20,15)}*+{\Gamma(\calh)^G} = "3";
{(20,15)}*+{C^\infty(V,\ffg)^K} = "4";
{(0,30)}*+{C^\infty(V,\ffq)^K} = "5";
{\ar@{->}^{h} "1";"2"};
{\ar@{->}^{\alpha} "1";"3"};
{\ar@{->}^{\beta} "3";"5"};
{\ar@{->}^{\iota_*} "5";"4"};
{\ar@{->}^{\epsilon} "4";"2"};
\endxy
\end{gathered}
\end{equation*}
where $\ffg=\ffk\oplus\ffq$ is the $K$-equivariant splitting corresponding to $P$, the bundle $\calh\to G\times^KV$ is the horizontal bundle of $G\times^KV\to G/K$ corresponding to the connection on $G\times^KV\to G/K$ determined by $\ffg=\ffk\oplus\ffq$, and the maps $\alpha,\beta, \iota_*,$ and $\epsilon$ are defined in the proof of Theorem \ref{Thm:NaturalIsomorphism}.
The map $\alpha$ is continuous since the addition in the group $\ffX(G\times^KV)^G$ is continuous and the maps $E_0$ and $P_0$ are continuous.
The map $\iota_*$ is continuous by part (1) of Lemma \ref{Lemma:WhitneyProperties}.
The map $\epsilon$ is continuous by Corollary \ref{Cor:EquivariantExtensionContinuous}
Hence, it remains to show that the map $\beta$ is continuous, and we will have shown that the map $h$ is continuous.

Recall that the map $\beta$ is the inverse of the map in the statement of Lemma \ref{Lemma:HorizontalTriviality}.
Thus, it suffices to check that the map $\beta^{-1}$ is a homeomorphism.
Recall from the proof of Lemma \ref{Lemma:HorizontalTriviality} that the map $\beta^{-1}$ factors as in diagram (\ref{Diag:HorizontalTriviality}), which we reproduce here for convenience:
\begin{equation*}
\begin{gathered}
\xy
{(-20,10)}*+{C^\infty(V, \ffq)^K} = "1";
{(20,10)}*+{\Gamma(\calh)^G} = "2";
{(-20,-10)}*+{\Gamma(V\times \ffq)^K} = "3";
{(20,-10)}*+{\Gamma(\calh \,|\, V)^K} = "4";
{\ar@{->}^{\beta^{-1}} "1";"2"};
{\ar@{->}_{1_{V}\times\, (-)} "1";"3"};
{\ar@{->}_{\eta_*} "3";"4"};
{\ar@{->}_{\epsilon} "4";"2"};
\endxy
\end{gathered}
\end{equation*}
Recall that the trivial bundle $ V\times \ffq \xrightarrow{\pr_V} V$ with the action of the group $K$ on the total space given by:
\begin{equation*}
k\cdot (v,\xi) := (k\cdot v, \Ad(k)\xi), \qquad
k\in K,\, (v, \xi) \in  V\times\ffq,
\end{equation*}
and the action on the base given by:
\begin{equation*}
k\cdot v :=  k\cdot v, \qquad
k\in K,\, v \in V.
\end{equation*}
Thus, the map $1_V\times (-)$ is a homeomorphism since its inverse is given by the pushforward of the bundle projection $\pr_V: V\times\ffq\to V$, which is continuous by part (1) of Lemma \ref{Lemma:WhitneyProperties}.
Recall that the map $\eta$ is a diffeomorphism.
Hence, the pushforward $\eta_*$ of this map is a homeomorphism by part (1) of Lemma \ref{Lemma:WhitneyProperties}.
Recall that the map $\epsilon$ is the restriction of the corresponding equivariant extension map as in Remark \ref{Rem:EquivariantExtension}.
Hence, it is continuous by Corollary \ref{Cor:EquivariantExtensionContinuous}.
The inverse of the map $\epsilon$ is the pullback by the slice embedding $ j: V\hookrightarrow G\times^KV$ of the associated bundle $ G\times^KV\to G/K$.
This pullback is continuous by Lemma \ref{Lemma:ProperPullbacks}.
Hence, the map $\epsilon$ is also a homeomorphism.
Since the top vertical maps are topological identifications, the map $h$ is continuous.
\end{proof}

We can now prove the second of the two main theorems of this section, that the equivalence $\bbx(G\times^KV)^G\simeq \bbx(V)^K$ of Theorem \ref{Thm:EquivalenceVFs} preserves open and dense subsets of equivariant vector fields up to isomorphism:

\begin{theorem}\label{Thm:MainResidual2}
Let $V$ be a representation of a compact Lie subgroup $K$ of a Lie group $G$.
Furthermore, let:
\begin{equation*}
E_0:\ffX(V)^K\hookrightarrow \ffX(G\times^KV)^G
\end{equation*}
be the canonical inclusion by equivariant extension (Theorem \ref{Thm:InclusionFunctor}).
Then if $\calu$ is an open and dense subset of $\ffX(V)^K$, the set:
\begin{equation*}
C^\infty(G,\ffg)^G\cdot E_0(\calu) \subseteq \ffX(G\times^KV)^G
\end{equation*}
of all equivariant vector fields on $G\times^KV$ isomorphic to a vector field in $E_0(\calu)$ is open and dense in $\ffX(G\times^KV)^G$.
Similarly, let:
\begin{equation*}
P_0:\ffX(G\times^KV)^G\to\ffX(V)^K
\end{equation*}
be a choice of projection of equivariant vector fields with respect to some equivariant connection on $G\times^KV\to G/K$ (Theorem \ref{Thm:ProjectionFunctor}).
Then if $\calu$ is an open and dense subset of $\ffX(G\times^KV)^G$, the set:
\begin{equation*}
C^\infty(V,\ffk)^K\cdot P_0(\calu) \subseteq \ffX(V)^K
\end{equation*}
of all equivariant vector fields on $V$ isomorphic to a vector field in $P_0(\calu)$ is open and dense in $\ffX(V)^K$.
\end{theorem}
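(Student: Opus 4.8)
The plan is to deduce this from the abstract preservation result for equivalences of topological abelian $2$-groups, Theorem \ref{Thm:GeneralResidualPreservation}, once we know that the equivalence $\bbx(G\times^KV)^G\simeq\bbx(V)^K$ of Theorem \ref{Thm:EquivalenceVFs} is an equivalence of topological abelian $2$-groups. Everything needed for the latter is already in place: by Theorem \ref{Thm:TopAb2VFs} both $\bbx(V)^K$ and $\bbx(G\times^KV)^G$ are topological abelian $2$-groups for the Whitney $C^\infty$ topology; the functor $E$ is continuous by Proposition \ref{Prop:ECts}, any choice of projection functor $P$ is continuous by Proposition \ref{Prop:PCts}, and the map $h$ realizing the natural isomorphism $EP\cong 1_{\bbx(G\times^KV)^G}$ is continuous by Proposition \ref{Prop:hCts}. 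Since $E$, $P$, $h$ are linear (hence group homomorphisms) and $PE=1_{\bbx(V)^K}$ holds on the nose by Theorem \ref{Thm:ExtendThenProject}, this is exactly the assertion of Theorem \ref{Thm:EquivalenceContinuous}. (The same reasoning applies verbatim to the categories of paths of equivariant vector fields, using the Whitney topologies coming from the identifications $C^\infty(\bbr,\ffX(M)^G)\cong\Gamma(TM\to\bbr\times M)^G$ and $C^\infty(\bbr,C^\infty(M,\ffg)^G)\cong C^\infty(\bbr\times M,\ffg)^G$.)

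For the first assertion I would apply Theorem \ref{Thm:GeneralResidualPreservation} with $\calc=\bbx(V)^K$, $\cald=\bbx(G\times^KV)^G$, and with $E$ the equivariant extension functor (part of an equivalence of topological abelian $2$-groups, with $\varepsilon$ given by $h$ and $\eta$ the identity). It yields that, for $\calu\subseteq\ffX(V)^K=\calc_0$ open and dense, the set $t(s^{-1}(E_0(\calu)))$ is open and dense in $\ffX(G\times^KV)^G=\cald_0$, where $s,t$ are the source and target maps of $\bbx(G\times^KV)^G$. It then remains only to identify this set: since $\bbx(G\times^KV)^G$ is the action groupoid of $C^\infty(G\times^KV,\ffg)^G$ acting on $\ffX(G\times^KV)^G$, a morphism with source $X$ has target $X+\partial(\psi)$, so $t(s^{-1}(E_0(\calu)))=\{X+\partial(\psi)\mid X\in E_0(\calu),\ \psi\in C^\infty(G\times^KV,\ffg)^G\}=C^\infty(G\times^KV,\ffg)^G\cdot E_0(\calu)$, which is precisely the set of all equivariant vector fields on $G\times^KV$ isomorphic to one in $E_0(\calu)$.

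For the second assertion I would note that the same data exhibits the \emph{reverse} equivalence $\bbx(G\times^KV)^G\simeq\bbx(V)^K$ as an equivalence of topological abelian $2$-groups: take $P$ as the forward functor, $E$ as the backward functor, $h$ as the continuous $2$-group natural isomorphism $EP\cong 1_{\bbx(G\times^KV)^G}$, and the identity as the (trivially continuous) natural isomorphism $PE=1_{\bbx(V)^K}$ of Theorem \ref{Thm:ExtendThenProject}. Applying Theorem \ref{Thm:GeneralResidualPreservation} to this equivalence — now with $\calc=\bbx(G\times^KV)^G$, $\cald=\bbx(V)^K$, and $P$ playing the role of the functor called $E$ in that theorem — gives that, for $\calu\subseteq\ffX(G\times^KV)^G$ open and dense, $t(s^{-1}(P_0(\calu)))$ is open and dense in $\ffX(V)^K$, and as before $t(s^{-1}(P_0(\calu)))=C^\infty(V,\ffk)^K\cdot P_0(\calu)$, the set of all $K$-equivariant vector fields on $V$ isomorphic to one in $P_0(\calu)$. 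This completes the proof.

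This theorem is essentially an assembly statement: the genuine work was done earlier, and I do not expect a serious obstacle here. The only points that need care are (a) that $h$, and not merely $E$ and $P$, is continuous — which is the delicate content of Theorem \ref{Thm:EquivalenceContinuous}, resting on Lemma \ref{Lemma:HorizontalTriviality} and Corollary \ref{Cor:EquivariantExtensionContinuous} — and (b) the observation that the equivalence is an equivalence of topological abelian $2$-groups in \emph{both} directions, so that Theorem \ref{Thm:GeneralResidualPreservation} can be invoked with $E$ and then with $P$.
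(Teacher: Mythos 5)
Your proposal is correct and follows exactly the paper's route: the paper likewise deduces the theorem by noting that $E$ and $P$ form an equivalence of topological abelian $2$-groups (Theorem \ref{Thm:EquivalenceContinuous}, resting on Theorem \ref{Thm:TopAb2VFs} and Propositions \ref{Prop:ECts}, \ref{Prop:PCts}, \ref{Prop:hCts}) and then invoking Theorem \ref{Thm:GeneralResidualPreservation} in both directions. Your additional care in identifying $t(s^{-1}(E_0(\calu)))$ with the action-groupoid saturation and in noting that the reversed equivalence (with $h$ and the identity natural isomorphism) is again an equivalence of topological abelian $2$-groups is exactly what the paper's one-line proof leaves implicit.
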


\begin{proof}
Since the functors $P:\bbx(G\times^KV)^K\simeq \bbx(V)^K:E$ are part of an equivalence of topological abelian $2$-groups by Theorem \ref{Thm:EquivalenceContinuous}, the result follows by Theorem \ref{Thm:GeneralResidualPreservation}.
\end{proof}

%
\bibliography{refs}{}
\bibliographystyle{plain}

\end{document}